\theoremstyle{plein}
\newtheorem{theorem}{Theorem}[section]
\newtheorem{definition}[theorem]{Definition}
\newtheorem{definition*}{Definition}
\newtheorem*{theorem*}{Theorem}
\newtheorem{lemma}[theorem]{Lemma}
\newtheorem{proposition}[theorem]{Proposition}
\newtheorem*{proposition*}{Proposition}
\newtheorem{corollary}[theorem]{Corollary}
\newtheorem*{corollary*}{Corollary}
\newtheorem{conjecture*}{Conjecture}
\newtheorem{terminology}[theorem]{Terminology}
\theoremstyle{definition}
\newtheorem{name-1}[theorem]{Monoid objects}
\newtheorem{name-2}[theorem]{Algebra objects}
\newtheorem{warning}[theorem]{Warning}
\newtheorem{example}[theorem]{Example}
\newtheorem{remark}[theorem]{Remark}
\newtheorem{remark*}{Remark}
\newtheorem{construction}[theorem]{Construction}
\newtheorem{notation}[theorem]{Notation}
\newcommand{\bE}{{\mathbb E}}
\renewcommand{\P}{{\mathbb P}}
\newcommand{\mA}{{\mathcal A}}
\newcommand{\mB}{{\mathcal B}}
\newcommand{\mC}{{\mathcal C}}
\newcommand{\mD}{{\mathcal D}}
\newcommand{\mE}{{\mathcal E}}
\newcommand{\mF}{{\mathcal F}}
\newcommand{\mJ}{{\mathcal J}}
\newcommand{\mL}{{\mathcal L}}
\newcommand{\mM}{{\mathcal M}}
\newcommand{\mN}{{\mathcal N}}
\newcommand{\mO}{{\mathcal O}}
\newcommand{\mP}{{\mathcal P}}
\newcommand{\mQ}{{\mathcal Q}}
\newcommand{\mR}{{\mathcal R}}
\newcommand{\mS}{{\mathcal S}}
\newcommand{\mU}{{\mathcal U}}
\newcommand{\mV}{{\mathcal V}}
\newcommand{\mW}{{\mathcal W}}
\newcommand{\mX}{{\mathcal X}}
\newcommand{\mY}{{\mathcal Y}}
\newcommand{\mZ}{{\mathcal Z}}
\newcommand{\A}{{\mathrm A}}
\newcommand{\B}{{\mathrm B}}
\newcommand{\C}{{\mathrm C}}
\newcommand{\D}{{\mathrm D}}
\newcommand{\E}{{\mathrm E}}
\newcommand{\F}{{\mathrm F}}
\newcommand{\G}{{\mathrm G}}
\newcommand{\rH}{{\mathrm H}}
\newcommand{\K}{{\mathrm K}}
\renewcommand{\L}{{\mathrm L}}
\renewcommand{\P}{{\mathrm P}}
\newcommand{\R}{{\mathrm R}}
\newcommand{\rS}{{\mathrm S}}
\newcommand{\T}{{\mathrm T}}
\newcommand{\V}{{\mathrm V}}
\newcommand{\W}{{\mathrm W}}
\newcommand{\X}{{\mathrm X}}
\newcommand{\Y}{{\mathrm Y}}
\newcommand{\Z}{{\mathrm Z}}
\newcommand{\rc}{{\mathrm c}}
\newcommand{\bj}{{\mathrm j}}
\newcommand{\bi}{{\mathrm i}}
\newcommand{\m}{{\mathrm m}}
\newcommand{\bk}{{\mathrm k}}
\newcommand{\rt}{{\mathrm t}}
\newcommand{\q}{{\mathrm q}}
\newcommand{\g}{{\mathrm g}}
\newcommand{\n}{{\mathrm n}}
\newcommand{\op}{\mathrm{op}}
\newcommand{\Ho}{\mathsf{Ho}}
\newcommand{\s}{\mathsf{s}}
\newcommand{\Sp}{\mathrm{Sp}}
\newcommand{\y}{\mathsf{y}}
\newcommand{\f}{\mathsf{f}}
\newcommand{\colim}{\mathrm{colim}}
\newcommand{\LMod}{{\mathrm{LMod}}}
\newcommand{\RMod}{{\mathrm{RMod}}}
\newcommand{\ot}{\otimes}
\newcommand{\Ass}{  {\mathrm {   Ass  } }   }
\newcommand{\h}{\mathrm{h}}
\newcommand{\id}{\mathrm{id}}
\newcommand{\Cat}{\mathsf{Cat}}
\newcommand{\Set}{\mathsf{Set}}
\newcommand{\fl}{\mathrm{fl}}
\newcommand{\map}{\mathrm{map}}
\renewcommand{\Pr}{\mathrm{Pr}}
\newcommand{\Alg}{\mathrm{Alg}}
\newcommand{\Mon}{\mathrm{Mon}}
\newcommand{\Fun}{\mathrm{Fun}}
\newcommand{\tu}{{\mathbb 1}}
\newcommand{\Op}{{\mathrm{Op}}}
\newcommand{\cocart}{{\mathrm{cocart}}}
\newcommand{\cart}{{\mathrm{cart}}}
\newcommand{\Triv}{{\mathrm{Triv}}}
\newcommand{\Fin}{{\mathrm{\mF in}}}
\newcommand{\rev}{{\mathrm{rev}}}
\newcommand{\Mul}{{\mathrm{Mul}}}
\newcommand{\lan}{{\mathrm{lan}}}
\newcommand{\ev}{{\mathrm{ev}}}
\newcommand{\Ind}{{\mathrm{Ind}}}
\newcommand{\mon}{{\mathrm{mon}}}
\newcommand{\LM}{{\mathrm{LM}}}
\newcommand{\Cmon}{{\mathrm{Cmon}}}
\newcommand{\Env}{{\mathrm{Env}}}
\newcommand{\LinFun}{{\mathrm{LinFun}}}  
\newcommand{\act}{{\mathrm{act}}}  
\newcommand{\bZ}{{\mathbb{Z}}}
\newcommand{\BMod}{{\mathrm{BMod}}}  
\newcommand{\Act}{{\mathrm{Act}}}
\newcommand{\PreCat}{{\mathrm{PreCat}}}
\newcommand{\BM}{{\mathrm{BM}}}  
\newcommand{\cl}{{\mathrm{cl}}}  
\newcommand{\RM}{{\mathrm{RM}}}  
\newcommand{\Enr}{{\mathrm{Enr}}}
\newcommand{\Mor}{{\mathrm{Mor}}}
\newcommand{\gen}{{\mathrm{gen}}} 
\newcommand{\Quiv}{{\mathrm{Quiv}}} 
\newcommand{\LaxLinFun}{{\mathrm{LaxLinFun}}} 
\newcommand{\el}{{\mathrm{el}}}
\newcommand{\inert}{{\mathrm{int}}} 
\newcommand{\Lur}{{\mathrm{Lur}}} 
\title{An equivalence between enriched $\infty$-categories and $\infty$-categories with weak action}
\begin{document}
	
\maketitle

\author{Hadrian Heine}

\begin{abstract}
	
We show that an $\infty$-category $\mathcal{M}$ with a closed left action
of a monoidal $\infty$-category $\mathcal{V}$ is completely determined by the  $\mathcal{V}$-valued graph of morphism objects equipped with the structure of a $\mathcal{V}$-enrichment in the sense of Gepner-Haugseng.
We prove a similar result when $\mathcal{M}$ is a $\mathcal{V}$-enriched $\infty$-category in the sense of Lurie, an operadic generalization of the notion of $\infty$-category with closed action.
Precisely, we prove that sending a $\mathcal{V}$-enriched $\infty$-category in the sense of Lurie to the $\mathcal{V}$-valued graph of morphism objects refines to an equivalence $\chi$ between the $\infty$-category of $\mathcal{V}$-enriched $\infty$-categories in the sense of Lurie and of Gepner-Haugseng.
Moreover if $\mathcal{V}$ is a presentably $\bE_{\bk+1}$-monoidal $\infty$-category for $1 \leq \bk \leq \infty$, we prove that $\chi$ restricts to a 
lax $\bE_\bk$-monoidal functor between the $\infty$-category of 
left $\mathcal{V}$-modules in $\Pr^\L$, the symmetric monoidal $\infty$-category of presentable $\infty$-categories, endowed with the relative tensor product,
and the tensor product of $\mathcal{V}$-enriched $\infty$-categories of Gepner-Haugseng.

As an application of our theory we construct a lax symmetric monoidal embedding of the $\infty$-category of small stable $\infty$-categories into the $\infty$-category of small spectral $\infty$-categories.
As a second application we produce a Yoneda-embedding for Lurie's notion of enriched $\infty$-categories.

\end{abstract}

\tableofcontents

\section{Motivation and overview}

\vspace{2mm} 
In \cite{GEPNER2015575} Gepner-Haugseng develop a theory of enriched $\infty$-categories
and show that many results from enriched category theory and homotopy theory hold in their framework. Among other results they prove the homotopy hypothesis \cite[Corollary 6.1.10]{GEPNER2015575} and the Baez-Dolan stabilization hypothesis \cite[Corollary 6.2.9.]{GEPNER2015575} of \cite{MR1355899}, show that $\infty$-categories enriched in spaces are modeled by Segal spaces 
\cite[Theorem 4.4.6.]{GEPNER2015575} and 
that the $\infty$-category of small $\infty$-categories enriched in any $\bE_{\n+1}$-monoidal $\infty$-category is $\bE_\n$-monoidal for $\n \geq 1$
\cite[Corollary 5.6.12.]{GEPNER2015575}.
These results prove the theory of Gepner-Haugseng as very powerful in structural and conceptual aspects. 

Despite of that it can be quite tricky to work with Gepner-Haugseng's definition of enriched $\infty$-categories: for example it was some time an open problem to construct an enriched Yoneda-embedding in the framework of Gepner-Haugseng.
This problem was solved by Hinich \cite{HINICH2020107129} by introducing a different but due to work of Macpherson \cite{MR4185309} equivalent definition of enriched $\infty$-categories \cite[Proposition 3.4.4.]{HINICH2020107129}.
In general it seems quite intricate to work with enriched $\infty$-categories of functors. 
Although homotopy-coherent enrichment can be rectified \cite[Theorem 1.1]{MR3402334}, enriched $\infty$-categories of functors are rarely modeled by 
their strictly enriched counterparts, what the basic example of enrichment in spaces already demonstrates:
the Bergner model structure \cite{MR2276611} on simplicial categories is not monoidal (see \cite{lurie.HTT} before A.3.4.9.) and so one cannot simply derive simplicial functor categories to produce a model for the $\infty$-category of functors. In fact, it is one of the main motivations for creating a theory of homotopy-coherent enrichment that there is no need for strict models.

Therefore one often avoids to deal with homotopy-coherent enrichment and uses
another much simpler considered structure as a substitute for homotopy-coherent enrichment: this structure is an $\infty$-category $\mC$ with a left action of a monoidal $\infty$-category $\mV$ \cite[Definition 4.2.1.19.]{lurie.higheralgebra} that is closed in the following sense:
for any object $\X$ of $\mC$ the action functor $(-) \ot \X: \mV \to \mC$
admits a right adjoint $\mC \to \mV$ sending an object $\Y$ of $\mC$
to an object $\Mor_\mC(\X,\Y) $ of morphisms $\X \to \Y$ in $\mC.$
These morphism objects make the homotopy category $\Ho(\mC)$ enriched in $\Ho(\mV)$, where the axioms of an action imply associativity and unitality of the enriched composition.
By work of Gepner-Haugseng \cite[Theorem 7.4.7.]{GEPNER2015575} and independently Hinich \cite[Proposition 6.3.1.]{HINICH2020107129} 
the induced enrichment of $\Ho(\mC)$ in $\Ho(\mV)$
refines to a homotopy-coherent enrichment of $\mC$ in $\mV$. This motivates viewing $\infty$-categories with closed left action as a substitute for
enriched $\infty$-categories. 

Moreover for convenience one likes to restrict to presentable $\infty$-categories with a closed left action of a presentably monoidal $\infty$-category.
These presentably left tensored $\infty$-categories admit a very rich theory:
by work of Lurie \cite[Proposition 4.8.1.15.]{lurie.higheralgebra} there is a closed symmetric monoidal structure on the $\infty$-category $\Pr^\L$ of presentable $\infty$-categories and left adjoint functors,
where left adjoint functors $\mC \ot \mD \to \mE$ from the tensor product of two presentable $\infty$-categories $\mC,\mD$ to a another one $\mE$ correspond to bilinear functors $\mC \times \mD \to \mE$, i.e. functors preserving small colimits in each component.
Pursuing the analogy between the tensor product for presentable $\infty$-categories and that for abelian groups one often considers the study of algebraic structures in $\Pr^\L$ as a kind of categorified higher linear algebra: (commutative) rings correspond to (symmetric) monoidal presentably $\infty$-categories
and left modules over a (commutative) ring correspond to presentably
left tensored $\infty$-categories.
This analogy is especially fruitful in the stable case, i.e. for stable presentable $\infty$-categories, and is omnipresent in the theory of higher-categorical invariants like higher derived Brauer groups \cite{MR3190610} and secondary K-theory \cite{MR3607274}. Here stable presentably left tensored $\infty$-categories serve as a tractable substitute for spectral presentable $\infty$-categories.

In this article we prove that $\infty$-categories with closed left $\mV$-action,
playing the role of a substitute for $\mV$-enriched $\infty$-categories, are in fact a model for a certain class of $\mV$-enriched $\infty$-categories, which we call tensored: 
a $\mV$-enriched $\infty$-category $\mD$ is tensored over $\mV$ if for any object $\V$ of $ \mV$ and $\X$ of $\mD$ there is an object $\V \ot \X$ in $\mD$, the tensor of $\V$ and $\X$, equipped with a morphism $\V \to \mD(\X, \V\ot \X)$ in $\mV$, where $ \mD(-,-) $ is the graph of $\mD$, such that for any object
$\Y $ of $\mD$ the canonical composition
$\mD(\V \otimes \X,\Y) \to \mD(\X,\Y)^{\mD(\X,\V \otimes \X)} \to \mD(\X,\Y)^\V$ is an equivalence, where the exponent is the internal hom of $\mV.$

Before we state our results, we need to say a word about the terminology
used by Gepner-Haugseng: Gepner-Haugseng associate to any $\mV$-enriched $\infty$-category a classifying Segal space \cite[Definition 5.1.1.]{GEPNER2015575},
which for $\mV$ the $\infty$-category of spaces gives an equivalence
between $\mV$-enriched $\infty$-categories and Segal spaces \cite[Theorem 4.4.6.]{GEPNER2015575}.
If the associated Segal space happens to be complete, they call the $\mV$-enriched $\infty$-category complete \cite[Definition 5.1.7.]{GEPNER2015575}.
Since Segal spaces are a model for $\infty$-precategories and complete Segal spaces are a model for $\infty$-categories,
Hinich \cite{HINICH2020107129} refers to $\mV$-enriched $\infty$-categories in the sense of Gepner-Haugseng as $\mV$-enriched $\infty$-precategories
and reserves the name $\mV$-enriched $\infty$-categories exclusively for the complete $\mV$-enriched $\infty$-precategories. We follow Hinich's terminology.
Moreover we call the (complete) Segal space associated to a $\mV$-enriched $\infty$-(pre)category the underlying $\infty$-(pre)category, and call a $\mV$-enriched $\infty$-(pre)category small if the associated Segal space is a simplicial object of small spaces. We prove the following theorem:
\begin{theorem}\label{1}
Let $\mV$ be a monoidal $\infty$-category, $ \LMod_\mV(\Cat_\infty)$ the $\infty$-category of small $\infty$-categories with left $\mV$-action and
$\Cat_\infty^{\mV}$ the $\infty$-category of small $\mV$-enriched $\infty$-categories. 
Sending an $\infty$-category with closed left $\mV$-action to its associated $\mV$-enriched $\infty$-category promotes to an equivalence 
$$ \LMod_\mV(\Cat_\infty)^\cl \simeq \Cat_\infty^{\mV,\mathrm{ten}}$$
between the full subcategory $ \LMod_\mV(\Cat_\infty)^\cl \subset \LMod_\mV(\Cat_\infty)$
of $\infty$-categories with closed left $\mV$-action
and a non-full subcategory $\Cat_\infty^{\mV,\mathrm{ten}} \subset \Cat_\infty^{\mV}$ of tensored $\mV$-enriched $\infty$-categories. 
\end{theorem}

Before we explain how to prove this theorem, we
will give several specializations, generalizations and enhancements of this theorem.
Restricting to presentably left tensored $\infty$-categories we obtain the following corollary,
where we call a $\mV$-enriched functor $\F: \mC \to \mD$ a left adjoint if
for any object $\Y$ of $\mD$ the presheaf $\X \mapsto \mD(\F(\X), \Y)$ on $\mC$ is representable. 
\begin{theorem}\label{1.1}
Let $\mV$ be a presentably monoidal $\infty$-category, $\widehat{\Cat}_\infty^\mV$ the $\infty$-category of large $\mV$-enriched $\infty$-categories, and $\Pr^\L_\mV \subset \widehat{\Cat}_\infty^\mV$ the subcategory of tensored $\mV$-enriched $\infty$-categories whose underlying $\infty$-category is presentable, and left adjoint $\mV$-enriched functors.
Sending a presentably left tensored $\infty$-category to its associated $\mV$-enriched $\infty$-category promotes to an equivalence 
$$ \LMod_\mV(\Pr^\L) \simeq \Pr^\L_\mV. $$ 
\end{theorem}

If $\mV$ is a presentably $\bE_{\n+1}$-monoidal $\infty$-category for $\n \geq 1$, Gepner-Haugseng \cite[Proposition 4.3.10.]{GEPNER2015575} and Hinich \cite[Corollary 3.5.3.]{HINICH2020107129} endow $\widehat{\Cat}_\infty^\mV$ with an $\bE_{\n}$-monoidal structure.
Also the $\infty$-category $ \LMod_\mV(\Pr^\L)$ is $\bE_\n$-monoidal,
where the tensor product of two $\mV$-modules $\mC,\mD$ in $\Pr^\L$ is the relative tensor product $\mC \ot_\mV \mD$ computed as the colimit of a derived Bar-construction 
$ \mathrm{Bar}(\mC,\mV,\mD)$, a simplicial object in $\Pr^\L$,
whose $\n$-th term is $\mC \ot \mV^{\ot \n} \ot \mD$.
We prove the following enhancement of Theorem \ref{1.1}:

\begin{theorem}(Theorem \ref{corrr})
Let $\mV$ be a presentably $\bE_{\n+1}$-monoidal $\infty$-category for $1 \leq \n \leq \infty$. 
	
The composition
$$ \LMod_\mV(\Pr^\L) \simeq \Pr^\L_\mV \subset \widehat{\Cat}_\infty^\mV$$ is lax $\bE_\n$-monoidal.
	
\end{theorem}

Lurie \cite[Proposition 4.8.2.18.]{lurie.higheralgebra} constructs from the symmetric monoidal structure on $\Pr^\L$ a symmetric monoidal structure on the full subcategory $\Pr^\L_\mathrm{st}$ of stable presentable $\infty$-categories. The tensor unit is the $\infty$-category of spectra $\Sp,$
which therefore inherits a symmetric monoidal structure.
This symmetric monoidal structure on $\Pr^\L_\mathrm{st}$ is determined by the requirement that the forgetful functor $\LMod_\Sp(\Pr^\L) \to \Pr^\L$ restricts to a symmetric monoidal equivalence $ \LMod_\Sp(\Pr^\L) \simeq \Pr^\L_\mathrm{st}$ \cite[Proposition 4.8.2.10.]{lurie.higheralgebra}.
We get the following corollary:

\begin{corollary}
	
There is a lax symmetric monoidal inclusion
$$ \Pr^\L_\mathrm{st} \hookrightarrow \widehat{\Cat}_\infty^\Sp $$
from stable presentable $\infty$-categories to large spectral $\infty$-categories.
	
\end{corollary}

Looking at Theorem \ref{1} the question arises what happens on the left hand side of the equivalence if one takes all of $\Cat_\infty^\mV$ on the right hand side.
In \cite[Definition 4.2.1.12, 4.2.1.25, 4.2.1.28]{lurie.higheralgebra} Lurie gives several generalizations of the notion of an $\infty$-category with closed left action of a monoidal $\infty$-category $\mV$, which he calls $\infty$-categories enriched in $\mV$, pseudo-enriched in $\mV$ and weakly enriched in $\mV$ going from the most special to the most general notion.
We refer to $\infty$-categories weakly enriched in $\mV$ in Lurie's sense as $\infty$-categories weakly left tensored over $\mV$ since we will use Lurie's term in another meaning. 

To understand how these notions extend the concept of an $\infty$-category with closed left action let us recall that a left action of a monoidal $\infty$-category $\mV$ on an $\infty$-category $\mC$
gives rise to a non-symmetric $\infty$-operad $\mO$ whose colors are the objects of $\mV$ and $\mC$, and whose space of multimorphisms 
$\Mul_\mO(\V_1,...,\V_\n,\X;\Y) $ for $\V_1,...,\V_\n \in \mV$, $\n \geq 0$ and $\X,\Y \in 
\mV \coprod \mC$ is equivalent to the mapping space $\map_\mU(\V_1 \ot ... \ot \V_\n \ot \X \to \Y)$ for $\mU=\mV,\mC$, respectively.
For $\mV$ and $\mC$ contractible we write $\LM$ for $\mO$. So $\LM$ is a non-symmetric $\infty$-operad with two colors $\mathfrak{a}, \mathfrak{m}$.

By functorility the non-symmetric $\infty$-operad $\mO$ comes equipped with
a map of non-symmetric $\infty$-operads $\mO \to \LM,$ 
which in fact completely determines the left action of $\mV$ on $\mC$:
for any object $\V$ of $\mV$ and $\X $ of $\mC$ one can reconstruct $\V \ot \X$
as the object corepresenting the functor $\mC \to \mS, \Y \mapsto \Mul_\mO(\V,\X;\Y)$.
In fact an arbitrary map of non-symmetric $\infty$-operads $\mO \to \LM$ is induced by a left action of the fiber $\mV:= \mO_{\mathfrak{a}}$ on the fiber $\mC:= \mO_{\mathfrak{m}}$ if and only if the following three conditions hold:

\begin{enumerate}
\item The fiber $\mV=\mO_{\mathfrak{a}}$, which is canonically a non-symmetric $\infty$-operad, is a monoidal $\infty$-category.
\item For any objects $\V_1,...,\V_\n $ of $\mV$ for $\n \geq 0$ and $\X,\Y$ of $ \mC$ the natural map 
$$ \Mul_\mO(\V_1 \ot ... \ot \V_\n,\X;\Y) \to \Mul_\mO(\V_1,...,\V_\n,\X;\Y) $$ is an equivalence.
\item For any object $\V$ of $\mV$ and $\X $ of $\mC$ there is an object $\V \ot \X$ and an equivalence $$\Mul_\mO(\V_1,...,\V_\n, \V \ot \X;\Y) \simeq \Mul_\mO(\V_1,...,\V_\n, \V,\X;\Y)$$
natural in objects $\V_1,...,\V_\n$ of $\mV$ for $\n \geq 0 $ and $\Y$ of $\mC$. 
	
\end{enumerate}
If (3) holds, there are canonical equivalences
$$\Mul_\mO(\V, \W,\X;\Y) \simeq \Mul_\mO(\V, \W \ot \X;\Y) \simeq \map_\mC(\V \ot (\W \ot \X),\Y), $$
$$ \Mul_\mO(\V \ot \W, \X;\Y) \simeq \map_\mC((\V \ot \W) \ot \X,\Y).$$
so that the natural map $\alpha: \Mul_\mO(\V \ot \W, \X;\Y) \to \Mul_\mO(\V, \W,\X;\Y)$
represents a morphism $\beta: \V \ot (\W \ot \X)\to (\V \ot \W) \ot \X $ in $\mC.$
If also (1) and (2) hold, $\alpha$ and so $\beta$ are equivalences providing associativity.

To arrive at the notion of weakly left tensored $\infty$-category one drops all three conditions: a weakly left tensored $\infty$-category is by definition a map of non-symmetric $\infty$-operads $\mO \to \LM$. We say that $\mO \to \LM$
exhibits $\mC:= \mO_{\mathfrak{m}}$ as weakly left tensored over $\mV:=\mO_{\mathfrak{a}}.$ 
If condition (1) and (2) hold, we say that $\mO \to \LM$ exhibits $\mC$ as pseudo-enriched in $\mV.$ 
If only condition (3) holds, we say that $\mO \to \LM$ exhibits $\mC$ as locally left tensored over $\mV.$
We say that $\mO \to \LM$ exhibits $\mC$ as enriched in $\mV$ if for any object $\X $ of $\mC$ and $\Y$ of $\mD$ there is an object $\Mor_\mC(\X,\Y)$ of $\mV$ and an equivalence $$\Mul_\mO(\V_1,...,\V_\n, \X;\Y) \simeq \Mul_\mV(\V_1,...,\V_\n, \Mor_\mC(\X,\Y))$$ natural in objects $\V_1,...,\V_\n$ of $\mV$ for $\n \geq 0 $.
This is an extension of Lurie's definition \cite[Definition 4.2.1.28]{lurie.higheralgebra} from enrichment in a monoidal $\infty$-category to enrichment in any non-symmetric $\infty$-operad.

In total we obtain the following commutative diagram, where all squares are pullback squares:

\begin{equation*} 
\begin{xy}
\xymatrix{
& \{\underset{\text{$\infty$-categories}}{\text{closed left tensored}}\} \ar[rd]^{ }\ar[ld]^{ }
\\
\{\underset{\text{$\infty$-categories}}{\text{left tensored}}\}\ar[d]\ar[rd] && \{\underset{\text{in a monoidal $\infty$-category}}{\text{$\infty$-categories enriched}}\} \ar[d] \ar[ld]
\\
\{\underset{\text{$\infty$-categories}}{\text{locally left tensored}}\}  \ar[rd] & \{\underset{\text{$\infty$-categories}}{\text{pseudo enriched}}\} \ar[d] & \{\underset{\text{in an $\infty$-operad}}{\text{$\infty$-categories enriched}}\} \ar[ld] 
\\ & \{\underset{\text{$\infty$-categories}}{\text{weakly left tensored}}\}
}
\end{xy} 
\end{equation*}
In this work we identify the different classes of weakly left tensored $\infty$-categories of the last diagram with classes of enriched $\infty$-categories in the sense of Gepner-Haugseng, which we explain now:

For every $\infty$-category $\mC$ weakly left tensored over a non-symmetric $\infty$-operad $\mV$, objects $\X,\Y$ of $\mC$ and a natural $\n \geq 0$
there is a presheaf $ \V_1,...,\V_\n \mapsto \Mul_\mO(\V_1,...,\V_\n,\X;\Y)$ on $\mV^{\times \n}$. For varying $\n \geq 0$ these presheaves combine to a single presheaf $ \Mul_\mO(-,..,-,\X;\Y)$ on a certain $\infty$-category $\Env(\mV)$, the enveloping monoidal $\infty$-category of $\mV,$ which was intensively studied by Lurie \cite[2.2.4.]{lurie.higheralgebra}. 

The $\infty$-category $\Env(\mV)$ consists of finite sequences of objects of $\mV$ and
carries a monoidal structure given by concatenation of sequences.
Identifying an object of $\mV$ with a sequence of length one defines an embedding of non-symmetric $\infty$-operads $\mV \subset \Env(\mV)$, which exhibits $\Env(\mV)$ as the initial monoidal $\infty$-category under $\mV$ giving $\Env(\mV)$ the name.
The presheaves $\Mul_\mO(-,..,-,\X;\Y)$ for $\X,\Y \in \mC$ make the homotopy-category $\Ho(\mC)$ enriched in $\Ho(\mP\Env(\mV))$, where $\mP\Env(\mV)$ is the $\infty$-category of presheaves on $\Env(\mV)$ endowed with the Day-convolution monoidal structure.

If $\mV$ is a monoidal $\infty$-category, there is a lax monoidal embedding $\mP\mV \subset \mP\Env(\mV)$ sending a presheaf $\F$ on $\mV$
to the presheaf $ (\V_1,...\V_\n) \mapsto \F(\V_1 \ot ...\ot \V_\n)$. 
If $\mC$ is pseudo-enriched in a monoidal $\infty$-category $\mV$, by condition (2) the presheaf $ \Mul_\mO(-,..,-,\X;\Y) $ on $\Env(\mV)$ is the image of
the presheaf $\V \mapsto \Mul_\mO(\V,\X;\Y) $ on $\mV$. So in this case the homotopy-category $\Ho(\mC)$ is enriched in $\Ho(\mP\mV).$
If $\mC$ is enriched in a non-symmetric $\infty$-operad $\mV$, for any $\X, \Y \in \mC$ the presheaf $\Mul_\mO(-,..,-,\X;\Y)$ on $\Env(\mV)$ is represented by an object of
$\mV$. So in this case $\Ho(\mC)$ is enriched in $\Ho(\mV).$

For any $\infty$-category $\mC$ weakly left tensored over a non-symmetric $\infty$-operad $\mV$ we lift the enrichment of $\Ho(\mC)$ in $\Ho(\mP\Env(\mV))$ to an enrichment of $\mC$ in $\mP\Env(\mV)$
in the sense of Gepner-Haugseng, which we call a weak enrichment of $\mC$ in $\mV$, (Theorem \ref{pqp}).
We extend Theorem \ref{1} to the following one:

\begin{theorem}\label{2}
Let $\mV$ be a small non-symmetric $\infty$-operad, $ \omega\LMod_\mV$ the $\infty$-category of small $\infty$-categories weakly left tensored over $\mV$ and $\omega\Cat_\infty^{\mV}$ the $\infty$-category of small $\mP\Env(\mV)$-enriched $\infty$-categories.
Sending a small $\infty$-category weakly left tensored over $\mV$ to the associated $\mP\Env(\mV)$-enriched $\infty$-category promotes to a canonical equivalence 
$$ \chi: \omega\LMod_\mV \simeq \omega\Cat_\infty^{\mV},$$
under which the following structures correspond, where enrichment on the right hand side is by Gepner-Haugseng:	
	
\begin{tabular}[h]{l|c}
$\infty$-categories pseudo-enriched in $\mV$ & $\mP\mV$-enriched $\infty$-categories \\
\hline
$\infty$-categories enriched in $\mV$ & $\mV$-enriched $\infty$-categories \\
\hline
$\infty$-categories locally left tensored over $\mV$ & $\mP\Env(\mV)$-enriched $\infty$-categories tensored over $\mV$ \\
\hline
$\infty$-categories tensored over $\mV$ & $\mP\mV$-enriched $\infty$-categories tensored over $\mV$ \\
\hline
$\infty$-categories with closed left action of $\mV$ & $\mV$-enriched $\infty$-categories tensored over $\mV$ \\
\end{tabular} 
	
\end{theorem}

Theorem \ref{2} is a corollary of the following one involving enriched $\infty$-precategories:

\begin{theorem}\label{3}(Theorem \ref{trfd})
	
Let $\mV$ be a small non-symmetric $\infty$-operad and
$\omega\mathrm{Pre}\Cat_\infty^{\mV}$ the $\infty$-category of small $\mP\Env(\mV)$-enriched $\infty$-precategories in the sense of Gepner-Haugseng.
Sending a small $\infty$-category weakly left tensored over $\mV$ to its underlying $\infty$-category weakly enriched in $\mV$ promotes to a functor
$$ \chi: \omega\LMod_\mV \to \omega\Cat_\infty^{\mV} \subset \omega\mathrm{Pre}\Cat_\infty^{\mV}$$
that admits a left adjoint $\L.$
For any small $\mV$-enriched $\infty$-precategory $\mC$ the unit $\mC \to \chi(\L(\mC))$ is an equivalence if and only if $\mC$ is a $\mV$-enriched $\infty$-category.	
	
\end{theorem}

Theorem \ref{2} implies that the functor $ \chi: \omega\LMod_\mV \to \omega\Cat_\infty^{\mV}$ is fully faithful.
We prove a stronger result.
For any $\infty$-categories $\mC,\mD$ weakly left tensored over $\mV$
the collection of functors $\mC \to \mD$ compatible with the weak left $\mV$-action,
which we call lax $\mV$-linear functors, forms an $\infty$-category $ \LaxLinFun_{\mV}(\mC,\mD)$.
Similarly, for any monoidal $\infty$-category $\mW$ compatible with small colimits
the $\mW$-enriched functors $\chi(\mC) \to \chi(\mD)$ form an $\infty$-category $\Fun^{\mW}(\chi(\mC),\chi(\mD))$.

We prove the following theorem:
\begin{theorem}\label{00}
	
For any non-symmetric $\infty$-operad $\mV$ and $\infty$-categories $\mC,\mD$ weakly left tensored over $\mV$ there is a canonical equivalence
$$\LaxLinFun_\mV(\mC,\mD) \simeq \Fun^{\mP\Env(\mV)}(\chi(\mC),\chi(\mD))$$
that induces on maximal subspaces the map induced by $\chi.$	
\end{theorem}

Theorem \ref{00} suggests that the functor $\chi: \omega\LMod_\mV \to \omega\Cat_\infty^{\mV}$ is a functor of $(\infty,2)$-categories,
a $\Cat_\infty$-enriched functor, where the morphism object of two $\infty$-categories $\mC, \mD$ weakly left tensored over $\mV$ is $\LaxLinFun_\mV(\mC,\mD)$ and the morphism object of two $\mP\Env(\mV)$-enriched $\infty$-categories $\mA, \mB$ is $\Fun^{\mP\Env(\mV)}(\mA,\mB)$.
In fact the $\infty$-category $\omega\LMod_\mV$ carries a closed left $\Cat_\infty$-action, where the morphism object of two $\infty$-categories $\mC, \mD$ weakly left tensored over $\mV$ is $\LaxLinFun_\mV(\mC,\mD)$
(Lemma \ref{2enr}).
On the other hand for any monoidal $\infty$-category $\mW$ compatible with small colimits there is a canonical left action of the $\infty$-category of spaces $\mS $ endowed with the cartesian structure on the $\infty$-category $\mW$, where the left action is compatible with the monoidal structures.
The latter action induces a left action of $\Cat_\infty$ on the $\infty$-category 
$\Cat_\infty^\mW$ (Construction \ref{CoN}).
Since $\mP\Env(\mV)$ is a monoidal $\infty$-category compatible with small colimits, the $\infty$-category $\omega\Cat_\infty^{\mV}$ carries a left $\Cat_\infty$-action. 

We prove the following theorem: 

\begin{theorem}\label{4} (Theorem \ref{PP})
Let $\mV$ be a small non-symmetric $\infty$-operad.
The functor $$\chi: \omega\LMod_\mV \to \omega\Cat_\infty^{\mV}$$ refines to a $\Cat_\infty$-linear equivalence.
\end{theorem}
Theorem \ref{4} implies that the left $\Cat_\infty$-action on
$\omega\Cat_\infty^{\mV}$ is closed, and Theorems \ref{3}, \ref{00} guarantee that 
for any $\mP\Env(\mV)$-enriched $\infty$-categories $\mA, \mB$ the $\infty$-category $\Fun^{\mP\Env(\mV)}(\mA, \mB)$ is the morphism object of $\omega\Cat_\infty^{\mV}$ (Corollary \ref{uuij}).
Let $\Cat_\infty^{\mV, \Lur} \subset \omega\LMod_\mV$ be the full subcategory spanned by those $\infty$-categories $\mC$ weakly left tensored over $\mV$ that exhibit $\mC$ as $\mV$-enriched. 

\begin{theorem}\label{99} (Corollary \ref{QQ})
Let $\mW$ be a monoidal $\infty$-category compatible with small colimits.
The $\Cat_\infty$-linear functor $\chi: \omega\LMod_\mW \to \omega\Cat_\infty^{\mW}$ restricts to a $\Cat_\infty$-enriched equivalence
$$\chi: \Cat_\infty^{\mW, \Lur} \to \Cat_\infty^{\mW}. $$

\end{theorem}

Theorem \ref{99} guarantees that the left $\Cat_\infty$-action on $ \Cat_\infty^{\mW}$ is closed, Theorems \ref{3}, \ref{00} imply that 
for any $\mW$-enriched $\infty$-categories $\mA, \mB$ the $\infty$-category $\Fun^{\mW}(\mA, \mB)$ is the morphism object of $\Cat_\infty^{\mW}$
(Corollary \ref{uui}).

Moreover we strongly refine Theorem \ref{00} via the following notion of weakly bitensored $\infty$-category relaxing the notion of bitensored $\infty$-category:
like any left tensored $\infty$-category also any bitensored $\infty$-category has an underlying non-symmetric $\infty$-operad, which comes equipped with a map to the non-symmetric $\infty$-operad $\BM$ underlying the biaction of the contractible $\infty$-category on itself.

A weakly bitensored $\infty$-category is by definition a map of non-symmetric $\infty$-operads $\mO \to \BM.$
For any non-symmetric $\infty$-operads $\mV,\mW$, any $\infty$-category $\mC$ weakly left 
tensored over $\mV$ and $\infty$-category $\mD$ weakly bitensored over $\mV,\mW$
the $\infty$-categories $\LaxLinFun_\mV(\mC,\mD),$
$\Fun^{\mP\Env(\mV)}(\chi(\mC),\chi(\mD))$ are weakly right tensored over $\mW$
(Proposition \ref{lehmmm} and Notation \ref{exxxx}) and we prove the following theorem:

\begin{theorem}\label{rat}(Theorem \ref{werf})
For any non-symmetric $\infty$-operads $\mV,\mW$ and $\infty$-categories $\mC$ weakly left tensored over $\mV$ and $\mD$ weakly bitensored over $\mV,\mW$
the canonical equivalence
$$\LaxLinFun_\mV(\mC,\mD) \simeq \Fun^{\mP\Env(\mV)}(\chi(\mC),\chi(\mD))$$
is compatible with the weak right $\mW$-action.	 
\end{theorem}

Before we explain how we construct the adjunction $ \L:  \omega\mathrm{Pre}\Cat_\infty^{\mV} \rightleftarrows \omega\LMod_\mV:\chi$ and prove Theorem \ref{00}, we like to mention two applications of our theory.
The first application is a comparison between stable and spectral $\infty$-categories,
the second is a construction of enriched Yoneda-embedding in Lurie's model
of enriched $\infty$-categories.

We start with the comparison result: in \cite{article} Blumberg-Gepner-Tabuada develop a theory of derived non-commutative motives giving rise to a universal characterization of higher algebraic K-theory. 
One important tool in their framework is a comparison \cite[Theorem 4.22.]{article} between stable and spectral $\infty$-categories, i.e. $\infty$-categories enriched in the $\infty$-category of spectra $\Sp$ endowed with the smash product.
The authors model the $\infty$-category of small spectral $\infty$-categories
by the Dwyer-Kan model structure on categories enriched in an appropriate model for symmetric spectra, which is possible by \cite[Theorem 1.1]{MR3402334}.
Using this model they construct a functor $\Cat_\infty^\Sp \to \Cat_\infty^{\mathrm{st}}$ associating to any spectral $\infty$-category 
a universal stable $\infty$-category, the stable closure.
Their main theorem \cite[Theorem 4.22.]{article} says that the functor $\Cat_\infty^\Sp \to \Cat_\infty^{\mathrm{st}}$ admits a fully faithful right adjoint.

We give a direct proof for this statement without need to pass to strict models.
Moreover we prove a monoidal version: the $\infty$-category $\Cat_\infty^\Sp$
carries a canonical symmetric monoidal structure.
By Lemma \ref{lmo} the $\infty$-category $\Cat_\infty^{\mathrm{st}}$ carries a symmetric monoidal structure whose tensor unit is the $\infty$-category of compact spectra
and such that exact functors $\mC \ot \mD \to \mE$ correspond to biexact functors $\mC \times \mD \to \mE.$
We prove the following theorem:

\begin{theorem}\label{wqp} (Theorem \ref{spectrall})
There is a canonical lax symmetric monoidal embedding
$$ \Cat_\infty^\mathrm{st} \hookrightarrow \Cat_\infty^\Sp$$
of the $\infty$-category of small stable $\infty$-categories into the
$\infty$-category of small spectral $\infty$-categories.
The essential image precisely consists of the spectral $\infty$-categories $\mD$ 
that are stable and such that the enriched hom-functor $\mD^\op \times \mD \to \Sp$ is biexact.	 
\end{theorem}

Theorem \ref{wqp} gives another description of the tensor product $\mC \otimes^{\mathrm{st}} \mD$ of two small stable $\infty$-categories $\mC,\mD$. By Theorem \ref{wqp} we may identify stable $\infty$-categories with certain spectral $\infty$-categories, which we denote by the same name. If we write $\mC \otimes^\Sp \mD$ for the spectral tensor product, by Theorem \ref{wqp} there is a canonical spectral functor $\mC \ot^\Sp \mD \to \mC \otimes^{\mathrm{st}} \mD $, which we show to exhibit $ \mC \otimes^{\mathrm{st}} \mD $ as the stable closure of $\mC \otimes^{\Sp} \mD$ as a consequence of Corollary \ref{symrea}. 

The second application of our theory is a construction of enriched Yoneda-embedding in Lurie's model. For that we construct for any $\mV$-enriched $\infty$-category $\mM$ in the sense of Lurie an opposite enriched $\infty$-category $\mM^\op$ (Notation \ref{notori}), in which source and target of the morphism objects are reversed, which is enriched in the opposite non-symmetric $\infty$-operad $\mV^\rev$. Our construction purely uses Lurie's model of enrichment and corresponds under $\chi$ to the opposite
enriched $\infty$-category of \cite[6.2.1.]{HINICH2020107129} (Corollary \ref{uuu}). 
Based on that we construct a $\mV$-enriched Yoneda-embedding $\mM \to \LaxLinFun_{\mV^\rev}(\mM^\op, \mV)$ purely in terms of Lurie's definition of enrichment (Theorem \ref{ooo}, Corollary \ref{uuuzz}).

In the following we explain how we prove Theorem \ref{00}.
In \cite{HINICH2020107129} Hinich constructs for every small space $\X$ and presentably monoidal $\infty$-category $\mV$ a monoidal structure on the $\infty$-category of functors $\X \times \X \to \mV $ 
and defines $\mV$-enriched $\infty$-precategories with space of objects $\X$ as associative algebras for this monoidal structure \cite[Definition 3.1.1]{HINICH2020107129}.
The tensor product on functors $\X \times \X \to \mV $ is a categorification of the product of matrices sending two functors $\F,\G: \X \times \X \to \mV$ to the functor
$\A,\C \mapsto \colim_{\B \in \X} \F(\B,\C) \ot \G(\A,\B)$.
Therefore the multiplication map of an associative algebra
$\mC: \X \times \X \to \mV$ corresponds to a family of compatible maps
$ \mC(\B,\C) \ot \mC(\A,\B) \to \mC(\A,\C)$ in $\mV$ for $\B \in \X$
equipped with coherence data, and so looks like a model of a $\mV$-enriched $\infty$-precategory in the sense of Gepner-Haugseng.
This was made formal by work of Macpherson \cite{MR4185309}.
An alternative construction of this tensor product was given by Haugseng \cite[Corollary 3.6.4.]{Rune} who directly identified associative algebras for this tensor product with $\mV$-enriched $\infty$-precategories in the sense of Gepner-Haugseng.

The monoidal structure on functors $\X \times \X \to \mV$ is very powerful
since it acts in various ways: a closed left (right) $\mV$-action on a cocomplete $\infty$-category $\mM$ gives rise to a closed left (right) action of the $\infty$-category of functors $\X \times \X \to \mV$ on the $\infty$-category of functors $\X \to \mM$ \cite[3.4.2.]{HINICH2020107129}, Proposition \ref{xre}.
The left action sends functors $\rH : \X \times \X \to \mV, \F: \X \to \mM$ 
to the functor $\Z \mapsto \colim_{\Y \in \X} \rH(\Y,\Z) \ot \F(\Y).$ 
Thus a left action of a $\mV$-enriched $\infty$-precategory $\mC$ with space of objects $\X$ on a functor $ \F: \X \to \mM$ corresponds to a collection of
maps $\mC(\Y,\Z) \to \Mor_\mM(\F(\Y), \F(\Z)) $ in $\mV$ for $\Y,\Z \in \X$ 
equipped with coherence data, and so looks like a $\mV$-enriched functor from $\mC $ to $\mM.$
Similarly, if $\mM$ is right tensored over $\mV$, a right action of an $\infty$-precategory $\mC$ on a functor $ \F: \X \to \mM$ looks like a $\mV$-enriched presheaf on $\mC$ with values in $\mM.$ 

Following this intuition Hinich defines the $\infty$-category $\Fun^\mV(\mC,\mM) $ of $\mV$-enriched functors $\mC \to \mM$ as the $\infty$-category of left $\mC$-modules in $\Fun(\X,\mM)$ \cite[6.1.]{HINICH2020107129}.
Similarly, he defines the $\infty$-category $\mP_\mV(\mC,\mM)$ of $\mV$-enriched presheaves on $\mC$ with values in $\mM$ as the $\infty$-category of right $\mC$-modules in $\Fun(\X,\mM)$ \cite[6.2.]{HINICH2020107129}. 
Since for $\mM=\mV$ the right action of the $\infty$-category of functors $\X \times \X \to \mV$ on the $\infty$-category of functors $\X \to \mV$
is compatible with the left diagonal $\mV$-action, the
$\infty$-category $\mP_\mV(\mC):= \mP_\mV(\mC,\mV)$ of right $\mC$-modules is left tensored over $\mV$. Therefore we can use $\mP_\mV(\mC)$ to turn enrichment in the sense of Gepner-Haugseng to enrichment in the sense of Lurie:
for any $\infty$-category $\mC$ weakly enriched in $\mV$ with small space of objects $\X$ we write $\L(\mC) \subset \mP_{\mP\Env(\mV)}(\mC) $ for the essential image of the $\mP\Env(\mV) $-enriched Yoneda-embedding.
Because $ \mP_{\mP\Env(\mV)}(\mC)$ is left tensored over $\mP\Env(\mV),$ the $\infty$-category $\L(\mC)$ is weakly left tensored over $\mV.$

To define $\chi$ we introduce a variant of the $\infty$-category $\Fun^\mV(\mC,\mM)$ of $\mV$-enriched functors $\mC \to \mM$ when $\mV$ is a non-symmetric $\infty$-operad
and $\mC$ is weakly enriched in $\mV.$
For that we use a variation of the enveloping monoidal $\infty$-category
$\Env(\mV)$ associated to a monoidal $\infty$-category $\mV:$
like $\Env(\mV)$ is initial among monoidal $\infty$-categories into which $\mV$ embeds, for any $\infty$-category $\mM$ weakly left tensored over a non-symmetric $\infty$-operad $\mV$ there is an initial $\infty$-category $\L\Env(\mM)$ left tensored over $\Env(\mV)$ into which $\mM$ embeds (Lemma \ref{gre}). Taking presheaves we get an $\infty$-category $\mP\L\Env(\mM)$ closedly left tensored over $\mP\Env(\mV)$, which we call the enveloping $\infty$-category with closed left action. 
Then we define the $\infty$-category $\Fun^\mV(\mC,\mM)$ of $\mV$-enriched functors $\mC$ to $\mM$ as the $\infty$-category of 
$\mP\Env(\mV)$-enriched functors $\mC$ to $\mP\L\Env(\mM)$
that carry objects of $\X$ to $ \mM \subset \mP\L\Env(\mM)$.
Via $\Fun^\mV(\mC,\mM)$ we define $\chi(\mM)$ by the following theorem:
\begin{theorem}\label{prroqa} (Theorem \ref{proooq})
Let $\mV$ be a small non-symmetric $\infty$-operad and $\mM$ a small $\infty$-category weakly left tensored over $\mV$.
The presheaf $$(\omega\mathrm{Pre}\Cat_\infty^{\mV})^\op \to \mS, \ \mC \mapsto \Fun^{\mV}(\mC, \mM)^\simeq $$ is represented by some $\infty$-precategory $\chi(\mM)$ weakly enriched in $\mV.$

\end{theorem}

Theorem \ref{prroqa} describes a universal property of the $\infty$-category $\Fun^{\mV}(\mC,\mM)$ of $\mV$-enriched functors. 
As a second step we prove a universal property of the $\infty$-category of $\mV$-enriched presheaves:  
\begin{theorem}\label{th2} (Theorem \ref{tgre})
Let $\mV$ be a presentably monoidal $\infty$-category and $\mM$ an $\infty$-category 
left tensored over $\mV$ that admits small colimits preserved by the left action in both variables.
	
For any $\mV$-enriched $\infty$-precategory $\mC$ with small space of objects $\X$ there is a canonical equivalence
\begin{equation}\label{Yoneet}
\LinFun_\mV^\L(\mP_\mV(\mC),\mM) \simeq \Fun^\mV(\mC,\mM)
\end{equation}	
over $\Fun(\X,\mM),$
where the left hand side is the $\infty$-category of left adjoint $\mV$-linear functors $\mP_\mV(\mC) \to \mM$.
	
\end{theorem}

In \cite[Theorem 4.8.4.1.]{lurie.higheralgebra} Lurie proves a universal property of the $\infty$-category $\RMod_\A(\mV)$ of right $\A$-modules in any presentably monoidal $\infty$-category $\mV$:
there is a canonical equivalence
\begin{equation}\label{rmoo}
\LinFun_\mV^\L(\RMod_\A(\mV),\mM) \simeq \LMod_\A(\mM),
\end{equation}
which is Theorem \ref{th2} for $\X$ contractible.

In \cite{HINICH2020107129} Hinich constructs an enriched Yoneda-embedding.
Under equivalence (\ref{Yoneet}) the identity of $\mP_{\mV}(\mC)$ corresponds to another candidate of enriched Yoneda-embedding.
In \cite{hinich2021colimits} Hinich proves a similar universal property for his
version of enriched Yoneda-embedding, which identifies both constructions.

By Theorem \ref{th2} for any non-symmetric $\infty$-operad $\mV$,
$\infty$-category $\mM$ weakly left tensored over $\mV$ and $\infty$-precategory $\mC$ weakly enriched in $\mV$ there is a canonical equivalence 
\begin{equation*}\label{eqwc}
\LinFun_{\mP\Env(\mV)}^\L(\mP_{\mP\Env(\mV)}(\mC),\mP\L\Env(\mM)) \simeq \Fun^{\mP\Env(\mV)}(\mC,\mP\L\Env(\mM))
\end{equation*}
that restricts to an equivalence 
\begin{equation}\label{eqyy}
\LinFun_{\mP\Env(\mV)}^{\L}(\mP_{\mP\Env(\mV)}(\mC),\mP\L\Env(\mM))' \simeq \Fun^{\mV}(\mC,\mM),
\end{equation}
where the left hand side is the full subcategory spanned by the left adjoint
$\mP\Env(\mV)$-linear functors sending representable enriched presheaves to $\mM.$ 
We combine equivalence (\ref{eqyy}) with the following theorem:
\begin{theorem}\label{propo1} (Theorem \ref{cooo})
Let $\mV$ be a small non-symmetric $\infty$-operad and $\mC$ a small $\infty$-precategory weakly enriched in $\mV$.
The embedding $$\L(\mC) \subset \mP_{\mP\Env(\mV)}(\mC)$$
exhibits $\mP_{\mP\Env(\mV)}(\mC)$ as enveloping $\infty$-category with closed left $\mP\L\Env(\mV)$-action associated to $\L(\mC).$	 
\end{theorem}
Theorem \ref{propo1} shows one advantage of weakly enriched $\infty$-categories over enriched $\infty$-categories:
for $\infty$-categories $\mC$ weakly enriched in a non-symmetric $\infty$-operad $\mV$ it
is not reasonable to consider the $\infty$-category of $\mV$-enriched presheaves
on $\mC$ since $\mC$ does not embed into the former one. 
Instead one should consider $\mP\Env(\mV)$-enriched presheaves on $\mC$
playing the role of $\mV$-enriched presheaves in the world of weakly enriched
$\infty$-categories. Theorem \ref{propo1} says that under the equivalence of Theorem \ref{2} 
the $\infty$-category of $\mP\Env(\mV)$-enriched presheaves on $\mC$
identifies with the enveloping $\infty$-category with closed left action, which has a much simpler description. 

By Theorem \ref{propo1} and the universal property of the enveloping $\infty$-category with closed left action there is a canonical equivalence
\begin{equation}\label{eqy}
\LinFun_{\mP\Env(\mV)}^{\L}(\mP_{\mP\Env(\mV)}(\mC),\mP\L\Env(\mM))' \simeq \LaxLinFun_\mV(\L(\mC),\mM).
\end{equation}
Combining equivalences \ref{eqyy} and \ref{eqy} we obtain for any small non-symmetric $\infty$-operad $\mV$, small $\infty$-category $\mM$ weakly left tensored over $\mV$ and small $\infty$-precategory $\mC$ weakly enriched in $\mV$ a canonical equivalence 
\begin{equation}\label{eqayx}
\LaxLinFun_\mV(\L(\mC),\mM) \simeq \Fun^{\mV}(\mC,\mM).
\end{equation}
In view of Theorem \ref{prroqa} equivalence \ref{eqayx} shows that $\chi$ admits a left adjoint, where the unit $\mC \to \chi(\L(\mC))$ at an $\infty$-precategory $\mC$
weakly enriched in $\mV$ is induced by the enriched Yoneda-embedding.
Since the enriched Yoneda-embedding is fully faithful in the enriched sense
(Proposition \ref{yooon}), the unit induces an essentially surjective map on spaces of objects and an equivalence on morphism objects and so is an equivalence if and only if $\mC$ is complete \cite[Corollary 5.2.8.]{GEPNER2015575}.

\vspace{2mm}

\subsubsection{Overview}

In the following we give an overview over the content.

\vspace{1mm}
In section \ref{opera} we lay the foundations for the later sections by introducing several types of $\infty$-operads (Definition \ref{opap}) and generalized versions of those (Definition \ref{opep}):
\begin{itemize}
\item non-symmetric $\infty$-operads that were studied by Lurie \cite[Definition 4.1.3.2.]{lurie.higheralgebra} under the name planar $\infty$-operads as the non-symmetric counterpart of Lurie's model of $\infty$-operads. 
\item generalized non-symmetric $\infty$-operads that were studied by Gepner-Haugseng
\cite[Definition 3.1.3.]{GEPNER2015575} and in the classical context by Leinster 
\cite{leinster_2004}, \cite{Leinster2002} and Shulman \cite{Shulman2009} as the most general place, categories can be enriched in.
\item (generalized) $\LM,$-$\RM$- and $\BM$-operads,
where $\LM,\RM, \BM$ are (approximations to) the $\infty$-operads
governing left, right and bimodules, that encode weak left, right and biactions of one or two (generalized) $\infty$-operads on some $\infty$-category, which were studied by Lurie \cite[Definition 4.2.1.12.]{lurie.higheralgebra} 
as the general context, in which left, right and bimodules are defined.	
\end{itemize}
We treat all these variants of (generalized) $\infty$-operads on one footage
under the name (generalized) $\mO$-operads (Definition \ref{opep}, \ref{opap}).
Moreover we consider families of (generalized) $\mO$-operads (Definition \ref{famm}) that allow us to make constructions functorial, in which (generalized) $\mO$-operads appear.
In section \ref{weak action} we introduce weakly left, right and bitensored
$\infty$-categories (Definition \ref{wla}, \ref{bla})
as a model for generalized $\LM$-,$\RM$- and $\BM$-operads (Propositions \ref{proo}, \ref{bproo}) that seems generally easier to work with.
In subsection \ref{LinF} we study morphisms of $\infty$-categories weakly left tensored over a non-symmetric $\infty$-operad $\mV$, which we call lax $\mV$-linear functors.
We prove that the $\infty$-category of lax $\mV$-linear functors to an $\infty$-category weakly bitensored over non-symmetric $\infty$-operads $\mV,\mW$ is canonically weakly right tensored over $\mW$ (Proposition \ref{lehmmm}), 
a result needed to prove Theorem \ref{rat} (Theorem \ref{werf}).
In subsection \ref{envob} we define and study enveloping $\infty$-categories left, right and bitensored over the enveloping monoidal $\infty$-category. 
In subsection \ref{Enpse} we define enriched and pseudo-enriched $\infty$-categories in the sense of Lurie (Definition \ref{Enr} and \ref{Lu}) as certain weakly left tensored $\infty$-categories.

In section \ref{extr} we introduce weakly enriched $\infty$-categories and construct the functor $\chi$ relating weakly enriched $\infty$-categories to weakly left tensored
$\infty$-categories. We construct $\chi$ via Theorem \ref{pqp} using a theory of Day-convolution for generalized $\mO$-operads developed in section \ref{genDay} and using Lurie's theory
of endomorphism algebras \cite[4.7.1]{lurie.higheralgebra}, which we adapt to
weakly left tensored $\infty$-categories (Proposition \ref{urr}).
In section \ref{univpr} we prove Theorem \ref{th2} (Theorem \ref{tgre}),
a universal property of enriched presheaves.
In section \ref{eqqiv} we prove Theorem \ref{propo1} (Theorem \ref{cooo})
identifying the $\infty$-category of enriched presheaves with the enveloping
$\infty$-category with closed left action associated to the full subcategory of representable presheaves.
From Theorems \ref{pqp}, \ref{tgre} and  \ref{cooo} we deduce our main Theorems \ref{trfd}
and \ref{werf}.
In section \ref{moneq} we study the monoidality of the functor $\chi$
and prove Theorem \ref{corrr}.
In section \ref{EnrYoneda} we construct an enriched Yoneda-embedding 
in the model of Lurie's definition of enriched $\infty$-categories
(Theorem \ref{ooo}).
In section \ref{2Fu} we prove that the functor $\chi$ is a functor of
$(\infty,2)$-categories (Theorem \ref{PP}).
In section \ref{stabspec} we prove Theorem \ref{wqp} (Theorem \ref{spectrall})
providing a lax symmetric monoidal embedding of the $\infty$-category 
of small stable $\infty$-categories into the $\infty$-category 
of small spectral $\infty$-categories.

In section \ref{genDay} we construct Day-convolution for generalized
$\mO$-operads extending Hinich's Day-convolution for $\mO$-operads.
Hinich \cite{HINICH2020107129} develops Day-convolution for $\mO$-operads to construct a monoidal structure on quivers whose associative algebras are enriched $\infty$-precategories with fixed object space.
We produce an extension of Hinich's construction to generalized $\mO$-operads.
Working with generalized $\mO$-operads 
requires more technology but makes constructions easier since Gepner-Haugseng's generalized non symmetric $\infty$-operad governing enrichment is by far simpler than its very combinatorial operadic model.

\subsubsection{Related work}
Gepner-Haugseng \cite[Theorem 7.4.7.]{GEPNER2015575} associate to any $\infty$-category with closed action an underyling enriched $\infty$-category and conjecture that this process extends to an equivalence between Lurie's definition of enriched $\infty$-categories and Gepner-Haugseng's definition \cite[7.2.14.]{GEPNER2015575}.
Hinich \cite[6.3.]{HINICH2020107129} describes an alternative way to produce an enriched $\infty$-category from any $\infty$-category with closed action and proves that his procedure preserves the space of objects and underlying graph. 

In \cite{berman2020enriched} Berman proves structural results about
$\infty$-categories of enriched presheaves and proves a duality between 
enriched presheaves and enriched copresheaves \cite[Theorem 1.7]{berman2020enriched}. Moreover he conjectures Theorem \ref{tgre}
\cite[Remark 1.8.]{berman2020enriched}.
In a recent manuscript \cite{hinich2021colimits} Hinich proves a monoidal version of Theorem \ref{tgre} that universally endows the $\infty$-category of $\mV$-enriched presheaves in any symmetric monoidal $\mV$-category with a symmetric monoidal structure.
From his result he obtains a partial version of Theorem \ref{2} identifying 
enriched $\infty$-categories in the sense of Gepner-Haugseng with a subcategory
of Lurie-enriched $\infty$-categories \cite[Corollary 3.4.1.]{hinich2021colimits}.

\subsection{Acknowledgements}

I thank Hongyi Chu, Markus Spitzweck and Paula Verdugo for helpful discussions. 
Moreover I like to thank David Gepner, Rune Haugseng and Vladimir Hinich for laying the foundations on which I build my work. 
I also thank Rune Haugseng for carefully reading an early version.
Most parts of this work were arising during my postdoc in 2019/2020 in the group of Ieke Moerdijk at Utrecht University. I thank Ieke Moerdijk for his hospitality during this time.

\subsection{Notation and terminology}

We fix a hierarchy of Grothendieck universes whose objects we call small, large, very large, etc.
We call a space small, large, etc. if the the set of path components and all homotopy groups are so for any choice of base point. We call an $\infty$-category small, large, etc. if the maximal subspace and all mapping spaces are so.

We write 
\begin{itemize}
\item $\Set$ for the category of small sets.
\item $\Delta$ for (a skeleton of) the category of finite, non-empty, partially ordered sets and order preserving maps, whose objects we denote by $[\n] = \{0 < ... < \n\}$ for $\n \geq 0$.
\item $\mS$ for the $\infty$-category of small spaces.
\item $ \Cat_\infty$ for the $\infty$-category of small $\infty$-categories.
	
\end{itemize}
We often indicate $\infty$-categories of large objects by $\widehat{(-)}$, for example we write $\widehat{\mS}, \widehat{\Cat}_\infty$ for the $\infty$-categories of large spaces, large $\infty$-categories, respectively.
\vspace{1mm}

\subsubsection{Functor $\infty$-categories}

The homotopy category of $\Cat_\infty$ is cartesian closed.
For any small $\infty$-categories $\mC,\mD$ we write $\Fun(\mC,\mD)$
for the internal hom, the $\infty$-category of functors $\mC \to \mD$.

\vspace{1mm}
For any $\infty$-category $\K$ and functors $\alpha: \mC \to \rS,\beta: \mD \to \rS$ we write
$\Fun_\rS(\mC,\mD)$ for the fiber of the functor $\Fun(\mC,\mD) \to \Fun(\mC,\rS)$
over $\alpha$ and $\mD^\K$ for the pullback of the functor $\Fun(\K,\mD) \to \Fun(\K,\rS)$ along the diagonal functor $\rS \to \Fun(\K,\rS)$. There are canonical equivalences
$$\Fun(\K, \Fun_\rS(\mC,\mD)) \simeq \Fun_\rS(\K \times \mC,\mD) \simeq \Fun_\rS(\mC, \mD^\K).$$

\vspace{1mm}
For any $\infty$-category $\mC$ containing objects $\A, \B$ we write
\begin{itemize}
\item $\mC_{/\B}:= \{\B\} \times_{\Fun(\{1\},\mC)} \Fun([1],\mC)$ for the $\infty$-category of objects over $\B$,
\item $\mC(\A,\B)$ for the space of maps $\A \to \B$ in $\mC$ defined as the fiber of
$\mC_{/\B} \to \mC$ over $\A.$
\item $\Ho(\mC)$ for the homotopy category of $\mC$,
\item $\mC^{\triangleleft}, \mC^{\triangleright}$ for the $\infty$-categories arising from $\mC$ by adding an initial, final object, respectively,
\item $\mC^\simeq $ for the maximal subspace in $\mC$.
\end{itemize}

\subsubsection{Inclusions and embeddings}

We often call a fully faithful functor $\mC \to \mD$ an embedding.
We call a functor $\phi: \mC \to \mD$ an inclusion
or say that $\phi$ exhibits $\mC$ as a subcategory of $\mD$ if for any $\infty$-category $\mB$ the induced map
$\Cat_\infty(\mB,\mC) \to \Cat_\infty(\mB,\mD)$ is an embedding.
A functor $\phi: \mC \to \mD$ is an inclusion if and only if it induces an embedding on maximal subspaces and on all mapping spaces.
We say that an inclusion $\phi: \mC \to \mD$ exhibits $\mC$ as a wide subcategory of $\mD$ if $\phi$ is essentially surjective.

A functor $\phi: \mC \to \mD$ is an inclusion (embedding) if and only if the 
induced functor $\Ho(\phi):\Ho(\mC) \to \Ho(\mD)$ is an inclusion (embedding) and the canonical functor
$\mC \to \Ho(\mC) \times_{\Ho(\mD)} \mD$ is an equivalence.
In this case $\phi$ is essentially determined by $\mD$ and $\Ho(\phi): \Ho(\mC) \to \Ho(\mD).$

\subsubsection{Join construction}\label{join}

We will often use the join of two objects $[\n],[\m] \in \Delta$, which we define as $[\n] \ast [\m]:= [\n+\m+1]$.
There are canonical embeddings $[\n] \subset [\n]\ast[\m], [\m] \subset [\n]\ast[\m]$ sending $[\n]\cong \{0,...,\n\} \subset [\n+\m+1], [\m]\cong \{\n+1,...,\n+\m+1\} \subset [\n+\m+1]$.
The join is functorial in a way that these embeddings are natural.
Moreover we set $[\n] \ast \emptyset = \emptyset \ast [\n] := [\n].$

\subsubsection{Relative cocartesian fibrations}
Let $\mE \subset \Fun([1], \rS)$ a full subcategory. 
We call a functor $\phi: \mC \to \rS$ a cocartesian fibration relative to
$\mE$ if the pullback $\phi': [1]\times_\rS \mC \to [1]$ along any functor
$[1]\to \rS$ corresponding to a morphism of $\mE$ is a cocartesian fibration
and the projection $ [1]\times_\rS \mC \to \mC$
sends $\phi'$-cocartesian morphisms to $\phi$-cocartesian morphisms.

For any cocartesian fibrations $\mC \to \rS, \mD \to \rS$ relative to
$\mE$ a map $\mC \to \mD$ of cocartesian fibrations relative to $\mE$ is a functor $\mC \to \mD$ over $\rS$ that preserves cocartesian lifts of morphisms of $\mE.$

\subsubsection{Relative adjunctions}\label{rell}

For any $\infty$-category $\rS$ and functor $\F:\mC \to \mD$ over $\rS$
we call a right adjoint $\G: \mD \to \mC$ of $\F$ a right adjoint relative to $\rS$ if the unit and counit of the adjunction $\F \dashv \G$ are sent to equivalences by the functors $\mC \to \rS, \mD \to \rS.$
If $\F$ is a map of locally 
cocartesian fibrations over $\rS$, then $\F$ admits a right adjoint relative to $\rS$ if and only if for any $\s \in \rS$ the induced functor
$\mC_\s \to \mD_\s$ on the fiber over $\s$ admits a right adjoint
\cite[Proposition 7.3.2.6.]{lurie.higheralgebra}.

\section{Generalized $\Ass$-, $\LM$-, $\RM$- and $\BM$-operads}
\label{opera}

In this section we define generalized non-symmetric $\infty$-operads
and weak actions of these called generalized $\LM$-, $\RM$- and $\BM$-operads.

For a detailed treatment of generalized non-symmetric $\infty$-operads we recommend \cite{GEPNER2015575} section 3.

\begin{notation}\label{ooop}
Let $\Ass:= \Delta^\op$ be the category of finite non-empty totally ordered sets
and order preserving maps.
We call a map $[\n] \to [\m]$ in $\Ass$
\begin{itemize}
\item inert if it corresponds to a map of $\Delta$ of the form $[\m] \simeq \{\bi, \bi+1,..., \bi+\m \} \subset [\n]$ for some $\bi \geq 0.$
\item standard inert if it is inert and $\m=1$.
\item active if it corresponds to a map of $\Delta$, which preserves the minimum and maximum.
\end{itemize}

\end{notation}

\begin{remark}\label{facto}
Every morphism in $\Ass$ uniquely factors as an inert morphism followed by an active one:
if $\f: [\m] \to [\n]$ is a map in $\Delta$, we can factor $\f$ as $[\m] \xrightarrow{\bar{\f}} \{\f(0), ..., \f(\m)\} \subset [\n].$	
\end{remark}

\begin{remark}\label{not}
For every $\n \geq 2$ there are $\n$-many standard inert morphisms $[1] \simeq \{ \bi-1, \bi \} \subset [\n]$ in $\Delta$ for $1 \leq \bi \leq \n$ that are part of a colimit diagram in $\Delta$:
\begin{equation}\label{dia}
(\{0,1\} \leftarrow \{1\} \to \{1,2\} \leftarrow \{2\} \to ... \leftarrow \{\n-1\} \to \{\n-1,\n\} ) \to [\n].
\end{equation}
\end{remark}

\begin{notation}Let $ \BM:= (\Delta_{/[1]})^\op$ and $ \LM \subset \BM,\RM \subset \BM$ the full subcategories of maps $[\n] \to [1]$ sending 0 to 0 and at most one object to 1, sending $\n$ to 1 and at most one object to 0, respectively.
\end{notation}

\begin{remark}\label{elem}
There is a canonical equivalence $\Ass \times [1] \simeq \LM$ corresponding to
the natural transformation sending $[\n]$ to $[\n] \ast \emptyset \to [\n] \ast [0] $
over $[0]*[0]=[1]$, and similar for $\RM$. 
\end{remark}
\begin{remark}
The right fibration $\Delta_{/[1]} \to \Delta$ is opposite to a left fibration
$\BM \to \Ass$ that restricts to cocartesian fibrations $\LM \to \Ass, \ \RM \to \Ass$ relative to the collection of inert morphisms with discrete fibers.
We call a morphism of $\LM, \RM, \BM$ inert, active if its image in $\Ass$ is.
\end{remark}

\begin{remark}\label{rightactt}
The maps $\{0\} \subset [1], \{1\} \subset [1]$ yield two embeddings
$\Ass \subset \BM$ over $\Ass$, which we call left and right embedding,
that factor through $\LM,\RM$, respectively,
and which are right fibrations relative to the collection of active morphisms.

\end{remark}

\begin{notation}\label{nooor}

Let $\psi: \mO \to \Ass $ be a cocartesian fibration relative to the collection of inert morphisms. We call a morphism of $\mO$ (standard) inert if it is $\psi$-cocartesian and its image in $\Ass$ is (standard) inert. We call a morphism of $\mO$ active if its image in $\Ass$ is active. 

\end{notation}

\begin{definition}\label{opep}
Let $\psi: \mO \to \Ass$ be a cocartesian fibration relative to the collection of inert morphisms. 

\vspace{1mm}

A generalized $\mO$-operad is a cocartesian fibration $\phi: \mC \to \mO$ relative to the collection of inert morphisms such that the following conditions are satisfied:

\begin{enumerate}
\item (Segal condition)
For every diagram 
\begin{equation*}\label{diaa}
\X \to (\X_{0,1} \to \X_{1} \leftarrow \X_{1,2} \to \X_{2} \leftarrow ... \to \X_{\n-1} \leftarrow \X_{\n-1,\n})
\end{equation*} in $\mO$, where all morphisms are $\psi$-cocartesian, lifting diagram (\ref{dia}) the induced functor $$\alpha: \mC_\X \to \mC_{\X_{0,1}} \times_{ \mC_{\X_{1}}} \mC_{\X_{1,2}} \times_{ \mC_{\X_{2}} } ... \times_{\mC_{\X_{\n-1}}} \mC_{\X_{\n-1,\n}} $$ is an equivalence.

\vspace{1mm}
\item For every diagram 
\begin{equation*}\label{diaa}
\Y \to (\Y_{0,1} \to \Y_{1} \leftarrow \Y_{1,2} \to \Y_{2} \leftarrow ... \to \Y_{\n-1} \leftarrow \Y_{\n-1,\n})
\end{equation*} in $\mC$, where all morphisms are $\phi$-cocartesian, lifting diagram (\ref{dia}) and every $\Z \in \mC$ the square 
\begin{equation}\label{geno}
\begin{xy}
\xymatrix{
\mC(\Z, \Y) \ar[d]
\ar[r]^{}
& \mC(\Z,\Y_{0,1}) \times_{\mC(\Z,\Y_{1})} ... \times_{\mC(\Z,\Y_{\n-1}) } \mC(\Z,\Y_{\n-1,\n}) \ar[d]^{} 
\\
\mO(\phi(\Z), \phi(\Y)) \ar[r]^{}  &  \mO(\phi(\Z),\phi(\Y_{0,1}))\times_{\mO(\phi(\Z),\phi(\Y_{1}))} ... \times_{\mO(\phi(\Z),\phi(\Y_{\n-1}))} \mO(\phi(\Z),\phi(\Y_{\n-1,\n}))}
\end{xy}
\end{equation}
is a pullback square.

\end{enumerate}

\end{definition}

\begin{remark}
(2) implies that the functor $\alpha$ in (1) is fully faithful
by taking the fiber over the identity of $\X=\phi(\Z) $ in square (\ref{geno}).
So if (2) holds, (1) holds if and only if $\alpha$ is essentially surjective.
\end{remark}

\begin{remark}\label{simpp}
In (1) and (2) of Definition \ref{opep} one can replace diagrams of inert morphisms lying over diagram (\ref{dia})
by diagrams of inert morphisms lying over diagrams in $\Delta$ of the form
\begin{equation*}
\begin{xy}
\xymatrix{
[0] \ast \emptyset  \ar[d]^{} \ar[r]^{ }
&[0] \ast [0] \ar[d]^{}  & \ar[l] \emptyset \ast [0] \ar[d]^{} 
\\ [\n] \ast \emptyset
\ar[r]^{}  & [\n] \ast [\m]  & \ar[l] \emptyset \ast [\m],
}
\end{xy} 
\end{equation*}
where the map $[0] \to [\n]$ takes $\n$ and the map $[0] \to [\m]$ takes $0.$ 

\end{remark}

\begin{notation}
We call a generalized $\mO$-operad $\mC \to \mO$ small if $\mC$ and $\mO$ are small $\infty$-categories. 
\end{notation}

We consider generalized $\mO$-operads for $\mO \in \{\Ass,\LM,\RM,\BM\}.$
Generalized $\Ass$-operads are generalized non-symmetric $\infty$-operads 
in the sense of \cite[Definition 3.1.12.]{GEPNER2015575}. We refer to generalized $\Ass$-operads as generalized $\infty$-operads as we rarely deal with their symmetric variant.

\begin{notation}
For $\mO =\Ass$ we use the following notation: we write $\mV^\ot \to \Ass$ for a generalized $\infty$-operad and call $\mV:= \mV^\ot_{[1]}$ the underlying $\infty$-category. We call objects of $\mV$ the colors of $\mV^\ot \to \Ass.$
\end{notation}

\begin{remark}
The left fibration $\BM \to \Ass$ is a generalized $\infty$-operad. This implies that generalized $\BM$-operads are precisely generalized $\infty$-operads over $\BM.$	
\end{remark}

\begin{remark}\label{reali}
For $\mO \in \{\Ass,\LM,\RM,\BM\}$ every diagram in $\mO$ lifting diagram (\ref{dia}) is a limit diagram. Therefore condition (2) of Definition \ref{opep} is equivalent to the condition that
for every diagram 
\begin{equation*}
\Y \to (\Y_{0,1} \to \Y_{1} \leftarrow \Y_{1,2} \to \Y_{2} \leftarrow ... \to \Y_{\n-1} \leftarrow \Y_{\n-1,\n})
\end{equation*} in $\mC$, where all morphisms are $\phi$-cocartesian, lifting diagram (\ref{dia}) 
and every $\Z \in \mC$ the canonical map 
$$\mC(\Z,\Y) \to \mC(\Z,\Y_{0,1}) \times_{\mC(\Z,\Y_{1})} ... \times_{\mC(\Z,\Y_{\n-1}) } \mC(\Z,\Y_{\n-1,\n})$$ is an equivalence.
\end{remark}

Using Remarks \ref{simpp}, \ref{reali} the definition of generalized $\BM$-operad takes the following form:

\begin{lemma}\label{visio}

A cocartesian fibration $\phi: \mC \to \BM$ relative to the collection of inert morphisms
is a generalized $\BM$-operad if and only if the following conditions are satisfied:
\begin{enumerate}
\item The pullbacks $\Ass \times_\BM \mC \to \Ass$
along both embeddings $\Ass \subset \BM$ are generalized $\infty$-operads.

\vspace{1mm}
\item For every $\n \geq 0$ the diagram in $\BM^\op$
\begin{equation*}\label{qavbl}
\begin{xy}
\xymatrix{
[0] \ast \emptyset  \ar[d]^{} \ar[r]^{ }
&[0] \ast [0] \ar[d]^{}  & \ar[l] \emptyset \ast [0] \ar[d]^{} 
\\ [\n] \ast \emptyset
\ar[r]^{}  & [\n] \ast [\m]  & \ar[l] \emptyset \ast [\m],
}
\end{xy} 
\end{equation*}
where the map $[0] \to [\n]$ takes $\n$ and the map $[0] \to [\m]$ takes $0,$ induces an equivalence 
$$ \mC_{[\n] \ast [\m]} \to \mC_{[\n] \ast \emptyset} \times_{\mC_{[0] \ast \emptyset}} \mC_{[0] \ast [0]} \times_{\mC_{\emptyset \ast [0] }} \mC_{\emptyset \ast [\m]}.$$

\item Every commutative diagram 
\begin{equation*}\label{agfa}
\begin{xy}
\xymatrix{
\Y' \ar[d] & \Y \ar[l] \ar[d]^{} \ar[r]^{ }
&\Y'' \ar[d]^{} 
\\\Y''''& \Y'''\ar[l]
\ar[r]^{}  & \Y'''''
}
\end{xy} 
\end{equation*}
in $\mC$ of $\phi$-cocartesian morphisms lying over diagram (\ref{qavbl})
induces for every $\X \in \mC$ an equivalence
$$ \mC(\X,\Y) \to \mC(\X,\Y') \times_{\mC(\X,\Y'''') } \mC(\X,\Y''') \times_{\mC(\X,\Y''''')} \mC(\X,\Y'').$$
\end{enumerate}

\end{lemma}

\begin{definition}\label{opap}
Let $\psi: \mO \to \Ass$ be a cocartesian fibration relative to the collection of inert morphisms. 

\begin{itemize}	
\item An $\mO$-operad is a generalized $\mO$-operad $\phi: \mC \to \mO$ such that the functor $\mC_{[0]} \to \mO_{[0]}$ is an equivalence.

\vspace{1mm}
\item A (generalized) $\mO$-monoidal $\infty$-category is a (generalized) $\mO$-operad $\phi: \mC \to \mO $ that is a cocartesian fibration.

\vspace{1mm}
\item A (generalized) corepresentable $\mO$-operad is a (generalized) $\mO$-operad $\phi: \mC \to \mO $ that is a locally cocartesian fibration.
\end{itemize}

\end{definition}

\begin{remark}
A cocartesian fibration $\phi: \mC \to \mO $ is a generalized $\mO$-operad
if and only if condition (1) of Definition \ref{opep} holds since for any cocartesian fibration (1) implies (2). 	
\end{remark}

\begin{notation} We say that a corepresentable $\mO$-operad is compatible with small colimits if for every $\X \in \mO_{[1]}$ the fiber $\mC_\X$ admits small colimits and for any active morphism $\X \to \Y$ in $\mO$, where $\Y \in \mO_{[1]}$, the induced functor $\prod_{\bi=1}^\n \mC_{\X_\bi} \simeq \mC_\X \to \mC_\Y$ preserves small colimits component-wise. 
We will also use this terminology for different kind of colimits like
$\kappa$-small colimits for a regular cardinal $\kappa.$
\end{notation}	

\begin{definition}
A presentably $\mO$-monoidal $\infty$-category is an $\mO$-monoidal $\infty$-category 
$\mC \to \mO$ compatible with small colimits such that for every $\X \in \mO_{[1]}$ the fiber $\mC_\X$ is presentable.
\end{definition}

\begin{lemma}\label{ope} Let $\psi: \mO \to \Ass$ be a cocartesian fibration relative to the collection of inert morphisms. 
A cocartesian fibration $\phi:\mC \to \mO$ relative to the collection of inert morphisms
is an $\mO$-operad if and only if the following conditions are satisfied:

\begin{enumerate}
\item For any $\X \in \mO$ lying over $[\n]$ the standard inert morphisms $\X \to \X_{\bi}$ for $1 \leq \bi \leq \n$ induce an equivalence $\mC_\X \to \prod_{\bi=1}^\n \mC_{\X_\bi}$ and $\mC_\X$ is contractible if $\X$ lies over $[0].$

\item For any $\Y \in \mC$ lying over $[\n]$ and $\Z \in \mC$ the induced map
$$ \mC(\Z,\Y)\to \mO(\phi(\Z),\phi(\Y)) \times_{\prod_{\bi=1}^\n \mO(\phi(\Z),\phi(\Y_{\bi}))} \prod_{\bi=1}^\n \mC(\Z,\Y_{\bi})$$ is an equivalence. 
\end{enumerate}	

\end{lemma}

\begin{proof}
This follows from the fact that the following two conditions are equivalent:
\begin{enumerate}
\item The functor $\mC_{[0]} \to \mO_{[0]}$ is an equivalence.
\item For any  $\Z \in \mO_{[0]}$ the fiber $\mC_\Z$ is contractible and for any $\X \in \mC_{[\m]}$ for $\m \geq 0$ and $\Y \in \mC_{[0]}$ the map $\mC(\X,\Y) \to \mO(\phi(\X),\phi(\Y))$ is an equivalence.
\end{enumerate}	
(2) implies (1) since the functors $\mO_{[0]} \to \mO, \mC_{[0]} \to \mC$ are fully faithful as $[0]$ is the initial object of $\Ass$.
(1) implies (2) because $\phi$ is a map of cocartesian fibrations relative to the collection of inert morphisms and all maps $\f: [0] \to [\m]$ in $\Delta$ are inert
so that the fiber over $\f$ of the map
$\mC(\X,\Y) \to \mO(\phi(\X),\phi(\Y))$ is equivalent to
$\mC_{[0]}(\X',\Y) \to \mO_{[0]}(\phi(\X'),\phi(\Y))$, where $\X \to \X'$ is a  cocartesian lift of $\f.$	
\end{proof}

\begin{example}
$\Ass$-operads are non-symmetric $\infty$-operads in the sense of \cite[Definition 3.1.3.]{GEPNER2015575}. We refer to $\Ass$-operads as $\infty$-operads because we mainly deal with non-symmetric $\infty$-operads. 
\end{example}
\begin{example}
$\Ass$-monoidal $\infty$-categories agree with monoidal $\infty$-categories.
Generalized $\Ass$-monoidal $\infty$-categories precisely classify Segal objects in $\Cat_\infty.$

\end{example}

For the proofs of the next lemmas we fix the following notation:
let $\mO \to \Ass$ be a cocartesian fibration relative to the collection of inert morphisms and $\X \in \mO.$ Let $\mO^\inert_{\X/} \subset \mO_{\X/}$ be the full subcategory of inert morphisms whose target lies over $[0],[1]\in \Ass.$ 

\begin{lemma}\label{qbg}Let $\phi: \mC \to \mO$ be a generalized $\mO$-operad.
A morphism $\rho:\A \to \B$ of $\mC$ is locally $\phi$-cocartesian if for every
inert morphism $\B \to \C$ in $\mC$, where $\C$ lies over $[0],[1]\in \Ass$,
the composition $\A \xrightarrow{\rho} \B \to \C$ is locally $\phi$-cocartesian.
\end{lemma}

\begin{proof}
For any $\Z \in \mC$ lying over $\T:={\phi(\B)}$ the map
$ \mC_{\T}(\B,\Z) \to \{\phi(\rho)\} \times_{\mO(\phi(\A),\T)} \mC(\A,\Z)$
identifies with the canonical map
$$\lim_{(\sigma: \T \to \T') \in \mO^\inert_{\T/}} \mC_{\T'}(\sigma_*(\B),\sigma_*(\Z)) \to \lim_{(\sigma: \T \to \T') \in \mO^\inert_{\T/}} \{\sigma \circ \phi(\rho)\} \times_{\mO(\phi(\A),\T')} \mC(\A,\sigma_*(\Z)).$$
 
\end{proof}

\begin{lemma}\label{reyqa} Let $\phi: \mC \to \mO$ be a generalized $\mO$-operad and locally cocartesian fibration relative to the collection of active morphisms whose target lies over $[0],[1]\in \Ass$. Assume that $\mC_{\X}$ is a space for any $\X \in \mO_{[0]}$.
Then $\phi$ is a corepresentable generalized $\mO$-operad.
Moreover a morphism $\rho:\A \to \B$ of $\mC$ is locally $\phi$-cocartesian if and only if for every inert morphism $\beta: \B \to \C$ in $\mC$, where $\C$ lies over $[0],[1]\in \Ass$, the composition $\A \xrightarrow{\rho} \B \xrightarrow{\beta} \C$ is locally $\phi$-cocartesian.
In particular, the composition of a locally $\phi$-cocartesian morphism followed by an inert morphism is locally $\phi$-cocartesian.

\end{lemma}

\begin{proof}
Observe that $\phi$ is a locally cocartesian fibration relative to the collection of morphisms whose target lies over $[0],[1]\in \Ass$
as any map of $\mO$ factors as an inert map followed by an active one.

Let $\Y \in \mC$ and $\alpha: \phi(\Y) \to \Z$ a morphism of $\mO$.
Then for every $(\sigma:\Z \to \Z')\in \mO^\inert_{\Z/}$ there is a locally $\phi$-cocartesian lift $\rho_\sigma: \Y \to (\sigma \circ \alpha)_*(\Y) $ of $\sigma \circ \alpha.$
For any inert morphism $\tau: \Z' \to \Z''$ in $\mO$ for $\Z''\in \mO_{[0]} $ the composition 
$\Y \xrightarrow{\rho_\sigma} (\sigma \circ \alpha)_*(\Y) \to \tau_*((\sigma \circ \alpha)_*(\Y))$
is locally $\phi$-cocartesian since $\mC_{\Z''}$ is a space. So by the uniqueness of locally $\phi$-cocartesian lifts the morphisms $\rho_\sigma$ are compatible and so give a lift $\rho: \Y \to \Y'$ of $\alpha$ such that $\rho_\sigma$ factors as $\Y \xrightarrow{\rho} \Y' \xrightarrow{} \sigma_*(\Y')$ using that $\mC \to \mO$ is a generalized $\infty$-operad.
By Lemma \ref{qbg} the morphism $\rho$ is locally $\phi$-cocartesian.
Hence $\phi$ is a locally cocartesian fibration.

The characterization of locally $\phi$-cocartesian morphisms follows from the construction of locally $\phi$-cocartesian lifts and Lemma $\ref{qbg}.$
	
\end{proof}

\begin{remark}\label{Reym}
	
If $\phi:\mC \to \mO$ is an $\mO$-operad, $\phi$ is a locally cocartesian fibration relative to the collection of active morphisms whose target lies over $[0]\in \Ass$. 
Indeed, any active map $\alpha: \X \to \Y$ in $\mO$ whose target lies over $[0]$ belongs to $\mO_{[0]}$ as there is only one active map $[\n]\to [0]$ in $\Ass$. As $\phi_{[0]}: \mC_{[0]} \to \mO_{[0]}$ is an equivalence, for any $\X' \in \mC$ lying over $\X$ there is a locally $\phi$-cocartesian lift $\X'\to \Y'$ of $\alpha$.
\end{remark}

\begin{corollary}\label{repmon}

A generalized corepresentable $\mO$-operad $\phi: \mC \to \mO$ is a generalized $\mO$-monoidal $\infty$-category if
and only if for any locally $\phi$-cocartesian lifts $\beta: \Z \to \Y, \ \alpha: \Y \to \X$ of active morphisms of $\mO$,
where $\X$ lies over $[0],[1] \in \Ass $, the 
composition $\alpha \circ \beta$ is locally $\phi$-cocartesian.
\end{corollary}

\begin{proof}
By \cite[Proposition 2.4.2.8.]{lurie.HTT} $\phi: \mC \to \mO$ is a cocartesian fibration if and only if for any locally $\phi$-cocartesian morphisms $\beta: \Z \to \Y, \ \alpha: \Y \to \X$ in $\mC$ the composition $\alpha \circ \beta$ is locally $\phi$-cocartesian.
By Lemma \ref{reyqa} the morphism $\alpha \circ \beta$ is locally $\phi$-cocartesian if for every inert morphism $\gamma: \X \to \X'$ in $\mC$, where $\X'$ lies over $[0],[1]\in \Ass$, the composition 
$ \gamma \circ \alpha \circ \beta$ is locally $\phi$-cocartesian.
By the same lemma $\gamma \circ \alpha$ is locally $\phi$-cocartesian.
So we can assume that $\X$ lies over $[0],[1]\in \Ass$.

The map $\beta$ factors as an inert morphism
$\beta': \Z \to \Y'$ followed by an active one $\beta'': \Y' \to \Y$.
As $\beta'$ is $\phi$-cocartesian and $\beta$ is locally $\phi$-cocartesian, $\beta'': \Y' \to \Y$ is locally $\phi$-cocartesian, similarly for $\alpha.$
The map $\alpha' \circ \beta'': \Y' \to \Y \to \X'$ factors as
an inert morphism $\kappa: \Y' \to \T $ followed by an active one $\rho: \T \to \X'$.
By Lemma \ref{reyqa} the map $\alpha' \circ \beta''$ is locally $\phi$-cocartesian.
Since $\kappa$ is $\phi$-cocartesian, $\rho$ is locally $\phi$-cocartesian.
The map $\alpha \circ \beta$ factors as
$ \alpha'' \circ \alpha' \circ \beta'' \circ \beta'=  \alpha'' \circ \rho \circ \kappa \circ \beta'.$
Since $\kappa, \beta'$ and so $\kappa \circ \beta'$ are $\phi$-cocartesian,
it is enough to see that $ \alpha'' \circ \rho$ is locally $\phi$-cocartesian.
This holds by assumption.

\end{proof}

There is the following relationship between $\LM,\RM,\BM$-operads and their generalized version:
\begin{lemma}

A generalized $\LM$-operad $\mC \to \LM$ is a $\LM$-operad if and only if the pullback $\Ass \times_\LM \mC \to \Ass$ is an $\infty$-operad, and similar for $\RM.$
A generalized $\BM$-operad $\mC \to \BM$ is a $\BM$-operad if and only if the pullbacks $\Ass \times_\BM \mC \to \Ass$ along both embeddings $\Ass \subset \BM$ are $\infty$-operads.

\end{lemma}

\begin{proof}
The fibers $\LM_{[0]}, \RM_{[0]}$ are contractible and the fiber $\BM_{[0]}$
has two elements, each of which lies in the image of one of the two embedding $\Ass \subset \BM.$
\end{proof}

There is the following duality between (generalized) $\LM$ and $\RM$-operads:

\begin{remark}\label{inv}
There is a canonical involution on $\Delta$ sending $[\n]$ to $[\n]$ and a map $\alpha: [\n] \to [\m]$ in $\Delta$ to the map $$[\n] \simeq [\n] \xrightarrow{\alpha} [\m] \simeq [\m],$$ where the bijection $[\n] \simeq [\n]$ sends $\bi$ to $\n-\bi$ and similar for $\m.$
The involution on $\Delta$ yields involutions on $\Ass=\Delta^\op$ and $\BM=(\Delta_{/[1]})^\op$. The involution on $\BM$ restricts to an equivalence
$$\LM \simeq \Ass \times [1] \simeq \RM \simeq \Ass \times [1]$$
that is the product of the involution on $\Ass$ with the identity of $[1].$

The involution on $\Ass$ yields involutions on $\Op_\infty, \Op^\gen_\infty$ denoted by $(-)^\rev$, the involution on $\BM$ yields involutions on $\Op^\BM_\infty, \Op^{\BM,\gen}_\infty$
that restrict to equivalences $$\Op^\LM_\infty \simeq \Op^\RM_\infty, \  \Op^{\LM,\gen}_\infty \simeq \Op^{\RM,\gen}_\infty$$
and cover the involutions on $\Op_\infty, \Op^\gen_\infty,$ respectively.
\end{remark}

We also consider maps of generalized $\mO$-operads:
 
\begin{definition}
Let $\mO \to \Ass$ be a cocartesian fibration relative to the collection of inert morphisms

\begin{itemize}
\item Let $\mC \to \mO,\mD \to \mO$ be generalized $\mO$-operads.
A $\mC$-algebra in $\mD$ or map of generalized $\mO$-operads $\mC \to \mD$ is a functor $\mC \to \mD$ over $\mO$ preserving cocartesian lifts of inert morphisms.

\item Let $\mC \to \mO,\mD \to \mO$ be generalized corepresentable $\mO$-operads.
An $\mO$-monoidal functor $\mC \to \mD$ is a map $\mC \to \mD$ of cocartesian fibrations over $\mO$.

\item Let $\mC \to \mO,\mD \to \mO$ be generalized corepresentable $\mO$-operads.
A lax $\mO$-monoidal functor $\mC \to \mD$ is a $\mC$-algebra in $\mD.$

\end{itemize}

\begin{notation}
We call a map of $\mO$-operads $\mC \to \mD$ an embedding (inclusion) if the functor $\mC \to \mD$ is an embedding (inclusion).
\end{notation}

\end{definition}
\begin{notation}
We say that an $\mO$-monoidal functor $\mC \to \mD$ preserves small colimits 
(admits a right adjoint) if for any $\X \in \mO$ the induced functor $\mC_\X \to \mD_\X$
does. 
\end{notation}

\begin{remark}
As a consequence of \cite[Proposition 7.3.2.6.]{lurie.higheralgebra} an $\mO$-monoidal functor admits a right adjoint if and only if it admits a right adjoint relative to $\mO.$
\end{remark}
\begin{notation}
Let $\mC \to \mO,\mD \to \mO$ be generalized $\mO$-operads. Let $\Alg_{\mC/\mO}(\mD) \subset \Fun_\mO(\mC,\mD)$ be the full subcategory of $\mC$-algebras in $\mD$ and let $\Alg_\mO(\mD):= \Alg_{\mO/\mO}(\mD)$.

\vspace{1mm}
Let $\mC \to \mO,\mD \to \mO$ be generalized $\mO$-monoidal $\infty$-categories.
Let $\Fun^\ot_\mO(\mC, \mD) \subset \Alg_{\mC/\mO}(\mD)$ be the full subcategory of $\mO$-monoidal functors $\mC \to \mD.$

\end{notation}

\begin{notation}
For $\mO=\Ass, \LM, \RM,\BM$ and a generalized $\mO$-operad $\mC \to \mO$ we call $\mO$-algebras in $\mC$ associative algebras in $\mC$, left modules, right modules and bimodules in $\mC$, respectively. 
We set $$\Alg(\mC):= \Alg_\Ass(\mC),\ \LMod(\mC):=\Alg_\LM(\mC),\ \RMod(\mC):=\Alg_\RM(\mC),\ \BMod(\mC):=\Alg_\BM(\mC).$$

\end{notation}

\begin{notation}
There are forgetful functors $$\LMod(\mC) \to \Alg(\mC),\ \RMod(\mC) \to \Alg(\mC),\ \BMod(\mC) \to \Alg(\mC)\times \Alg(\mC)$$ whose fibers over associative algebras
$\A,\B$ we denote by $\LMod_\A(\mC), \ \RMod_\B(\mC), \ \BMod_{\A,\B}(\mC).$
\end{notation}

\begin{notation}\label{notabene}
Let $\mO \to \Ass$ be a cocartesian fibration relative to the collection of inert morphisms. Let
\begin{itemize}
\item $\Cat_{\infty / \mO}^{\mathrm{inert}} \subset \Cat_{\infty / \mO}$
be the subcategory of cocartesian fibrations relative to the collection of inert morphisms of $\mO$ and functors over $\mO$ preserving cocartesian lifts of inert morphisms.
\item $\Op_\infty^\mO \subset \Op_\infty^{\mO, \gen} \subset \Cat_{\infty / \mO}^{\mathrm{inert}}$ be the full subcategories of (generalized) $\mO$-operads.
\item $\Op_\infty^{\mO, \mon} \subset\Cat_{\infty / \mO}^{\mathrm{inert}} $ be
the (non-full) subcategory of $\mO$-monoidal $\infty$-categories and $\mO$-monoidal functors.
\item $\Op_\infty^{\mO,\mon,\rc\rc} \subset \widehat{\Op}_\infty^{\mO,\mon}$
be the subcategory of $\mO$-monoidal $\infty$-categories compatible with small colimits and small colimits preserving $\mO$-monoidal functors.
\end{itemize}
For $\mO=\Ass$ we drop $\mO$ from the notation.

\end{notation}

If the tensor product is the product, the definition of algebra gets easier:

\begin{definition}
Let $\mO \to \Ass$ be a cocartesian fibration relative to the collection of inert morphisms,
$\psi: \mC \to \mO$ a generalized $\mO$-operad and $\mD$ an $\infty$-category with finite products.
We call a functor $\alpha: \mC \to \mD$ a $\mC$-monoid in $\mD$ if for every 
$\X \in \mC$ lying over $[\n]\in \Ass$ the standard inert morphisms
$\X \to \X_\bi$ for $1 \leq \bi \leq \n$ in $\mC$ induce an equivalence
$\alpha(\X) \to \prod_{\bi=1}^\n \alpha(\X_\bi).$
\end{definition}
\begin{notation}
Let $\Mon_{\mC}(\mD) \subset \Fun(\mC,\mD) $ be the full subcategory of $\mC$-monoids in $\mD$. For $\mC=\mO=\Ass$ we omit $\mC$ from the notation.
\end{notation}

\begin{example}
Note that $\mO$-monoids in $\Cat_\infty$ are classified by $\mO$-monoidal $\infty$-categories. This gives a canonical equivalence
$ \Mon_{\mO}(\Cat_\infty) \simeq \Op_\infty^{\mO,\mon}.$	
\end{example}

\begin{example}\label{stru} Let $\mD$ be an $\infty$-category with finite products.
The canonical equivalence $\LM \simeq \Ass \times [1]$ induces an embedding
$ \Mon_{\LM}(\mD) \subset \Fun([1],\Fun(\Ass,\mD)) $ whose essential image consists of those natural transformations
$\F \to \G$ such that for any $[\n] \in \Ass$ the map $[0] \simeq \{\n\} \subset [\n]$ induces an equivalence $\F([\n]) \to \G([\n]) \times \F([0]) .$
\end{example}

\begin{proposition}\label{cacart}
Let $\mD$ be an $\infty$-category with finite products.
There is a monoidal $\infty$-category $\mD^\times \to \Ass$
and $\mD^\times$-monoid in $ \mD$ such that for any cocartesian fibration $\mO \to \Ass$ relative to the collection of inert morphisms and generalized $\mO$-operad $\mC \to \mO$ the
functor $\mO \times_\Ass \mD^\times \to \mD^\times \to \mD$ induces an equivalence 
$$ \Alg_{\mC/\mO}(\mO \times_\Ass \mD^\times) \to \Mon_{\mC}(\mD).$$
We call $\mD^\times \to \Ass$ the cartesian monoidal structure on $\mD.$

\end{proposition}

\begin{proof}
Like the proof of \cite[Proposition 2.4.1.7.]{lurie.higheralgebra}.	
\end{proof}

\begin{remark}
Let $\Triv \subset \Ass$ be the wide subcategory of inert morphisms.
The functor $\Triv \subset \Ass$ is an $\infty$-operad and for any 
$\infty$-operad $\mB^\ot \to \Ass$ and $\infty$-category $\mD$ having finite products the forgetful functors
$\Alg_{\Triv/\Ass}(\mB) \to \mB_{[1]}, \Mon_\Triv(\mD) \to \mD$ are equivalences.
So we get a canonical equivalence
$\mD^\times_{[1]} \simeq \Alg_{\Triv/\Ass}(\mD^\times) \to \Mon_{\Triv}(\mD) \simeq \mD.$
\end{remark}
\begin{example}\label{strai}
There is a canonical equivalence
$ \Alg_{\mO}(\mO \times_\Ass \Cat_\infty^\times) \simeq \Mon_{\mO}(\Cat_\infty) \simeq \Op_\infty^{\mO,\mon}.$	
\end{example}

We will also often use the following modification of $\Cat_\infty^\times:$
	
\begin{notation}\label{coli}
Let $(\Cat_\infty^{{\rc\rc}})^\ot \subset \widehat{\Cat}_\infty^\times $
be the suboperad whose colors are the $\infty$-categories having small colimits and 
whose active morphisms $\mC \to \mD$ lying over a map $[\n] \to [1]$ in $\Ass$
correspond to functors $\mC_1 \times ...\times \mC_\n \to \mD $ that preserve small colimits component-wise.
Let $(\Pr^\L)^\ot \subset (\Cat_\infty^{{\rc\rc}})^\ot $ be
the full suboperad whose colors are the presentable $\infty$-categories.
\end{notation}	
By \cite[Corollary 4.8.1.4.]{lurie.higheralgebra} the $\infty$-operad $(\Cat_\infty^{{\rc\rc}})^\ot \to \Ass $
is a monoidal $\infty$-category and by \cite[Corollary 4.8.1.15.]{lurie.higheralgebra} the $\infty$-operad $(\Pr^\L)^\ot \to \Ass $ is a monoidal $\infty$-category such that the embedding $(\Pr^\L)^\ot \subset (\Cat_\infty^{{\rc\rc}})^\ot $ is a monoidal functor.

\begin{remark}\label{colimon}
 	
The canonical equivalence
$ \Alg_{\mO}(\mO \times_\Ass \widehat{\Cat}_\infty^\times) \simeq \Mon_{\mO}(\widehat{\Cat}_\infty) \simeq \widehat{\Op}_\infty^{\mO,\mon}$
restricts to an equivalence
$ \Alg_{\mO}(\mO \times_\Ass \Cat_\infty^{{\rc\rc}}) \simeq \Op_\infty^{\mO,\mon,\rc\rc}.$

\end{remark}

By \cite[Proposition 4.8.1.3.]{lurie.higheralgebra} the inclusion $(\Cat_\infty^{{\rc\rc}})^\ot \subset \widehat{\Cat}_\infty^\times $
admits a left adjoint relative to $\Ass$ 
that by \cite[Theorem 5.1.5.6.]{lurie.HTT} sends any small $\infty$-category $\mB$ to $\mP(\mB)=\Fun(\mB^\op,\mS),$
where the unit $\mB \to \mP(\mB)$ is the Yoneda-embedding.
Hence the induced inclusion $$\Op_\infty^{\mO,\mon,\rc\rc} \simeq \Alg_{\mO}(\mO \times_\Ass (\Cat_\infty^{{\rc\rc}})^\ot) \subset \Alg_{\mO}(\mO \times_\Ass \widehat{\Cat}_\infty^\times) \simeq \Mon_{\mO}(\widehat{\Cat}_\infty) \simeq \widehat{\Op}_\infty^{\mO,\mon}$$ admits a left adjoint.
This implies the following proposition:
\begin{proposition}\label{presday}
Let $\mO \to \Ass$ be a cocartesian fibration relative to the collection of inert morphisms and $\mC \to \mO$ a small $\mO$-monoidal $\infty$-category.
There is an $\mO$-monoidal $\infty$-category $\mP\mC \to \mO$ compatible with small colimits and an $\mO$-monoidal embedding $\mC \to \mP\mC$
that induces on the fiber over every $\X \in \mO$
the Yoneda-embedding $\mC_\X \to \mP(\mC_\X)$ and such that for any $\mO$-monoidal $\infty$-category $\mD \to \mO$ compatible with small colimits the induced functor $ \Fun_\mO^{\ot,\L}(\mP\mC, \mD) \to \Fun_\mO^{\ot}(\mC, \mD)$ is an equivalence, where the left hand side is the full subcategory of left adjoint $\mO$-monoidal functors. 
\end{proposition}

\begin{notation}Let $\mV^\ot \to \Ass$ be a small monoidal $\infty$-category. We write $\mP(\mV)^\ot $ for $\mP\mV^\ot.$\end{notation}

We also need the following variant: 
\begin{notation} Let $\kappa$ be a regular cardinal. For any small $\infty$-category $\mC$ that admits $\kappa$-small colimits let $\Ind_\kappa(\mC) \subset \mP(\mC)$ be the full subcategory of functors preserving $\kappa$-small limits. For any $\infty$-category $\mC$ that admits small colimits let $\widehat{\Ind}(\mC) \subset \widehat{\mP}(\mC)$ be the full subcategory of functors preserving small limits. \end{notation}

\begin{remark}\label{neos}Let $\mC$ be a small $\infty$-category that admits $\kappa$-small colimits. By construction the Yoneda-embedding $\y:\mC \to \Ind_\kappa(\mC)$ preserves $\kappa$-small colimits. As $\kappa$-small limits commute with  $\kappa$-filtered colimits, $\Ind_\kappa(\mC)$ is closed in $\mP(\mC)$ under $\kappa$-filtered colimits. Moreover $\Ind_\kappa(\mC)$ is a localization of $\mP(\mC)$ with respect to the set of maps $\{ \colim(\y \circ \rH) \to \y(\colim(\rH)) \mid \rH:\K \to \mC, \ \K \ \kappa\text{-small} \}$. Hence $\Ind_\kappa(\mC)$ is presentable. \end{remark}

\begin{notation}\label{Indi}
Let $\kappa$ be a regular cardinal. For any small $\mO$-monoidal $\infty$-category $\mC \to \mO$ compatible with $\kappa$-small colimits let $\Ind_\kappa\mC \subset \mP\mC$ be the full suboperad spanned by objects of $\Ind_\kappa(\mC_\X) \subset \mP(\mC_\X) \simeq \mP\mC_\X$ for some $\X \in \mO.$	For any $\mO$-monoidal $\infty$-category $\mC \to \mO$ compatible with small colimits let $\widehat{\Ind}\mC \subset \widehat{\mP}\mC$ be the full suboperad spanned by objects of $\widehat{\Ind}(\mC_\X) \subset \widehat{\mP}(\mC_\X) \simeq \widehat{\mP}\mC_\X$ for some $\X \in \mO.$	\end{notation}

\begin{remark}\label{nos}Let $\mC \to \mO$ be a small $\mO$-monoidal $\infty$-category compatible with $\kappa$-small colimits. By the description of generating local equivalences $\Ind_\kappa\mC \subset \mP\mC$ is a localization relative to $\mO$ (\cite[Remark 2.2.4.12]{lurie.higheralgebra}) so that $\Ind_\kappa\mC \to \mO$ is an $\mO$-monoidal $\infty$-category compatible with small colimits. Again by the description of generating local equivalences for any $\mO$-monoidal $\infty$-category $\mD \to \mO$ compatible with small colimits the induced functor $\Fun_\mO^{\ot,\L}(\Ind_\kappa\mC, \mD) \to \Fun_\mO^{\ot,\kappa}(\mC, \mD)$ is an equivalence, where the right hand side is the full subcategory of $\mO$-monoidal functors preserving fiberwise $\kappa$-small colimits.
\end{remark}

\begin{notation}Let $\mV^\ot \to \Ass$ be a small monoidal $\infty$-category compatible with $\kappa$-small colimits. We write $\Ind_\kappa(\mV)^\ot $ for $\Ind_\kappa\mV^\ot.$
Let $\mV^\ot \to \Ass$ be a monoidal $\infty$-category compatible with small colimits. We write $\widehat{\Ind}(\mV)^\ot $ for $\widehat{\Ind}\mV^\ot.$
\end{notation}

We also consider families of $\mO$-operads relative to an $\infty$-category $\rS$ for any cocartesian fibration $\mO \to \Ass$ relative to the collection of inert morphisms
by applying Definition \ref{opep} to the projection $\mO \times \rS \to \mO \to \Ass$, which is a cocartesian fibration relative to the collection of inert morphisms:

\begin{definition}\label{famm}

Let $\rS$ be an $\infty$-category and $\mO \to \Ass$ a cocartesian fibration relative to the collection of inert morphisms.

\begin{itemize}
\item A $\rS$-family of (generalized) $\mO$-operads is a (generalized) $\mO \times \rS$-operad.

\vspace{1mm}

\item A $\rS$-family of (generalized) $\mO$-monoidal $\infty$-categories is a $\rS$-family
$\phi: \mC \to \mO \times \rS $ of (generalized) $\mO$-operads
such that $\phi$ is a map of cocartesian fibrations over $\mO.$

\vspace{1mm}

\item A cocartesian (cartesian) $\rS$-family of (generalized) $\mO$-operads/ (generalized) $\mO$-monoidal $\infty$-categories is
a $\rS$-family $\phi: \mC \to \mO \times \rS $ of (generalized) $\mO$-operads/ (generalized) $\mO$-monoidal $\infty$-categories such that $\phi $ is a map of cocartesian (cartesian) fibrations over $\rS.$

\vspace{1mm}

\item A bicartesian $\rS$-family of presentably $\mO$-monoidal $\infty$-categories
is a cocartesian and cartesian $\rS$-family $\phi: \mC \to \mO \times \rS $ of $\mO$-monoidal $\infty$-categories such that for any $\s \in \rS$ the functor $\mC_\s \to \mO$ is a presentably $\mO$-monoidal $\infty$-category.
\end{itemize}

\end{definition}

\begin{remark}\label{recogn}
Let $\rS$ be an $\infty$-category and $\mO \to \Ass$ a cocartesian fibration relative to the collection of inert morphisms.
A functor $\mV^\ot \to \mO \times \rS$ exhibits $\mV^\ot$ as a $\rS$-family of generalized $\mO$-operads if and only if it is a map of generalized $\mO$-operads.
This gives an equivalence $\Op_{\infty}^{\mO \times \rS, \gen} \simeq (\Op_\infty^{\mO,\gen})_{/\mO \times \rS}.$

More generally, for any $\infty$-category $\T$ a functor $\mV^\ot \to \mO \times \rS \times \T$ exhibits $\mV^\ot$ as a $\rS \times \T$-family of generalized $\mO$-operads if and only if it is a map of $\rS$ families of generalized $\mO$-operads.

\end{remark}

Associated to families of $\infty$-operads is a fibered version of $\infty$-category of algebras:

\begin{notation}
Let $\rS$ be an $\infty$-category and $\mO \to \Ass$ a cocartesian fibration relative to the collection of inert morphisms. 
We write $\Alg^\rS_{\mO}(-): \Op_\infty^{\mO \times \rS,\gen} \to \Cat_{\infty/\rS} $ for the right adjoint of the functor $$(-)\times \mO: \Cat_{\infty/\rS} \to  \Op_\infty^{\mO \times \rS,\gen}$$ that exists 
by \cite[Theorem B.4.2.]{lurie.higheralgebra} using that the projection $\mO \times \rS \to \rS$ is a cocartesian fibration.
\end{notation}

\begin{remark}\label{diagol} 
Let $\mD \to \rS \times \mO$ be a $\rS$-family of generalized $\mO$-operads.
By the defining universal property there is an equivalence
$\Alg_\mO^\rS(\mD) \simeq \rS \times_{\Alg_\mO(\rS \times \mO)} \Alg_\mO(\mD) $,
where the pullback uses the diagonal functor $\delta:\rS \to \Alg_\mO(\rS \times \mO) \subset \Fun_\mO(\mO, \rS \times \mO) \simeq \Fun(\mO,\rS).$
In particular, if $\mO$ is weakly contractible (so that $\delta$ is an equivalence), $\Alg_\mO^\rS(\mD)$ is the full subcategory of $ \Alg_\mO(\mD)$
spanned by the $\mO$-algebras $\alpha$
such that $\mO \xrightarrow{\alpha}\mD \to \rS$ inverts every morphism.
This holds for example for $\mO\in \{\Ass, \LM,\RM,\BM\}.$	
\end{remark}

\begin{lemma}\label{fibb}
Let $\rS' \to \rS$ be a functor, $\mO' \to \mO$ a map of cocartesian fibrations relative to the collection of inert morphisms of $\Ass$ and $\mD \to \rS\times \mO$ a $\rS$-family of generalized $\mO$-operads and $\Y \in \mO.$

\begin{enumerate}
\item There is a canonical equivalence $$\rS' \times_\rS \Alg^\rS_{\mO}(\mD) \simeq \Alg^{\rS'}_{\mO}(\rS' \times_\rS\mD).$$ 
In particular, for any $\s \in \rS$ the fiber $\Alg^\rS_{\mO}(\mD)_\s$ is
$\Alg_{\mO}(\mD_\s).$

\vspace{1mm}
\item There are functors over $\rS$: $$\Alg^\rS_{\mO}(\mD) \to \mD_\Y, \ \Alg^\rS_{\mO}(\mD) \to \Alg^\rS_{\mO'}(\mO' \times_\mO \mD)$$

\vspace{1mm}
\item Let $\mE \subset \Fun([1],\rS)$ be a full subcategory. If $\mD \to \rS\times \mO$ is a map of (co)cartesian fibrations
relative to $\mE$, the functor $\Alg^\rS_{\mO}(\mD) \to \rS$ is a (co)cartesian fibration relative to $\mE$ and the functors of (2) are maps of (co)cartesian fibrations relative to $\mE$.

\end{enumerate}

\end{lemma}

\begin{proof}

The map in (1) is represented by the map
$$ \Fun_{\rS'}(\T, \rS' \times_\rS \Alg^\rS_{\mO}(\mD)) \simeq \Fun_{\rS}(\T,\Alg^\rS_{\mO}(\mD)) \to
\Fun_{\rS'}(\T, \Alg^{\rS'}_{\mO}( \rS' \times_\rS \mD)) $$
that is the restriction of the canonical equivalence
$$\Fun_{\mO \times \rS}(\mO \times \T, \mD) \simeq \Fun_{\mO \times \rS'}(\mO \times \T, \rS' \times_\rS \mD)$$
for any functor $\T \to \rS'$.	
The first functor in (2) is represented by the map
$$ \Fun_\rS(\T, \Alg^\rS_{\mO}(\mD)) \hookrightarrow \Fun_{\mO \times \rS}(\mO \times \T, \mD) \to \Fun_{\mO \times \rS}(\{\Y \} \times \T, \mD) \simeq \Fun_\rS(\T, \mD_\Y)$$
for any functor $\T \to \rS$. The second functor in (2) is represented by the map $$ \Fun_\rS(\T, \Alg^\rS_{\mO}(\mD)) \to \Fun_\rS(\T, \Alg^\rS_{\mO'}(\mO' \times_\mO \mD)) $$ that is the restriction of the canonical map
$$\Fun_{\mO \times \rS}(\mO \times \T, \mD) \to \Fun_{\mO \times \rS}(\mO' \times \T, \mD) \simeq \Fun_{\mO' \times \rS}(\mO' \times \T, \mO' \times_\mO \mD)$$
for any functor $\T \to \rS$.
(3) follows from Lemma \ref{flafla}.

\end{proof}

\begin{notation}\label{modol}
Let $\rS$ be an $\infty$-category, $\mO \in \{\Ass,\LM,\RM,\BM\}$
and $\mC \to \rS\times \mO$ a $\rS$-family of generalized $\mO$-operads.	
We set $$\Alg^\rS(\mC):= \Alg^\rS_\Ass(\mC),\ \LMod^\rS(\mC):=\Alg^\rS_\LM(\mC),\ \RMod^\rS(\mC):=\Alg^\rS_\RM(\mC),\ \BMod^\rS(\mC):=\Alg^\rS_\BM(\mC)$$
and omit $\rS$ from the notation if $\rS$ is contractible.	
\end{notation}

\begin{example}\label{exemp}
For any (cocartesian, cartesian) $\rS$-family of generalized $\LM$-operads $\mD \to \rS \times \LM$ there are canonical maps $$\LMod^\rS(\mD) \to \Alg^\rS(\Ass \times_\LM \mD), \ \LMod^\rS(\mD) \to \mD_{[0]*[0]} $$ of (cocartesian, cartesian) fibrations over $\rS$.

\end{example}

\section{Weakly tensored $\infty$-categories}
\label{weak action}
In this section we give a different model for $\LM$-, $\RM$- and $\BM$-operads, 
which we call weakly left, right and bitensored $\infty$-categories, and which seems generally easier to work with.
In \cite[4.2.2.]{lurie.higheralgebra} Lurie introduces so called simplicial models for
left, right and bitensored $\infty$-categories, which are a special case of
our notion of weakly left, right and bitensored $\infty$-categories.
In Propositions \ref{proo} and \ref{bproo} we show that weakly left, right and bitensored $\infty$-categories are a model for generalized $\LM-,\RM-,\BM$-operads, respectively.
We define enriched and pseudo-enriched $\infty$-categories in the sense of Lurie (Definition \ref{Enr} and \ref{Lu}) as certain weakly left tensored $\infty$-categories.

\subsection{Weakly left, right and bitensored $\infty$-categories}

\begin{notation}
We say that a morphism in $ \Ass$ preserves the minimum (maximum) if it corresponds to a map $[\m] \to [\n]$ in $\Delta$ sending $0$ to $0$
(sending $\m$ to $\n$).
We call a morphism of $\Ass \times \Ass$ inert if it is inert in both components.

\end{notation}

\begin{remark}\label{flpqmmmm}
Every morphism in $\Ass$ can be uniquely factored in an inert morphism that preserves the maximum (minimum) followed by a morphism that preserves the minimum (maximum).
	
\end{remark}

\begin{definition}\label{bla}
Let $\mV^\ot \to \Ass, \mW^\ot \to \Ass$ be generalized $\infty$-operads and $\phi=(\phi_1,\phi_2): \mM^\circledast \to \mV^\ot \times \mW^\ot $ a map of cocartesian fibrations relative to the collection of inert morphisms of $\Ass \times \Ass$ whose first component preserves the maximum and whose second component preserves the minimum.

\vspace{2mm}	

We call $\phi: \mM^\circledast \to \mV^\ot \times \mW^\ot $ an $\infty$-category weakly bitensored over $(\mV, \mW)$ (or an $\infty$-category with weak biaction over $(\mV,\mW)$) if the following conditions hold:
\begin{enumerate}
\item for every $\n,\m \geq 0$ the map $[0]\simeq \{\n\} \subset [\n]$ in the first component and the map $[0]\simeq \{0\} \subset [\m]$ in the second component induce an equivalence
$$ \theta: \mM^\circledast_{[\n][\m]} \to \mV^\ot_{[\n]} \times_{\mV^\ot_{[0]}}  \mM^\circledast_{[0][0]}  \times_{\mW^\ot_{[0]}}  \mW^\ot_{[\m]},$$
\vspace{1mm}
\item for every $\X,\Y \in \mM^\circledast$ lying over $([\m'], [\n']), ([\m], [\n]) \in \Ass \times \Ass$ the cocartesian lift $\Y \to \Y'$ of the map $[0]\simeq \{\m\} \subset [\m]$ and $[0]\simeq \{0\} \subset [\n]$ induces an
equivalence
$$\hspace{11mm} \mM^\circledast(\X,\Y) \to \mV^\ot(\phi_1(\X),\phi_1(\Y))\times_{\mV^\ot(\phi_1(\X),\phi_1(\Y'))} \mM^\circledast(\X,\Y') \times_{\mW^\ot(\phi_2(\X),\phi_2(\Y'))} \mW^\ot(\phi_2(\X),\phi_2(\Y)).$$
	
\end{enumerate}

\end{definition}

\begin{remark}\label{porto}
(2) implies that the functor $\theta$ in (1) is fully faithful
by taking the fiber over the identity of $[\n],[\m] \in \Ass \times \Ass $ in (2).
So if (2) holds, (1) holds if and only if $\theta$ is essentially surjective.

Moreover observe that $\theta$ is essentially surjective
if and only if for every $\V \in \mV^\ot$ lying over $[\n]$ and $\W \in \mW^\ot$ lying over $[\m]$ the cocartesian lifts $\V \to \V',\W \to \W'$ of the maps $[0]\simeq \{\n\} \subset [\n], [0]\simeq \{0\} \subset [\m]$ induce an essentially surjective functor
$\mM^\circledast_{\V,\W} \to \mM^\circledast_{\V',\mW'}.$
\end{remark}

\begin{notation}
	
For any $\infty$-category $\phi: \mM^\circledast \to \mV^\ot \times \mW^\ot$ with weak biaction we call $\mM:=  \mM^\circledast_{[0],[0]} $ the underlying $\infty$-category of $\phi$ and say that $\phi$ exhibits $\mM$ as weakly bitensored over $\mV,\mW$.
	
\end{notation}

\begin{example}\label{Exq}
Let $\mV^\ot \to \Ass$ be a generalized $\infty$-operad.
We write $\mV^\circledast \to \Ass \times \Ass$ for the pullback of $\mV^\ot \to \Ass$ along the functor $\Ass \times \Ass \to \Ass, \ ([\n],[\m]) \mapsto [\n]\ast [\m]$.
The two functors $\Ass \times \Ass \times [1] \to \Ass$ corresponding to the natural transformations $(-)\ast \emptyset \to (-)\ast (-), \ \emptyset \ast (-) \to (-)\ast (-)$ send the morphism $\id_{[\n],[\m]}, 0 \to 1$ to
an inert one and so give rise to functors
$\mV^\circledast \to \mV^\ot \times \Ass, \ \mV^\circledast \to \Ass \times \mV^\ot $ over $\Ass\times \Ass$. The resulting functor $\mV^\circledast \to \mV^\ot \times \mV^\ot$ is an $\infty$-category weakly bitensored over $\mV,\mV.$

\end{example}

\begin{example}\label{jhll}
For every $\infty$-category $\K$ and generalized $\infty$-operad $\mV^\ot \to \Ass$
the projection $ \mV^\ot \times \K \times \mW^\ot \to \mV^\ot \times \mW^\ot $
is an $\infty$-category weakly bitensored over $\mV,\mW.$
	
\end{example}

\begin{remark}\label{remosq}
Let $\mV^\ot \to \Ass, \mW^\ot \to \Ass$ be generalized $\infty$-operads,
$\phi: \mM^\circledast\to \mV^\ot \times \mW^\ot$ a weakly bitensored $\infty$-category and $\mN^\circledast \hookrightarrow \mM^\circledast$ a fully faithful functor.
Assume that the fiber transports of $\phi$ along inert morphisms of $\Ass \times \Ass$ whose first component preserves the maximum and whose second component preserves the minimum, preserve $\mN^\circledast $.
Assume that for every cocartesian lift $\X \to \Y$ in $\mM^\circledast$ of the maps $[0]\simeq \{\n\} \subset [\n], [0]\simeq \{0\} \subset [\m]$, where $\Y \in \mN^\circledast$, the object $\X$ belongs to $\mN^\circledast$.

The restriction $\mN^\circledast \hookrightarrow \mM^\circledast\to \mV^\ot \times \mW^\ot$ is a weakly bitensored $\infty$-category. Condition (2) of Definition \ref{bla} is inherited from $ \phi: \mM^\circledast \to \mV^\ot \times \mW^\ot$. So Condition (1) of Definition \ref{bla} follows from Remark \ref{porto}.

\end{remark}

\begin{example}
Let $\mM^\circledast \to \mV^\ot \times \mW^\ot$ be a weakly bitensored $\infty$-category and $\mN \subset \mM$ a full subcategory.
Let $\mN^\circledast \subset \mM^\circledast$ the full subcategory of $\X \in \mM^\circledast$ lying over some
$([\n],[\m])\in \Ass \times \Ass$ corresponding to a family $\V_1,...,\V_\n \in \mV, \Y \in \mN, \W_1,...,\W_\m \in \mW.$
By Remark \ref{remosq} the restriction $\mN^\circledast \subset \mM^\circledast \to \mV^\ot \times \mW^\ot$ is an $\infty$-category with weak biaction, where $\mN^\circledast_{[0][0]} \simeq \mN$.
We call $\mN^\circledast \to \mV^\ot \times \mW^\ot$ the full subcategory of $ \mM^\circledast $ with weak biaction spanned by $\mN.$

\end{example}

\begin{notation}\label{empp}
Let $\emptyset^\ot \subset \Ass$ be the full subcategory spanned by $[0] \in \Ass.$
Then $\emptyset^\ot$ is contractible and $\emptyset^\ot \subset \Ass$ is an $\infty$-operad such that $ \emptyset^\ot_{[0]} $ contractible and $\emptyset^\ot_{[\n]}$ is empty for $\n >0$.
For any generalized $\infty$-operad $\mV^\ot \to \Ass$
there is a canonical equivalence $\Alg(\emptyset,\mV) \simeq \mV_{[0]}^\ot$
so that $\emptyset^\ot \to \Ass$ is the initial $\infty$-operad.
\end{notation}

We will often consider $\infty$-categories weakly bitensored over $\mV, \emptyset$
and $ \emptyset,\mV$, which we call $\infty$-categories weakly left (right) tensored over 
$\mV.$
In this case the definition of $\infty$-category with weak biaction simplifies to the following one:
\begin{definition}\label{wla}
Let $\mV^\ot \to \Ass$ be a generalized $\infty$-operad and $\phi: \mM^\circledast \to \mV^\ot $ a map of cocartesian fibrations relative to the collection of inert morphisms of $\Ass$ that preserve the maximum.

\vspace{1mm}	

We call $\phi: \mM^\circledast \to \mV^\ot $ an $\infty$-category weakly left tensored over $\mV$ (or an $\infty$-category with weak left $\mV$-action) if the following conditions hold:
\begin{enumerate}
\item for every $\n \geq 0$ the map $[0]\simeq \{\n\} \subset [\n]$ in $\Delta$ induces an equivalence
$$ \theta: \mM^\circledast_{[\n]} \to \mV^\ot_{[\n]} \times_{\mV^\ot_{[0]}} \mM^\circledast_{[0]}.$$

\item for every $\X,\Y \in \mM^\circledast$ lying over $[\m], [\n] \in \Ass$ the cocartesian lift $\Y \to \Y'$ of the map $[0]\simeq \{\n\} \subset [\n]$ in $\Delta$ induces a pullback square
\begin{equation*} 
\begin{xy}
\xymatrix{
\mM^\circledast(\X,\Y)  \ar[d]^{} \ar[r]^{ }
& \mM^\circledast(\X,\Y') \ar[d]^{} 
\\ \mV^\ot(\phi(\X),\phi(\Y)) 
\ar[r]^{}  & \mV^\ot(\phi(\X),\phi(\Y')). 
}
\end{xy} 
\end{equation*}
\end{enumerate}
\end{definition}

There is a dual description of $\infty$-categories with weak right action.

\begin{definition}Let $\phi: \mM^\circledast \to \mV^\ot \times \mW^\ot$ be an  $\infty$-category with weak biaction.
	
\begin{itemize}
\item We say that $\phi: \mM^\circledast \to \mV^\ot \times \mW^\ot$ exhibits $\mM$ as bitensored over $\mV,\mW$ if $\phi$ is a map
of cocartesian fibrations over $\Ass \times \Ass$.

\item We say that $\phi: \mM^\circledast \to \mV^\ot \times \mW^\ot$ exhibits $\mM$ as left tensored over $\mV$ if $\phi$ is a map of cocartesian fibrations relative to the collection of morphisms of $\Ass \times \Ass$ whose second component is inert and preserves the minimum.

\item We say that $\phi: \mM^\circledast \to \mV^\ot \times \mW^\ot$ exhibits $\mM$ as right tensored over $\mW$ if $\phi$ is a map of cocartesian fibrations relative to the collection of morphisms of $\Ass \times \Ass$ whose first component is inert and preserves the maximum. 

\item We say that an $\infty$-category $\phi: \mM^\circledast \to \mV^\ot$
weakly left tensored over $\mV$ exhibits $\mM$ as left tensored over $\mV$
if $\phi$ is a map of cocartesian fibrations over $\Ass$.

\end{itemize}

\end{definition} 

\begin{remark}\label{remuu}

Let $\phi: \mM^\circledast \to \mV^\ot \times \mW^\ot$ be a map of cocartesian fibrations
over $\Ass \times \Ass.$
Then $\phi: \mM^\circledast \to \mV^\ot \times \mW^\ot$ exhibits $\mM$ as bitensored over $\mV,\mW$ if and only if the cocartesian fibrations $\mV^\ot \to \Ass, \mW^\ot \to \Ass$ 
are generalized monoidal $\infty$-categories and condition (1) of Definition \ref{bla} holds. Condition (2) is then automatic. 
\end{remark}

\begin{remark}\label{locally}
An $\infty$-category with weak biaction $\phi: \mM^\circledast \to \mV^\ot \times \mW^\ot$ exhibits $\mM$ as left tensored over $\mV$ if and only if
the functor $\mM^\circledast \to \mV^\ot$ is a map of cocartesian fibrations
over $\Ass$ and every morphism that is cocartesian with respect to the functor
$\mM^\circledast \to \mV^\ot \to \Ass$ is inverted by the functor $\mM^\circledast \to \mW^\ot.$ 
If $\phi: \mM^\circledast \to \mV^\ot \times \mW^\ot$ exhibits $\mM$ as left tensored over $\mV$, the underlying $\infty$-category with weak left action
exhibits $\mM$ as left tensored over $\mV$.
But the converse is not true.

\end{remark}

\begin{example}\label{Exq2}
Let $\mV^\ot \to \Ass$ be a generalized monoidal $\infty$-category.
The weakly bitensored $\infty$-category $\mV^\circledast= (\Ass \times \Ass) \times_\Ass \mV^\ot \to \mV^\ot \times \mV^\ot$ of Example \ref{Exq} exhibits $\mV$ as bitensored over $\mV,\mV.$
	
\end{example}

\begin{definition}Let $\phi: \mM^\circledast \to \mV^\ot$ be an $\infty$-category left tensored over a monoidal $\infty$-category.
\begin{itemize}
\item We say that $\phi: \mM^\circledast \to \mV^\ot$ exhibits $\mM$ as left tensored over $\mV$ compatible with small colimits if $\mV^\ot \to \Ass$ is a monoidal $\infty$-category compatible with small colimits, $\mM$ has small colimits and for any $\V \in \mV, \X \in \mM$
the functors $(-) \ot \X: \mV \to \mM, \V \ot (-): \mM \to \mM$ preserve small colimits.
\vspace{1mm}

\item We say that $\phi: \mM^\circledast \to \mV^\ot$ is an $\infty$-category with closed left action if for any $\V \in \mV, \X \in \mM$ the functor $(-) \ot \X: \mV \to \mM$ admits a right adjoint.
\end{itemize}	
\end{definition}

Moreover we define families of weakly left tensored / bitensored $\infty$-categories using Remark \ref{recogn}:

\begin{definition}\label{famwe}
Let $\rS\in \Cat_\infty$ and $\mV^\ot \to \rS \times \Ass,\mW^\ot \to \rS \times \Ass $ be $\rS$-families of generalized $\infty$-operads.
\begin{itemize}
\item A $\rS$-family of weakly left tensored $\infty$-categories is a weakly left tensored $\infty$-category $\mM^\circledast \to \mV^\ot.$

\item A $\rS$-family of weakly bitensored $\infty$-categories is a functor $\mM^\circledast \to \mV^\ot \times_\rS \mW^\ot$ such that the composition
$\mM^\circledast \to \mV^\ot \times_\rS \mW^\ot \to \mV^\ot \times \mW^\ot$
is a weakly bitensored $\infty$-category.
	
\item A (co)cartesian $\rS$-family of weakly left tensored / bitensored $\infty$-categories is a $\rS$-family of weakly left tensored / bitensored $\infty$-categories that is a map of (co)cartesian fibrations over $\rS.$
\end{itemize}		
\end{definition}

\begin{notation}\label{mult}(Multi-morphism spaces)
	
\begin{itemize}
\item Let $\mV^\ot \to \Ass$ be an $\infty$-operad and $\V_1,..., \V_{\n},\W \in \mV$ for some $\n \geq 0 $. We write
$$\Mul_{\mV}(\V_1,..., \V_\n;\W)$$ for the full subspace of $\mV^\ot(\V,\W)$ spanned by the morphisms $\V \to \W$ in $\mV^\ot$ lying over the active morphism $[1] \to [\n]$ in $\Delta,$ where $\V \in \mV^\ot_{[\n]} \simeq \mV^{\times \n}$ corresponds to $(\V_1,..., \V_\n) $.
		
\vspace{0,5mm}
		
\item Let $\mM^\circledast \to \mV^\ot \times \mW^\ot$ be an $\infty$-category weakly bitensored over $\infty$-operads and $\V_1,..., \V_{\n} \in \mV, \W_1, ..., \W_\m \in \mW, \ \X, \Y \in \mM$ for $\n, \m \geq 0$. We write
$$\Mul_{\mM}(\V_1,..., \V_\n,\X,\W_1,...,\W_\m; \Y)$$ for the full subspace of $\mM^\circledast(\Z,\Y)$ spanned by the morphisms $\Z \to \Y$ in $\mM^\circledast$ lying over the maps $[0] \simeq \{0 \} \subset [\n], [0] \simeq \{\m \} \subset [\m]$ in $\Delta,$ where $\Z \in \mM_{[\n],[\m]}^\circledast \simeq \mV^{\times \n} \times \mM \times  \mW^{\times \n}$ corresponds to $(\V_1,..., \V_\n,\X, \W_1,...,\W_\m) $.
		
\end{itemize}
\end{notation}

\begin{remark}
Let $\mV^\ot \to \Ass$ be an $\infty$-operad and $\V_1,..., \V_{\n},\X, \W_1,..., \W_{\m}, \Y \in \mV$ for some $\n,\m \geq 0 $. 
Let $\mM^\circledast:= \mV^\circledast$ be the weak $\mV,\mV$-biaction of $\mV$ on itself (Notation \ref{Exq}).
There is a canonical equivalence 
$$\Mul_{\mM}(\V_1,..., \V_\n,\X,\W_1,...,\W_\m; \Y) \simeq \Mul_{\mV}(\V_1,..., \V_\n,\X,\W_1,...,\W_\m; \Y).$$	

If $(\V_1,..., \V_\n,\X,\W_1,...,\W_\m)$ correspond to $\Z \in  \mV^\circledast_{[\n],[\m]} \simeq \mV^\ot_{[\n]*[\m]},$ this equivalence is the restriction of the map
$ \mV^\circledast(\Z,\Y)\simeq \mV^\ot(\Z,\Y)$ induced by the projection
$ \mV^\circledast= (\Ass \times \Ass) \times_\Ass \mV^\ot \to \mV^\ot.$ 
	
\end{remark}

Next we define morphisms of small weakly left, right and bitensored $\infty$-categories:

\begin{definition}\label{linmapp}
Let $\phi: \mM^\circledast \to \mV^\ot \times \mW^\ot, \phi':\mM'^\circledast \to \mV'^\ot \times \mW'^\ot $ be $\infty$-categories with weak biaction.

We call a commutative square of $\infty$-categories over $\Ass \times \Ass$ 
\begin{equation*} 
\begin{xy}
\xymatrix{
\mM^\circledast  \ar[d]^{\phi} \ar[rr]^{\gamma}
&&\mM'^\circledast \ar[d]^{\phi'} 
\\ \mV^\ot \times \mW^\ot
\ar[rr]^{\rho}  && \mV'^\ot \times \mW'^\ot
}
\end{xy} 
\end{equation*}
a map of weakly bitensored $\infty$-categories if $\gamma$ preserves cocartesian lifts of inert morphisms of $\Ass \times \Ass$ whose first component preserves the maximum and whose second component preserves the minimum.

We call a map of weakly bitensored $\infty$-categories
\begin{itemize}
\item a lax $\mV,\mW$-linear functor if $\rho$ is the identity. 

\item an embedding if the functors $\gamma$ and $\rho$ are embeddings of $\infty$-categories.
\end{itemize}
\end{definition}

\begin{definition}

Let $\phi: \mM^\circledast \to \mV^\ot \times \mW^\ot, \phi':\mM'^\circledast \to \mV^\ot \times \mW^\ot $ be $\infty$-categories with weak biaction.

We call a lax $\mV,\mW$-linear functor $ \mM^\circledast \to \mM'^\circledast$
\begin{itemize}
\item a $\mV$-linear functor if it preserves cocartesian lifts of morphisms of $\Ass \times \Ass$ whose second component preserves the minimum and $\phi, \phi'$ exhibit
$\mM, \mM'$ as left tensored over $\mV.$
\vspace{1mm}
\item a $\mW$-linear functor if it preserves cocartesian lifts of morphisms of $\Ass \times \Ass$ whose first component preserves the maximum and $\phi, \phi'$ exhibit
$\mM, \mM'$ as right tensored over $\mW.$
\vspace{1mm}
\item a $\mV, \mW$-linear functor if it preserves cocartesian lifts of morphisms of $\Ass \times \Ass$ and $\phi, \phi'$ exhibit $\mM, \mM'$ as bitensored over $\mV,\mW.$
\end{itemize}

\end{definition} 

\begin{notation}
If $\mW^\ot \simeq \emptyset^\ot$, we call a (lax) $\mV,\mW$-linear functor a (lax) $\mV$-linear functor and similar for weak right actions.
\end{notation}

\begin{notation} Let
$ \Cat_{\infty / \Ass \times \Ass}^{\max\min} \subset \Cat_{\infty / \Ass \times \Ass}$ be the subcategory of cocartesian fibrations relative to the collection of inert morphisms whose first component preserves the maximum and whose second component preserves the minimum.
Let $$\omega\BMod \subset \omega\BMod^\gen \subset (\Op_\infty^\gen \times \Op_\infty^\gen ) \times_{\Cat_{\infty / \Ass \times \Ass}^{\max\min} } \Fun([1], \Cat_{\infty / \Ass \times \Ass}^{\max\min}) $$ be the full subcategories of $\infty$-categories with weak biaction over (generalized) $\infty$-operads.
Let $$\BMod \subset \BMod^\gen \subset (\Op_\infty^{\mon,\gen} \times \Op_\infty^{\mon,\gen}) \times_{\Cat_{\infty / \Ass \times \Ass}^{\cocart} } \Fun([1], \Cat_{\infty / \Ass \times \Ass}^\cocart) $$ the full subcategory of $\infty$-categories with biaction over (generalized) monoidal $\infty$-categories.
		
\end{notation}

\begin{example}\label{abbml}
	
For every $\infty$-category $\K$ the unique functor $\K \to \emptyset^\ot \times \emptyset^\ot $ exhibits
$\K$ as weakly bitensored over $\emptyset, \emptyset.$
The forgetful functor $$\omega\BMod_{\emptyset, \emptyset} \subset \Cat_{\infty /\emptyset^\ot \times \emptyset^\ot} \simeq \Cat_\infty, \mM^\circledast \to \emptyset^\ot \times \emptyset^\ot \mapsto \mM \simeq \mM^\circledast$$ is an equivalence inverse to the functor sending $\K$ to $\K \to \emptyset^\ot \times \emptyset^\ot $.
	
\end{example}

\begin{notation} Let $\Cat_{\infty/\Ass}^{\max} \subset \Cat_{\infty/\Ass}$ be the subcategory of cocartesian fibrations relative to the collection of inert morphisms preserving the maximum.

Let 
$$\omega\LMod \subset \omega\LMod^\gen \subset \Op_\infty^\gen \times_{\Fun(\{1\},\Cat_{\infty/\Ass}^{\max}) }  
\Fun([1],\Cat_{\infty/\Ass}^{\max})$$ be the full subcategories
of $\infty$-categories with weak left action of (generalized) $\infty$-operads.

Let $$\LMod \subset \LMod^\gen \subset \Fun([1],\Cat_{\infty/\Ass}^{\cocart})$$ be the full subcategory of $\infty$-categories with left action of a (generalized) monoidal $\infty$-category.

Let $$\LMod^{\rc\rc} \subset \widehat{\LMod}$$ be the subcategory of $\infty$-categories with left action compatible with small colimits and functors preserving small colimits.
\end{notation}

\begin{remark}

There is a canonical equivalence $$\{ \emptyset^\ot \} \times_{([0] \times \Cat_{\infty / \Ass})} (\Cat_{\infty / \Ass} \times \Cat_{\infty / \Ass}) \times_{\Cat_{\infty / \Ass \times \Ass} } \Fun([1], \Cat_{\infty / \Ass \times \Ass}) \simeq \Fun([1], \Cat_{\infty / \Ass}) $$
that restricts to equivalences
$$ \omega\BMod^\gen \times_{\Op^\gen_{\infty}} \{\emptyset^\ot\}\simeq \omega\LMod^\gen, \ \omega\BMod \times_{\Op_{\infty}} \{\emptyset^\ot\}\simeq \omega\LMod$$
over $\Op_{\infty}^\gen$ giving embeddings
$\omega\LMod^\gen \subset \omega\BMod^\gen, \omega\LMod \subset \omega\BMod.$
\end{remark}

\begin{notation}
For any generalized $\infty$-operads $\mV^\ot \to \Ass, \mW^\ot \to \Ass$ let
\begin{itemize}
\item $\Cat^{{\max\min}}_{\infty/\mV^\ot \times \mW^\ot} \subset \Cat_{\infty/\mV^\ot\times \mW^\ot}$
be the subcategory of cocartesian fibrations relative to the collection of
cocartesian lifts in $\mV^\ot \times \mW^\ot$ of inert morphisms in $\Ass \times \Ass$ whose first component preserves the maximum and whose second component preserves the minimum.

\item $ \Cat^{{\max}}_{\infty/\mV^\ot} \subset \Cat_{\infty/\mV^\ot}$
be the subcategory of cocartesian fibrations relative to the collection of
cocartesian lifts in $\mV^\ot$ of inert morphisms of $\Ass$ that preserves the maximum.
\end{itemize}

\end{notation}

\begin{notation}
Evaluation at the target restricts to forgetful functors $$ \omega\BMod^\gen \to \Op_\infty^\gen \times \Op_\infty^\gen,\ \omega\BMod \to \Op_\infty \times \Op_\infty, $$$$ \BMod^\gen \to \Op_\infty^{\mon,\gen} \times \Op_\infty^{\mon,\gen}, \  \BMod \to \Op_\infty^\mon \times \Op_\infty^\mon, $$ 
whose fibers over generalized $\infty$-operads (generalized monoidal $\infty$-categories)
$\mV^\ot \to \Ass, \mW^\ot \to \Ass$ we denote by
$$ \omega\BMod_{\mV,\mW}, \BMod_{\mV,\mW} \subset \Cat^{{\max\min}}_{\infty/\mV^\ot \times \mW^\ot}. $$
\end{notation}

\begin{notation}
Similarly, evaluation at the target restricts to forgetful functors $$ \omega\LMod^\gen \to \Op_\infty^\gen,\ \omega\LMod \to \Op_\infty,\ \LMod^\gen \to \Op_\infty^{\mon,\gen}, \  \LMod \to \Op_\infty^\mon $$ whose fiber over a generalized $\infty$-operad (generalized monoidal $\infty$-category)
$\mV^\ot \to \Ass$ we denote by
$$ \omega\LMod_{\mV}, \LMod_{\mV} \subset \Cat^{\mathrm{max}}_{\infty/\mV^\ot}. $$
\end{notation}

\begin{remark}\label{modules}
By Example \ref{stru} there is a canonical equivalence
$ \LMod \simeq \Mon_\LM(\Cat_\infty) \simeq \LMod(\Cat_\infty)$
over $\Op_{\infty}^\mon \simeq \Mon(\Cat_\infty)\simeq \Alg(\Cat_\infty)$,
where $\Cat_\infty$-carries the cartesian structure.
Applied to a larger universe the latter equivalence restricts to an 
equivalence $ \LMod^{\rc\rc} \simeq \LMod(\Cat^{\rc\rc}_\infty)$
over $\Op_{\infty}^{\mon, \rc\rc} \simeq \Alg(\Cat^{\rc\rc}_\infty).$

For any monoidal $\infty$-category $\mV^\ot \to \Ass$ corresponding to an associative algebra in $\Cat_\infty$ there is an induced equivalence
$ \LMod_\mV \simeq \LMod_\mV(\Cat_\infty).$

\end{remark}

Next we study functoriality of these fibers with respect to $ \mV^\ot \to \Ass, \mW^\ot \to \Ass$ (Proposition \ref{bica}).

For that note that both functors evaluating at the target 
$$ \Fun([1], \Cat_{\infty / \Ass \times \Ass}^{\max\min}) \to \Cat_{\infty / \Ass \times \Ass}^{\max\min}, \ \Fun([1], \Cat_{\infty/\Ass \times \Ass}^\cocart) \to \Cat_{\infty/\Ass \times \Ass}^\cocart$$ are bicartesian fibrations since $ \Cat_{\infty / \Ass \times \Ass}^{\max\min}, \Cat_{\infty/\Ass \times \Ass}^\cocart $ admit pullbacks. 

Moreover for the proof of the next proposition we use the following lemma:

\begin{lemma}\label{remul}
	
Let $\psi: \mB \to\mD, \phi: \mC \to \mD$ be cartesian fibrations and $\mB \to \mC$ a
map of cartesian fibrations over $\mD$ that is fully faithful.
If for every $\X \in \mD$ the induced functor
$ \{\X\} \times_\mD \mB \subset \{\X\} \times_\mD \mC$ admits a left adjoint
and $\phi$ is a cocartesian fibration, then $\psi$ is a cocartesian fibration.
\end{lemma}
\begin{proof}
Let $\alpha: \X \to \Y$ be a morphism of $\mD$ and
$[1] \times_\mD \mC \to [1]$ the pullback along the functor $[1]\to \mD$
corresponding to $\alpha.$
The bicartesian fibration $[1] \times_\mD \mC \to [1]$ classifies an 
adjunction $\alpha_!: \mC_\X \rightleftarrows \mC_\Y: \alpha^*$.
Since the induced functor $[1] \times_\mD \mB \to [1] \times_\mD \mC$
is a map of cartesian fibrations over $[1]$,
the functor $\alpha^*: \mC_\Y \to \mC_\X$ restricts to a functor
$\mB_\Y \to \mB_\X$. In other words the functor $\alpha^*$
preserves local objects for the corresponding localizations.
This is equivalent to ask that the left adjoint
$\alpha_!$ preserves local equivalences for the corresponding localizations.
By \cite[Lemma 2.2.4.11.]{lurie.HTT} this guarantees that $\psi$ is a cocartesian fibration if $\phi$ is a cocartesian fibration.
\end{proof}

\begin{proposition}\label{bica}
The canonical functors $$  \omega\BMod^\gen \to \Op_\infty^\gen \times \Op_\infty^\gen,\ 
\BMod^\gen \to \Op_\infty^{\mon,\gen} \times \Op_\infty^{\mon,\gen} $$ 
are bicartesian fibrations and the embeddings
$$\omega\BMod^\gen \subset (\Cat_{\infty / \Ass}^{\mathrm{inert}} \times \Cat_{\infty / \Ass}^{\mathrm{inert}}) \times_{\Cat_{\infty / \Ass \times \Ass}^{\max\min} } \Fun([1], \Cat_{\infty / \Ass \times \Ass}^{\max\min}), $$$$ \BMod^\gen \subset (\Cat_{\infty / \Ass}^{\cocart} \times \Cat_{\infty / \Ass}^{\cocart}) \times_{\Cat_{\infty / \Ass \times \Ass}^{\cocart} } \Fun([1], \Cat_{\infty / \Ass \times \Ass}^\cocart) $$ preserve cartesian morphisms and admit left adjoints relative to $ \Op^\gen_\infty \times \Op^\gen_\infty, \Op_\infty^{\mon,\gen} \times \Op_\infty^{\mon,\gen} $.

\end{proposition}
\begin{proof}
The bicartesian fibrations 
$$(\Cat_{\infty / \Ass}^{\mathrm{inert}} \times \Cat_{\infty / \Ass}^{\mathrm{inert}}) \times_{\Cat_{\infty / \Ass \times \Ass}^{\max\min} } \Fun([1], \Cat_{\infty / \Ass \times \Ass}^{\max\min}) \to \Cat_{\infty / \Ass}^{\mathrm{inert}} \times \Cat_{\infty / \Ass}^{\mathrm{inert}},$$
$$(\Cat_{\infty / \Ass}^{\cocart} \times \Cat_{\infty / \Ass}^{\cocart}) \times_{\Cat_{\infty / \Ass \times \Ass}^{\cocart} } \Fun([1], \Cat_{\infty / \Ass \times \Ass}^\cocart) \to \Cat_{\infty / \Ass}^{\cocart} \times \Cat_{\infty / \Ass}^{\cocart}$$
restrict to cartesian fibrations
$ \omega\BMod^\gen \to \Op_\infty^\gen \times \Op_\infty^\gen, \BMod^\gen \to \Op_\infty^{\mon,\gen} \times \Op_\infty^{\mon,\gen} $ with the same cartesian morphisms, respectively.

By Lemma \ref{remul} it is enough to check that the full embeddings
$$\omega\BMod^\gen \subset (\Cat_{\infty / \Ass}^{\mathrm{inert}} \times \Cat_{\infty / \Ass}^{\mathrm{inert}}) \times_{\Cat_{\infty / \Ass \times \Ass}^{\max\min} } \Fun([1], \Cat_{\infty / \Ass \times \Ass}^{\max\min}), $$$$\BMod^\gen \subset (\Cat_{\infty / \Ass}^{\cocart} \times \Cat_{\infty / \Ass}^{\cocart}) \times_{\Cat_{\infty / \Ass \times \Ass}^{\cocart} } \Fun([1], \Cat_{\infty / \Ass \times \Ass}^\cocart)$$
induce on the fiber over every pair of generalized $\infty$-operads (generalized monoidal $\infty$-categories) $\mV^\ot \to \Ass, \mW^\ot \to \Ass$ a localization.
Let $\mE \subset \Fun([1],\mV^\ot \times \mW^\ot)$ be the full subcategory spanned by all cocartesian lifts in $\mV^\ot \times \mW^\ot$.
Let $\Xi$ be the collection of functors $[0]^{\triangleleft} \simeq [1] \to \mV^\ot \times \mW^\ot$ determining a pair of inert morphisms $\V \to \V'$ in $\mV^\ot$ and
$\W \to \W'$ in $\mW^\ot$ lying over the maps $[0]\simeq \{\n\}\subset [\n]$,
$[0]\simeq \{0\}\subset [\m]$, respectively.
By Remark \ref{porto} the full subcategories $$\omega\BMod_{\mV,\mW} \subset (\Cat^{\max\min}_{\infty /\Ass \times \Ass})_{/\mV^\ot \times \mW^\ot} \simeq \Cat^{\max\min}_{\infty/\mV^\ot \times \mW^\ot}, \BMod^\gen_{\mV,\mW} \subset (\Cat^\cocart_{\infty /\Ass \times \Ass})_{/\mV^\ot \times \mW^\ot} \simeq \Cat^{\mE}_{\infty/\mV^\ot \times \mW^\ot}  $$ are the full subcategories
of $\Xi$-fibered objects in the sense of \cite[Definition B.0.19.]{lurie.higheralgebra}, which are localizations as a consequence of \cite[Theorem B.0.20.]{lurie.higheralgebra}.

\end{proof}

In the following we identify generalized $\LM$-operads with weakly left tensored $\infty$-categories:

\begin{proposition}\label{proo}
There is an equivalence
$\Cat_{\infty/\LM}^\mathrm{inert} \simeq \Cat_{\infty/\Ass}^{\mathrm{inert}} \times_{\Fun(\{1\},\Cat_{\infty/\Ass}^{\max}) } \Fun([1],\Cat_{\infty/\Ass}^{\max})$
that restricts to equivalences
$$ \Op^{\LM,\gen}_{\infty} \simeq \omega\LMod^\gen, \hspace{3mm} \Op^{\LM}_{\infty} \simeq \omega\LMod, \hspace{3mm} \Op^{\LM, \mon}_{\infty} \simeq \LMod. $$
\end{proposition}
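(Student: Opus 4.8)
The backbone of the argument is the equivalence $\LM \simeq \Ass \times \{\emptyset \to \infty\}$ recorded just above Definition \ref{wla}, and my first step would be to pin down exactly which morphisms are inert under it. Writing the second factor as $[1]=\{0\to 1\}$, the end $\Ass\times\{1\}$ consists of the ``algebra'' objects $[\n]\to[1]$ constant at $0$, which map to $[\n]$ under $\LM\to\Ass$, while $\Ass\times\{0\}$ consists of the ``module'' objects $[\n+1]\to[1]$ sending the top vertex to $1$, which map to $[\n+1]$. A short computation with the join description shows that a morphism $(\alpha,\beta)$ is inert in $\LM$ precisely when: over the algebra end ($\beta=\id_1$) the component $\alpha$ is an arbitrary inert morphism of $\Ass$; over the module end ($\beta=\id_0$) the component $\alpha$ is inert and preserves the maximum, since the module marker at the top vertex must be preserved; and every vertical morphism $([\n],0)\to([\n],1)$ is inert, its image in $\Ass$ being the inert inclusion $[\n]\hookrightarrow[\n+1]$ that omits the module vertex. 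This asymmetry between the two ends is exactly what forces $\Cat_{\infty/\Ass}^{\mathrm{inert}}$ at the target end $\Ass\times\{1\}$ and the weaker $\Cat_{\infty/\Ass}'$ for the remaining data.

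For the first (unrestricted) equivalence I would build the comparison functor by restriction and transport. Given $\mP\to\LM$ in $\Cat_{\infty/\LM}^{\mathrm{inert}}$, restriction to the two ends (regarded over $\Ass$ via the projection $\LM\simeq\Ass\times[1]\to\Ass$) produces $\mV:=\mP|_{\Ass\times\{1\}}\to\Ass$ and $\mM:=\mP|_{\Ass\times\{0\}}\to\Ass$; by the end-analysis these are cocartesian fibrations relative to the inert, respectively the inert max-preserving, morphisms, so $\mV\in\Cat_{\infty/\Ass}^{\mathrm{inert}}$ and $\mM\in\Cat_{\infty/\Ass}'$. Since every vertical morphism is inert, $\mP$ is a cocartesian fibration along the $[1]$-direction, and cocartesian transport yields a functor $\phi:\mM\to\mV$ over $\Ass$ preserving inert max-preserving cocartesian lifts, i.e.\ a morphism of $\Cat_{\infty/\Ass}'$ with target $\mV$. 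This produces the object $(\mV,\phi)$ of the pullback. To see that the assignment is an equivalence I would invoke relative straightening over $[1]$: the projection $\Ass\times[1]\to[1]$, together with the formalism of Definition \ref{namen2} and \cite{lurie.higheralgebra} Theorem B.4.2., reconstructs $\mP$ functorially from its two end-restrictions and the transport $\phi$, compatibly with the relevant inert classes, supplying the inverse.

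The three restricted equivalences then follow by matching the fibered conditions on the two sides. Using the explicit reformulation of Remark \ref{ope}, a cocartesian-rel-inert fibration over $\LM$ is a generalized $\LM$-operad exactly when the Segal-type limit conditions hold for the diagrams in $\Lambda^\gen_\LM$. Under $\LM\simeq\Ass\times[1]$ each such inert diagram decomposes into an inert diagram for the algebra part, living at the algebra end, together with the single module marker, and I would check that the induced conditions on $(\mV,\phi)$ say precisely that $\mV$ is a generalized $\infty$-operad and that $\phi:\mM\to\mV$ satisfies conditions 1 and 2 of Definition \ref{wla}, i.e.\ that $(\mV,\phi)\in\omega\LMod^\gen$. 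The restriction to $\Op^{\LM}_\infty\simeq\omega\LMod$ is then the additional requirement that $\mV$ be an honest $\infty$-operad, via Proposition \ref{genopp}, and $\Op^{\LM,\mon}_\infty\simeq\LMod$ is the further requirement that $\mP\to\LM$ be a genuine cocartesian fibration, which on the two ends says that $\mV$ is monoidal and that $\mM\to\mV$ is a cocartesian fibration over $\Ass$.

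The main obstacle I anticipate is the bookkeeping in this last matching step, especially in the generalized case: one must verify that the $\LM$-avatar of the colimit diagram \ref{dia}, that is, diagram \ref{diaa} read at the module end, yields exactly the pullback square of condition 2 of Definition \ref{wla} together with the fiber identification $\mM_{[\n]}\simeq\mV_{[\n]}\times_{\mV_{[0]}}\mM_{[0]}$ of condition 1, with correct handling of the module vertex under the inert projections and of the fiber over $[0]$. A secondary point requiring care is that the comparison be genuinely an equivalence of $\infty$-categories rather than merely a bijection on equivalence classes, which is why I would route the inverse through relative straightening over $[1]$ rather than reconstructing $\mP$ object by object.
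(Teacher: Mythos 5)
Your proposal is correct and takes essentially the same route as the paper: your analysis of which morphisms of $\LM\simeq\Ass\times[1]$ are inert is precisely the content of the paper's Lemma \ref{llem}, your passage from $\mP$ to the pair $(\mV,\phi)$ and back via straightening over $[1]$ is the paper's identification $(\Cat^\cocart_{\infty/[1]})_{/\LM}\simeq \Fun([1],\Cat_{\infty/\Ass})$, and both arguments conclude by matching the fibered (Segal-type) conditions on the two sides to obtain the restricted equivalences. One cosmetic correction: the reconstruction of $\mP$ from its two ends and the transport functor is ordinary straightening over $[1]$ (which the paper invokes as ``classifying a functor by a cocartesian fibration''), not an application of \cite{lurie.higheralgebra} Theorem B.4.2.
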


\begin{proof}
Via the equivalence $\LM \simeq \Ass \times [1]$
classifying a functor by a cocartesian fibration gives an equivalence
\begin{equation}\label{epp}
(\Cat^\cocart_{\infty/[1]})_{/\LM}\simeq \Fun([1], \Cat_{\infty/\Ass}).\end{equation}
By Lemma \ref{llem} (1) the inclusion $ \Cat_{\infty/\LM}^\mathrm{inert} \subset \Cat_{\infty/\LM} $ lands in $(\Cat^\cocart_{\infty/[1]})_{/\LM}\subset (\Cat_{\infty/[1]})_{/\LM} \simeq \Cat_{\infty/\LM}$.
By Lemma \ref{llem} equivalence (\ref{epp}) restricts to an equivalence
$$\Cat_{\infty/\LM}^\mathrm{inert} \simeq \Cat_{\infty/\Ass}^{\mathrm{inert}} \times_{\Fun(\{1\},\Cat_{\infty/\Ass}^{\max})} \Fun([1],\Cat_{\infty/\Ass}^{\max}) \subset \Fun([1],\Cat_{\infty/\Ass}^{\max}) \subset \Fun([1],\Cat_{\infty/\Ass})$$ further restricting to the claimed equivalences.

\end{proof}

We used the following lemma:
\begin{lemma}\label{llem}
A functor $\alpha: \mO \to \LM \simeq \Ass \times [1]$ is a cocartesian fibration
relative to the collection of inert morphisms if and only if the following three conditions are satisfied:
\begin{enumerate}
\item The functor $\alpha$ is a map of cocartesian fibrations over $[1]$.
\vspace{1mm}
\item The pullback $ \mV^\ot:= (\Ass \times\{1\}) \times_{(\Ass \times [1])} \mO \to \Ass $ is a cocartesian fibration relative to the collection of inert morphisms.
\vspace{1mm}
\item The pullback $ \mM^\circledast:= (\Ass \times\{0\}) \times_{(\Ass \times [1])} \mO \to \Ass $ is a cocartesian fibration relative to the collection of inert morphisms preserving the maximum and the functor $\phi: \mM^\circledast \to \mV^\ot$ over $\Ass$ classified by $\alpha$ via (1) preserves cocartesian lifts of inert morphisms preserving the maximum.

\end{enumerate}

For cocartesian fibrations $\mO \to \LM,\mO'\to \LM$ relative to the collection of inert morphisms a functor $\theta: \mO \to \mO'$ over $\LM$ is a map of cocartesian fibrations relative to the collection of inert morphisms if and only if
$\theta$ is a map of cocartesian fibrations over $[1]$
classifying a commutative square of $\infty$-categories
\begin{equation*} 
\begin{xy}
\xymatrix{
\mM^\circledast  \ar[d]^{\phi} \ar[r]^{\alpha }
&\mN^\circledast \ar[d]^{\phi'}
\\ \mV^\ot
\ar[r]^{\beta}  & \mW^\ot,
}
\end{xy} 
\end{equation*} 
over $\Ass$ such that $\beta$ is a map of cocartesian fibrations relative to the collection of inert morphisms and $\alpha$ is a map of cocartesian fibrations relative to the collection of inert maps preserving the maximum.

\end{lemma}

\begin{proof}
	
First note that a morphism in $\LM$ is inert if and only if its corresponding morphism in $\Ass \times [1]$ satisfies one of the following conditions:
\begin{itemize}
\item its first component is inert and its second component is the unique map to the final object.
\item its first component is inert and preserves the maximum and its second component is the identity of the initial object.
\end{itemize}
	
Thus a functor $\alpha: \mO \to \Ass \times [1]$ is a cocartesian fibration relative to the collection of inert morphisms if and only if
the following conditions are satisfied:
\begin{enumerate}
\item $\alpha $ is a cocartesian fibration relative to the collection of morphisms of the form $(\f,\g)$, where $\f$ is an equivalence. 
\item $\alpha $ is a cocartesian fibration relative to the collection of morphisms of the form $(\f,\g)$, where $\g$ is the identity of 1 and $\f$ is 
an inert morphism.
\item $\alpha $ is a cocartesian fibration relative to the collection of morphisms of the form $(\f,\g)$, where $\g$ is the identity of 0 and $\f$ is 
an inert morphism that preserves the maximum.
\end{enumerate} 

Now observe that the first, second, third point is equivalent to the
first, second, third point of the statement, respectively.
The second part of the statement follows immediately from the first one.

\end{proof}

Proposition \ref{proo} also identifies $\rS$-families of $\LM$-operads with $\rS$-families of weakly left tensored $\infty$-categories for every $\infty$-category $\rS.$

The $\infty$-category of $\rS$-families of small weakly left tensored $\infty$-categories is by definition the pullback
$\omega\LMod^\gen \times_{\Op_\infty^\gen} \Op^{\Ass \times \rS,\gen}_\infty$
of the forgetful functor $\omega\LMod^\gen \to \Op_\infty^\gen$
along the forgetful functor $\Op^{\Ass \times \rS,\gen}_\infty \simeq (\Op_\infty^{\gen})_{/\Ass \times \rS}\to \Op_\infty^\gen$ using Remark \ref{recogn}. Note that
\begin{equation}\label{euter}
\omega\LMod^\gen \times_{\Op_\infty^\gen} \Op^{\Ass \times \rS,\gen}_\infty
\simeq \omega\LMod^\gen \times_{\Op_\infty^\gen} (\Op_\infty^{\gen})_{/\Ass \times \rS}\simeq \omega\LMod^\gen_{/(\id:\rS \times \Ass\to \rS \times \Ass)} .\end{equation}

\begin{corollary}
Let $\rS$ be an $\infty$-category. There is a canonical equivalence 
$$\Op^{\rS \times \LM,\gen}_{\infty} \simeq \omega\LMod^\gen \times_{\Op_\infty^\gen} \Op^{\rS \times \Ass,\gen}_\infty$$
between $\rS$-families of generalized $\LM$-operads and  $\rS$-families of weakly left tensored $\infty$-categories.
\end{corollary}

\begin{proof}
The generalized
$\LM$-operad $\rS \times \LM \to \LM$ classifies the weakly left tensored
$\infty$-category $\id: \rS \times \Ass\to \rS \times \Ass.$

The equivalence 
$\Op^{\LM,\gen}_{\infty} \simeq \omega\LMod^\gen$
of Proposition \ref{proo} induces an equivalence
$$\Op^{\rS \times \LM,\gen}_{\infty} \simeq (\Op^{\LM,\gen}_{\infty})_{/\rS \times \LM} \simeq \omega\LMod^\gen_{/(\id:\rS \times \Ass\to \rS \times \Ass)} \simeq \omega\LMod^\gen \times_{\Op_\infty^\gen} \Op^{\rS \times \Ass,\gen}_\infty,$$
where the first equivalence is by Remark \ref{recogn} and the last one is
the inverse of equivalence (\ref{euter}).
\end{proof}

Next we identify generalized $\BM$-operads with weakly bitensored $\infty$-categories.
For that we use the category $\Ass^{\triangleright} $ opposite to the category of finite totally ordered sets. 

\begin{notation}\label{aact}
We call a morphism of $\Ass^{\triangleright}$ inert if it is an inert morphism of $\Ass$ or its target is the final object.
We call a morphism in $\Ass^{\triangleright}$ active if it is an active morphism of $\Ass$ or it is the identity of the final object. 

\end{notation}

\begin{lemma}\label{zuwas}
Let $\phi: \X \to \rS$ be a functor.
\begin{enumerate}
\item The induced functor $\phi^{\triangleright}: \X^{\triangleright} \to \rS^{\triangleright}$ is a cocartesian fibration relative to the collection
of morphisms $\{ \A \to \infty \mid \A \in \rS^{\triangleright} \}.$

\item If $\phi: \X \to \rS$ is a cocartesian (left) fibration, the functor $\phi^{\triangleright}: \X^{\triangleright} \to \rS^{\triangleright}$ is a cocartesian (left) fibration, where a morphism of $\X^{\triangleright}$ is $\phi^{\triangleright}$-cocartesian if and only if it is a $\phi$-cocartesian morphism of $\X$ or it is the unique map to the final object of $\X^{\triangleright}$.
\end{enumerate}
\end{lemma}

\begin{proof}
(1) holds since the unique map in $\X^{\triangleright}$ to the final object is $\phi^{\triangleright}$-cocartesian.
(2) follows from (1) and the fact that the embedding $\X \subset \X^{\triangleright}$ sends $\phi$-cocartesian morphism to $\phi^{\triangleright}$-cocartesian morphisms.
\end{proof}

\begin{remark}\label{zuwas2}
By Lemma \ref{zuwas} the cocartesian fibration $\rS \to [0]$
induces a cocartesian fibration 
$ \rS^{\triangleright} \to [0]^{\triangleright} = [1]$,
which classifies the unique functor $\rS \to [0].$	
	
\end{remark}

By Lemma \ref{zuwas} the left fibration $\BM \to \Ass$ gives rise to a left fibration $ \BM^{\triangleright} \to \Ass^{\triangleright}$.
We call a morphism of $\BM^{\triangleright}$ inert (active) if its image in $\Ass^{\triangleright}$ is inert (active).
We have the following lemma:
\begin{lemma}\label{BMM}
There is a canonical equivalence 
$\Ass^{\triangleright}  \times \Ass^{\triangleright}  \simeq \BM^{\triangleright}$
that restricts to equivalences
$$(\Ass^{\triangleright} \times \{ \emptyset \to \infty \}) \setminus \{(\infty,\emptyset) \}\simeq \LM^{\triangleright},\ \Ass \times \{ \emptyset \to \infty \} \simeq \LM,$$
$$(\{ \emptyset \to \infty \} \times \Ass^{\triangleright}) \setminus \{(\emptyset, \infty) \} \simeq \RM^{\triangleright}, \ \{\emptyset  \to \infty \} \times \Ass \simeq \RM.$$
\begin{proof}
The desired equivalence is opposite to the canonical equivalence
\begin{equation*}\label{tzb}
\Delta^{\triangleleft} \times  \Delta^{\triangleleft} \simeq (\Delta^{\triangleleft})_{/[1]} = (\Delta_{/[1]})^{\triangleleft} , \ ([\n],[\m]) \mapsto [\n]\ast[\m]
\end{equation*}
inverse to the functor
$(\Delta^{\triangleleft})_{/[1]} \to \Delta^{\triangleleft} \times  \Delta^{\triangleleft}$ sending a map $\phi: [\n]\to [1]$ in $\Delta^{\triangleleft} $
to its fibers $(\phi^{-1}(0),\phi^{-1}(1)).$
\end{proof}	

\end{lemma}

\begin{remark}\label{char}
 
By Lemma \ref{BMM} there is a canonical equivalence $\BM^{\triangleright} \simeq \Ass^{\triangleright} \times \Ass^{\triangleright}. $
A morphism of $\BM^{\triangleright} \simeq \Ass^{\triangleright} \times \Ass^{\triangleright} $ is inert if and only if both of its components in $ \Ass^{\triangleright}$ are inert and one of the following conditions are satisfied:
\begin{itemize}
\item the first or second component is the unique map to the final object.
\item both components lie in $\Ass$ and the first component preserves the maximum and the second component preserves the minimum.
\end{itemize}

\end{remark}

\begin{notation}
	
Let $\Omega \subset \Fun([1],  \Ass^{\triangleright} \times \Ass^{\triangleright})$ be the full subcategory
of pairs of morphisms $(\f,\g)$ of one of the following types: 
\begin{itemize}
\item $\f$ is the unique morphism
$\A \to \infty$  for some $\A \in \Ass^{\triangleright}$ and $\g$ is an equivalence, 
\item $\f$ is an equivalence and $\g$ is the unique morphism
$\A \to \infty$ for some $\A \in   \Ass^{\triangleright}.$
\end{itemize}
\end{notation}
To any functor $\mO \to \BM$ we associate the functors
$$\mM^\circledast:=(\mO^{\triangleright})_{|{\Ass \times \Ass}} \to \Ass \times \Ass, \ \mV^\ot:=(\mO^{\triangleright})_{|{\{\infty\} \times \Ass}} \to \Ass, \ \mW^\ot:=(\mO^{\triangleright})_{\mid \Ass \times {\{\infty\}}} \to \Ass. $$

If the functor $\mO^{\triangleright}\to \BM^{\triangleright}$
is a cocartesian fibration relative to $\Omega$, the canonical natural transformation from the inclusion
$\Ass \times \Ass \subset \Ass^{\triangleright} \times \Ass^{\triangleright} $ to
$\Ass \times \Ass \to \{\infty\} \times \Ass \subset \Ass^{\triangleright} \times \Ass^{\triangleright} $ 
induces a functor $\mM^\circledast \to \mV^\ot \times \Ass, \mM^\circledast \to \Ass \times \mW^\ot$ over $\Ass \times \Ass$.
Similarly, the canonical natural transformation from the inclusion
$\Ass \times \Ass \subset \Ass^{\triangleright} \times \Ass^{\triangleright} $ to
$\Ass \times \Ass \to \Ass \times \{\infty\} \subset \Ass^{\triangleright} \times \Ass^{\triangleright} $ induces a functor $ \mM^\circledast \to \Ass \times \mW^\ot$ over $\Ass \times \Ass$.
In total we obtain a functor $\mM^\circledast \to \mV^\ot \times \mW^\ot$ over $\Ass \times \Ass.$

\begin{lemma}\label{luxa}
Let $\phi: \mO \to \BM$ be a functor such that the induced functor $\phi^{\triangleright}: \mO^{\triangleright}\to \BM^{\triangleright}$ is a cocartesian fibration relative to $\Omega$.
The functor $\phi^{\triangleright}$ is a cocartesian fibration relative to the collection of inert morphisms
if and only if the following conditions hold:
\begin{enumerate}
\item The functors $\mV^\ot \to \Ass, \ \mW^\ot \to \Ass$ are cocartesian fibrations relative to the collection of inert morphisms.

\item The functor $\mM^\circledast \to \Ass \times \Ass$ is a cocartesian fibration relative to the collection of inert morphisms whose first component preserves the maximum and whose second component preserves the minimum.

\item The functor $\mM^\circledast \to \mV^\ot \times \mW^\ot$ over $\Ass \times \Ass$ is a map of cocartesian fibrations relative to the collection of inert morphisms of $\Ass \times \Ass$ whose first component preserves the maximum and whose second component preserves the minimum.
\end{enumerate}
\end{lemma}

\begin{proof}
Since the functor $\phi^{\triangleright}:\mO^{\triangleright}\to \BM^{\triangleright}$ is a cocartesian fibration relative to $\Omega$, by Remark \ref{char}
the functor $\phi^{\triangleright}$ is a cocartesian fibration relative to the collection of inert morphisms
if and only if the following conditions hold:
\begin{itemize}
\item The functor $\phi^{\triangleright}$ is a cocartesian fibration relative to the collection of pairs of inert morphisms of $\Ass^{\triangleright}$ 
whose first or second component is the identity of the final object.

\item The functor $\phi^{\triangleright}$ is a cocartesian fibration relative to the collection $\mE$ of pairs of inert morphisms of $\Ass$ 
whose first component preserves the maximum and whose second component preserves the minimum.
\end{itemize}
The first of these conditions is equivalent to (1) because the embeddings $\mV^\ot, \mW^\ot \subset \mO^{\triangleright} $ preserve cocartesian lifts of inert morphisms.
Condition (2) is equivalent to ask that $\phi^{\triangleright}$ is a locally cocartesian fibration relative to $\mE$ and that every locally $\phi^{\triangleright} $-cocartesian lift of a morphism of $\mE$ is cocartesian
with respect to the pullback
$(\mO^{\triangleright})_{|{\Ass \times \Ass}} \to \Ass \times \Ass.$
If (1) and (2) hold, (3) is equivalent to ask that every locally $\phi^{\triangleright} $-cocartesian lift of a morphism of $\mE$, is
$\phi^{\triangleright} $-cocartesian.

\end{proof}

\begin{remark}\label{akka}
By Lemma \ref{visio} a cocartesian fibration $\mO \to \BM$ relative to the collection of inert morphisms is a (generalized) $\BM$-operad / (generalized) $\BM$-monoidal $\infty$-category if and only if the functor $\mM^\circledast \to \mV^\ot \times \mW^\ot$ is an $\infty$-category weakly bitensored / bitensored over (generalized) $\infty$-operads.
\end{remark}

\begin{terminology}
Let $\mO \to \BM$ be a generalized $\BM$-operad.
We call the functor $\mM^\circledast \to \mV^\ot \times \mW^\ot$ the $\infty$-category with weak biaction underlying $\mO \to \BM$.	
\end{terminology}

Sending a cocartesian fibration $\mO \to \BM$ relative to the collection of inert morphisms
to $\mM^\circledast \to \mV^\ot \times \mW^\ot$
defines a functor 
$$\zeta:\Cat_{\infty/\BM}^{\mathrm{inert}} \to (\Cat_{\infty / \Ass}^{\mathrm{inert}} \times \Cat_{\infty / \Ass}^{\mathrm{inert}}) \times_{\Cat_{\infty / \Ass \times \Ass}^{\max\min} } \Fun([1], \Cat_{\infty / \Ass \times \Ass}^{\max\min}). $$ 

We have the following proposition:

\begin{proposition}\label{bproo}
The functor $\zeta$ is an equivalence.
\end{proposition}
By Remark \ref{akka} the functor $\zeta$ restricts to equivalences 
$$\Op^{\BM,\gen}_{\infty} \simeq \omega\BMod^\gen, \hspace{3mm} \Op^\BM_{\infty} \simeq \omega\BMod, \hspace{3mm} \Op^{\BM,\mon}_{\infty} \simeq \BMod. $$

\begin{proof}
By Corollary \ref{addd} adding a final object $\mO \mapsto \mO^\triangleright$ defines an embedding $ \Cat_{\infty/\BM}^{\mathrm{inert}} \subset \Cat_{\infty/\BM^{\triangleright}}^{\mathrm{inert}}$
whose essential image is the full subcategory $ (\Cat_{\infty/\BM^{\triangleright}}^{\mathrm{inert}})'$ of cocartesian fibrations relative to the collection of inert morphisms of $\BM^{\triangleright} \simeq \Ass^{\triangleright} \times \Ass^{\triangleright} $ whose fiber over the final object is contractible.

Let $\xi $ be the functor $\Cat^{\Omega}_{\infty/ \BM^{\triangleright}} \to (\Cat_{\infty / \Ass} \times \Cat_{\infty / \Ass}) \times_{\Cat_{\infty / \Ass \times \Ass}} \Fun([1], \Cat_{\infty / \Ass \times \Ass})$  
that sends a cocartesian fibration $\mC \to \BM^{\triangleright} \simeq \Ass^{\triangleright} \times \Ass^{\triangleright}$ relative to $\Omega$ to
$$\mV^\ot:=\mC_{|{\{\infty\} \times \Ass}} \to \Ass, \ \mW^\ot:=\mC_{\mid \Ass \times {\{\infty\}}} \to \Ass, \ \mM^\circledast:=\mC_{|{\Ass \times \Ass}} \to \mV^\ot \times \mW^\ot. $$

Let $(\Cat^{\Omega}_{\infty/ \BM^{\triangleright}})' \subset \Cat^{\Omega}_{\infty/ \BM^{\triangleright}} $ be the full subcategory of cocartesian fibrations $\mC \to \BM^{\triangleright}$ relative to $ \Omega$ whose fiber over the final object is contractible. Note that $(\Cat_{\infty/\BM^{\triangleright}}^{\mathrm{inert}})'  \subset(\Cat^{\Omega}_{\infty/ \BM^{\triangleright}})'.$
By Lemma \ref{easy} the functor $\xi$ restricts to an equivalence
$$(\Cat^{\Omega}_{\infty/ \BM^{\triangleright}})' \simeq (\Cat_{\infty / \Ass} \times \Cat_{\infty / \Ass}) \times_{\Cat_{\infty / \Ass \times \Ass}} \Fun([1], \Cat_{\infty / \Ass \times \Ass}).$$
By Lemma \ref{luxa} this equivalence restricts to an equivalence
$$ (\Cat_{\infty/\BM^{\triangleright}}^{\mathrm{inert}})' \simeq (\Cat_{\infty / \Ass}^{\mathrm{inert}} \times \Cat_{\infty / \Ass}^{\mathrm{inert}}) \times_{\Cat_{\infty / \Ass \times \Ass}^{\max\min} } \Fun([1], \Cat_{\infty / \Ass \times \Ass}^{\max\min})$$
and $\zeta$ factors as 
$ \Cat_{\infty/\BM}^{\mathrm{inert}} \simeq (\Cat_{\infty/\BM^{\triangleright}}^{\mathrm{inert}})' \simeq (\Cat_{\infty / \Ass}^{\mathrm{inert}} \times \Cat_{\infty / \Ass}^{\mathrm{inert}}) \times_{\Cat_{\infty / \Ass \times \Ass}^{\max\min} } \Fun([1], \Cat_{\infty / \Ass \times \Ass}^{\max\min}).$

\end{proof}

For the proof of Proposition \ref{bproo} we used a few lemmas, for which we need the following notation:

\begin{notation}
For $\infty$-categories $\rS,\T$ let $\Omega \subset \Fun([1],  \rS^\triangleright \times \T^\triangleright)$ be the full subcategory
of pairs of morphisms $(\f,\g)$ of one of the following types: 
\begin{itemize}
\item $\f$ is the unique morphism
$\A \to \infty$  for some $\A \in \rS$ and $\g$ is an equivalence, 
\item $\f$ is an equivalence and $\g$ is the unique morphism
$\A \to \infty$ for some $\A \in \T.$
\end{itemize}
\end{notation}

Let $(\Cat_{\infty/{\rS^{\triangleright}\times \T^{\triangleright}}}^{\Omega})' \subset \Cat_{\infty/{\rS^{\triangleright}\times \T^{\triangleright}}}^{\Omega}$
be the full subcategory spanned by the cocartesian fibrations relative to $\Omega$ whose fiber over the final object is contractible.

\begin{lemma}\label{easy}
Let $\rS,\T$ be $\infty$-categories.	
There is a canonical equivalence $$\Phi:(\Cat_{\infty/{\rS^{\triangleright}\times \T^{\triangleright}}}^{\Omega})' \to (\Cat_{\infty / \rS} \times \Cat_{\infty / \T}) \times_{\Cat_{\infty / \rS \times \T}} \Fun([1],\Cat_{\infty / \rS \times \T}), $$
that sends $ \X \to \rS^{\triangleright} \times \T^{\triangleright} $ to
$(\X_{\mid\rS \times \{\infty\} }, \X_{\mid\{\infty\} \times \T}, \X_{\mid \rS \times \T} \to \X_{\mid\rS \times \{\infty\} } \times \X_{\mid\{\infty\} \times \T} ).$

\end{lemma}	

\begin{proof}
	
Observe that the equivalence 	
$ (\Cat_{\infty/[1] \times [1]})_{/\rS^\triangleright \times \T^{\triangleright}} \simeq  \Cat_{\infty/\rS^{\triangleright} \times \T^{\triangleright}},$
where we view $\rS^\triangleright \times \T^{\triangleright} $ over $[1] \times [1]$ via the functor $\rS^\triangleright \times \T^{\triangleright} \to [0]^\triangleright \times[0]^\triangleright =[1] \times [1],$
restricts to an equivalence
$ (\Cat^\cocart_{\infty/[1] \times [1]})_{/\rS^\triangleright \times \T^{\triangleright}} \simeq \Cat_{\infty/{\rS^{\triangleright}\times \T^{\triangleright}}}^{\Omega}$
that further restricts to an equivalence
$ ((\Cat^\cocart_{\infty/[1] \times [1]})_{/\rS^\triangleright \times \T^{\triangleright}})' \simeq (\Cat_{\infty/{\rS^{\triangleright}\times \T^{\triangleright}}}^{\Omega})',$
where the left hand side is the full subcategory of $(\Cat^\cocart_{\infty/[1] \times [1]})_{/\rS^\triangleright \times \T^{\triangleright}}$ spanned by the maps of cocartesian fibrations over $[1] \times [1]$ that induce an equivalence on the fiber over $(1,1).$
	
Let $\phi: [1] \times [1] \to \Cat_\infty$ be the functor that sends
$(1,1)$ to the final $\infty$-category,
sends $\{0\} \times [1] $ to $\rS \times \T \to \rS$ and
$[1] \times \{0\}$ to $\rS \times \T \to \T$.
The equivalence
$ \Cat^\cocart_{\infty/[1] \times [1]}\simeq \Fun([1] \times [1],\Cat_{\infty})$
gives rise to an equivalence 
$ (\Cat^\cocart_{\infty/[1] \times [1]})_{/\rS^\triangleright \times \T^\triangleright} \simeq \Fun([1] \times [1],\Cat_{\infty})_{/\phi}$
that restricts to an equivalence
$ ((\Cat^\cocart_{\infty/[1] \times [1]})_{/\rS^\triangleright \times \T^\triangleright})' \simeq \Fun([1] \times [1],\Cat_{\infty})'_{/\phi},$
where $\Fun([1] \times [1],\Cat_{\infty})'$ is the full subcategory
of $\Fun([1] \times [1],\Cat_{\infty})$ spanned the functors that send
$(1,1)$ to the final $\infty$-category.

The embedding 
$\bj: \Lambda^2_0 \simeq \{(0,0)\to (0,1), (0,0) \to (1,0) \} \subset [1] \times [1]$
gives rise to an adjunction
$\bj_*: \Fun([1] \times [1],\Cat_{\infty}) \rightleftarrows \Fun(\Lambda^2_1,\Cat_{\infty}):\bj_*,$
where the right adjoint is fully faithful and takes the right Kan extension.
By construction of the right Kan extension 
this adjunction restricts to an adjunction
$\Fun([1] \times [1],\Cat_{\infty})' \rightleftarrows \Fun(\Lambda^2_0,\Cat_{\infty}).$
As the left adjoint is conservative, it is an equivalence.
The left adjoint equivalence induces an equivalence 
$\Fun([1] \times [1],\Cat_{\infty})'_{/\phi} \simeq \Fun(\Lambda^2_0,\Cat_{\infty})_{/\sigma},$
where $\sigma: \Lambda_0^2 \to \Cat_\infty$ is the functor that sends
$0 \to 1, 0 \to 2$ to the projections $\q_\rS:\rS \times \T \to \rS, \q_\T: \rS \times \T \to \T,$ respectively. 
The functor
$$ \rho_\rS: \Theta_\rS:= \Cat_{\infty / \rS} \times_{\Fun(\{1\},\Cat_{\infty / \rS \times \T})} \Fun([1],\Cat_{\infty / \rS \times \T}) \to \Fun([1],\Cat_{\infty})_{/\q_\rS}$$ sending $(\mA \to \rS, \mC \to \mA \times \T) $ to $ \mC \to \mA \times \T \to \mA$ is an equivalence.
This follows from the fact that $\rho$ is a map of cocartesian fibrations over
$ \Cat_{\infty / \rS}$, where we view the target of $\rho$ over $ \Cat_{\infty / \rS}$ via evaluation at the target, that induces fiberwise equivalences.
Indeed, evaluation at the target is a cocartesian fibration
whose cocartesian morphisms are those that are inverted by evaluation at the source. Hence $\rho$ is a map of cocartesian fibrations over
$ \Cat_{\infty / \rS}$. Moreover $\rho$ induces on the fiber over any functor $\mC \to \rS$
the canonical equivalence $(\Cat_{\infty / \rS \times \T})_{/\mC \times \T} \simeq \Cat_{\infty / \mC \times \T}.$
Via $\rho_\rS, \rho_\T$ we obtain a canonical equivalence
$$\Fun(\Lambda^2_0,\Cat_{\infty})_{/\sigma} \simeq  \Fun([1],\Cat_{\infty})_{/\q_\rS} \times_{\Fun(\{0\},\Cat_{\infty})_{/ \rS \times \T}}
\Fun([1],\Cat_{\infty})_{/\q_\T} \simeq \Theta_\rS \times_{\Fun(\{0\},\Cat_{\infty / \rS \times \T})} \Theta_\T.$$

Finally, we observe that there is a canonical equivalence
$$ \Theta_\rS \times_{\Fun(\{0\},\Cat_{\infty / \rS \times \T})} \Theta_\T \simeq (\Cat_{\infty / \rS} \times \Cat_{\infty / \T}) \times_{\Fun(\{1\},\Cat_{\infty / \rS \times \T})} \Fun([1],\Cat_{\infty / \rS \times \T})$$
sending $(\mA \to \rS, \mC \to \mA \times \T, \mB \to \T, \mC \to \rS \times \mB)$
to $(\mA \to \rS, \mB \to \T, \mC \to (\mA \times \T) \times_{(\rS \times \T)}  (\rS \times \mB) \simeq \mA \times \mB).$

\end{proof}

\begin{lemma}
Let $\rS$ be an $\infty$-category, $ \Y \to \rS$ a functor and $\X \to \rS^{\triangleright}$ a functor such that $\X$ has a final object lying over the final object of $\rS^{\triangleright}$. The canonical functor
$\phi: \Fun_{\rS^{\triangleright}}(\Y^{\triangleright},\X) \to
\Fun_\rS(\Y, \X_{\mid \rS})$ is an equivalence.

\end{lemma}

\begin{proof}
The functor $\phi$ is inverse to the functor
$\Fun_\rS(\Y,\X_{\mid \rS}) \to \Fun_{\rS^{\triangleright}}(\Y^{\triangleright},(\X_{\mid \rS})^{\triangleright}) \to \Fun_{\rS^{\triangleright}}(\Y^{\triangleright},\X) $
induced by the functor $(\X_{\mid \rS})^{\triangleright} \to \X$ over $\rS^{\triangleright}$ whose pullback to $\rS$ is the identity of $ \X_{\mid \rS}$ and whose image of the final object of $(\X_{\mid \rS})^{\triangleright}$ is the final object of $\X.$

\end{proof}

\begin{corollary}\label{addd}
Let $\rS$ be an $\infty$-category and $\Theta \subset \Cat_{\infty / \rS^{\triangleright}}$ the full subcategory of functors $\X \to \rS^{\triangleright}$ such that $\X$ has a final object lying over the final object of $\rS^{\triangleright}$.
Restricting to $\rS \subset \rS^{\triangleright}$ 
defines a functor $(-)_{\mid \rS}: \Theta \subset \Cat_{\infty / \rS^{\triangleright}} \to \Cat_{\infty / \rS}$ that admits a fully faithful left adjoint with colocal objects 
the functors $\X \to \rS^{\triangleright}$ whose fiber over the final object of $\rS^{\triangleright}$ is contractible.

\end{corollary}

\begin{proof}
Let $\lambda: \X \to \rS^{\triangleright}$ belong to $\Theta$. The functor $(\X_{\mid \rS})^{\triangleright} \to \X$ over $\rS^{\triangleright}$ is an equivalence if and only if the fiber of $\lambda$ over the final object of $\rS^{\triangleright}$ is contractible.

\end{proof}
Let $\rS$ be an $\infty$-category and $\sigma: \Ass \times \Ass \times \rS \to \Ass \times \rS \times \Ass \times \rS$ the weakly bitensored $\infty$-category induced by the diagonal, which is classified by the generalized $\BM$-operad $\rS \times \BM \to \BM$.

The $\infty$-category of $\rS$-families of small weakly bitensored $\infty$-categories is by definition $\omega\BMod_{/\sigma}^\gen$ 
since a $\rS$-family of weakly bitensored $\infty$-categories is equivalently given by a weakly bitensored $\infty$-category over $\sigma$ using Remark \ref{recogn}. Since $\Op^{\rS \times \BM,\gen}_{\infty} \simeq (\Op^{\BM,\gen}_{\infty})_{/\rS \times \BM}$ by Remark \ref{recogn}, we get the following corollary:

\begin{corollary}
Let $\rS$ be an $\infty$-category. 
There is a canonical equivalence $$\Op^{\rS \times \BM,\gen}_{\infty} \simeq \omega\BMod^\gen_{/\sigma}$$
between $\rS$-families of generalized $\BM$-operads and $\rS$-families of weakly bitensored $\infty$-categories.

\end{corollary}

By Example \ref{strai} Proposition \ref{bproo} gives the following corollary:

\begin{corollary}
There is a canonical equivalence 
$\BMod(\Cat_\infty) \simeq \Op^{\BM, \mon}_{\infty} \simeq \BMod.$
\end{corollary}

For any generalized $\BM$-operad $\mO \to \BM$ one can consider bimodules in $\mO$. 
If $\mO \to \BM$ classifies a weakly bitensored $\infty$-category 
$\phi: \mM^\circledast \to \mV^\ot \times \mW^\ot, $ there is a description of bimodules in terms of $\phi:$
\begin{notation}
For any $\infty$-category $\phi: \mM^\circledast \to \mV^\ot \times \mW^\ot $ with weak biaction 
let $$\BMod(\mM) \subset \Fun_{\Ass \times \Ass}(\Ass \times \Ass,\mM^\circledast) $$ the full subcategory of functors $\theta: \Ass \times \Ass \to \mM^\circledast$ over $\Ass \times \Ass$ with the following properties:
\begin{enumerate}
\item $\alpha$ sends inert morphisms whose first component preserves the maximum and
whose second component preserves the minimum to cocartesian lifts.
\item The composition $\Ass \times \Ass \xrightarrow{\theta} \mM^\circledast \xrightarrow{\phi} \mV^\ot \times \mW^\ot$ sends any inert morphism to a cocartesian lift.
\end{enumerate}
\end{notation}

\begin{notation}\label{not1}
The functor $\phi$ gives rise to a functor $$\Fun_{\Ass \times \Ass}(\Ass \times \Ass,\mM^\circledast) \to \Fun_\Ass(\Ass,\mV^\ot) \times \Fun_\Ass(\Ass,\mW^\ot) $$ restricting to a functor $\BMod(\mM) \to \Alg(\mV) \times \Alg(\mW),$
whose fiber over $\A \in \Alg(\mV), \B \in \Alg(\mW)$ we denote by $\BMod_{\A,\B}(\mM)$.
\end{notation}

Proposition \ref{bproo} implies the following corollary:
\begin{corollary}\label{motivo}
Let $\mO \to \BM$ be a generalized $\BM$-operad classifying a weakly bitensored $\infty$-category $\mM^\circledast:= \mO^\triangleright_{|{\Ass \times \Ass}} \to \mV^\ot \times \mW^\ot. $ 
The canonical functor $$\Fun_\BM(\BM,\mO)\simeq \Fun_{\Ass^{\triangleright} \times \Ass^{\triangleright}}(\Ass^{\triangleright} \times \Ass^{\triangleright},\mO^\triangleright) \to \Fun_{\Ass \times \Ass}(\Ass \times \Ass,\mM^\circledast) $$ restricts to an equivalence
$ \BMod(\mO) \to \BMod(\mM)$ over $\Alg(\mV) \times \Alg(\mW)$.
\end{corollary}

\begin{notation}
Let $\rS$ be an $\infty$-category and $\mM^\circledast \to \mV^\ot \times_\rS \mW^\ot$ a $\rS$-family of weakly bitensored $\infty$-categories.
Let $\BMod^\rS(\mM) \subset \BMod(\mM)$ be the pullback of 
$\BMod(\mM) \to \Alg(\mV) \times \Alg(\mW)$ to the full subcategory  $\Alg^\rS(\mV) \times \Alg^\rS(\mW) \subset \Alg(\mV) \times \Alg(\mW)$ (see Remark \ref{diagol}).

\end{notation}

\begin{remark}
The functor $\mM^\circledast \to \mV^\ot \times_\rS \mW^\ot$ induces a functor $\BMod^\rS(\mM) \to \Alg^\rS(\mV) \times_\rS \Alg^\rS(\mW)$.
For any $\s \in \rS$ there is a canonical equivalence $\BMod^\rS(\mM)_\s \simeq \BMod(\mM_\s)$ over $\Alg(\mV_\s) \times \Alg(\mW_\s).$
\end{remark}

Remark \ref{diagol} implies the following corollary:
\begin{corollary}\label{motivo}
Let $\mO \to \rS \times \BM$ be a $\rS$-family of generalized $\BM$-operads classifying a $\rS$-family of weakly bitensored $\infty$-categories $\mM^\circledast\to \mV^\ot \times \mW^\ot. $ 
The canonical equivalence
$ \BMod(\mO)\simeq \BMod(\mM)$ over $\Alg(\mV)\times \Alg(\mW)$ restricts to an equivalence
$ \BMod^\rS(\mO)\simeq \BMod^\rS(\mM)$ over $\Alg^\rS(\mV)\times_\rS \Alg^\rS(\mW)$.
\end{corollary}

\vspace{2mm}

The same is true for generalized $\LM$-operads:
\begin{notation}
For any $\infty$-category $\phi: \mM^\circledast \to \mV^\ot $ with weak left action 
let $$\LMod(\mM) \subset \Fun_\Ass(\Ass,\mM^\circledast) $$ be the full subcategory spanned by the functors $\alpha: \Ass \to \mM^\circledast$ over $\Ass$ 
with the following properties:
\begin{enumerate}
\item $\alpha$ sends inert morphisms preserving the maximum to a cocartesian lift.
\item The composition $\Ass \xrightarrow{\alpha} \mM^\circledast \xrightarrow{\phi} \mV^\ot$ sends any inert morphism to a cocartesian lift.
\end{enumerate}
\end{notation}
\begin{notation}\label{not2}
The functor $\phi$ gives rise to a functor $\Fun_\Ass(\Ass,\mM^\circledast) \to \Fun_\Ass(\Ass,\mV^\ot)$ restricting to a functor $\LMod(\mM) \to \Alg(\mV),$
whose fiber over $\A \in \Alg(\mV)$ we denote by $\LMod_\A(\mM)$.
\end{notation}
Proposition \ref{proo} gives the following corollary:
\begin{corollary}\label{motiv}
Let $\mO \to \LM$ be a generalized $\LM$-operad classifying a weakly left tensored $\infty$-category 
$\mM^\circledast :=(\Ass \times \{0\}) \times_\LM \mO \to \mV^\ot.$
The embedding $\Ass \times \{0\} \subset  \Ass \times [1] \simeq \LM$ 
induces a functor $$\Fun_\LM(\LM,\mO) \to \Fun_\LM(\Ass,\mO) \simeq \Fun_\Ass(\Ass,\mM^\circledast)$$ that restricts to an equivalence $ \LMod(\mO) \to \LMod(\mM)$ over $\Alg(\mV)$.
\end{corollary}

\begin{notation}
Let $\rS$ be an $\infty$-category and $\mM^\circledast \to \mV^\ot $ a $\rS$-family of weakly left tensored $\infty$-categories. Let $\LMod^\rS(\mM) \subset \LMod(\mM)$ be the pullback of $\LMod(\mM) \to \Alg(\mV)$
to the full subcategory $\Alg^\rS(\mV) \subset \Alg(\mV)$ (see Remark \ref{diagol}).
	
\end{notation}

\begin{remark}
For any $\s \in \rS$ there is a canonical equivalence $\LMod^\rS(\mM)_\s \simeq \LMod(\mM_\s)$ over $\Alg(\mV_\s).$
\end{remark}

\begin{corollary}\label{motivo}
Let $\mO \to \rS \times \LM$ be a $\rS$-family of generalized $\LM$-operads classifying a $\rS$-family of weakly left tensored $\infty$-categories $\mM^\circledast\to \mV^\ot.$
The canonical equivalence
$ \LMod(\mO)\simeq \LMod(\mM)$ over $\Alg(\mV)$ restricts to an equivalence
$ \LMod^\rS(\mO)\simeq \LMod^\rS(\mM)$ over $\Alg^\rS(\mV).$
\end{corollary}

\begin{lemma}\label{2enr}
Let $\mV^\ot \to \Ass, \mW^\ot \to \Ass$ be generalized $\infty$-operads.
The $\infty$-category $\omega\BMod_{\mV,\mW}$ is left tensored over
$\Cat_\infty$. 
The left action sends an $\infty$-category $\K$ and an $\infty$-category with weak biaction
$\mM^\circledast \to \mV^\ot \times \mW^\ot$ to $\K \times \mM^\circledast \to \mV^\ot \times \mW^\ot$ and so restricts to $\BMod_{\mV,\mW}$ if $\mV^\ot \to \Ass, \mW^\ot \to \Ass$ are monoidal $\infty$-categories.
\end{lemma}
\begin{proof}  
By Example \ref{jhll} the functor $\Cat_\infty \to \Cat_{\infty/\mV^\ot \times \mW^\ot}, \ \K \mapsto \mV^\ot \times \K \times \mW^\ot$ induces a functor $\xi: \Cat_\infty \to \omega\BMod_{\mV,\mW}$ that preserves finite products and so promotes to a monoidal functor
$ \Cat_\infty^\times \to \omega\BMod_{\mV,\mW}^\times$ (\ref{cacart}).
By Example \ref{Exq2} we may view $\omega\BMod_{\mV,\mW}$ as left tensored over itself provided by the left tensored $\infty$-category
$\omega\BMod_{\mV,\mW}^\circledast \to \omega\BMod_{\mV,\mW}^\times.$
The pullback $ \Cat_\infty^\times \times_{\omega\BMod_{\mV,\mW}^\times}\omega\BMod_{\mV,\mW}^\circledast \to \Cat_\infty^\times$ exhibits $\omega\BMod_{\mV,\mW}$ as left tensored over $ \Cat_\infty,$
where the left action sends an $\infty$-category $\K$ and an $\infty$-category with weak biaction $\mM^\circledast \to \mV^\ot \times \mW^\ot$ to $\K \times \mM^\circledast \to \mV^\ot \times \mW^\ot$.
If $\mV^\ot \to \Ass, \mW^\ot \to \Ass$ are monoidal $\infty$-categories,
$\xi$ factors through $\BMod_{\mV,\mW}$ and $ \Cat_\infty^\times \times_{\BMod_{\mV,\mW}^\times} \BMod_{\mV,\mW}^\circledast \to \Cat_\infty^\times$ exhibits $\BMod_{\mV,\mW}$ as left tensored over $ \Cat_\infty$. So the inclusion $\BMod_{\mV,\mW} \subset \omega \BMod_{\mV,\mW}$
is $\Cat_\infty$-linear.
\end{proof}

\begin{notation}
Let $\Cat^{\max}_{\infty /\Ass \times \Ass}, \Cat^{\min}_{\infty /\Ass \times \Ass} \subset \Cat_{\infty /\Ass \times \Ass}$ be the subcategories of cocartesian fibrations relative to the collection of inert morphisms whose first component preserves the maximum, whose second component preserves the minimum, respectively.
\end{notation}

\begin{proposition}
Let $\mV^\ot \to \Ass, \mW^\ot \to \Ass$ be generalized $\infty$-operads.
\begin{enumerate}
\item If $\mW^\ot \to \Ass$ is a monoidal $\infty$-category, there is a canonical embedding $$ \RMod_\mW(\omega\LMod_\mV) \hookrightarrow (\Cat^{\max}_{\infty /\Ass \times \Ass})_{/\mV^\ot \times \mW^\ot}$$
with essential image the $\infty$-categories $\mM^\circledast \to \mV^\ot \times \mW^\ot$ with weak biaction that exhibit $\mM$ as right tensored over $\mW$.
\vspace{1mm}

\item If $\mV^\ot \to \Ass$ is monoidal $\infty$-category, there is a canonical embedding $$ \LMod_\mV(\omega\RMod_\mW) \hookrightarrow (\Cat^{\min}_{\infty /\Ass \times \Ass})_{/\mV^\ot \times \mW^\ot}$$
with essential image the $\infty$-categories $\mM^\circledast \to \mV^\ot \times \mW^\ot$ with weak biaction that exhibit $\mM$ as left tensored over $\mV$.

\vspace{1mm}
\item If $\mV^\ot \to \Ass,\mW^\ot \to \Ass$ are monoidal $\infty$-categories, the
$\Cat_\infty$-linear inclusion $\LMod_\mV \subset \omega\LMod_\mV$ induces an
inclusion $\RMod_\mW(\LMod_\mV) \subset \RMod_\mW(\omega\LMod_\mV) \hookrightarrow (\Cat^{\max}_{\infty /\Ass \times \Ass})_{/\mV^\ot \times \mW^\ot}$
that restricts to an equivalence $\RMod_\mW(\LMod_\mV) \simeq \BMod_{\mV,\mW}.$

\vspace{1mm}
\item If $\mV^\ot \to \Ass,\mW^\ot \to \Ass$ are monoidal $\infty$-categories, the
$\Cat_\infty$-linear inclusion $\RMod_\mW \subset \omega\RMod_\mW$ induces an
inclusion $\LMod_\mV(\RMod_\mW) \subset \LMod_\mV(\omega\RMod_\mW) \hookrightarrow (\Cat^{\min}_{\infty /\Ass \times \Ass})_{/\mV^\ot \times \mW^\ot}$
that restricts to an equivalence $\LMod_\mV(\RMod_\mW) \simeq \BMod_{\mV,\mW}.$
\end{enumerate}	
\end{proposition}

\begin{proof}
(1) By Example \ref{stru} there is a canonical embedding
$$\RMod_\mW(\omega\LMod_\mV) \simeq \RMod_{\mV^\ot \times \mW}(\omega\LMod_\mV^\times)\subset\RMod_{\mV^\ot \times \mW}((\Cat^{\max}_{\infty/\mV^\ot})^\times)\subset (\Cat^{\max}_{\infty /\Ass \times \Ass})_{/\mV^\ot \times \mW^\ot} $$
with essential image the $\infty$-categories $\mM^\circledast \to \mV^\ot \times \mW^\ot$ with weak biaction that exhibit $\mM$ as right tensored over $\mW$.
(2) is similar and (3) and (4) are clear.
\end{proof}

\subsection{$\infty$-categories of lax linear functors}
\label{LinF}
\begin{notation}\emph{}
For $\infty$-categories with weak biaction $\phi: \mM^\circledast \to \mV^\ot \times \mW^\ot, \phi':\mM'^\circledast \to \mV^\ot \times \mW^\ot $ let
\begin{itemize}
\item $ \LaxLinFun_{\mV,\mW}(\mM,\mM') \subset \Fun_{\mV^\ot \times \mW^\ot}(\mM^\circledast,\mM'^\circledast)$ be the full subcategory of lax $\mV,\mW$-linear functors.

\vspace{1mm}
\item $ \LinFun_{\mV,\mW}(\mM,\mM') \subset \LaxLinFun_{\mV,\mW}(\mM,\mM')$ be the full subcategory of $\mV,\mW$-linear functors if $\phi$, $\phi'$ are $\infty$-categories with biaction.
\end{itemize}

\end{notation}

\begin{remark}\label{enrrr}
For any $\K\in \Cat_\infty$ and $\infty$-categories $\mM^\circledast \to \mV^\ot \times \mW^\ot, \mM'^\circledast \to \mV^\ot \times \mW^\ot$ with weak biaction the equivalence
$\Fun(\K, \Fun_{\mV^\ot \times \mW^\ot}(\mM^\circledast,\mM'^\circledast)) \simeq \Fun_{\mV^\ot \times \mW^\ot}(\K \times \mM^\circledast, \mM'^\circledast)$
restricts to an equivalence
$$\Fun(\K,\LaxLinFun_{\mV,\mW}(\mM,\mM')) \simeq \LaxLinFun_{\mV,\mW}(\K \times \mM,\mM').$$
\end{remark}
\begin{notation}\label{diagor}
For any $\infty$-category $\K$ and $\infty$-category $\mM^\circledast \to \mV^\ot \times \mW^\ot$ with weak biaction let $$(\mM^\circledast)^\K:=(\mV^\ot \times \mW^\ot) \times_{\Fun(\K,\mV^\ot \times \mW^\ot)} \Fun(\K,\mM^\circledast) \to \mV^\ot \times \mW^\ot $$
be the pullback along the diagonal functor, which is an $\infty$-category with weak biaction
that endows $\Fun(\K, \mM)$ with the diagonal weak biaction. The canonical equivalence
$ \Fun_{\mV^\ot \times \mW^\ot}(\K \times \mM^\circledast, \mM'^\circledast) \simeq \Fun_{\mV^\ot \times \mW^\ot}(\mM^\circledast, (\mM'^\circledast)^\K)$
restricts to an equivalence
$$\LaxLinFun_{\mV,\mW}(\K \times \mM,\mM') \simeq \LaxLinFun_{\mV,\mW}(\mM,\mM'^\K).$$
\end{notation}

\begin{definition}
Let $\phi: \mM^\circledast \to \mV^\ot \times \mW^\ot, \phi':\mM'^\circledast \to \mV^\ot \times \mW^\ot $ be $\infty$-categories with biaction.

We say that a $\mV,\mW$-linear functor $\mM^\circledast \to \mM'^\circledast $ preserves small colimits (admits a right adjoint) if the underlying functor $\mM \to \mM'$ preserves small colimits (admits a right adjoint).

\end{definition}

\begin{remark}\label{relright}
Let $\phi: \mM^\circledast \to \mV^\ot \times \mW^\ot, \phi':\mM'^\circledast \to \mV^\ot \times \mW^\ot $ be $\infty$-categories with biaction.
\cite[Proposition 7.3.2.6.]{lurie.higheralgebra} implies the following: a $\mV,\mW$-linear functor $\mM^\circledast \to \mM'^\circledast $ admits a right adjoint
if and only if it admits a right adjoint relative to $\mV^\ot \times \mW^\ot,$
where the right adjoint relative to $\mV^\ot \times \mW^\ot$ is a lax $\mV,\mW$-linear functor $\mM'^\circledast \to \mM^\circledast.$

\end{remark}

\begin{notation}
For $\infty$-categories $\phi:\mM^\circledast \to \mV^\ot, \phi':\mM'^\circledast \to \mV^\ot$ with weak left action let
\begin{itemize}
\item $ \LaxLinFun_\mV(\mM,\mM') \subset \Fun_{\mV^\ot}(\mM^\circledast,\mM'^\circledast)$ the full subcategory of lax $\mV$-linear functors.
\vspace{1mm}
\item $\LinFun^\L_\mV(\mM,\mM') \subset \LinFun_\mV(\mM,\mM') \subset \LaxLinFun_{\mV}(\mM,\mM')$ the full subcategories of $\mV$-linear functors (preserving small colimits) in case where $\phi$, $\phi'$ are $\infty$-categories with left action.

\end{itemize} 
\end{notation}

Remark \ref{modules} and \cite[Corollary 4.2.4.7.]{lurie.higheralgebra} imply the following remark:
\begin{remark}\label{lefta}
Let $\phi:\mM^\circledast \to \mV^\ot$ be an $\infty$-category with left action.
The functor $\LinFun_\mV(\mV,\mM) \to \mM$ evaluating at the tensor unit of $\mV$
is an equivalence whose inverse lifts the functor
$\mM \to \Fun(\mV,\mM)$ adjoint to the functor $\ot: \mV \times \mM \to \mM.$	
\end{remark}

\begin{lemma}\label{fghhnml}

Let $\mV^\ot \to \Ass, \mW^\ot \to \Ass$ be $\infty$-operads and $\mM^\circledast \to \mV^\ot \times \mW^\ot,\phi: \mN^\circledast \to \mV^\ot \times \mW^\ot$ be $\infty$-categories with weak biaction such that $\phi$ is a locally cocartesian fibration.
\begin{enumerate}
\item If $\mN$ has small colimits and for every $\V \in \mV, \W \in \mW$ the functor $\V \ot (-) \ot \W: \mN \to \mN$ preserves small colimits, $ \LaxLinFun_{\mV,\mW}(\mM, \mN) $ has small colimits and the forgetful functor $$ \LaxLinFun_{\mV,\mW}(\mM, \mN) \to \Fun(\mM, \mN)$$ preserves small colimits.
\item If for every $\V \in \mV, \W \in \mW$ the functor $\V \ot (-) \ot \W: \mN \to \mN$ is accessible, $\mM^\circledast$ is small and $\mN$ is accessible, the $\infty$-category $ \LaxLinFun_{\mV,\mW}(\mM, \mN)$ is accessible.

\item If $ \mM^\circledast \to \mV^\ot \times \mW^\ot $ is an $\infty$-category with biaction, the same holds for $ \LinFun_{\mV,\mW}(\mM, \mN) $.

\end{enumerate}

\end{lemma}

\begin{proof}

By definition $\LinFun_{\mV,\mW}(\mM, \mN),\ (\LaxLinFun_{\mV,\mW}(\mM, \mN)) \subset \Fun_{\mV^\ot \times \mW^\ot}(\mM^\circledast, \mN^\circledast)$ are the full subcategories of functors $\mM^\circledast \to \mN^\circledast$
over $\mV^\ot \times \mW^\ot$ preserving cocartesian lifts of morphisms of $\Ass \times \Ass$ (whose first component preserves the maximum and whose second component preserves the minimum).
By assumption the fibers of $\phi$ admit small colimits (are accessible)
and the fiber transports preserve small colimits (are accessible).
Hence (1), (2), (3) follow from \cite[Proposition 5.4.7.11.]{lurie.HTT}.
\end{proof}	

\begin{remark}
Let $\mV^\ot \to \Ass,\mW^\ot \to \Ass$ be generalized $\infty$-operads.
The functor $$\Cat_{\infty / \mV^\ot} \times \Cat_{\infty / \mW^\ot} \to \Cat_{\infty / \mV^\ot \times \mW^\ot}, \ (\mM^\circledast \to \mV^\ot, \mN^\circledast \to \mW^\ot) \mapsto \mM^\circledast \times \mN^\circledast \to \mV^\ot \times \mW^\ot$$
restricts to a functor
$$\omega\LMod_{\mV}\times \omega\RMod_{\mW} \to \omega\BMod_{\mV,\mW}. $$
If $\mV^\ot \to \Ass,\mW^\ot \to \Ass$ are monoidal $\infty$-categories, respectively,
this functor restricts to functors $$ \LMod_{\mV}\times \omega\RMod_{\mW} \to \LMod_{\mV}(\omega\RMod_{\mW}), \ \omega\LMod_{\mV}\times \RMod_{\mW} \to \RMod_{\mW}(\omega\LMod_{\mV}), $$$$ \LMod_{\mV}\times \RMod_{\mW} \to \BMod_{\mV,\mW}.$$

\end{remark}

We have the following proposition:
\begin{proposition}

For any generalized $\infty$-operads $\mV^\ot \to \Ass,\mW^\ot \to \Ass$ the functor
$$\omega\LMod_{\mV}\times \omega\RMod_{\mW} \to \omega\BMod_{\mV,\mW}$$
admits componentwise right adjoints.

If $\mV^\ot \to \Ass,\mW^\ot \to \Ass$ are monoidal $\infty$-categories, respectively,
the functors $$ \LMod_{\mV}\times \omega\RMod_{\mW} \to \LMod_{\mV}(\omega\RMod_{\mW}), \ \omega\LMod_{\mV}\times \RMod_{\mW} \to \RMod_{\mW}(\omega\LMod_{\mV}), $$$$ \LMod_{\mV}\times \RMod_{\mW} \to \BMod_{\mV,\mW}$$ admit componentwise right adjoints.

\end{proposition}

To prove this proposition we fix the following terminology:
\begin{definition}\label{flat}
We call a functor $\mC \to \rS$ flat if 
$ \mC \times_\rS (-): \Cat_{\infty / \rS} \to \Cat_{\infty / \mC}$ 
admits a right adjoint. 
\end{definition}

\begin{remark}
Evidently, the opposite of a flat functor is flat. By \cite[Example B.3.11.]{lurie.higheralgebra} every cocartesian fibration is flat and so dually every cartesian fibration is flat.
\end{remark}

\begin{remark}
Let $\mC \to \rS, \rS' \to \rS$ be functors and $\mC':= \rS' \times_\rS \mC.$
By the pasting law the functor $ \Cat_{\infty / \rS'} \xrightarrow{\mC' \times_{\rS'}(-)} \Cat_{\infty / \mC'} \to  \Cat_{\infty / \mC}$ factors as 
$ \Cat_{\infty / \rS'} \to \Cat_{\infty / \rS} \xrightarrow{\mC \times_\rS (-)}  \Cat_{\infty / \mC}.$

So the pullback of a flat functor is flat.

\end{remark}

\begin{notation}
For functors $\mC \to \T, \T \to \rS$ such that the composition $\mC \to \T \to \rS$ is flat, the functor 
$(-)\times_\rS \mC:\Cat_{\infty / \rS} \xrightarrow{ } \Cat_{\infty / \mC} \to \Cat_{\infty / \T}$ admits a right adjoint that we denoty by
$ \Fun_\T^{\rS}(\mC,-).$ 

If $\rS, \T $ are contractible, we drop $\rS$, $\T$ from the notation.
\end{notation}

\begin{remark}\label{pullbor}
For any functors $\mD \to \T$ and $\rS' \to \rS$ there is a canonical equivalence 
$ \rS' \times_\rS  \Fun_\T^{\rS}(\mC,\mD) \simeq \Fun_{\rS' \times_\rS \T}^{\rS'}( \rS' \times_\rS \mC, \rS' \times_\rS \mD)$ specifying the fibers of the functor $\Fun_\T^{\rS}(\mC,\mD) \to \rS.$
\end{remark}

\begin{lemma}\label{flafla}
Let $\alpha: \T \to \rS$ be a cocartesian fibration relative to a subcategory $\delta \subset \rS$ and $\delta_\T\subset \T$ the subcategory of $\alpha$-cocartesian lifts of morphisms of $\delta.$ 
For any cocartesian fibration $\mC \to \T$ relative to $\delta_\T$ and cartesian fibration
$\mD \to \T$ relative to $\delta_\T$ the functor $\psi: \Fun_{\T}^{\rS}(\mC,\mD) \to \rS$ is a cartesian fibration relative to $\delta$. 

A morphism of $\Fun_{\T}^{\rS}(\mC,\mD)$ lying over one of $\delta$ corresponding to a functor $ \gamma: [1] \times_\rS \mC \to [1] \times_\rS \mD$ over $[1]\times_\rS \T$ is $\psi$-cartesian if and only if $\gamma$ sends cocartesian lifts of the map $0\to 1$ to cartesian lifts.

\end{lemma}
\begin{proof}
This follows from \cite[Theorem B.4.2.]{lurie.higheralgebra}.	
\end{proof}

\begin{lemma}\label{innerho}
Let $\mM^\circledast \to \mV^\ot, \mO^\circledast \to \mW^\ot, \mN^\circledast \to \mV^\ot \times \mW^\ot$ be $\infty$-categories with weak left, right, biaction. 

\begin{enumerate}
\item The functor $\alpha: \Fun^{\mW^\ot}_{\mV^\ot \times \mW^\ot}(\mM^\circledast \times \mW^\ot, \mN^\circledast) \to \mW^\ot $ is an $\infty$-category with weak right action and for any $\X \in \mM^\circledast$ lying over $\Y \in \mV^\ot$ the canonical functor
$$ \kappa: \Fun^{\mW^\ot}_{\mV^\ot \times \mW^\ot}(\mM^\circledast \times \mW^\ot, \mN^\circledast) \to \mN^\circledast_\Y $$ preserves cocartesian lifts of inert morphisms of $\Ass$ that preserve the minimum.

\vspace{1mm}	
\item The functor $\beta: \Fun^{\mV^\ot}_{\mV^\ot \times \mW^\ot}(\mV^\ot \times \mO^\circledast, \mN^\circledast) \to \mV^\ot $ is an $\infty$-category with weak left action and for any $\X \in \mO^\circledast$ lying over $\Y \in \mW^\ot$ the canonical functor
$$ \tau: \Fun^{\mV^\ot}_{\mV^\ot \times \mW^\ot}(\mV^\ot \times \mO^\circledast, \mN^\circledast) \to \mN^\circledast_\Y $$ preserves cocartesian lifts of
inert morphisms of $\Ass$ that preserve the maximum.

\vspace{1mm}

\item If $\mN^\circledast \to\mV^\ot \times \mW^\ot$ exhibits $\mN$ as right tensored over $\mW$, then $\alpha$ is a right tensored $\infty$-category and for any $\X \in \mM^\circledast$ lying over $\Y \in \mV^\ot$ the map $\kappa$ preserves cocartesian morphisms over $\Ass.$

\vspace{1mm}

\item If $\mN^\circledast \to\mV^\ot \times \mW^\ot$ exhibits $\mN$ as left tensored over $\mV$, then $\beta$ is a left tensored $\infty$-category and for any $\X \in \mO^\circledast$ lying over $\Y \in \mW^\ot$ the functor $\tau$ preserves cocartesian morphisms over $\Ass.$

\end{enumerate}
\end{lemma}

\begin{proof}
We prove (1) and (3), statements (2) and (4) are similar.
By \cite[Theorem B.4.2.]{lurie.higheralgebra} (see Lemma \ref{flafla}) the functors $\alpha$, $\kappa$ preserve cocartesian lifts of inert morphisms of $\Ass$ that preserve the minimum.

Next we prove that $\alpha$ satisfies (1) in Definition \ref{wla}.
This follows from the canonical equivalence 
$$\Fun^{\mW^\ot}_{\mV^\ot \times \mW^\ot}(\mM^\circledast \times \mW^\ot, \mN^\circledast)_{[\n]} \simeq \Fun^{\mW_{[\n]}^\ot}_{\mV^\ot \times \mW^\ot_{[\n]}}(\mM^\circledast \times \mW_{[\n]}^\ot, \mN_{[\n]}^\circledast) \simeq $$$$\Fun^{\mW_{[\n]}^\ot}_{\mV^\ot \times \mW^\ot_{[\n]}}(\mM^\circledast \times \mW_{[\n]}^\ot, \mN_{[0]}^\circledast\times_{\mW^\ot_{[0]}} \mW^\ot_{[\n]}) \simeq \Fun^{\mW_{[0]}^\ot}_{\mV^\ot \times \mW^\ot_{[0]}}(\mM^\circledast \times \mW_{[0]}^\ot, \mN_{[0]}^\circledast)\times_{\mW^\ot_{[0]}} \mW^\ot_{[\n]}.$$

By \cite[Theorem B.4.2.]{lurie.higheralgebra}
under the assumptions of (3) the functors $\alpha,\kappa$ are maps of cocartesian fibrations over $\Ass$.
So by Remark \ref{remuu} the functor $\alpha$ is a right tensored $\infty$-category. This proves (3). 

\vspace{1mm}
We complete the proof by showing (1). The lax $\mV,\mW$-linear embedding $\mN^\circledast \subset \mQ^\circledast:=\mV^\ot 
\times_{\Env(\mV)^\ot} \B\Env(\mN)^\circledast \times_{\Env(\mW)^\ot} \mW^\ot$ yields an embedding
$\Fun^{\mW^\ot}_{\mV^\ot \times \mW^\ot}(\mM^\circledast \times \mW^\ot, \mN^\circledast) \subset \Fun^{\mW^\ot}_{\mV^\ot \times \mW^\ot}(\mM^\circledast \times \mW^\ot, \mQ^\circledast) $ over $\mW^\ot$ that preserves cocartesian lifts of inert morphisms of $\Ass$ that preserve the minimum.

Since $\mV^\ot \times_{\Env(\mV)^\ot} \B\Env(\mN)^\circledast \to \Env(\mW)^\ot$
exhibits $ \B\Env(\mN)$ as right tensored over $\Env(\mW)$,
by (3) $\Fun^{\Env(\mW)^\ot}_{\mV^\ot \times \Env(\mW)^\ot}(\mM^\circledast \times \Env(\mW)^\ot, \mV^\ot \times_{\Env(\mV)^\ot} \B\Env(\mN)^\circledast) \to \Env(\mW)^\ot$ 
is a right tensored $\infty$-category.
Thus $$ \Fun^{\mW^\ot}_{\mV^\ot \times \mW^\ot}(\mM^\circledast \times \mW^\ot, \mQ^\circledast) \simeq \Fun^{\Env(\mW)^\ot}_{\mV^\ot \times \Env(\mW)^\ot}(\mM^\circledast \times \Env(\mW)^\ot, \mV^\ot \times_{\Env(\mV)^\ot} \B\Env(\mN)^\circledast)\times_{\Env(\mW)^\ot} \mW^\ot \to \mW^\ot$$
is a weakly right tensored $\infty$-category.
Remark \ref{remosq} implies that $\alpha: \Fun^{\mW^\ot}_{\mV^\ot \times \mW^\ot}(\mM^\circledast \times \mW^\ot, \mN^\circledast) \to \mW^\ot $
is a weakly right tensored $\infty$-category as $\alpha$ satisfies (1) of Definition \ref{wla}. This proves (1).
 
\end{proof}

By Lemma \ref{innerho} any cocartesian lift $\Y \to \Y'$ in $ \mW^\ot$ of the inert map $[0] \simeq \{0\} \subset [\n]$ yields an equivalence
$$\Fun^{\mW^\ot}_{\mV^\ot \times \mW^\ot}(\mM^\circledast \times \mW^\ot, \mN^\circledast)_{\Y} \simeq \Fun^{\mW^\ot}_{\mV^\ot \times \mW^\ot}(\mM^\circledast \times \mW^\ot, \mN^\circledast)_{\Y'} \simeq \Fun_{\mV^\ot}(\mM^\circledast, \mN^\circledast_{\Y'}).$$

\begin{notation}\label{bbbb}
Let $\mM^\circledast \to \mV^\ot, \mO^\circledast \to \mW^\ot, \mN^\circledast \to \mV^\ot \times \mW^\ot$ be $\infty$-categories with weak left, right, biaction, respectively.
Let $$\LaxLinFun_\mV(\mM,\mN)^\circledast \subset \Fun^{\mW^\ot}_{\mV^\ot \times \mW^\ot}(\mM^\circledast \times \mW^\ot, \mN^\circledast) $$ be the full subcategory spanned by the objects of 
$$\Fun^{\mW^\ot}_{\mV^\ot \times \mW^\ot}(\mM^\circledast \times \mW^\ot, \mN^\circledast)_{\Y} \simeq \Fun^{\mW^\ot}_{\mV^\ot \times \mW^\ot}(\mM^\circledast \times \mW^\ot, \mN^\circledast)_{\Y'} \simeq \Fun_{\mV^\ot}(\mM^\circledast, \mN^\circledast_{\Y'})$$
for some $\Y \in \mW^\ot$ that are lax $\mV$-linear functors $\mM^\circledast \to \mN^\circledast_{\Y'}.$
Similarly, we define $\LaxLinFun_\mW(\mO,\mN)^\circledast \subset \Fun^{\mV^\ot}_{\mV^\ot \times \mW^\ot}(\mV^\ot \times \mO^\circledast, \mN^\circledast) \to \mV^\ot.$
If $\mV^\ot= \emptyset^\ot$, we write $\Fun(\mM,\mN)^\circledast$ for $\LaxLinFun_\mV(\mM,\mN)^\circledast.$	
\end{notation}

\begin{notation}
Let $\mM^\circledast \to \mV^\ot$ be an $\infty$-category with left action and
$\mN^\circledast \to \mV^\ot \times \mW^\ot$ an $\infty$-category with biaction that exhibits $\mN$ as left tensored over $\mV.$
Let $$\LinFun_\mV(\mM,\mN)^\circledast \subset \LaxLinFun_\mV(\mM,\mN)^\circledast$$
be the full subcategory with weak right $\mW$-action spanned by the objects of 
$\LaxLinFun_\mV(\mM,\mN)^\circledast_{\Y}$
for some $\Y \in \mW^\ot$ that are $\mV$-linear functors $\mM^\circledast \to \mN^\circledast_{\Y'}.$
Similarly, we define $\LinFun_\mW(\mO,\mN)^\circledast \subset \LaxLinFun_\mW(\mO,\mN)^\circledast $ for any $\infty$-category $ \mO^\circledast \to \mW^\ot$ with right action and an $\infty$-category $ \mN^\circledast \to \mV^\ot \times \mW^\ot$ with biaction that exhibits $\mN$ as right tensored over $\mW.$

\end{notation}

Lemma \ref{innerho} immediately implies the following proposition:

\begin{proposition}\label{innerhost}
Let $\mM^\circledast \to \mV^\ot, \mO^\circledast \to \mW^\ot, \mN^\circledast \to \mV^\ot \times \mW^\ot$ be $\infty$-categories with weak left, right, biaction, respectively.

\begin{enumerate}
\item The functor $\LaxLinFun_\mV(\mM,\mN)^\circledast \to \mW^\ot $ is an $\infty$-category with weak right action.

\vspace{1mm}	
\item The functor $\LaxLinFun_\mW(\mO,\mN)^\circledast \to \mV^\ot $ is an $\infty$-category with weak left action.

\vspace{1mm}

\item If $\mM^\circledast \to \mV^\ot$ is an $\infty$-category with left action and
$\mN^\circledast \to \mV^\ot \times \mW^\ot$ exhibits $\mN$ as left tensored over $\mV,$
the functor $\LinFun_\mV(\mM,\mN)^\circledast \to \mW^\ot $ is an $\infty$-category with weak right action.

\vspace{1mm}	
\item If $\mO^\circledast \to \mW^\ot$ is an $\infty$-category with right action and
$\mN^\circledast \to \mV^\ot \times \mW^\ot$ exhibits $\mN$ as right tensored over $\mW,$
the functor $\LinFun_\mW(\mO,\mN)^\circledast \to \mV^\ot $ is an $\infty$-category with weak left action.

\vspace{1mm}

\item If the functor $\mN^\circledast \to\mW^\ot$ exhibits $\mN$ as right tensored over $\mW$, then
$\LaxLinFun_\mV(\mM,\mN)^\circledast \to \mW^\ot$ is an $\infty$-category with right action
and for any $\X \in \mM^\circledast$ lying over $\Y \in \mV^\ot$ the functor
$\LaxLinFun_\mV(\mM,\mN)^\circledast\to \mN^\circledast_\Y $ preserves cocartesian morphisms over $\Ass,$ and similar for $\LaxLinFun_\mW(\mO,\mN)^\circledast \to \mV^\ot.$

\vspace{1mm}
\item If $\mM^\circledast \to \mV^\ot, \mN^\circledast \to \mV^\ot \times \mW^\ot$ are $\infty$-categories with left, biaction, respectively,
the functor $\LinFun_\mV(\mM,\mN)^\circledast \to \mW^\ot $ is an $\infty$-category with right action and for any $\X \in \mM^\circledast$ lying over $\Y \in \mV^\ot$ the functor
$\LinFun_\mV(\mM,\mN)^\circledast\to \mN^\circledast_\Y $ preserves cocartesian morphisms over $\Ass,$ and similar for $\LinFun_\mW(\mO,\mN)^\circledast \to \mV^\ot.$

\end{enumerate}
\end{proposition}

Proposition \ref{innerhost} and Lemma \ref{fghhnml} imply the following corollary:

\begin{corollary}\label{comppres}
Let $\mM^\circledast \to \mV^\ot$ be a weakly left tensored $\infty$-category and $\mN^\circledast \to \mV^\ot \times \mW^\ot$ a bitensored $\infty$-category such that $\mN$ has small colimits and for every $\V \in \mV$ the functor $\V \ot (-): \mN \to \mN$ preserves small colimits.

Then $\LaxLinFun_\mV(\mM,\mN)$ has small colimits, $\LaxLinFun_\mV(\mM,\mN)^\circledast \to \mW^\ot$ is an $\infty$-category with right action and $\LaxLinFun_\mV(\mM,\mN)^\circledast \to \mN^\circledast_{[1]} $ 
is $\mW$-linear and preserves small colimits.
So if $ \mN^\circledast_{[1]} \to \mW^\ot$ is compatible with small colimits, $\LaxLinFun_\mV(\mM,\mN)^\circledast \to \mW^\ot$ is compatible with small colimits.
\end{corollary}
Remark \ref{flafla} implies the following propositions:

\begin{proposition}\label{lehmmm} 
Let $\mM^\circledast \to \mV^\ot, \mO^\circledast \to \mW^\ot, \mN^\circledast \to \mV^\ot \times \mW^\ot$ be $\infty$-categories with weak left, right, biaction, respectively.
The canonical equivalence
$$ \Fun_{\mW^\ot}(\mO^\circledast, \Fun^{\mW^\ot}_{\mV^\ot \times \mW^\ot}(\mM^\circledast \times \mW^\ot, \mN^\circledast)) \simeq \Fun_{\mV^\ot \times \mW^\ot}(\mM^\circledast \times \mO^\ot, \mN^\circledast)$$
restricts to an equivalence
\begin{equation}\label{equino}
\LaxLinFun_{\mW}(\mO, \LaxLinFun_\mV(\mM,\mN)) \simeq \LaxLinFun_{\mV, \mW}(\mM \times \mO, \mN).
\end{equation} 

The canonical equivalence
$$ \Fun_{\mV^\ot}(\mM^\circledast,\Fun^{\mV^\ot}_{\mV^\ot \times \mW^\ot}(\mV^\ot \times \mO^\circledast, \mN^\circledast)) \simeq \Fun_{\mV^\ot \times \mW^\ot}(\mM^\circledast \times \mO^\ot, \mN^\circledast)$$
restricts to an equivalence
$$ \LaxLinFun_{\mV}(\mM, \LaxLinFun_\mW(\mO,\mN)) \simeq \LaxLinFun_{\mV,\mW}(\mM \times \mO, \mN).$$
\end{proposition}
\begin{proposition}\label{partial}
Let $\mM^\circledast \to \mV^\ot, \mO^\circledast \to \mW^\ot, \mN^\circledast \to \mV^\ot \times \mW^\ot$ be $\infty$-categories with weak left, right, biaction, respectively.

\begin{enumerate}
\item If $ \mO^\circledast \to \mW^\ot$ is an $\infty$-category with right action and $\mN^\circledast \to \mV^\ot \times \mW^\ot$ exhibits $\mN$ as right tensored over $\mW$, equivalence (\ref{equino}) restricts to an embedding 
$$ \LinFun_{\mW}(\mO, \LaxLinFun_\mV(\mM,\mN)) \hookrightarrow \LaxLinFun_{\mV, \mW}(\mM \times \mO, \mN) $$
with essential image the $\mW$-linear functors. 

\vspace{1mm}
\item If $ \mM^\circledast \to \mV^\ot$ is an $\infty$-category with left action and $\mN^\circledast \to \mV^\ot \times \mW^\ot$ exhibits $\mN$ as left tensored over $\mV$, equivalence (\ref{equino}) restricts to an embedding 
$$ \LaxLinFun_{\mW}(\mO, \LinFun_\mV(\mM,\mN)) \hookrightarrow \LaxLinFun_{\mV, \mW}(\mM \times \mO, \mN) $$
with essential image the $\mV$-linear functors. 

\vspace{1mm}

\item If $\mM^\circledast \to \mV^\ot, \mO^\circledast \to \mW^\ot, \mN^\circledast \to \mV^\ot \times \mW^\ot$ are $\infty$-categories with left, right, biaction, respectively,
equivalence (\ref{equino}) restricts to an equivalence $$ \LinFun_{\mW}(\mO, \LinFun_\mV(\mM,\mN)) \simeq \LinFun_{\mV, \mW}(\mM \times \mO, \mN).$$
\end{enumerate}
\end{proposition}

\vspace{0,1mm}

\begin{remark}\label{llll}
Let $\mM^\circledast \to \mV^\ot$ be an $\infty$-category with left action
and $\mW^\ot \to \Ass$ a monoidal $\infty$-category. 
By Remark \ref{enrrr} there is an adjunction $\mM^\circledast \times (-) : \Cat_\infty \rightleftarrows \omega\LMod_\mV: \LaxLinFun_\mV(\mM,-) $,
where the left adjoint is $\Cat_\infty$-linear as a consequence of Lemma \ref{2enr} and Remark \ref{lefta}.

Hence this adjunction induces an adjunction
$$\mM^\circledast \times (-) :\RMod_\mW \simeq \RMod_\mW(\Cat_\infty) \rightleftarrows \RMod_\mW(\omega\LMod_\mV): \LaxLinFun_\mV(\mM,-).$$
The left adjoint is the restriction of the functor
$\mM^\circledast \times (-): \Cat_{\infty/\mW^\ot} \to \Cat_{\infty/\mV^\ot \times \mW^\ot}$. So by adjointness the right adjoint identifies with the functor
of \ref{partial} (1) with the same name.

\end{remark}

The following lemma follows immediately from the universal property shown in Proposition \ref{lehmmm}:

\begin{lemma}\label{leman}
Let $\mM^\circledast \to \mV^\ot, \mM'^\circledast \to \mV^\ot, \mN^\circledast \to \mV^\ot \times \mW^\ot$ be $\infty$-categories with weak left, biaction, respectively.
\begin{enumerate}

\item For any map $\mW'^\ot \to \mW^\ot$ of generalized $\infty$-operads
there is a canonical equivalence
$$ \mW'^\ot \times_{\mW^\ot} \LaxLinFun_\mV(\mM,\mN)^\circledast \simeq \LaxLinFun_\mV(\mM,\mW' \times_\mW \mN)^\circledast$$
of $\infty$-categories weakly right tensored over $\mW'$.
\vspace{1mm}	

\item For any map $\mV'^\ot \to \mV^\ot $ of generalized $\infty$-operads there is a canonical map
$$ \LaxLinFun_\mV(\mM,\mN)^\circledast \to
\LaxLinFun_{\mV'}(\mV' \times_{\mV}\mM, \mV' \times_{\mV} \mN)^\circledast$$
of $\infty$-categories weakly right tensored over $\mW.$

\vspace{1mm}	
\item There is a canonical map 
$$ \LaxLinFun_\mV(\mM,\mN)^\circledast \to 
\Fun(\LaxLinFun_\mV(\mM', \mM),\LaxLinFun_\mV(\mM', \mN))^\circledast $$
of $\infty$-categories weakly right tensored over $\mW$.
\end{enumerate}

\end{lemma}

\begin{remark}\label{motis}
For every weakly bitensored $\infty$-category $\mN^\circledast \to \mV^\ot \times \mW^\ot $, weakly left tensored $\infty$-category $\mM^\circledast \to \mV^\ot$ and $\A \in \Alg(\mV),\B \in \Alg(\mW)$	we defined $\infty$-categories
$\BMod_{\A,\B}(\mN)$ (Notation \ref{not1}) and $ \LMod_\A(\mM)$ (Notation \ref{not2}).
There are canonical equivalences
$$\BMod_{\A,\B}(\mN) \simeq \LaxLinFun_{\Ass_{[0]} \times \Ass_{[0]}}(\Ass_{[0]} \times \Ass_{[0]}, \mN_{\A,\B}), $$$$\LMod_\A(\mM) \simeq \LaxLinFun_{\Ass_{[0]}}(\Ass_{[0]}, \mM_\A),$$ 
where $\mN_{\A,\B}^\circledast:= (\Ass \times \Ass) \times_{(\mV^\ot \times \mW^\ot)} \mN^\circledast, \mM_\A^\circledast:= \Ass \times_{\mV^\ot} \mM^\circledast$ are the pullbacks along $\A \times \B$, $\A.$\end{remark}
This motivates the following notation:
\begin{notation}\label{rightact}
Let $\mM^\circledast \to \mV^\ot \times \mW^\ot $ be an $\infty$-category with weak biaction and $\A\in \Alg(\mV).$
We set $$\LMod_\A(\mM)^\circledast:= \LaxLinFun_{\Ass_{[0]}}(\Ass_{[0]}, \mM_\A)^\circledast \in \omega\BMod_{\emptyset,\mW} \simeq \omega\RMod_\mW.$$
Similarly, for any $\B \in \Alg(\mW)$ we define 
$$\RMod_\B(\mM)^\circledast:= \LaxLinFun_{\Ass_{[0]}}(\Ass_{[0]}, \mM_\B)^\circledast \in \omega\BMod_{\mV,\emptyset} \simeq \omega\LMod_\mV.$$
\end{notation}

\begin{remark}\label{coala}
For any $\infty$-category $\mM^\circledast \to \mV^\ot \times \mW^\ot $ with weak biaction, $\infty$-category $\mN^\circledast \to \mW^\ot$ with weak right action
and $\A\in \Alg(\mV)$ by definition there is a canonical equivalence
$$ \LaxLinFun_{\mW}(\mN,\LMod_\A(\mM))\simeq \LaxLinFun_{\Ass, \mW}(\Ass_{[0]} \times \mN, \mM_\A).$$
Similarly, for any $\infty$-category $\mN^\circledast \to \mV^\ot$ with weak left action
and $\B \in \Alg(\mW)$ there is an equivalence
$$ \LaxLinFun_{\mV}(\mN,\RMod_\B(\mM))\simeq \LaxLinFun_{\mV,\Ass}(\mN \times \Ass_{[0]}, \mM_\B).$$
\end{remark}
Remark \ref{coala} implies the following one:
\begin{remark}
Let $\mM^\circledast \to \mV^\ot \times \mW^\ot $ be a weakly bitensored $\infty$-category and $\A\in \Alg(\mV), \B \in \Alg(\mW)$.
There is a canonical equivalence
$$\LMod_\A(\mM)^\circledast \simeq (\RMod_{\A^\rev}(\mM^\rev)^\circledast)^\rev$$ of $\infty$-categories weakly right tensored over $\mW$.

\end{remark}

Lemma \ref{leman} implies the following remark:
\begin{remark}\label{jbfigh}
Let $\mM^\circledast \to \mV^\ot$ be a weakly left tensored $\infty$-category, $\mN^\circledast \to \mV^\ot \times \mW^\ot$ a weakly bitensored $\infty$-category and $\A\in \Alg(\mV).$ There is a canonical map 
$$ \LaxLinFun_\mV(\mM,\mN)^\circledast \to
\Fun(\LMod_\A(\mM), \LMod_\A(\mN))^\circledast$$
of $\infty$-categories weakly right tensored over $\mW.$
\end{remark}

\begin{proposition}\label{compright}
Let $\mM^\circledast \to \mV^\ot \times \mW^\ot $ be a weakly bitensored $\infty$-category and $\A \in \Alg(\mV),\B \in \Alg(\mW).$
There is a canonical equivalence
$$\RMod_\B(\LMod_\A(\mM)) \simeq \BMod_{\A,\B}(\mM)$$
compatible with the forgetful functors to $\mM.$

\end{proposition}

\begin{proof}
Lemma \ref{leman} (1) provides a canonical equivalence
$\LMod_\A(\mM)^\circledast_\B \simeq \LMod_\A(\mM_\B)^\circledast$
of $\infty$-categories weakly right tensored over $[0].$
So we obtain an equivalence
$$\RMod_\B(\LMod_\A(\mM))= \LaxLinFun_{\Ass_{[0]}}(\Ass_{[0]}, \LMod_\A(\mM)_\B) \simeq \LaxLinFun_{\Ass_{[0]}}(\Ass_{[0]},\LMod_\A(\mM_\B)). $$
Remarks \ref{coala} and \ref{motis} provide a canonical equivalence
$$\LaxLinFun_{\Ass_{[0]}}(\Ass_{[0]},\LMod_\A(\mM_\B)) \simeq \LaxLinFun_{\Ass_{[0]}, \Ass_{[0]}}(\Ass_{[0]} \times \Ass_{[0]}, \mM_{\A,\B}) \simeq \BMod_{\A,\B}(\mM).$$

\end{proof}

\begin{lemma}\label{monart}
Let $\mM^\circledast \to \mV^\ot \times \mW^\ot$ be an $\infty$-category with biaction and $\A, \B$ associative algebras in $\mV,\mW ,$ respectively.
The forgetful functor $\nu_\mM: \BMod_{\A,\B}(\mM) \to \mM$ is monadic,
where the left adjoint sends $\X \in \mM$ to $\A \ot \X \ot \B$.
\end{lemma}

\begin{proof}
By Proposition \ref{compright} there is a canonical equivalence $\BMod_{\A,\B}(\mM) \simeq \LMod_\A(\RMod_\B(\mM))$ compatible with the forgetful functors to $\mM.$
This implies the existence and description of the left adjoint.
By the $\infty$-categorical Barr-Beck theorem \cite[Theorem 4.7.3.5]{lurie.higheralgebra} it remains to see that $\BMod_{\A,\B}(\mM) $ admits colimits of $\nu_\mM$-split simplicial objects and that these colimits are preserved by $\nu_\mM.$
For that we can reduce to the case, where $\mM^\circledast \to \mV^\ot \times \mW^\ot$ is an $\infty$-category with biaction compatible with small colimits
since $\nu_\mM$ is the pullback of $\nu_{\mP(\mM)}$. 
In this case by Lemma \ref{fghhnml} the $\infty$-category $ \BMod_{\A,\B}(\mM) \simeq \LMod_\A(\RMod_\B(\mM))$ admits small colimits preserved by $\nu_\mM.$
\end{proof}

\begin{notation}\label{switchh}
For any generalized $\infty$-operads $\mV^\ot \to \Ass,\mW^\ot \to \Ass $ let  $$\theta_{\mV,\mW}: \mV^\ot \times_\Ass (\mW^\ot)^\rev \to \mV^\ot \times (\mW^\ot)^\rev \simeq \mV^\ot \times \mW^\ot $$
be the composition of the projection and the functor induced by the 
equivalence $(\mW^\ot)^\rev \simeq \mW^\ot$.

Taking pullback along the functor $\theta_{\mV,\mW}$ induces a functor
$(\theta_{\mV,\mW})^*: \omega\BMod_{\mV,\mW} \to \omega\LMod_{\mV \times \mW^\rev}.$ 
\end{notation}

\begin{proposition}\label{eqaybbn}
Let $\mM^\circledast \to \mV^\ot \times \mW^\ot$ be an $\infty$-category weakly bitensored over $\infty$-operads and $\A, \B$ be associative algebras in $\mV,\mW ,$ respectively.
The canonical functor $$\kappa_\mM: \BMod_{\A,\B}(\mM) \to \LMod_{\A,\B^\rev}(\theta_{\mV,\mW}^\ast(\mM))$$
is an equivalence.

\end{proposition}

\begin{proof}
Since the functor $\kappa_\mM$ is the pullback of the functor $\kappa_{\B\Env(\mM)}$, we can reduce to the case where $\mM^\circledast \to \mV^\ot \times \mW^\ot$ is a weakly bitensored $\infty$-category. In this case by Lemma \ref{monart} the forgetful functor
$\BMod_{\A,\B}(\mM) \to \mM$ is monadic and admits a left adjoint sending $\X \in \mM$ to $\A \ot \X \ot \B$.
The forgetful functor $\LMod_{\A,\B^\rev}(\theta_{\mV,\mW}^\ast(\mM)) \to \mM$ is monadic, too, where the left adjoint sends $\X \in \mM$ to $\A \ot \X \ot \B$.
The functor $\kappa_\mM$ commutes with the forgetful functors and preserves the left adjoints and so is an equivalence by \cite[Corollary 4.7.3.16.]{lurie.higheralgebra}.

\end{proof}

\subsection{Enveloping $\infty$-categories with left action}
\label{envob}
In this subsection we prove that any weakly left tensored $\infty$-category universally embeds into a left tensored $\infty$-category reducing questions about weakly left tensored $\infty$-categories to questions about left tensored $\infty$-categories.

\vspace{1mm}

For any cocartesian fibration $\mO \to \Ass$ relative to the collection of inert morphisms let $\Act(\mO) \subset \Fun([1],\mO)$ be the full subcategory of active morphisms.

\begin{notation}\label{ene}
Let $\mO \to \Ass$ be a cocartesian fibration relative to the collection of inert morphisms such that $\mO_{[0]}$ is a space and $\mC \to \mO$ a generalized $\mO$-operad.
We set $$\Env_\mO(\mC):= \Act(\mO) \times_{\Fun(\{0\}, \mO)} \mC.$$
\end{notation}

We call the functor \begin{equation}
\Env_\mO(\mC) \to \Act(\mO) \to \Fun(\{1\},\mO) 
\end{equation}
evaluating at the target the enveloping generalized $\mO$-monoidal $\infty$-category of $\mC \to \mO$.
If $\mC \to \mO$ is an $\mO$-operad, we call the enveloping generalized $\mO$-monoidal $\infty$-category of $\mC \to \mO$ the enveloping $\mO$-monoidal $\infty$-category. The next Proposition \ref{unb} justifies this terminology.

The diagonal embedding $\mO \subset \Act(\mO)$ 
induces an embedding $\mC \subset \Env_\mO(\mC)=\Act(\mO) \times_{\Fun(\{0\}, \mO)} \mC.$

\begin{proposition}\label{unb}
Let $\mO \to \Ass$ be a cocartesian fibration relative to the collection of inert morphisms such that $\mO_{[0]}$ is a space.

\begin{enumerate}
\item The functor $\Env_\mO(\mC) \to \Act(\mO) \to \Fun(\{1\}, \mO) $
is a generalized $\mO$-monoidal $\infty$-category and the embedding $\mC \subset \Env_\mO(\mC)$ is a map of generalized $\mO$-operads.

\vspace{1mm}
	
\item The functor $\Env_\mO(\mC) \to \mO$ is an $\mO$-monoidal $\infty$-category if $\mC \to \mO$ is an $\mO$-operad.

\vspace{1mm}

\item For any generalized $\mO$-monoidal $\infty$-category $\mD \to \mO$
the functor $\rho: \Alg_{\Env_\mO(\mC) / \mO}(\mD) \to \Alg_{\mC/ \mO}(\mD)$
admits a fully faithful left adjoint taking values in the full subcategory of $\mO$-monoidal functors.
So $\rho$ restricts to an equivalence
$\Fun_\mO^{\ot}(\Env_\mO(\mC), \mD) \simeq \Alg_{ \mC/ \mO}(\mD).$
\end{enumerate}

\end{proposition} 
 
\begin{proof}
(1) follows from Remark \ref{lababop}.
(2): For every $\Z \in \mO_{[0]} $ there is a canonical equivalence
$\Env_\mO(\mC)_\Z \simeq (\mO_{[0]})_{/\Z} \times_{\mO_{[0]}} \mC_{[0]} \simeq  \mC_{\Z}$ as $\mO_{[0]}$ is a space.
(3) is by Proposition \ref{Envelo} (1).

\end{proof}

We have the following important lemmas:
Lemma \ref{envvo} immediately implies the following:

\begin{lemma}\label{looocx}
Let $\mC \to \mO$ be a generalized $\mO$-monoidal $\infty$-category.
The embedding $\mC \subset \Env_\mO(\mC)$ admits a left adjoint relative to $\mO$
(an $\mO$-monoidal left adjoint).

\end{lemma}	

We apply the next lemma to the left and right embeddings $\Ass \subset \BM$ (Remark \ref{rightact}) and the induced embeddings $\Ass \subset \LM,\Ass \subset \RM$:

\begin{lemma}\label{gre}
Let $\theta: \mO' \to \mO$ be a map of cocartesian fibrations relative to the collection of inert morphisms of $\Ass.$ 
If $\theta$ is a right fibration relative to the collection of active morphisms,
the functor
$$\Env_{\mO'}(\mO' \times_\mO \mC) \to \mO' \times_\mO \Env_\mO(\mC)$$ 
over $\mO'$ is an equivalence.
\begin{proof}
The functor $\theta: \mO' \to \mO$ yields a functor $\rho: \Act(\mO') \to \Fun(\{1\}, \mO') \times_{\Fun(\{1\}, \mO)} \Act(\mO)$ that is an equivalence if $\theta$ is a right fibration relative to the collection of active morphisms.
In this case $\rho$ gives rise to an equivalence
$$\mO' \times_\mO \Env_\mO(\mC) = \mO' \times_{\Fun(\{1\}, \mO)} (\Act(\mO) 
\times_{\Fun(\{0\}, \mO)} \mC) \simeq \Act(\mO') \times_{\Fun(\{0\}, \mO')} (\mO' \times_\mO \mC)= \Env_{\mO'}(\mO' \times_\mO \mC)$$
inverse to the canonical functor.
\end{proof}
\end{lemma}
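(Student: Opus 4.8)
The plan is to work directly from the defining formula $\Env_\mO(\mC) = \Act(\mO) \times_{\Fun(\{0\},\mO)} \mC$ and to reduce the whole assertion to a single identification of active-arrow $\infty$-categories, into which the right-fibration hypothesis feeds. First I would unwind both sides as iterated fibre products, keeping careful track of which evaluation map governs each factor. Since the structure map $\Env_\mO(\mC) \to \mO$ is target evaluation $\Env_\mO(\mC) \to \Act(\mO) \xrightarrow{\ev_1} \Fun(\{1\},\mO) = \mO$, the target of the comparison is
$$\mO' \times_\mO \Env_\mO(\mC) = \left(\mO' \times_{\Fun(\{1\},\mO)} \Act(\mO)\right) \times_{\Fun(\{0\},\mO)} \mC,$$
whereas, expanding the definition over $\mO'$ and using associativity of pullbacks together with the factorization $\Act(\mO') \xrightarrow{\ev_0} \mO' \xrightarrow{\theta} \mO = \Fun(\{0\},\mO)$, the source is
$$\Env_{\mO'}(\mO' \times_\mO \mC) = \Act(\mO') \times_{\Fun(\{0\},\mO')} (\mO' \times_\mO \mC) = \Act(\mO') \times_{\Fun(\{0\},\mO)} \mC.$$
Thus it suffices to produce an equivalence $\Act(\mO') \simeq \mO' \times_{\Fun(\{1\},\mO)} \Act(\mO)$ that is compatible with target evaluation (along which the two $\mO'$-factors glue) and, crucially, with source evaluation to $\mO$ (along which the two copies of $\mC$ glue); then both displays become the same pullback.

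The heart of the argument, and the only place the hypothesis is used, is this identification of active arrows. Because $\theta$ lies over $\Ass$, a morphism of $\mO'$ is active iff its $\theta$-image is active, so $\Act(\theta)\colon \Act(\mO') \to \Act(\mO)$ is defined and assembles with $\ev_1$ into a comparison functor $\Act(\mO') \to \mO' \times_{\Fun(\{1\},\mO)} \Act(\mO)$ sending $(a' \to b')$ to $(b',\, \theta(a') \to \theta(b'))$. I would show this is an equivalence by unwinding the definition of right fibration relative to active morphisms: for every active $g\colon a \to b$ in $\mO$ the pullback $\mO' \times_\mO [1] \to [1]$ is a right fibration, so each lift $b'$ of the target carries an essentially unique $\theta$-cartesian — hence active — lift $a' \to b'$ of $g$. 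Essential surjectivity of the comparison is the existence of these lifts, and full faithfulness is extracted from their uniqueness (a morphism of active arrows in $\mO'$ is determined by its target in $\mO'$ together with its image in $\Act(\mO)$, via the cartesian property). Tracing source evaluation through this identification gives exactly the compatibility needed above.

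The main obstacle is twofold and lies more in bookkeeping than in depth. First, the enveloping construction mixes source and target evaluation, so one must be scrupulous about which evaluation each pullback is formed along before the associativity rearrangements above are legitimate. Second, and more substantively, one must verify that the equivalence produced this way genuinely coincides with (is inverse to) the \emph{canonical} comparison map of $\mO'$-monoidal $\infty$-categories, rather than being merely an abstract equivalence between the same two objects. For this I would observe that every identification above is induced by $\Act(\theta)$ together with the structure projections — precisely the data defining the canonical map — so the constructed equivalence is by construction inverse to it. That the comparison respects the $\mO'$-monoidal structures is then automatic, since both sides carry their structure via target evaluation: $\mO' \times_\mO \Env_\mO(\mC)$ as the pullback of the generalized $\mO$-monoidal category $\Env_\mO(\mC)$ along $\theta$, and $\Env_{\mO'}(\mO' \times_\mO \mC)$ by the enveloping construction over $\mO'$.
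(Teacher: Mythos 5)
Your proposal is correct and takes essentially the same route as the paper's proof: both reduce everything to the equivalence $\Act(\mO') \simeq \mO' \times_{\Fun(\{1\},\mO)} \Act(\mO)$ furnished by the right-fibration hypothesis, and then rearrange the defining iterated pullbacks of the two envelopes. The only difference is one of detail: the paper asserts this key equivalence and the inverse-to-canonical-map statement without elaboration, whereas you spell out the cartesian-lift argument behind it and the source/target-evaluation bookkeeping.
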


\begin{notation}
Let $\mV^\ot \to \Ass$ be an $\infty$-operad. We write $\Env(\mV)^\ot \to \Ass$ for the enveloping monoidal $\infty$-category.
	
\end{notation}

\begin{notation}
Let $\text{Min},\text{Max} \subset \Fun([1],\Ass)$ be the full subcategories of morphisms
preserving the minimum (maximum).
\end{notation}
\begin{remark}\label{rqqqyzz}
Every morphism in $\Ass$ uniquely factors as an inert morphism followed by an active morphism, which is a morphism preserving the minimum and maximum.
Similarly, every morphism in $\Ass$ uniquely factors as an inert morphism preserving the maximum (minimum) followed by a morphism preserving the minimum (maximum).
By \cite[Lemma 5.2.8.19.]{lurie.HTT} this guarantees that the embeddings $\Act:=\Act(\Ass), \text{Min},\text{Max} \subset \Fun([1],\Ass)$ admit left adjoints, where a morphism in $\Fun([1],\Ass) $ with local target is a local equivalence
if and only if its image in $\Ass$ under evaluation at the source is inert,
is inert and preserves the maximum, is inert and preserves the minimum, respectively, and its image in $\Ass$ under evaluation at the target is an equivalence.
\end{remark}

\begin{notation}
Let $\mM^\circledast \to \mV^\ot$ be a weakly left tensored $\infty$-category, $\mN^\circledast \to \mW^\ot$ a weakly right tensored $\infty$-category and $\mO^\circledast \to \mV^\ot \times \mW^\ot$ a weakly bitensored $\infty$-category. We set	
\begin{itemize}	
\item $\L\Env(\mM)^\circledast:=\text{Min} \times_{\Fun(\{0\},\Ass)} \mM^\circledast. $ 

\vspace{1mm}
\item $\R\Env(\mN)^\circledast:=\text{Max} \times_{\Fun(\{0\},\Ass)} \mN^\circledast. $
		
\vspace{1mm}
\item $\B\Env(\mO)^\circledast:= (\text{Min} \times \text{Max}) \times_{\Fun(\{0\},\Ass \times \Ass)} \mO^\circledast.$ 
\end{itemize}
\end{notation}

\begin{remark}
Remark \ref{rqqqyzz} implies that for any weakly left tensored $\infty$-category 
$\mM^\circledast \to \mV^\ot$ the embedding $$\text{Min} \times_{\Fun(\{0\},\Ass)} \mM^\circledast\subset \Fun([1],\Ass) \times_{\Fun(\{0\},\Ass)} \mM^\circledast$$
admits a left adjoint relative to $\Fun(\{1\},\Ass)$.
This guarantees that the functor $$\L\Env(\mM)^\circledast \to\Fun(\{1\},\Ass)$$ is a cocartesian fibration. Remark \ref{rqqqyzz} implies that for any weakly bitensored $\infty$-category 
$\mM^\circledast \to \mV^\ot \times \mW^\ot$ the embedding $$(\text{Min} \times \text{Max}) \times_{\Fun(\{0\},\Ass \times \Ass)} \mM^\circledast\subset \Fun([1],\Ass \times \Ass) \times_{\Fun(\{0\},\Ass \times \Ass)} \mM^\circledast$$
admits a left adjoint relative to $\Fun(\{1\},\Ass \times \Ass)$.
This guarantees that the functor $$\B\Env(\mM)^\circledast \to\Fun(\{1\},\Ass \times \Ass)$$ is a cocartesian fibration. 
\end{remark}

\begin{remark}
Let $\mM^\circledast \to \mV^\ot$ be a weakly left tensored $\infty$-category.
There is a canonical map $$\L\Env(\mM)^\circledast = \mathrm{Min} \times_{\Fun(\{0\},\Ass)} \mM^\circledast
\subset \Fun([1],\Ass) \times_{\Fun(\{0\},\Ass)} \mM^\circledast \to $$$$ \Fun([1],\Ass) \times_{\Fun(\{0\},\Ass)} \mV^\ot \to \Env(\mV)^\ot= \Act \times_{\Fun(\{0\},\Ass)} \mV^\ot$$
of cocartesian fibrations over $\Ass$, where the last functor arises from the localization functor of \ref{facto}.

\vspace{1mm}	

Let $\mM^\circledast \to \mV^\ot \times \mW^\ot$ be an weakly bitensored $\infty$-category.
There is a canonical map
$$\B\Env(\mM)^\circledast = (\mathrm{Min} \times \mathrm{Max}) \times_{\Fun(\{0\},\Ass \times \Ass)} \mM^\circledast 
\subset \Fun([1],\Ass\times\Ass) \times_{\Fun(\{0\},\Ass \times \Ass)} \mM^\circledast \to $$$$ (\Fun([1],\Ass) \times_{\Fun(\{0\},\Ass)} \mV^\ot) \times (\Fun([1],\Ass) \times_{\Fun(\{0\},\Ass)} \mW^\ot) \to $$$$\Env(\mV)^\ot \times \Env(\mW)^\ot= (\Act \times_{\Fun(\{0\},\Ass)} \mV^\ot) \times (\Act \times_{\Fun(\{0\},\Ass)} \mW^\ot)$$
of cocartesian fibrations over $\Ass\times \Ass, $ where the last functor is induced by the localization functors.
	
\end{remark}

\begin{proposition}\label{rqqqy}\emph{} 
\begin{enumerate}
\item Let $\mM^\circledast \to \mV^\ot$ be a weakly left tensored $\infty$-category classified by a generalized $\LM$-operad $\mC \to \LM.$
Then $\L\Env(\mM)^\circledast \to \Env(\mV)^\ot$
is a left tensored $\infty$-category classified by $\Env_\LM(\mC) \to \LM.$

\vspace{1mm}
\item Let $\mM^\circledast \to \mW^\ot$ be a weakly right tensored $\infty$-category classified by a generalized $\RM$-operad $\mC \to \RM.$
Then $\R\Env(\mM)^\circledast \to \Env(\mW)^\ot$
is a right tensored $\infty$-category classified by $\Env_\RM(\mC) \to \RM.$

\vspace{1mm}
\item Let $\mM^\circledast \to \mV^\ot \times \mW^\ot$ be a weakly bitensored $\infty$-category 
classified by a generalized $\BM$-operad $\mC \to \BM.$
Then $\B\Env(\mM)^\circledast \to \Env(\mV)^\ot \times \Env(\mW)^\ot$
is a bitensored $\infty$-category classified by $\Env_\BM(\mC) \to \BM.$
\end{enumerate}
\end{proposition}

\begin{proof}
	
(1): The embedding $\Ass \simeq \Ass \times \{0\} \subset \Ass \times [1] \simeq \LM$ induces an equivalence $$\mathrm{Min} \simeq \Fun(\{1\},\Ass) \times_{\Fun(\{1\},\LM)}\Act(\LM).$$
Moreover the functor 
$\mathrm{Min} \simeq (\Ass \times \{0\}) \times_{\LM}\Act(\LM) \to (\Ass \times \{1\}) \times_{\LM}\Act(\LM) \simeq \Act$
factors as $\mathrm{Min} \subset \Fun([1],\Ass) \to \Act$.
So there is a canonical equivalence
$$(\Ass \times \{0\}) \times_{\LM} \Env_{\LM}(\mC) \simeq (\Ass \times \{0\}) \times_{\Fun(\{1\},\LM)} \Act(\LM) \times_{\Fun(\{0\},\LM)} \mC $$$$\simeq \L\Env(\mM)^\circledast= \mathrm{Min} \times_{\Fun(\{0\},\Ass)} \mM^\circledast$$
over $\Ass$ that is compatible with the functor to
$$(\Ass \times \{1\}) \times_{\LM} \Env_{\LM}(\mC) \simeq (\Ass \times \{1\})  \times_{\Fun(\{1\},\LM)} \Act(\LM) \times_{\Fun(\{0\},\LM)} \mC \simeq \Env(\mV)^\ot.$$

(2) is similar to (1).
(3): The embedding $\Ass \times \Ass \subset \Ass^{\triangleright} \times \Ass^{\triangleright} \simeq \BM^{\triangleright}$ induces an equivalence $$\mathrm{Min} \times \mathrm{Max} \simeq \Fun(\{1\},\Ass \times \Ass) \times_{\Fun(\{1\},\BM^{\triangleright})}\Act(\BM^{\triangleright})$$
and the functor 
$\mathrm{Min} \times \mathrm{Max} \simeq \Fun(\{1\},\Ass \times \Ass) \times_{\Fun(\{1\},\BM^{\triangleright})}\Act(\BM^{\triangleright}) \to $$$ (\Ass \times \{\infty\}) \times_{\Fun(\{1\},\BM^{\triangleright})}\Act(\BM^{\triangleright}) \times (\{\infty\} \times \Ass) \times_{\Fun(\{1\},\BM^{\triangleright})}\Act(\BM^{\triangleright}) \simeq \Act\times \Act$$
factors as $\mathrm{Min} \times \mathrm{Max} \subset \Fun([1],\Ass) \times \Fun([1],\Ass) \to \Act \times \Act$.
Since $$\Env_{\BM}(\mC)^\triangleright \simeq \Act(\BM^{\triangleright}) \times_{\Fun(\{0\},\BM^{\triangleright})} \mC^\triangleright $$(see Notation \ref{aact}), there is a canonical equivalence
$$ (\Ass \times \Ass) \times_{\BM^{\triangleright}} \Env_{\BM}(\mC)^\triangleright \simeq (\Ass \times \Ass) \times_{\Fun(\{1\},\BM^{\triangleright})} \Act(\BM^{\triangleright}) \times_{\Fun(\{0\},\BM^{\triangleright})} \mC^\triangleright $$$$\simeq \B\Env(\mM)^\circledast=(\mathrm{Min} \times \mathrm{Max}) \times_{\Fun(\{0\},\Ass \times \Ass)} \mM^\circledast$$
over $\Ass \times \Ass$ that is compatible with the functor to
$$ \Ass \times_{\BM} \Env_{\BM}(\mC) \simeq \Ass \times_{\Fun(\{1\},\BM)} \Act(\BM) \times_{\Fun(\{0\},\BM)} \mC \simeq \Env(\mV)^\ot $$
and similar for the right embedding.

\end{proof}
 
\begin{corollary}\label{bienv}
Let $\mM^\circledast \to \mV^\ot$ be a weakly left tensored $\infty$-category
and $\mN^\circledast \to \mW^\ot$ a weakly right tensored $\infty$-category.
The embedding $\mM^\circledast \times \mN^\circledast \subset \L\Env(\mM)^\circledast \times \R\Env(\mN)^\circledast$
of weakly bitensored $\infty$-categories induces an equivalence of $\infty$-categories bitensored over $\Env(\mV),\Env(\mW)$:
$$ \B\Env(\mM \times \mN)^\circledast \simeq \L\Env(\mM)^\circledast \times \R\Env(\mN)^\circledast.$$	
\end{corollary}

\begin{notation}
Let $\mM^\circledast \to \mV^\ot \times \mW^\ot$ be a weakly bitensored $\infty$-category.
We set $$\L\Env(\mM)^\circledast:= (\mathrm{Min} \times \Ass) \times_{\Fun(\{0\},\Ass \times \Ass)} \mM^\circledast \to \Fun(\{1\},\Ass \times \Ass)$$ so that $\L\Env(\mM)^\circledast_{[0]} \simeq \L\Env(\mM_{[0]})^\circledast$,
where we take the fiber over $[0]$ in the second factor of $\Ass$.
\end{notation}

There is a canonical functor $ \L\Env(\mM)^\circledast = \mathrm{Min} \times_{\Fun(\{0\},\Ass)} \mM^\circledast \to \Fun([1],\Ass) \times_{\Fun(\{0\},\Ass)} \mV^\ot $$$ \to \Env(\mV)^\ot = \Act \times_{\Fun(\{0\},\Ass)} \mV^\ot$$
over $\Fun(\{1\},\Ass), $ where the last functor is induced by the localization $\Fun([1],\Ass) \to \Act$. 

\vspace{1mm}
There is a functor $\L\Env(\mM)^\circledast \to \mM^\circledast \to \mW^\ot$ and embeddings $\mM^\circledast \subset \L\Env(\mM)^\circledast \subset \B\Env(\mM)^\circledast$ induced by the embeddings $\Ass \subset \mathrm{Min}, \Ass \subset \text{Max}.$
\begin{remark}
Note that $\L\Env(\mM)^\circledast \subset \B\Env(\mM)^\circledast$ is the full weakly bitensored subcategory spanned by $\L\Env(\mM)$ and $\mW.$
So $\L\Env(\mM)^\circledast \to \Env(\mV)^\ot \times \mW^\ot$ is a
weakly bitensored $\infty$-category and the embeddings $\mM^\circledast \subset \L\Env(\mM)^\circledast,  \L\Env(\mM)^\circledast \subset \B\Env(\mM)^\circledast$ are maps of weakly bitensored $\infty$-categories.
\end{remark}
 
\begin{lemma}\label{ripre}
Let $\mM^\circledast \to \mV^\ot \times \mW^\ot$ be a weakly bitensored $\infty$-category that exhibits $\mM$ as right tensored over $\mW.$
The embedding $\L\Env(\mM)^\circledast \subset \B\Env(\mM)^\circledast$ admits a left adjoint relative to $\Ass \times \Ass.$ 
A morphism is a local equivalence if and only if it lies over an equivalence in $\Fun(\{1\},\Ass \times \Ass)$ and lies over a cocartesian lift in $\mM^\circledast$ of a morphism in $\Fun(\{0\},\Ass \times \Ass)$ whose first component is an equivalence and whose second component preserves the maximum.

\end{lemma}

\begin{proof}
Since the diagonal embedding $\Ass \subset \Fun([1],\Ass)$ admits a left adjoint relative to $\Fun(\{1\},\Ass), $ 
the embeddings $\Ass \subset \text{Min}, \Ass \subset \text{Max} $ also do.
Thus the embedding $(\mathrm{Min} \times \Ass) \times_{\Fun(\{0\},\Ass \times \Ass)} \mM^\circledast \subset (\mathrm{Min} \times \text{Max}) \times_{\Fun(\{0\},\Ass \times \Ass)} \mM^\circledast$ admits a left adjoint relative to $\Ass \times \Ass$ using that $\mM^\ot \to \Ass$ is a cocartesian fibration via projection to the second factor.

\end{proof}

\begin{remark}
Lemma \ref{ripre} guarantees that for any weakly bitensored $\infty$-category $\mM^\circledast \to \mV^\ot \times \mW^\ot$ that exhibits $\mM$ as right tensored over $\mW$ the functor $\L\Env(\mM)^\circledast \to \Env(\mV)^\ot \times \mW^\ot$ is a bitensored $\infty$-category and the embedding $\mM^\circledast \subset \L\Env(\mM)^\circledast$ is $\mW$-linear.
\end{remark}

\noindent
The weakly bitensored $\infty$-category $\L\Env(\mM)^\circledast \to \Env(\mV)^\ot \times \mW^\ot$ has the following universal property:

\begin{proposition}
Let $\mM^\circledast \to \mV^\ot \times \mW^\ot, \mN \to \Env(\mV)^\ot \times \mW^\ot$ be weakly bitensored $\infty$-categories.

\begin{enumerate}
\item If $\mN^\circledast \to \Env(\mV)^\ot\times \mW^\ot$ exhibits $\mN$ as left tensored over $\Env(\mV)$ the canonical functor 
$$ \LaxLinFun_{\Env(\mV),\mW}(\L\Env(\mM),\mN) \to \LaxLinFun_{\mV,\mW}(\mM,\mN)$$
admits a fully faithful left adjoint that lands in $\LaxLinFun_{\Env(\mV),\mW}(\L\Env(\mM),\mN)'$, the full subcategory of $\Env(\mV)$-linear functors. 

In particular, the functor $ \LaxLinFun_{\Env(\mV),\mW}(\L\Env(\mM),\mN)' \to \LaxLinFun_{\mV,\mW}(\mM,\mN)$ is an equivalence. 

\vspace{1mm}
\item If $\mN^\circledast \to \Env(\mV)^\ot \times \mW^\ot$ is a bitensored $\infty$-category and $\mM^\circledast \to \mV^\ot \times \mW^\ot$ exhibits $\mM$ as right tensored over $\mW$, the former equivalence restricts to an equivalence 
$$ \alpha: \LinFun_{\Env(\mV),\mW}(\L\Env(\mM),\mN) \simeq \LaxLinFun_{\mV,\mW}(\mM,\mN)'', $$
where the right hand side is the full subcategory of lax $\mV,\mW$-linear functors 
that are $\mW$-linear.

\end{enumerate} 

\end{proposition}

\begin{proof}
(1) follows from Proposition \ref{Envelo} and Example \ref{envort}.
(2): The functor $\alpha$ in (2) factors as
$$\LinFun_{\Env(\mV),\mW}(\L\Env(\mM),\mN) \hookrightarrow \LinFun_{\Env(\mV),\Env(\mW)}(\B\Env(\mM),\mN) \simeq \LaxLinFun_{\mV,\mW}(\mM,\mN),$$
where the first functor is induced by the localization functor
$ \B\Env(\mM)^\circledast \to \L\Env(\mM)^\circledast$ of Lemma \ref{ripre}.
So the claim follows from the fact that a $\Env(\mV),\Env(\mW) $-linear functor 
$  \B\Env(\mM)^\circledast \to \mN^\circledast$ inverts local equivalences if and only if its restriction
$ \mM^\circledast_{[0]} \to \mN^\circledast_{[0]}$ is $\mW $-linear.

\end{proof}

\begin{corollary}\label{envdecom}
Let $\mM^\circledast \to \mV^\ot \times \mW^\ot$ be a weakly bitensored $\infty$-category. There is an equivalence
$$\R\Env(\L\Env(\mM))^\circledast \simeq \B\Env(\mM)^\circledast$$
of $\infty$-categories bitensored over $\Env(\mV),\Env(\mW).$

\end{corollary}

In the following we introduce a variant of enveloping
$\mO$-monoidal $\infty$-category that is closed.
 
\begin{notation}\label{envors}
For a small $\mO$-operad $\mC \to \mO$ we define the enveloping closed $\mO$-monoidal $\infty$-category $\mP\Env_\mO(\mC) \to \mO$ of $\mC \to \mO$ applying Proposition \ref{presday} to $\Env_\mO(\mC) \to \mO $, where we omit $\mO$ if $\mO=\Ass.$
\end{notation}

The enveloping closed $\mO$-monoidal $\infty$-category comes with an embedding of $\mO$-operads $\mC \subset \Env_\mO(\mC) \subset \mP\Env_\mO(\mC) $ such that for any $\mO$-monoidal $\infty$-category $\mD \to \mO$ compatible with small colimits the functor $$\Fun_\mO^{\ot,\L}(\mP\Env_\mO(\mC),\mD)\to \Alg_{ \mC/ \mO}(\mD)$$ is an equivalence.

\vspace{1mm}

In particular, we consider the following cases, where $\mO \in \{\Ass,\LM,\RM,\BM\}:$
\begin{itemize}
\item For $\mV^\ot \to \Ass$ an $\infty$-operad we call $\mP\Env(\mV)^\ot \to \Ass $ the enveloping closed monoidal $\infty$-category
of $\mV^\ot \to \Ass.$

\vspace{1mm}

\item For a $\LM$-operad $\mC \to \LM$ classifying a left tensored $\infty$-category $\mM^\circledast \to \mV^\ot$ the $\LM$-monoidal $\infty$-category $\mP\Env_\LM(\mC) \to \LM $ classifies a left tensored $\infty$-category  $\mP\L\Env(\mM)^\circledast \to \mP\Env(\mV)^\ot$, which we call the enveloping $\infty$-category with closed left action.

\vspace{1mm}

\item For a $\RM$-operad $\mC \to \RM$ classifying a right tensored $\infty$-category $\mM^\circledast \to \mW^\ot$ the $\RM$-monoidal $\infty$-category $\mP\Env_\RM(\mC) \to \RM $ classifies a right tensored $\infty$-category  $\mP\R\Env(\mM)^\circledast \to \mP\Env(\mW)^\ot$, which we call the enveloping $\infty$-category with closed right action.

\vspace{1mm}

\item For a $\BM$-operad $\mC \to \BM$ classifying a bitensored $\infty$-category $\mM^\circledast \to \mV^\ot \times \mW^\ot$ the $\BM$-monoidal $\infty$-category $\mP\Env_\BM(\mC) \to \BM $ classifies a bitensored $\infty$-category $\mP\B\Env(\mM)^\circledast \to \mP\Env(\mV)^\ot \times \mP\Env(\mW)^\ot$, which we call the enveloping $\infty$-category with closed biaction.

\end{itemize}

\subsection{Enriched and pseudo-enriched $\infty$-categories}
\label{Enpse}
In this subsection we define enriched and pseudo-enriched $\infty$-categories in the sense of Lurie as certain weakly left tensored $\infty$-categories.

To define enrichment we define morphism objects (Definition \ref{moor}).

\begin{warning}
The following definition of morphism object in case where $\mV^\ot$ is a monoidal $\infty$-category is not the definition \cite[Definition 4.2.1.28.]{lurie.higheralgebra} given by Lurie. See Lemma \ref{Rem} for the relationship between both definitions.

\end{warning}

\begin{definition}\label{moor}	
Let $\mM^\circledast\to \mV^\ot $ be an $\infty$-category weakly left tensored over an $\infty$-operad.

A morphism object of $\X, \Y \in \mM$ is an object 
$\mathrm{Mor}_{\mM}(\X, \Y) \in \mV $ together with a multi-morphism
$\alpha \in \Mul_{\mM}(\mathrm{Mor}_{\mM}(\X, \Y), \X; \Y) $ that induces for all objects $\Z_1, ..., \Z_\n \in \mV$ an equivalence
$$\Mul_{\mV}( \Z_1, ..., \Z_\n;  \mathrm{Mor}_{\mM}(\X, \Y)  ) \simeq \Mul_{\mM}( \Z_1, ..., \Z_\n, \X; \Y).$$

\end{definition}

\begin{definition}\label{Enr}
We say that an $\infty$-category $\mM^\circledast\to \mV^\ot$ weakly left tensored over an $\infty$-operad exhibits $\mM$ as enriched in $\mV$ 
if for every $\X, \Y \in \mM$ there is a morphism object $\mathrm{Mor}_{\mM}(\X, \Y) \in \mV$.
\end{definition}

For any weakly left tensored $\infty$-category $\mM^\circledast\to \mV^\ot$ that exhibits $\mM$ as enriched in $\mV$ the functor
$$\Mul_{\mM}(-,-;-): \mV^\op \times \mM^\op \times \mM \to \mS$$
is adjoint to a functor
$\mathrm{Mor}_{\mM}(-, -) : \mM^\op \times \mM \to \mV \subset \Fun(\mV^\op,\mS),$ which we call the graph of $\mM.$

\vspace{2mm}
If $\mV^\ot$ is a monoidal $\infty$-category, the definition of morphism object gets easier:
\begin{lemma}\label{Rem}

Let $\mV^\ot$ be a monoidal $\infty$-category and $\mM^\circledast\to \mV^\ot$ an $\infty$-category weakly left tensored over $\mV$.
Let $\X, \Y \in \mM, \T \in \mV$ and $\alpha \in \Mul_{\mM}(\T, \X; \Y) $ a multi-morphism.
Then $\alpha$ exhibits $\T$ as the morphism object of $\X$ and $\Y$,
i.e. for every $\Z_1, ..., \Z_\n \in \mV$ the multi-morphism $\alpha$ induces an equivalence 
$$\psi: \mV(\otimes(\Z_1, ..., \Z_\n), \T) \simeq \Mul_{\mM}(\Z_1, ..., \Z_\n, \X; \Y)$$
if and only if the following two conditions are satisfied:
\begin{enumerate}
\item For every $\Z_1, ..., \Z_\n \in \mV$ the canonical map
$$\gamma: \Mul_{\mM}(\ot(\Z_1, ..., \Z_\n), \X; \Y) \to \Mul_{\mM}(\Z_1, ..., \Z_\n, \X; \Y)$$ is an equivalence.
\item For every $\Z \in \mV$ the canonical map $\rho_\Z: \mV(\Z, \T) \to \Mul_{\mM}( \Z, \X; \Y)$ is an equivalence.
\end{enumerate}

\end{lemma}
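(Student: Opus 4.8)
The plan is to reduce the statement to a factorization of $\psi$ into two maps, one controlled by condition 1 and the other by condition 2, and then to argue by two-out-of-three for equivalences. Write $\W := \ot(\Z_1, \ldots, \Z_\n) \in \mV$. Since $\mV$ is a monoidal $\infty$-category, the cocartesian lift over the relevant active morphism produces a canonical multi-morphism $\mu \in \Mul_{\mV}(\Z_1, \ldots, \Z_\n; \W)$ exhibiting $\W$ as the tensor product, i.e.\ composition with $\mu$ induces an equivalence $\mV(\W, -) \simeq \Mul_{\mV}(\Z_1, \ldots, \Z_\n; -)$ of presheaves on $\mV$. Under this identification $\psi$ becomes the map $\Mul_{\mV}(\Z_1, \ldots, \Z_\n; \T) \to \Mul_{\mM}(\Z_1, \ldots, \Z_\n, \X; \Y)$ given by operadic composition with $\alpha$, so I would phrase everything in these terms.

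First I would establish the factorization
$$\psi \simeq \gamma \circ \rho_\W,$$
where $\rho_\W : \mV(\W, \T) \to \Mul_{\mM}(\W, \X; \Y)$ is composition with $\alpha$ (the instance $\Z = \W$ of the map in condition 2) and $\gamma : \Mul_{\mM}(\W, \X; \Y) \to \Mul_{\mM}(\Z_1, \ldots, \Z_\n, \X; \Y)$ is composition with $\mu$ paired with $\id_\X$, which is exactly the canonical map of condition 1. Tracing a morphism $g : \W \to \T$ through $\gamma \circ \rho_\W$ gives $\alpha \circ (g, \id_\X) \circ (\mu, \id_\X)$, which by associativity of operadic composition equals $\alpha \circ (g \circ \mu, \id_\X)$, the image of $g$ under $\psi$. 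The only genuine work is to promote this from homotopy categories to a coherent equivalence of spaces; I expect to do this by realizing both composites as induced by composition in the multi-morphism fibration of $\mM^\circledast$ along the fixed inputs $\mu$ and $\alpha$, so that the factorization is literally an instance of the associativity coherence of the $\LM$-operad structure. This coherence bookkeeping is the main obstacle, though it is formal.

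Granting the factorization, both implications follow at once. For $(\Leftarrow)$, conditions 1 and 2 assert that $\gamma$ and $\rho_\W$ are equivalences, hence so is $\psi = \gamma \circ \rho_\W$; since the tuple $\Z_1, \ldots, \Z_\n$ was arbitrary, $\alpha$ exhibits $\T$ as the morphism object of $\X$ and $\Y$. For $(\Rightarrow)$, assume $\psi$ is an equivalence for all tuples. Taking $\n = 1$ and $\Z_1 = \Z$ gives $\W = \Z$, $\mu = \id$ and $\gamma = \id$, so $\psi = \rho_\Z$; this yields condition 2. Applying condition 2 with $\Z = \W$ shows $\rho_\W$ is an equivalence for every tuple, whence two-out-of-three in $\psi \simeq \gamma \circ \rho_\W$ forces $\gamma$ to be an equivalence, which is condition 1.

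Finally, for the closing remark that $\gamma$ is automatically an equivalence when $\mM$ is left tensored over $\mV$: in that case all $\mV$-tensors exist and the action is compatible with the monoidal structure, so there are natural equivalences $\Mul_{\mM}(\W, \X; \Y) \simeq \mM(\W \ot \X, \Y)$ and $\Mul_{\mM}(\Z_1, \ldots, \Z_\n, \X; \Y) \simeq \mM(\Z_1 \ot \cdots \ot \Z_\n \ot \X, \Y)$, and the associativity equivalence $\Z_1 \ot \cdots \ot \Z_\n \ot \X \simeq \W \ot \X$ identifies $\gamma$ with an equivalence. Consequently, in the left-tensored case the criterion for morphism objects collapses to condition 2 alone, which is the comparison with Lurie's definition that this lemma is meant to record.
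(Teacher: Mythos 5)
Your proof is correct and takes essentially the same route as the paper's: the heart of both arguments is the factorization $\psi = \gamma \circ \rho_{\ot(\Z_1, \ldots, \Z_\n)}$, together with the observation that condition 2 (quantified over all $\Z \in \mV$) is the same as requiring $\rho_\W$ to be an equivalence for every tensor product $\W = \ot(\Z_1,\ldots,\Z_\n)$, after which both implications follow by two-out-of-three. The extra details you supply --- the $\n=1$ specialization where $\mu = \id$ and $\gamma = \id$, and the identification of $\gamma$ with an associativity equivalence in the left-tensored case --- are points the paper states without proof, and you handle them correctly.
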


\begin{proof}
Condition (2) is equivalent to the condition that for every $\Z_1, ..., \Z_\n \in \mV$ the canonical map 
$$\rho_{\ot(\Z_1, ..., \Z_\n)}: \mV(\ot(\Z_1, ..., \Z_\n), \T) \to \Mul_{\mM}(\ot(\Z_1, ..., \Z_\n), \X; \Y)$$ is an equivalence.
Moreover we have $ \psi = \gamma \circ \rho_{\ot(\Z_1, ..., \Z_\n)}$. 
Consequently if (2) holds, then condition (1) holds if and only if $\alpha$ exhibits $\T$ as the morphism object of $\X$ and $\Y$.
We conclude by observing that if $\alpha$ exhibits $\T$ as the morphism object of $\X$ and $\Y$, then (2) holds taking $\n=1.$

\end{proof}

\begin{corollary}\label{zvqq}
An $\infty$-category $\mM^\circledast\to \mV^\ot$ weakly left tensored over a monoidal $\infty$-category exhibits $\mM$ as enriched in $\mV$ if and only if condition (1) and (2) of Lemma \ref{Rem} hold for any $\X,\Y \in \mM.$
	
\end{corollary}

\begin{remark}
Lurie \cite[Definition 4.2.1.25.]{lurie.higheralgebra} says that a $\LM$-operad $\mO \to \LM$ exhibits $\mO_{[0] *[0]}$ as enriched in
$\mV^\ot:= \Ass \times_\LM \mO$ if $\mV^\ot \to \Ass$ is a monoidal $\infty$-category
and conditions (1) and (2) of Lemma \ref{Rem} hold for any $\X,\Y \in \mM$
in accordance with Corollary \ref{zvqq}.

\end{remark}
Moreover Lurie relaxes the notion of enrichment (\cite[Definition 4.2.1.28.]{lurie.higheralgebra}):
\begin{definition}\label{Lu}
We say that an $\infty$-category $\mM^\circledast\to \mV^\ot$ weakly left tensored over a monoidal $\infty$-category exhibits $\mM$ as pseudo-enriched in $\mV$ if condition (1) of Lemma \ref{Rem} holds for any $\X,\Y \in \mM.$

\end{definition}

\begin{example}\label{pset}
Any left tensored $\infty$-category $\mM^\circledast\to \mV^\ot$ exhibits $\mM$ as pseudo-enriched in $\mV$ since for any $\X,\Y \in \mM$ and $\Z_1, ..., \Z_\n \in \mV$ for $\n \geq 0$ the map
$\Mul_{\mM}(\ot(\Z_1, ..., \Z_\n), \X; \Y) \to \Mul_{\mM}(\Z_1, ..., \Z_\n, \X; \Y)$ factors as
$\Mul_{\mM}(\ot(\Z_1, ..., \Z_\n), \X; \Y) \simeq \Mul_{\mM}(\ot(\Z_1, ..., \Z_\n) \ot \X, \Y) \simeq \Mul_{\mM}(\Z_1, ..., \Z_\n, \X; \Y).$

\end{example}

Corollary \ref{zvqq} and Example \ref{pset} imply the following one:

\begin{example}\label{cloq}
	
A left tensored $\infty$-category $\mM^\circledast\to \mV^\ot$ exhibits $\mM$ as enriched in $\mV$ if and only if for every $\X, \Y \in \mM$
there is an object $ \T \in \mV$ and a morphism $\alpha: \T \ot \X \to \Y $ in $\mM$ such that for every $\Z \in \mV$ the canonical map $\mV(\Z, \T) \to \mM(\Z \ot \X, \Y)$ is an equivalence, in other words if and only if
for every $\X \in \mM$ the functor $(-)\ot \X:\mV \to \mM$ admits a right adjoint.

\noindent
So a left tensored $\infty$-category $\mM^\circledast\to \mV^\ot$ exhibits $\mM$ as $\mV$-enriched if and only the left action is closed.

\end{example}

\vspace{1mm}

By Example \ref{cloq} every $\infty$-category $\mM^\circledast\to \mV^\ot$
with closed left $\mV$-action exhibits $\mM$ as $\mV$-enriched.
Thus for any $\infty$-category $\mM^\circledast\to \mV^\ot$ weakly left tensored 
over an $\infty$-operad the enveloping $\infty$-category $\mP\L\Env(\mM)^\circledast \to \mP\Env(\mV)^\ot$ with closed left $\mP\Env(\mV)$-action exhibits $\mP\L\Env(\mM)$ as enriched in $\mP\Env(\mV)$. Consequently, every weakly left tensored
$\infty$-category embeds into an enriched $\infty$-category.

This motivates the following notation:
\begin{notation}\label{Exta}
Let $\mM^\circledast \to \mV^\ot$ be an $\infty$-category weakly left tensored over an $\infty$-operad.
Let $\bar{\mM}^\circledast \subset \mP\L\Env(\mM)^\circledast \to \mP\Env(\mV)^\ot$ be the full weakly left tensored subcategory spanned by $\mM$.
\end{notation}
Then $\bar{\mM}^\circledast \to \mP\Env(\mV)^\ot$ is a weakly left tensored $\infty$-category whose pullback to $\mV^\ot$ is $\mM^\circledast.$
In particular, $\bar{\mM} \simeq \mM.$
We call $\bar{\mM}^\circledast \to \mP\Env(\mV)^\ot $ the canonical extension of $\mM^\circledast \to \mV^\ot$ to $\mP\Env(\mV)^\ot.$

\vspace{1mm}	
Since $\mP\L\Env(\mM)^\circledast \to \mP\Env(\mV)^\ot$ exhibits $\mP\L\Env(\mM)$ as enriched in $\mP\Env(\mV)$, the weakly left tensored $\infty$-category $\bar{\mM}^\circledast \to \mP\Env(\mV)^\ot$ exhibits $\mM$ as enriched in $\mP\Env(\mV)$.

\vspace{1mm}	
By the next lemma the canonical embedding $\mM^\circledast\subset \mP\L\Env(\mM)^\circledast$ preserves morphism objects
so that the canonical embedding $\mM^\circledast\subset \bar{\mM}^\circledast$ preserves morphism objects, too:

\begin{lemma}\label{morpre}

Let $\mM^\circledast \to \mV^\ot$ be an $\infty$-category weakly left tensored over an $\infty$-operad $\mV^\ot.$

\vspace{2mm}

The embedding $\mM^\circledast \subset \mP\L\Env(\mM)^\circledast$ of weakly left tensored $\infty$-categories preserves morphism objects.

\end{lemma}

\begin{proof}

The $\infty$-category $\mP\Env(\mV)$ is generated by $\mV$ under small colimits and tensor products.

For every $\X, \Y \in \mM$ and $\V_1,..., \V_\n \in \mV$ for $\n \geq 0$ there is a canonical equivalence 
$$\mP\Env(\mV)(\V_1 \ot ... \ot \V_\n;  \mathrm{Mor}_{\mM}(\X, \Y)) \simeq \Mul_{\mP\Env(\mV)}( \V_1, ..., \V_\n;  \mathrm{Mor}_{\mM}(\X, \Y)) \simeq $$$$ \Mul_{\mV}( \V_1, ..., \V_\n;  \mathrm{Mor}_{\mM}(\X, \Y)) \simeq \Mul_{\mM}(\V_1, ..., \V_\n, \X; \Y) \simeq \Mul_{ \mP\L\Env(\mM)}(\V_1, ..., \V_\n, \X; \Y) \simeq $$$$ \Mul_{\mP\Env(\mV)}( \V_1, ..., \V_\n;  \mathrm{Mor}_{ \mP\L\Env(\mM)}(\X, \Y)) \simeq \mP\Env(\mV)(\V_1 \ot ... \ot \V_\n;  \mathrm{Mor}_{ \mP\L\Env(\mM)}(\X, \Y)).$$

\end{proof} 

Motivated by Lemma \ref{morpre} we make the following definition:

\begin{definition}
Let $\mM^\circledast \to \mV^\ot$ be an $\infty$-category weakly left tensored over an $\infty$-operad $\mV^\ot.$ The graph of $\mM$ is the graph of $\bar{\mM}.$
\end{definition}

\vspace{1mm}
For any $\V_1,...,\V_\n \in \mV,\X,\Y \in \mM$ for $\n \geq 0$ we have
$\mathrm{Mor}_{\bar{\mM}}(\X,\Y)(\V_1 \ot ... \ot \V_{\n}) \simeq$$$ \mP\Env(\mV)(\V_1 \ot ... \ot \V_{\n},\mathrm{Mor}_{\bar{\mM}}(\X,\Y)) \simeq \Mul_{\bar{\mM}}(\V_1,...,\V_\n,\X;\Y)\simeq \Mul_\mM(\V_1,...,\V_\n,\X;\Y).$$

\vspace{1mm}

We extend Notation \ref{Exta} to weakly bitensored $\infty$-categories:
\begin{notation}\label{Exta2}
	
Let $\mM^\circledast \to \mV^\ot \times \mW^\ot$ be an $\infty$-category weakly bitensored over $\infty$-operads.
Let $\bar{\mM}^\circledast \subset \mP\B\Env(\mM)^\circledast \to \mP\Env(\mV)^\ot \times \mP\Env(\mW)^\ot$ the full weakly bitensored subcategory spanned by $\mM$.
\end{notation}
The pullback of $\bar{\mM}^\circledast \to \mP\Env(\mV)^\ot \times \mP\Env(\mW)^\ot $ to $\mV^\ot \times \mW^\ot$ is $\mM^\circledast$ so that $\bar{\mM} \simeq \mM.$	
We call $\bar{\mM}^\circledast \to \mP\Env(\mV)^\ot \times \mP\Env(\mW)^\ot $ the canonical extension of $\mM^\circledast \to \mV^\ot \times \mW^\ot$
to $\mP\Env(\mV)^\ot \times \mP\Env(\mW)^\ot $.

We also consider the pullbacks of $\bar{\mM}^\circledast \to \mP\Env(\mV)^\ot \times \mP\Env(\mW)^\ot $ to $\mV^\ot \times \mP\Env(\mW)^\ot$, $ \mP\Env(\mV)^\ot \times \mW^\ot$, which we call the canonical extensions of $\mM^\circledast \to \mV^\ot \times \mW^\ot$ to $\mV^\ot \times \mP\Env(\mW)^\ot$ and $ \mP\Env(\mV)^\ot \times \mW^\ot$. 

\vspace{1mm}
Next we define tensors:

\begin{definition}
Let $\mM^\circledast \to \mV^\ot$ be an $\infty$-category weakly left tensored over an $\infty$-operad $\mV^\ot$ and $\V \in \mV,\X \in \mM.$
The tensor of $\V$ and $\X $ is an object $\V \ot \X \in \mM$ 
equipped with an object $\alpha \in \Mul_\mM(\V,\X,\V \ot \X)$
such that for any object $\V_1,...,\V_\n \in \mV$ for $\n \geq 0$ 
the following map is an equivalence:
$$\Mul_\mM(\V_1,...,\V_\n, \V \ot \X;\Y) \to \Mul_\mM(\V_1,...,\V_\n, \V,\X;\Y).$$ 

We say that $\mM^\circledast \to \mV^\ot$ exhibits $\mM$ as locally left tensored over $\mV$ if any $\V \in \mV,\X \in \mM$ admit a tensor.

\end{definition}

\begin{remark}
Let $\mM^\circledast \to \mV^\ot$ be an enriched $\infty$-category
such that $\mV^\circledast \to \mV^\ot$ ($\mV$ seen as weakly left tensored over itself) is an enriched $\infty$-category and $\V \in \mV,\X \in \mM.$
An object $\alpha \in \Mul_\mM(\V,\X,\X') \simeq \mV(\V, \Mor_\mM(\X,\X'))$ exhibits an object $\X'\in \mM$ as the tensor of
$\V$ and $\X$ if and only if for any $\Y \in \mM$ the induced morphism 
$\Mor_\mM(\X',\Y) \to \Mor_\mV(\V,\Mor_\mM(\X,\Y))$ is an equivalence.

\end{remark}

\begin{remark}\label{assp}
Let $\mM^\circledast \to \mV^\ot$ be a pseudo-enriched $\infty$-category
and $\V \in \mV,\X \in \mM$. An object $\alpha \in \Mul_\mM(\V,\X,\X')$ exhibits an object $\X' \in \mM$ as the tensor of $\V$ and $\X$ if and only if for any $\W \in \mW, \Y \in \mM$ the map 
$\Mul_\mM(\W, \X';\Y) \to \Mul_\mM(\W, \V,\X;\Y) \simeq \mV(\W \ot \V,\X;\Y)$ is an equivalence.

In particular, if the functors $\Mul_\mM(\W, \X';-), \mV(\W \ot \V,\X;-):\mM \to \mS$ are corepresentable, say by objects $\W \ot \X', (\W \ot \V) \ot \X$,
then $\alpha$ exhibits $\X'$ as the tensor of
$\V$ and $\X$ if and only if the canonical morphism
$(\W \ot \V) \ot \X \to \W \ot \X'$ in $\mM$ is an equivalence.
\end{remark}

Remark \ref{assp} 
implies the next proposition.
See \cite[Proposition 4.2.1.26.]{lurie.higheralgebra} for a proof.

\begin{proposition}\label{lur}
A weakly left tensored $\infty$-category is a left tensored $\infty$-category if and only if it is a pseudo-enriched $\infty$-category and locally left tensored $\infty$-category.
\end{proposition}

\section{Turning a weak action into a weak enrichment}
\label{extr}

In this section we define weakly enriched $\infty$-precategories and construct a functor $$\chi: \omega\LMod \to \{\text{weakly enriched $\infty$-precategories} \}$$ that associates to any weakly left tensored $\infty$-category $\mM^\circledast \to \mV^\ot$ an $\infty$-precategory weakly enriched in $\mV$ with space of objects $\mM^\simeq$.
 
\subsection{Weakly enriched $\infty$-categories}\label{eenr}

In the following we give the definition of enriched $\infty$-categories in the sense of \cite{GEPNER2015575} and a variation of it (Definition \ref{weenr}) that we call weakly enriched $\infty$-categories.
We start with defining a many object version of $\Ass$.

\vspace{1mm}

\begin{notation}\label{priory}
	
For any $\infty$-category $\X$ the functor $\nu: \Delta \to \Set, \ [\n] \mapsto \{0,..., \n \} $
gives rise to a functor \begin{equation}\label{rkal}
\Ass \xrightarrow{(-)^\op}\Ass \xrightarrow{\nu^\op} \Set^\op \xrightarrow{\Fun(-,\X)} \Cat_\infty\end{equation} 
classifying a cocartesian fibration 
$ \Ass_{\X} \to \Ass $ that is a generalized monoidal $\infty$-category.

\end{notation}

\begin{remark}\label{contr}
If $\X$ is contractible, the functor $ \Ass_{\X} \to \Ass $ is an equivalence.
\end{remark}

\begin{remark}\label{uniqp}
	
The functor (\ref{rkal}) is the right kan extension of
the functor $* \to \Cat_\infty$ taking $\X$ along the functor
$* \to \Ass$ taking $[0].$
So for any cocartesian fibration $\mO \to \Ass$ restriction $\Fun^\cocart_\Ass(\mO,\Ass_\X) \to\Fun(\mO_{[0]},\X)$
is an equivalence.
	
\end{remark}

Now we can define enriched $\infty$-precategories in the sense
of Gepner-Haugseng \cite[Definition 2.4.4.]{GEPNER2015575}:

\begin{definition}\label{GH} (Gepner-Haugseng)

Let $\X$ be a space and $\mV^\ot \to \Ass$ a generalized $\infty$-operad. A $\mV$-enriched $\infty$-precategory (or $\mV$-precategory) with space of objects $\X$ is a map $\Ass_{\X} \to \mV^\ot $ of generalized $\infty$-operads.
\end{definition}

In this work we are mainly interested in $\infty$-precategories enriched in an $\infty$-operad since this is the structure we compare
to Lurie's definition of enriched $\infty$-categories.
But we also consider the more general case of enrichment in a generalized $\infty$-operad as in many constructions there is no difference.

\begin{remark}\label{rem}
Definition \ref{GH} is a slight variant from the original definition
of Gepner and Haugseng:
Gepner and Haugseng define homotopy-coherent enrichment in a generalized $\infty$-operad $\mV^\ot$ under the name categorical algebra in $\mV$ (or $\mV$-enriched $\infty$-category) with space of objects $\X$ as a map $\Ass_{\X} \to (\mV^\ot)^\rev$ of generalized $\infty$-operads, where $(\mV^\ot)^\rev$ is the reversed generalized $\infty$-operad.

In case where $\mV^\ot \to \Ass$ is a monoidal $\infty$-category, for any objects $\A, \B, \C \in \X$ the composition maps of a $\mV$-precategory
$\mC$ with space of objects $\X$ are morphisms
$ \Mor_\mC(\B, \C) \ot \Mor_\mC(\A, \B) \to  \Mor_\mC(\A, \C)$ in our definition but maps
$ \Mor_\mC(\A, \B) \ot \Mor_\mC(\B, \C) \to  \Mor_\mC(\A, \C)$ in 
the definition of Gepner-Haugseng.

\end{remark}

\begin{example}
Remark \ref{contr} implies that $\mV$-precategories whose space of objects is contractible, are associative algebras in $\mV.$

\end{example}

\begin{notation}\label{uzpp}
For any $\mV$-precategory $\mC: \Ass_{\X} \to \mV^\ot $ with space of objects $\X$ there is an induced functor
$\mC_{[1]}: (\Ass_{\X})_{[1]} \simeq \X \times \X \to \mV^\ot_{[1]}$,
which we denote by $\mC(-,-),$ and call the graph of $\mC.$
\end{notation}

\begin{remark}\label{funcp}
A map of spaces $\X \to\Y$ and map of generalized $\infty$-operads $\theta:\mV^\ot \to \mW^\ot$ induce a functor $\Alg_{\Ass_\Y}(\mV) \to \Alg_{\Ass_\X}(\mW)$ precomposing with $\Ass_\X \to \Ass_\Y$ and postcomposing with $\theta.$
	
\end{remark}

\begin{lemma}\label{under}
For any $\infty$-operad $\mV^\ot \to \Ass$ whose multi-morphism spaces
$\Mul_\mV(\emptyset,\Z) $ are small for $\Z \in \mV$, the functor $\Mul_\mV(\emptyset,-):\mV \to \mS$ refines to a map of $\infty$-operads $\mV^\ot \to \mS^\times.$
If $\mV^\ot \to \Ass$ is a monoidal $\infty$-category compatible with small colimits, $\mV^\ot \to \mS^\times$ admits a left adjoint relative to $\Ass.$
\end{lemma}

\begin{proof}
For every large monoidal $\infty$-category $\mW^\ot \to \Ass$ let $\widehat{\mP}(\mW)^\ot$ be the very large monoidal $\infty$-category of presheaves of large spaces.	
By Proposition \ref{presday} there is a unique adjunction
$\widehat{\mS}^\times \rightleftarrows \widehat{\mP}\Env(\mV)^\ot:\R$
relative to $\Ass$, where $\R$ induces on underlying $\infty$-categories the functor
$\widehat{\mP}\Env(\mV)(\tu,-) $.
Thus $\mV^\ot \subset \widehat{\mP}\Env(\mV)^\ot \xrightarrow{\R} \widehat{\mS}^\times$
induces on underlying $\infty$-categories the functor
$\Mul_\mV(\emptyset,-): \mV \to \mS \subset \widehat{\mS}$.

Let $\mV^\ot \to \Ass$ be a monoidal $\infty$-category compatible with small colimits and $\widehat{\Ind}(\mV)^\ot \subset \widehat{\mP}(\mV)^\ot$
the full suboperad of presheaves preserving small limits. By Remark \ref{nos} the full suboperad $\widehat{\Ind}(\mV)^\ot \subset \widehat{\mP}(\mV)^\ot$
is a localization relative to $\Ass.$ Lemma \ref{looocx} implies that $\widehat{\mP}(\mV)^\ot \subset \widehat{\mP}\Env(\mV)^\ot$ is a localization relative to $\Ass.$
The resulting adjunction
$ \widehat{\mS}^\times \leftrightarrows \widehat{\mP}\Env(\mV)^\ot \leftrightarrows \widehat{\mP}(\mV)^\ot\leftrightarrows \widehat{\Ind}(\mV)^\ot $ relative to $\Ass$
restricts to an adjunction
$ \mS^\times \leftrightarrows \mV^\ot $ relative to $\Ass$
since $\mV$ is closed in $\widehat{\Ind}(\mV)$ under small colimits.

\end{proof}

By Lemma \ref{under} for any locally small $\infty$-operad $\mV^\ot \to \Ass$ 
the functor $\Mul_\mV(\emptyset,-):\mV \to \mS$ refines to a map
of $\infty$-operads sending a $\mV$-precategory to its underlying $\mS$-precategory (Remark \ref{funcp}).

\vspace{1mm}
Next we define weakly enriched $\infty$-categories
via the enveloping closed monoidal $\infty$-category:

\begin{definition}\label{weenr}

For any space $\X$ and small $\infty$-operad $\mV^\ot \to \Ass$ an $\infty$-precategory weakly enriched in $\mV$ with space of objects $\X$ is an $\infty$-precategory enriched in $\mP\Env(\mV)$ with space of objects $\X$.

\end{definition}

The embedding $\mV^\ot \subset \mP\Env(\mV)^\ot$ of $\infty$-operads yields an embedding $\Alg_{\Ass_\X}(\mV) \subset \Alg_{\Ass_\X}(\mP\Env(\mV))$
of $\infty$-categories enriched in $\mV$ into $\infty$-categories weakly enriched in $\mV.$

\begin{example}
Let $\emptyset^\ot \to \Ass$ be the initial $\infty$-operad (see Notation \ref{empp}).	
As $\Env(\emptyset)^\ot \simeq \Ass,$ we have $\mP\Env(\emptyset)^\ot \simeq \mS^\times.$ 
Thus $\infty$-precategories weakly enriched in $\emptyset$ are $\infty$-precategories enriched in $\mS$.

\end{example}

\subsection{Enriched quivers and their actions}

To construct the functor \begin{equation}\label{Funq}
\chi: \omega\LMod \to \{ \text{weakly enriched $\infty$-precategories}\}\end{equation}
we will describe $\infty$-precategories weakly enriched in an $\infty$-operad $\mV^\ot \to \Ass$ as associative algebras in a monoidal $\infty$-category.
Following Hinich \cite{HINICH2020107129} we will construct for any small space $\X$ and monoidal $\infty$-category $\mV^\ot \to \Ass$ compatible with small colimits a monoidal structure on the $\infty$-category $\Fun(\X \times \X,\mV)$ with the following key properties:

\begin{enumerate}\label{enum}
\item Associative algebras in $\Fun(\X \times \X,\mV) $ are $\mV$-enriched $\infty$-precategories with space of objects $\X$. 

\item For every $\mV$-enriched $\infty$-category $\mM^\circledast \to \mV^\ot $ the $\infty$-category $\Fun(\X,\mM)$ is enriched in $\Fun(\X \times \X,\mV)$, where the morphism object of two functors
$\F,\G: \X \to \mM$ is the functor $\X \xrightarrow{(\F^\op,\G)} \mM^\op \times \mM\xrightarrow{\Mor_\mM(-,-)} \mV$ (Proposition \ref{xre}). 

\end{enumerate}

We will use (1) and (2) to turn small $\mV$-enriched $\infty$-categories 
$\mM^\circledast \to \mV^\ot $ to $\mV$-enriched $\infty$-precategories:
by (2) for every map of spaces $\F: \X \to \mM^\simeq$ the functor $\X \xrightarrow{(\F^\op,\F)} \mM^\op \times \mM\xrightarrow{\Mor_\mM(-,-)} \mV$
is the endomorphism object with respect to an enrichment of
$\Fun(\X,\mM)$ in $\Fun(\X \times \X,\mV)$ and so by Proposition \ref{urr} carries a unique structure of an associative algebra in $\Fun(\X \times \X,\mV)$, which by (1) is identified with a $\mV$-enriched $\infty$-precategory.
For $\F$ the inclusion $\mM^\simeq \subset \mM$ we produce this way a 
canonical $\mV$-enriched $\infty$-precategory.
For an arbitrary weakly left tensored $\infty$-category $\mM^\circledast \to \mV^\ot $ we extend $\mM^\circledast \to \mV^\ot $ to a 
$\mP\Env(\mV)$-enriched $\infty$-category $\bar{\mM}^\circledast \to \mP\Env(\mV)^\ot $ (see Notation \ref{Exta}) to produce an $\infty$-precategory weakly enriched in $\mV$.
We will organize this assignment to functor (\ref{Funq}).

\vspace{1mm}

In this subsection we will construct the monoidal structure on $\Fun(\X \times \X, \mV)$, in the next subsection we will construct $\chi.$ 
Since we need to consider the monoidal structure on $\Fun(\X \times \X, \mV)$ as functorial in $\X$, we construct a fibered version via families of $\infty$-operads (Definition \ref{famm}).

\begin{notation}\label{noriol}
For every functor $\X \to \rS$ let $(\Ass^{\triangleright})^\rS_{\X}\to \rS\times \Ass^{\triangleright}$ be
the pullback of the map $ (\Ass_{\X})^{\triangleright} \to (\Ass_{\rS})^{\triangleright}
$ of cocartesian fibrations over $\Ass^{\triangleright} $ along the unique map $\rS \times \Ass^{\triangleright} \to (\Ass_{\rS})^{\triangleright}$ of cocartesian fibrations over $\Ass^{\triangleright} $ inducing the identity on the fiber over $[0]\in \Ass^{\triangleright}$ (Remark \ref{uniqp}).
Let $\Ass^\rS_{\X}\to \rS\times \Ass$ be the pullback of $(\Ass^{\triangleright})^\rS_{\X}\to \Ass^{\triangleright}$ to $\Ass.$ If $\rS$ is contractible, we omit $\rS$.

\end{notation}	

\begin{remark}\label{Reqppp}
For any functor $\rS' \to \rS$ there is a canonical equivalence
$\rS' \times_\rS (\Ass^{\triangleright})^\rS_{\X} \simeq  (\Ass^{\triangleright})^{\rS'}_{\rS' \times_\rS \X}$.
 
\end{remark}

\begin{remark}
The functor $\Ass^\rS_{\X}\to \rS\times \Ass$ is a $\rS$-family of generalized monoidal $\infty$-categories since it is the pullback of 
generalized monoidal $\infty$-categories.
If $\X \to \rS$ is a cocartesian fibration, $(\Ass_\X)^{\triangleright} \to (\Ass_\rS)^{\triangleright}$ and so $(\Ass^{\triangleright})^\rS_{\X}\to \rS\times \Ass^{\triangleright}$ are cocartesian fibrations since it induces fiberwise cocartesian fibrations and the fiber transports over maps of $\Ass^{\triangleright} $ preserve cocartesian morphisms (\cite[Lemma A.1.8.]{haugseng_melani_safronov_2020}). 
\end{remark}

\begin{notation}\label{noris}
For every functors $\X \to \rS, \Y \to \rS$ we set $$(\BM^{\triangleright})^\rS_{\X,\Y}:=(\Ass^{\triangleright})^\rS_{\X} \times_{\rS} (\Ass^{\triangleright})^\rS_{\Y} \to (\Ass^{\triangleright} \times \rS) \times_{\rS}(\Ass^{\triangleright} \times \rS) \simeq \Ass^{\triangleright} \times \Ass^{\triangleright} \times \rS \simeq \BM^{\triangleright} \times \rS.$$
Let $ \BM^\rS_{\X,\Y}$ be the pullback of $(\BM^{\triangleright})^\rS_{\X,\Y} \to \BM^{\triangleright} \times \rS$ to $\BM \subset \BM^{\triangleright}.$ 
If $\rS$ is contractible, we omit $\rS$.

\end{notation}

If $\X \to \rS, \Y \to \rS$ are cocartesian fibrations, $(\BM^{\triangleright})^\rS_{\X,\Y}\to \BM^{\triangleright} \times \rS$ is a cocartesian fibration since $(\Ass_\X)^{\triangleright} \to (\Ass_\rS)^{\triangleright}$, $(\Ass_\Y)^{\triangleright} \to (\Ass_\rS)^{\triangleright}$ are cocartesian fibrations.
Remark \ref{Reqppp} implies that for any functor $\rS' \to \rS$ there is a canonical equivalence
$\rS' \times_\rS (\BM^{\triangleright})^\rS_{\X,\Y} \simeq  (\BM^{\triangleright})^{\rS'}_{\rS' \times_\rS \X, \rS' \times_\rS \Y}$
over $\rS' \times \BM^{\triangleright}$.
	
\begin{remark}\label{clars}
The functor $\BM^\rS_{\X,\Y} \to \BM \times \rS$ is a $\rS$-family of generalized $\BM$-monoidal $\infty$-categories and classifies the $\rS$-family of weakly bitensored $\infty$-categories $\id: \Ass^\rS_\X \times_\rS \Ass^\rS_\Y \to \Ass^\rS_\X \times_\rS \Ass^\rS_\Y$.
\end{remark}

Now we combine the operad families of Notation \ref{noriol} and \ref{noris} with a theory of generalized Day-convolution developed in section \ref{genDay}:

\begin{notation}\label{uzbx}
Let $\X \to \rS, \Y \to \rS$ be cocartesian fibrations.
Let \begin{equation*}\label{wlkhn}
\Quiv_{\X}^{\rS}(-)^\ot: \Op^{\Ass \times \rS,\gen}_{\infty} \to \Op^{\Ass \times \rS,\gen}_{\infty}, \B\Quiv_{\X,\Y}^{\rS}: \Op^{\BM \times \rS,\gen}_{\infty} \to \Op^{\BM \times \rS,\gen}_{\infty}\end{equation*}
be the right adjoints of the functors $$(-) \times_{(\Ass \times \rS)} \Ass^\rS_\X : \Op^{\Ass \times \rS,\gen}_{\infty} \to \Op^{\Ass \times \rS,\gen}_{\infty}, \ (-) \times_{(\BM \times \rS)} \BM^\rS_{\X,\Y} : \Op^{\BM \times \rS,\gen}_{\infty} \to \Op^{\BM \times \rS,\gen}_{\infty}, $$
respectively, that exist by Theorem \ref{proop}. If $\rS$ is contractible, we omit $\rS$.
\end{notation}

By Proposition \ref{proooo} the right adjoints of Notation \ref{uzbx} send $\rS$-families of $\mO$-operads to $\rS$-families of $\mO$-operads for $\mO=\Ass,\BM,$ respectively.
Lemma \ref{pulll} implies the following remark:
\begin{remark}\label{fibbr}
For any functor $\rS' \to \rS$ and $\rS$-family of generalized $\infty$-operads $\mV^\ot \to \Ass \times \rS$
there is a canonical equivalence
$$\rS' \times_\rS \Quiv_{\X}^{\rS}(\mV)^\ot \simeq \Quiv_{\rS' \times_\rS \X}^{\rS'}(\rS' \times_\rS\mV)^\ot$$
of $\rS'$-families of generalized $\infty$-operads and similarly for $\B\Quiv_{\X,\Y}^{\rS}.$
\end{remark}

Lemma \ref{forms} implies the following remark:
\begin{remark}
	
For every $\rS$-family of $\infty$-operads $\mV^\ot \to \Ass \times \rS$
there is a canonical equivalence
$$\Quiv^\rS_\X(\mV) \simeq \Fun^\rS(\X \times_\rS \X,\mV)$$ since $(\Ass^\rS_\X)_{[1]} \simeq \X \times_\rS \X.$

\end{remark}

\begin{remark}\label{sim}
In \cite[3.2.11.]{HINICH2020107129} Hinich constructs similar functors as in Notation \ref{uzbx} without using generalized $\infty$-operads by constructing elaborate operadic models of $\Ass_\X, \BM_{\X,*}$.
Precisely, Hinich constructs for $\mO =\Ass, \BM$ an
$\mO$-operad $\mO'_\X$ and proves that the functor $(-)\times_\mO \mO'_\X: \Op^\mO_\infty \to \Op^\mO_\infty$ admits a right adjoint $\Quiv^\mO_\X(-)$ \cite[3.2.11.]{HINICH2020107129}.
For $\mO=\Ass$ MacPherson \cite{MR4185309} identifies $\Ass'_\X$ as the initial $\infty$-operad under $\Ass_\X$. 
It follows from Hinich's description of $\Ass_\X'$ that the map $\Ass_\X \to \Ass'_\X$ is a strong approximation in the sense of \cite[Definition 2.3.3.6.]{lurie.higheralgebra}.
This implies that for any $\infty$-operad $\mV^\ot \to \Ass$ the pullback $ \mV^\ot \times_\Ass \Ass_\X \to \mV^\ot \times_\Ass \Ass'_\X $ exhibits $\mV^\ot \times_\Ass \Ass'_\X$ as the initial $\infty$-operad under $\mV^\ot \times_\Ass \Ass_\X$
so that Hinich's $\Quiv_\X^\Ass(-)$ is equivalent to our $\Quiv_\X(-)^\ot.$
\end{remark}

\begin{notation}\label{exxxx} Let $\X \to \rS, \Y \to \rS$ be cocartesian fibrations and $\mM^\circledast \to \mV^\ot \times_\rS \mW^\ot$ a $\rS$-family of weakly bitensored $\infty$-categories classified by a $\rS$-family of
generalized $\BM$-operads $\mD \to \rS \times \BM$.

The $\rS$-family of generalized $\BM$-operads $\B\Quiv^\rS_{\X,\Y}(\mD)\to \rS \times \BM $ exhibits $\Fun^\rS(\X \times_\rS \Y, \mM) \to \rS$ as weakly bitensored over $\Quiv^\rS_\X(\mV) \to \rS, \Quiv^\rS_\Y(\mW) \to \rS$ and so classifies
a weakly bitensored $\infty$-category $$\Fun^\rS(\X \times_\rS \Y, \mM)^\circledast \to \Quiv^\rS_\X(\mV)^\ot \times_\rS \Quiv^\rS_\Y(\mW)^\ot.$$

If $\rS$ is contractible, we omit $\rS$.

\end{notation}

\vspace{1mm}

We often apply Notation \ref{exxxx} when $\mW^\ot = \rS \times \emptyset^\ot$. Then $\Quiv^\rS_\Y(\mW)^\ot \simeq \rS \times \emptyset^\ot$
and $\mM^\circledast \to \mV^\ot \times_\rS \mW^\ot \simeq \mV^\ot,\ \Fun^\rS(\X \times_\rS \Y, \mM)^\circledast \to \Quiv^\rS_\X(\mV)^\ot$
are $\rS$-families of weakly left tensored $\infty$-categories.

\begin{remark}\label{swits}
Let $\nu: \Delta \to \Set$ be the functor forgetting the order. There is an equivalence
$ \nu \circ (-)^\op \simeq \nu$ of functors $\Delta \to \Set$,
whose component at $[\n]\in \Delta$ is the bijection
$\{1,...,\n\} \to \{1,...,\n\}, \bi \mapsto \n-\bi$.
This equivalence yields for every $\infty$-category $\X$ an equivalence 
$\Ass_{\X}^\rev\simeq \Ass_{\X}$ over $\Ass.$
 
For every functor $\X \to \rS$ we obtain an equivalence
$((\Ass^{\triangleright})^\rS_{\X})^\rev\simeq (\rS \times \Ass^{\triangleright})\times_{(\Ass^\rev_\rS)^{\triangleright}} (\Ass^\rev_\X)^{\triangleright} \simeq  (\Ass^{\triangleright})^\rS_{\X}$ 
over $\Ass^{\triangleright} \times \rS$
that restricts to an equivalence $(\Ass^\rS_{\X})^\rev\simeq \Ass^\rS_{\X} $ over $\Ass \times \rS$. We obtain an equivalence
\begin{equation}\label{swirl2}
((\BM^{\triangleright})^\rS_{\X,\Y})^\rev \simeq ((\Ass^{\triangleright})^\rS_{\Y})^\rev \times_{\rS}  ((\Ass^{\triangleright})^\rS_{\X})^\rev \simeq (\Ass^{\triangleright})^\rS_{\Y} \times_{\rS}(\Ass^{\triangleright})^\rS_{\X}= (\BM^{\triangleright})^\rS_{\Y,\X}
\end{equation}
over $\Ass^{\triangleright} \times \Ass^{\triangleright} \times\rS$,
where the left equivalence holds since 
the involution on $ \BM^{\triangleright} $
induced via $\BM=(\Delta_{/[1]})^\op $ identifies under $ \Ass^{\triangleright} \times \Ass^{\triangleright} \simeq \BM^{\triangleright} $ 
with switching the factors and applying the involution on $ \Ass^{\triangleright}$ in each factor.
(\ref{swirl2}) restricts to an equivalence $(\BM^\rS_{\X,\Y})^\rev \simeq \BM^\rS_{\Y,\X}$ over $\BM \times \rS$.

\end{remark}

\begin{remark}\label{swi}

By Remark \ref{swits} for any $\rS$-family of weakly bitensored $\infty$-categories $\mM^\circledast \to \mV^\ot \times_\rS \mW^\ot$
there are canonical equivalences
$$ (\Quiv_{\X}^{\rS}(\mV)^\ot)^\rev \simeq \Quiv_{\X}^{\rS}(\mV^\rev)^\ot, \ (\Fun^\rS(\X \times_\rS \Y, \mM)^\circledast)^\rev \simeq \Fun^\rS(\Y \times_\rS \X, \mM^\rev)^\circledast$$
over $\rS \times \Ass$ and $(\Quiv^\rS_\Y(\mW)^\ot)^\rev \times_\rS (\Quiv^\rS_\X(\mV)^\ot)^\rev \simeq \Quiv^\rS_\Y(\mW^\rev)^\ot \times_\rS \Quiv^\rS_\X(\mV^\rev)^\ot,$ respectively.
\end{remark}

\begin{notation}

Let $\X,\Y$ be $\infty$-categories and $\mM^\circledast \to \mV^\ot \times \mW^\ot$
a weakly bitensored $\infty$-category.
We set $\mV_\X^\ot:= \Ass_\X \times_\Ass \mV^\ot$
and $\mM^\circledast_{\X,\Y}:= {(\Ass_\X \times \Ass_\Y)} \times_{(\Ass \times \Ass)}\mM^\circledast \to \mV^\ot_\X \times \mW^\ot_\Y$.
	
\end{notation}

Note that there is a canonical map of generalized $\infty$-operads
$\Quiv_\X(\mV)_\X^\ot \to \mV^\ot.$

\begin{remark}\label{paraph}Let $\mM^\circledast \to \mV^\ot \times \mW^\ot$
be a weakly bitensored $\infty$-category and set 
$\widetilde{\mV}^\ot:= \Quiv_\X(\mV)^\ot, \widetilde{\mW}^\ot:= \Quiv_\Y(\mW)^\ot.$ As a consequence of Remark \ref{clars} the weakly bitensored $\infty$-category
$\Fun(\X \times \Y, \mM)^\circledast \to \widetilde{\mV}^\ot \times \widetilde{\mW}^\ot $ has the following corresponding universal property:
for any weakly bitensored $\infty$-category $\mN^\circledast \to \widetilde{\mV}^\ot \times \widetilde{\mW}^\ot $ there is a canonical equivalence
$$\LaxLinFun_{\widetilde{\mV},\widetilde{\mW}}(\mN,\Fun(\X \times \Y, \mM))\simeq \LaxLinFun_{\widetilde{\mV}_\X, \widetilde{\mW}_\Y}(\mN_{\X,\Y},\widetilde{\mV}_\X \times_{\mV} \mM\times_{\mW}\widetilde{\mW}_\Y).$$

\end{remark}

\begin{lemma}\label{diagg} Let $\X$ be an $\infty$-category and $\mM^\circledast \to \mV^\ot \times \mW^\ot$ a weakly bitensored $\infty$-category.
The weakly right tensored $\infty$-category underlying $\Fun(\X, \mM)^\circledast \to \Quiv_\X(\mV)^\ot \times \mW^\ot $ 
is $(\mM^\circledast)^\X \to \mW^\ot$ (\ref{diagor}).
\end{lemma}
\begin{proof}We apply Remark \ref{paraph} for $\mV^\ot=\emptyset^\ot$ and $\Y$ contractible.
For every weakly right tensored $\infty$-category $\mN^\circledast \to \mW^\ot$
we have $\mN_{\X,\Y}^\circledast \simeq \mN^\circledast \times \X$. Hence there is an equivalence
$ \LaxLinFun_{\mW}(\mN,\Fun(\X,\mM)) \simeq \LaxLinFun_{\mW}(\mN \times \X, \mM)\simeq \LaxLinFun_{\mW}(\mN, \mM^\X). $
\end{proof}

\begin{lemma}\label{comppp}

\begin{enumerate}
\item Let $\X \to \rS$ be a cocartesian fibration, $\mV^\ot \to \rS \times \Ass$ a
$\rS$-family of generalized $\infty$-operads and $\mO:= \BM \times_\Ass \mV^\ot \to \BM$ the associated $\rS$-family of generalized $\BM$-operads.
There is a canonical equivalence $$ \B\Quiv^\rS_{\X,\X}(\mO) \simeq \BM \times_\Ass \Quiv^\rS_\X(\mV)^\ot$$ of $\rS$-families of generalized $\BM$-operads.
In particular, there is a canonical equivalence $$\Ass \times_\BM \B\Quiv^\rS_{\X,\X}(\mO) \simeq \Quiv^\rS_\X(\mV)^\ot$$ of $\rS$-families of generalized $\infty$-operads, where $\Ass \subset \BM$ is the left or right embedding.

\vspace{1mm}
\item Let $\X\to \rS,\X'\to \rS,\Y\to \rS,\Y'\to \rS$ be cocartesian fibrations and $\mO \to \rS \times \BM$ a $\rS$-family of generalized $\BM$-operads.
There is a canonical equivalence $$ \B\Quiv^\rS_{\X',\Y'}(\B\Quiv^\rS_{\X,\Y}(\mO)) \simeq \B\Quiv^\rS_{\X \times_\rS \X',\Y \times_\rS \Y'}(\mO)$$ of $\rS$-families of generalized $\BM$-operads.

\end{enumerate}

\end{lemma}

\begin{proof}
(1): For any $\infty$-category $\Y$ there is an equivalence
$(\BM^{\triangleright})_{\Y,\Y}= (\Ass^{\triangleright})_\Y \times (\Ass^{\triangleright})_\Y \to \BM^{\triangleright} \times_{\Ass^{\triangleright}} (\Ass^{\triangleright})_\Y $
over $\Ass^{\triangleright} \times \Ass^{\triangleright} \simeq \BM^{\triangleright}$
classifying the equivalence 
$$([\n], [\m]) \mapsto \Fun(\{0,...,\n\},\Y) \times \Fun(\{0,...,\m\},\Y) \simeq \Fun(\{0,...,\n+\m+1\},\Y)$$
of functors $\Ass^{\triangleright} \times \Ass^{\triangleright}\to \Cat_\infty.$
The induced equivalence
$(\BM^{\triangleright})^\rS_{\X,\X}= (\Ass^{\triangleright})^\rS_\X \times_\rS (\Ass^{\triangleright})^\rS_\X \simeq $$$ (\rS \times \Ass^{\triangleright} \times \Ass^{\triangleright}) \times_{((\Ass^{\triangleright})_\rS \times (\Ass^{\triangleright})_\rS)} (\Ass^{\triangleright})_\X \times (\Ass^{\triangleright})_\X \simeq (\rS \times \BM^{\triangleright}) \times_{(\BM^{\triangleright})_{\rS,\rS}} (\BM^{\triangleright})_{\X,\X}
\simeq $$$$(\rS \times \BM^{\triangleright}) \times_{(\BM^{\triangleright} \times_{\Ass^{\triangleright}} (\Ass^{\triangleright})_\rS)} (\BM^{\triangleright} \times_{\Ass^{\triangleright}} (\Ass^{\triangleright})_\X)
\simeq \BM^{\triangleright} \times_{\Ass^{\triangleright}}  (\Ass^{\triangleright})^\rS_\X $$
over $\Ass^{\triangleright} \times \Ass^{\triangleright} \simeq \BM^{\triangleright}$
restricts to an equivalence $ \BM^\rS_{\X,\X} \to \BM \times_\Ass \Ass^\rS_\X  $ of $\rS$-families of generalized $\BM$-operads that represents the equivalence of (1).

\vspace{1mm}

The equivalence of (2) is represented by an equivalence
$\BM^\rS_{\X,\Y} \times_{(\BM \times \rS)} \BM^\rS_{\X',\Y'} \to \BM^\rS_{\X \times_\rS\X',\Y \times_\rS \Y'} $ of $\rS$-families of generalized $\BM$-operads
that is the restriction of the following equivalence over $\rS \times\BM^{\triangleright}$:
$$(\BM^{\triangleright})^\rS_{\X,\Y} \times_{(\BM^{\triangleright} \times \rS)} (\BM^{\triangleright})^\rS_{\X',\Y'}=((\Ass^{\triangleright})^\rS_\X \times_{\rS} (\Ass^{\triangleright})^\rS_{\Y}) \times_{(\rS \times \Ass^{\triangleright} \times_\rS \rS \times \Ass^{\triangleright})} ((\Ass^{\triangleright})^\rS_{\X'} \times_{\rS} (\Ass^{\triangleright})^\rS_{\Y'})$$
$$\simeq (\Ass^{\triangleright})^\rS_\X \times_{(\rS \times \Ass^{\triangleright})} (\Ass^{\triangleright})^\rS_{\X'} \times_{\rS} (\Ass^{\triangleright})^\rS_\Y \times_{(\rS \times \Ass^{\triangleright})} (\Ass^{\triangleright})^\rS_{\Y'} \simeq
(\Ass^{\triangleright})^\rS_{\X \times_\rS \X'} \times_{\rS} (\Ass^{\triangleright})^\rS_{\Y \times_\rS \Y'}.$$ 
\end{proof}

Proposition \ref{proooo} and Remark \ref{spezi} imply the following remark:

\begin{remark}\label{continok}
Let $\X,\Y$ be small spaces.

\begin{enumerate}
\item For every corepresentable $\infty$-operad $\mV^\ot \to \Ass$ such that $\mV$
has small colimits the $\infty$-operad $\Quiv_{\X}(\mV)^\ot \to \Ass$ is corepresentable.		

For any functors $\F_1,..., \F_\n: \X \times \X \to \mV$ for $\n \geq2$ and 
$\A, \A' \in \X$ we have 
$$\hspace{5mm} (\F_1 \ot.... \ot \F_\n)(\A, \A') \simeq \colim_{\B_1,...,\B_{\n-1} \in \X}\F_1(\B_1,\A') \ot\F_2(\B_2, \B_1)  \ot ... \ot \F_\n(\A,\B_{\n-1}).
$$

The tensor unit is the functor $\X \times \X \to \mV$ sending $(\A, \B)$ to $\X(\A,\B)\otimes \tu$.

\vspace{1mm}

\item For every corepresentable $\BM$-operad $\mO \to \BM$ classifying a weakly bitensored $\infty$-category $\mM^\circledast \to \mV^\ot \times \mW^\ot$
such that $\mM, \mV,\mW$ have small colimits the $\BM$-operad $\B\Quiv_{\X,\Y}(\mO) \to \BM$ is corepresentable.

For any functors $\F_1,...\F_\n: \X \times \X \to \mV, \G: \Y \times \X \to \mM, \rH_1,...,\rH_\m: \Y \times \Y \to \mW$ for $\n,\m \geq 2$ and $\A \in \Y,\A' \in \X$ we have $$(\F_1 \ot ... \ot \F_\n \ot \G \ot \rH_1 \ot ... \ot \rH_\m)(\A,\A') \simeq \colim_{\B_1,...,\B_{\n} \in \X,\C_1,...,\C_\m \in \Y} $$$$\hspace{10mm} \F_1(\B_1,\A') \ot\F_2(\B_2, \B_1)  \ot ... \ot \F_\n(\B_\n,\B_{\n-1}) \ot \G(\C_1,\B_\n) \ot \rH_1(\C_2,\C_1) \ot ... \ot \rH_\m(\A,\C_{\m}). $$

\end{enumerate}

\end{remark}

\begin{corollary}\label{contino}
Let $\X,\Y$ be small spaces.

\begin{enumerate}
\item For every monoidal $\infty$-category $\mV^\ot \to \Ass$ compatible with small colimits $\Quiv_{\X}(\mV)^\ot \to \Ass$ is a monoidal $\infty$-category compatible with small colimits.

\vspace{1mm}

\item For every bitensored $\infty$-category $\mM^\circledast \to \mV^\ot \times \mW^\ot$ compatible with small colimits $$\Fun(\X \times \Y, \mM)^\circledast \to \Quiv_\X(\mV)^\ot \times \Quiv_\Y(\mW)^\ot $$ is a bitensored $\infty$-category compatible with small colimits.

\end{enumerate}

\end{corollary}

\vspace{0,1mm}

\begin{proof}
By Remark \ref{continok} the $\infty$-operad $\Quiv_{\X}(\mV)^\ot \to \Ass$ is corepresentable and so is a monoidal $\infty$-category (Lemma \ref{repmon}) since for any $\F_1,\F_2,\F_3: \X \times \X \to \mV$ the following morphisms are equivalences: $$ (\F_1 \ot \F_2) \ot \F_3 \leftarrow \F_1 \ot \F_2 \ot \F_3 \to \F_1 \ot (\F_2 \ot \F_3), \ \F_1 \to \F_1 \ot \tu.$$

Let $\mO \to \BM$ be the $\BM$-monoidal $\infty$-category commpatible with small colimits classifying $\mM^\circledast \to \mV^\ot \times \mW^\ot$.
By Remark \ref{continok} the $\BM$-operad $\B\Quiv_{\X}(\mO) \to \BM$ is corepresentable and so is a $\BM$-monoidal $\infty$-category (Lemma \ref{repmon}) since for any $ \G: \X \times \Y \to \mM, \rH_1,\rH_2: \Y \times \Y \to \mW$ the
bitensor $\F_1 \ot \F_2 \ot \G \ot \rH_1 \ot \rH_2$ is canonically equivalent to the following objects:
$$ ((\F_1 \ot \F_2) \ot \G) \ot (\rH_1 \ot \rH_2), \ (\F_1 \ot (\F_2 \ot \G)) \ot (\rH_1 \ot \rH_2), \ (((\F_1 \ot \F_2) \ot \G) \ot \rH_1) \ot \rH_2, $$$$ ((\F_1 \ot (\F_2 \ot \G)) \ot \rH_1) \ot \rH_2, \ (\F_1 \ot \F_2) \ot (\G \ot (\rH_1 \ot \rH_2)), \ (\F_1 \ot \F_2) \ot ((\G \ot \rH_1) \ot \rH_2), $$$$ \F_1 \ot (\F_2 \ot (\G \ot (\rH_1 \ot \rH_2))), \ \F_1 \ot (\F_2 \ot ((\G \ot \rH_1) \ot \rH_2)). $$

\end{proof}

The next proposition is due to Hinich \cite[Proposition 6.3.1.]{HINICH2020107129} 
in case of endomorphism objects and describes the morphism objects 
of the weakly left tensored $\infty$-category 
$\Fun(\X,\mM)^\circledast \to \Quiv_\X(\mV)^\ot$ associated to a space $\X$ and $\infty$-category $\mM^\circledast \to \mV^\otimes$ with weak left action of an $\infty$-operad. 

\begin{proposition}\label{xre}
Let $\mM^\circledast \to \mV^\otimes$ be an $\infty$-category weakly left tensored over an $\infty$-operad.
Let $\X$ be a space and $\F,\G: \X \to \mM$ be functors.
\begin{enumerate}
\item Let $\T : \X \times \X \to \mV$ be a functor. A multi-morphism $\theta \in \Mul_{\Fun(\X,\mM)}(\T, \F; \G)$ exhibits $\T$ as the morphism object of
$\F$ and $\G$ if for any $\A,\B \in \X$ the induced multi-morphism in $\Mul_{\mV}(\T(\A,\B), \F(\A); \G(\B))$ exhibits $\T(\A,\B) $ as the morphism object of $\F(\A), \G(\B).$ 

\vspace{1mm}	
\item If for every $\A, \B \in \X$ there is a morphism object $\Mor_\mM(\F(\A),\G(\B)) \in \mV$, there is a functor $\T : \X \times \X \to \mV$ and $\theta \in \Mul_{\Fun(\X,\mM)}(\T, \F; \G)$ such that for any $\A,\B \in \X$ the induced multi-morphism in $\Mul_{\mV}(\T(\A,\B), \F(\A); \G(\B))$ exhibits $\T(\A,\B) $ as the morphism object of $\F(\A), \G(\B).$ 

\end{enumerate}

\end{proposition}

\begin{proof}
The weakly left tensored $\infty$-category $\mM^\circledast \to \mV^\ot$ embeds into $\mP\L\Env(\mM)^\circledast \to \mP\Env(\mV)^\ot$. By Lemma \ref{morpre} the embedding $\mM^\circledast \subset \mP\L\Env(\mM)^\circledast$ preserves morphism objects and yields an embedding $\Fun(\X,\mM)^\circledast \subset \Fun(\X,\mP\L\Env(\mM))^\circledast $ of
weakly left tensored $\infty$-categories. So we can reduce to the case where $\mM^\circledast \to \mV^\ot$ is an $\infty$-category presentably left tensored over $\mV$.
We start with proving (2). 

Let $\E$ be the tensor unit of $\Fun(\X \times \X,\mV) \simeq\Fun(\X,\Fun(\X,\mV))$.
The functor $\E: \X \to \Fun(\X,\mV)$ factors as $\X \subset \Fun(\X, \mS) \to \Fun(\X,\mV),$ where the first embedding is the Yoneda-embedding and the second functor is induced by the small colimits preserving functor $\mS \to \mV$ selecting the tensor unit. By Lemma \ref{dfgj} the functor $\F:\X \to \mM$
factors as $ \X \xrightarrow{\E} \Fun(\X,\mV) \xrightarrow{\bar{\F}} \mM $
for a unique $\mV$-linear functor $\bar{\F}: \Fun(\X,\mV) \to \mM$ right adjoint to the functor $\R: \mM \to \Fun(\X,\mV)$
corresponding to the functor $\X \times \mM \xrightarrow{\F^\op \times \mM} \mM^\op \times \mM \xrightarrow{\Mor_\mM(-,-)} \mV.$
We set $\T:= \R \circ \G \in \Fun(\X,\Fun(\X,\mV)) \simeq \Fun(\X \times \X,\mV)$ 
such that $\T$ corresponds to the functor $\X \times \X \xrightarrow{\F^\op \times \G} \mM^\op \times \mM \xrightarrow{\Mor_\mM(-,-)} \mV$.
Next we construct $\theta: \T \otimes \F \to \G$.
The unit $\id \to \R \circ \bar{\F}$ gives rise to a map $\E \to \R \circ \bar{\F} \circ \E \simeq \R \circ \F.$
The functor $\bar{\F}: \Fun(\X,\mV) \to \mM$ yields a $\Fun(\X \times \X,\mV)$-linear functor $\bar{\F}_\ast: \Fun(\X,\Fun(\X,\mV)) \simeq \Fun(\X \times \X,\mV) \to \Fun(\X,\mM).$ By linearity for any $\rH\in \Fun(\X \times \X,\mV) $ we obtain an equivalence 
$\rH \ot \F \simeq \rH \ot \bar{\F}_\ast(\E) \simeq \bar{\F}_\ast(\rH \otimes \E) \simeq \bar{\F} \circ \rH $
in $\Fun(\X,\mV) $.
We define $\theta: \T \otimes \F \to \G$ in $\Fun(\X,\mM)$ as the counit
$ \T \otimes \F \simeq \bar{\F} \circ \T = (\bar{\F} \circ \R) \circ \G  \to \G.$
Then for any $\A,\B \in \X$ the induced multi-morphism in $\Mul_{\mV}(\T(\A,\B), \F(\A); \G(\B))$ exhibits $\T(\A,\B) $ as the morphism object of $\F(\A), \G(\B)$.
It remains to see that the morphism $\theta: \T \ot \F \to \G$ yields an  equivalence
$$\rho: \Fun(\X, \Fun(\X,\mV))(\rH, \T) \to \Fun(\X,\mM)(\rH \otimes \F, \T \otimes \F) \to \Fun(\X,\mM)(\rH \otimes \F, \G).$$ 
The map $\rho$ factors as the following composition
$$ \Fun(\X, \Fun(\X,\mV))(\rH, \T) \to \Fun(\X,\mM)(\bar{\F} \circ \rH, \bar{\F} \circ \T) \xrightarrow{} \Fun(\X,\mM)(\bar{\F} \circ \rH, \G)  \simeq \Fun(\X,\mM)(\rH \otimes \F, \G)$$
inverse to the map $\Fun(\X,\mM)(\bar{\F} \circ \rH, \G) \to \Fun(\X, \Fun(\X,\mV))(\R \circ \bar{\F} \circ \rH, \T) \to \Fun(\X, \Fun(\X,\mV))(\rH, \T)$
induced by the unit $\id \to \R \circ \bar{\F}$ using the triangle identities.

We complete the proof by verifying (1):
Let $\T : \X \times \X \to \mV$ be a functor and $\theta: \T \ot \F \to \G$ a morphism. The morphism $\theta: \T \ot \F \simeq \bar{\F} \circ \T \to \G$
is adjoint to a morphism $\kappa: \T \to \R \circ \G,$
whose component at $\A,\B \in \X$ is the morphism
$\T(\A,\B) \to \Mor_\mM(\F(\A),\G(\B)) $ adjoint to the induced morphism
$\T(\A,\B)\ot \F(\A)\to \G(\B)$ in $\mM$. So $\kappa$ is an equivalence if for any $\A,\B \in \X$ the induced morphism $\T(\A,\B)\ot \F(\A)\to \G(\B)$ exhibits $\T(\A,\B) $ as the morphism object of $\F(\A), \G(\B)$.
We conclude by observing that by definition of $\kappa$ the morphism $\theta$ factors as $ \bar{\F} \circ \T \xrightarrow{\bar{\F} \circ \kappa} \bar{\F} \circ \R \circ \G \to \G$ and so exhibits $\T$ as the morphism object of
$\F$ and $\G$ by the proof of (2) if $\kappa$ is an equivalence.

\end{proof}

For Proposition \ref{xre} we used the following lemma:

\begin{lemma}\label{dfgj}

Let $\mB $ be a small $\infty$-category and $\mM^\circledast \to \mV^\ot $ a presentably left tensored $\infty$-category.
\begin{enumerate}
\item Composition with the functor
$\E: \mB \subset \Fun(\mB^\op,\mS) \xrightarrow{(-)\ot\tu} \Fun(\mB^\op,\mV)$ defines an equivalence	
$$  \mathrm{Lin}\Fun^{\L}_\mV(\Fun^{}(\mB^\op, \mV),\mM) \to \Fun^{}(\mB, \mM ). $$

\item For any functor $\F:\mB \to \mM$ the corresponding functor
$\bar{\F}: \Fun^{}(\mB^\op, \mV) \to \mM$ is left adjoint to the functor
$\mM \to \Fun^{}(\mB^\op, \mV)$ corresponding to the functor 
$\mB^\op \times \mM \xrightarrow{\F^\op \times \mM} \mM^\op \times \mM \xrightarrow{\Mor_\mM(-,-)} \mV.$
\end{enumerate}
\end{lemma}

\begin{proof}
(1): The functor $(-)\ot\tu: \Fun(\mB^\op,\mS) \to \Fun(\mB^\op,\mV)$ exhibits $(\mV^\circledast)^{\mB^\op} \to \mV^\ot $ as the free $\infty$-category left tensored over $\mV$ compatible with small colimits on $\Fun(\mB^\op,\mS)$ as the composition $\mV \ot \Fun(\mB^\op,\mS) \to \mV \ot \Fun(\mB^\op,\mV) \to \Fun(\mB^\op,\mV)$ is an equivalence \cite[Proposition 4.8.1.17.]{lurie.higheralgebra}.

(2): For any functor $\F:\mB \to \mM$ the corresponding $\mV$-linear functor
$\bar{\F}: \Fun^{}(\mB^\op, \mV) \to \mM$ is left adjoint to a functor
$\R: \mM \to \Fun^{}(\mB^\op, \mV)$.
For any $\Z \in \mB, \Y \in \mM$ there is a canonical equivalence
$$\R(\Y)(\Z) \simeq \Mor_{\Fun(\mB^\op, \mV)}(\E(\Z),\R(\Y)) \simeq
\Mor_\mM(\bar{\F}(\E(\Z)),\Y) \simeq \Mor_\mM(\F(\Z),\Y).$$

\end{proof}

Recall that every $\infty$-category $\mM^\circledast \to \mV^\ot \times \mW^\ot$ weakly bitensored over $\infty$-operads canonically extends to a weakly bitensored $\infty$-category $\bar{\mM}^\circledast \to \mP\Env(\mV)^\ot \times \mP\Env(\mW)^\ot$ (Notation \ref{Exta2}), whose pullbacks to $ \mP\Env(\mV)^\ot \times \mW^\ot$ and $\mV^\ot \times \mP\Env(\mW)^\ot$
we abusively also denote by $\mM^\circledast$:

\begin{notation}\label{enrfun}
Let $\mM^\circledast \to \mV^\ot \times \mW^\ot$ be an $\infty$-category
weakly bitensored over $\infty$-operads.

\begin{itemize}
\item For any $\infty$-precategory $\mC$ weakly enriched in $\mV$ with space of objects $\X$ let $$\Fun^\mV(\mC,\mM)^\circledast:=\LMod_\mC(\Fun(\X,\mM))^\circledast \to \mW^\ot$$
be the weakly right tensored $\infty$-category of left $\mC$-modules in $$\Fun(\X, \mM)^\circledast \to \Quiv_\X(\mP\Env(\mV))^\ot \times \mW^\ot.$$
We call $\Fun^\mV(\mC,\mM)$ the $\infty$-category of $\mV$-enriched functors $\mC \to \chi(\mM).$

This terminology is justified by Proposition \ref{proooq}.

\vspace{2mm}

\item For any $\infty$-precategory $\mC$ weakly enriched in $\mW$ with space of objects $\X$ let $$\hspace{12mm} \mP_\mW^\mM(\mC)^\circledast := \RMod_\mC(\Fun(\X,\mM))^\circledast  \simeq (\LMod_{\mC^\op}(\Fun(\X,\mM))^\circledast)^\rev =(\Fun^{\mW^\rev}(\mC^\op,\mM)^\circledast)^\rev \to \mV^\ot $$
be the weakly left tensored $\infty$-category of right $\mC$-modules in $$\Fun(\X, \mM)^\circledast \to \mV^\ot \times \Quiv_\X(\mP\Env(\mW))^\ot,$$
where the middle equivalence follows from Remark \ref{swi}.
\vspace{1mm}
We call $\mP_\mV^\mM(\mC)$ the $\infty$-category of $\mW$-enriched presheaves $\mC \to \chi(\mM)$.
For $\mM=\mV$ we omit $\mM$ from the notation.

\end{itemize}	
\end{notation}

We also sometimes use the following variant of the last notation:

\begin{notation}\label{enrfun2}
Let $\mM^\circledast \to \mV^\ot \times \mW^\ot$ be a weakly bitensored $\infty$-category.
\begin{itemize}
\item For any $\mV$-enriched $\infty$-precategory $\mC$ with space of objects $\X$ let $$\Fun^\mV(\mC,\mM)^\circledast:=\LMod_\mC(\Fun(\X,\mM))^\circledast \to \mW^\ot$$
be the weakly right tensored $\infty$-category of left $\mC$-modules in $$\Fun(\X, \mM)^\circledast \to \Quiv_\X(\mV)^\ot \times \mW^\ot.$$
		
\item For any $\mW$-enriched $\infty$-precategory $\mC$ with space of objects $\X$ let $$\hspace{12mm} \mP_\mW^\mM(\mC)^\circledast := \RMod_\mC(\Fun(\X,\mM))^\circledast  \simeq (\LMod_{\mC^\op}(\Fun(\X,\mM))^\circledast)^\rev =(\Fun^{\mW^\rev}(\mC^\op,\mM)^\circledast)^\rev \to \mV^\ot $$
be the weakly left tensored $\infty$-category of right $\mC$-modules in $$\Fun(\X, \mM)^\circledast \to \mV^\ot \times \Quiv_\X(\mW)^\ot.$$
For $\mM=\mV$ we omit $\mM$ from the notation.
\end{itemize}	
\end{notation}

\begin{lemma}\label{prestor}

Let $\mM^\circledast \to \mV^\ot \times \mW^\ot$ be a bitensored $\infty$-category compatible with small colimits and $\mC$ a $\mW$-precategory with small space of objects $\X$. Then $\Fun^\mV(\mC,\mM)^\circledast \to \mW^\ot$ is a right tensored $\infty$-category compatible with small colimits.
Dually, $\mP_\mW^\mM(\mC)^\circledast \to \mV^\ot$ is a left tensored $\infty$-category compatible with small colimits.
If $\mM^\circledast \to \mV^\ot \times \mW^\ot$ is presentably bitensored, $\Fun^\mV(\mC,\mM)^\circledast \to \mW^\ot$ is presentably right tensored
and $\mP_\mW^\mM(\mC)^\circledast \to \mV^\ot$ is presentably left tensored.

\end{lemma}
\begin{proof}
By Proposition \ref{contino} the weakly bitensored $\infty$-category $\Fun(\X,\mM)^\circledast \to \mV^\ot \times \Quiv_\X(\mW)^\ot$ is a bitensored
$\infty$-category compatible with small colimits.
So the claim follows from Notation \ref{rightact} and Corollary \ref{comppres}.
\end{proof}

\begin{remark}
Let $\mM^\circledast \to \mV^\ot$ be a weakly left tensored $\infty$-category.
We apply Notation \ref{enrfun} to $\mM^\circledast \to \mV^\ot \times \emptyset^\ot$
to obtain an $\infty$-category $\Fun^\mV(\mC,\mM).$
Let $\mM^\circledast \to \mW^\ot$ be a weakly right tensored $\infty$-category.
We apply Notation \ref{enrfun} to $\mM^\circledast \to \emptyset^\ot \times \mW^\ot$
to obtain an $\infty$-category $\mP_\mW^\mM(\mC)$.
\end{remark}

\vspace{2mm}

\begin{notation}\label{envfa}
	
\begin{itemize}
\item
For any cocartesian $\rS$-family of small $\infty$-operads $\mV^\ot \to \rS \times \Ass$ classifying a functor $\alpha: \rS \to \Op_{\infty}$ let $\mP\Env^\rS(\mV)^\ot \to \rS \times \Ass$ be the bicartesian $ \rS$-family of presentably monoidal $\infty$-categories classifying the functor $\rS \xrightarrow{\alpha}\Op_\infty \xrightarrow{ \mP\Env(-)} \Alg(\Pr^\L)$.

\vspace{1mm}
\item 
For any cocartesian $\rS$-family of weakly left tensored $\infty$-categories $\mM^\circledast \to \mV^\ot$ classifying a functor $\beta: \rS \to \omega\LMod$ let $\mP\L\Env^\rS(\mM)^\circledast \to \mP\Env^\rS(\mV)^\ot$ be the bicartesian $\rS$-family of presentably left tensored $\infty$-categories classifying the functor $\rS \xrightarrow{\beta} \omega\LMod \xrightarrow{\mP\L\Env(-)} \LMod(\Pr^\L)$.

\vspace{1mm}

\item Let $\bar{\mM}^\circledast \subset \mP\L\Env^\rS(\mM)^\circledast $ be the full weakly left tensored subcategory spanned by $\mM$ so that $\bar{\mM}^\circledast\to \mP\Env^\rS(\mV)^\ot$ is a cocartesian $\rS$-family of weakly left tensored $\infty$-categories whose pullback to $\mV^\ot$ is $\mM^\circledast.$ 
\end{itemize}
\end{notation}

\begin{notation}

\begin{itemize}
\item For $\mV^\ot \to \rS \times \Ass$ the cocartesian $\rS$-family of $\infty$-operads classifying the projection $\Op_{\infty} \times \mS \to \Op_\infty$ and $\X \to \rS$ the left fibration $\mS_* \times \Op_\infty \to \mS\times \Op_\infty$ we write $\Quiv^\ot \to \mS \times \Op_\infty \times \Ass$ for $ \Quiv_{\X}^{\rS}(\mV)^\ot$	
and $\omega\Quiv^\ot \to \mS \times \Op_\infty \times \Ass$ for $ \Quiv_{\X}^{\rS}(\mP\Env^\rS(\mV))^\ot.$	

\vspace{1mm}
\item
For	$\mM^\circledast \to \mV^\ot$ the cocartesian $\rS$-family of
weakly left tensored $\infty$-categories classifying the projection $\omega\LMod \times \mS \to \omega\LMod$ and $\X \to \rS$ the left fibration $\mS_* \times \omega\LMod \to \mS\times \omega\LMod$ we write $\Sigma^\circledast \to \omega\LMod \times_{\Op_\infty} \omega\Quiv^\ot $ for $\Fun^\rS(\X, \bar{\mM})^\circledast \to \Quiv_{\X}^{\rS}(\mP\Env^\rS(\mV))^\ot.$ 
\end{itemize}
\end{notation}

\begin{remark}\label{reusoxy}
	
By Notation \ref{exxxx} (using Lemma \ref{pulll}) there is a canonical equivalence $$\Sigma \simeq \Fun^{\mS \times \omega\LMod}(\mS_* \times \omega\LMod , \mS \times \mU \times_{\Cat_\infty} \omega\LMod) \simeq \Fun^{\mS \times \Cat_\infty}(\mS_* \times\Cat_\infty, \mS \times \mU) \times_{\Cat_\infty} \omega\LMod$$ over $\mS \times \omega\LMod,$ where $\mU \to \Cat_\infty$ is the cocartesian fibration classifying the identity.
\end{remark}

\begin{remark}\label{hhhh}
By Corollary \ref{remaq} (4) the functor $\omega\Quiv^\ot \to \Op_\infty \times \mS \times \Ass$ is a map of bicartesian fibrations over $\Op_\infty$ and cartesian fibrations over $\mS$ and the functor $\Sigma^\circledast \to \omega\LMod \times_{\Op_\infty} \omega\Quiv^\ot $ is a map of cocartesian fibrations over $\omega\LMod$ and cartesian fibrations over $\mS$.
\end{remark}

\begin{notation}
Let $$\PreCat_\infty:= \Alg^{\mS \times \Op_\infty}(\Quiv), \hspace{3mm} \omega\PreCat_\infty:=\Alg^{\mS \times \Op_\infty}(\omega\Quiv).$$

We call $\PreCat_\infty, \omega\PreCat_\infty$ the $\infty$-categories of
small enriched, weakly enriched $\infty$-precategories.

\end{notation}

\begin{remark}\label{abc}By Remark \ref{hhhh} and Lemma \ref{fibb} (3) the functor $\PreCat_\infty \to \mS \times \Op_\infty $ is a map of cocartesian fibrations over $\Op_\infty$ and cartesian fibrations over $\mS$
and the functor $ \omega\PreCat_\infty \to \mS \times \Op_\infty $ is a map of bicartesian fibrations over $\Op_\infty$ and cartesian fibrations over $\mS$.	
\end{remark}

\begin{notation}
For any (not necessarily small) $\infty$-operad $\mV^\ot \to \Ass $ let $\PreCat_\infty^\mV, \omega\PreCat_\infty^\mV $ be the full subcategories of the fibers of the functors $\widehat{\PreCat}_\infty \to \widehat{\Op}_\infty, \omega\widehat{\PreCat}_\infty \to \widehat{\Op}_\infty $ over $\mV^\ot \to \Ass $
spanned by the $\infty$-precategories (weakly) enriched in $\mV$ whose space of objects is small.
\end{notation}

By Lemma \ref{fibb} (1) for any space $\X$ and small $\infty$-operad $\mV^\ot \to \Ass $ there are canonical equivalences$$ (\PreCat_\infty^\mV)_\X \simeq \Alg_{\Ass_\X}(\mV), \ (\omega\PreCat_\infty^\mV)_\X \simeq \Alg_{\Ass_\X}(\mP\Env(\mV)).$$ 

By Remark \ref{hhhh} and Lemma \ref{fibb} there are forgetful functors
\begin{equation}\label{for}
\LMod^{\omega\LMod \times \mS}(\Sigma) \to \Alg^{\omega\LMod \times \mS}(\omega\LMod \times_{\Op_\infty} \omega\Quiv) \simeq \omega\LMod \times_{\Op_\infty} \omega\PreCat_\infty, \end{equation}
\begin{equation}\label{for2}
\LMod^{\omega\LMod \times \mS}(\Sigma) \to \Sigma \end{equation}
over $\omega\LMod \times\mS$ that are maps of cocartesian fibrations over $ \omega\LMod$ and cartesian fibrations over $ \mS$.

For any (weakly) enriched $\infty$-precategory there is an opposite one:

\begin{remark}\label{oppos}
By Remark \ref{swi} there are canonical equivalences
$\Quiv \simeq \Quiv^\rev, \omega\Quiv \simeq \omega\Quiv^\rev$ over $\mS \times \Op_{\infty}$ that give rise to equivalences over $\mS \times \Op_\infty:$
$$\PreCat_\infty= \Alg^{\mS \times \Op_\infty}(\Quiv) \simeq\Alg^{\mS \times \Op_\infty}(\Quiv^\rev)\simeq \rev^*(\PreCat_\infty)$$
$$\omega\PreCat_\infty= \Alg^{\mS \times \Op_\infty}(\omega\Quiv) \simeq\Alg^{\mS \times \Op_\infty}(\omega\Quiv^\rev)\simeq \rev^*(\omega\PreCat_\infty).$$

\end{remark}

\vspace{1mm}

\subsection{The comparison functor}
\label{compq}
Now we are ready to construct the functor $$\chi: \omega\LMod \to \omega \PreCat_\infty $$ that endows the graph $ \mM^\simeq \times \mM^\simeq \to \mP\Env(\mV) $ of a weakly left tensored $\infty$-category $\mM^\circledast \to \mV^\ot$ with the structure of a $\mP\Env(\mV)$-enriched $\infty$-precategory.
To construct $\chi$ we take the following steps:

\begin{itemize}
\item First we construct a functor $$\rho: \omega\LMod \to \mP^{\Op_\infty}(\omega\PreCat_\infty)$$ over $\mS \times \Op_\infty,$ 
where $\mP^{\Op_\infty}(\omega\PreCat_\infty) \to \Op_\infty$
is a relative version of presheaf $\infty$-category, whose fiber over a small
$\infty$-operad $\mV^\ot \to \Ass $ is $\mP(\omega\PreCat_\infty^\mV)$.
The functor $\rho$ sends a weakly left tensored $\infty$-category 
$\mM^\circledast \to \mV^\ot$ to the presheaf on $\omega\PreCat^\mV_\infty$
mapping a $\mV$-precategory $\mC$ with space of objects $\X$ to $\LMod_\mC(\Fun(\X,\mM))^\simeq.$

\item In a second step we factor $\rho$ through the desired functor $\chi: \omega\LMod \to \omega\PreCat_\infty$ (Theorem \ref{pqp}) using a relative version of
Yoneda-embedding (Notation \ref{yone}): $$\omega\PreCat_\infty \hookrightarrow \mP^{\Op_\infty}(\omega\PreCat_\infty). $$  

\item Finally, we prove that $\chi$ is a map of cartesian fibrations over $\Op_\infty$ (Lemma \ref{leeem}).

\end{itemize}
\begin{notation}
Let $ \Lambda:=\Fun([1],\mS) \times_{\Fun(\{1\},\mS)} \omega\LMod$
be the pullback of evaluation at the target along the forgetful functor $\omega\LMod \to \mS$ sending $\mM^\circledast \to \mV^\ot$ to its maximal subspace $\mM^\simeq$. 
\end{notation}
The diagonal embedding $\mS \subset \Fun([1],\mS)$ yields an embedding
$ \omega\LMod \subset \Lambda.$
\vspace{1mm}

\begin{remark}
The functor $\Fun([1],\mS) \to \Fun(\{0\},\mS) \times \Fun(\{1\},\mS)$ is a
map of cocartesian fibrations over $ \Fun(\{1\},\mS)$ and cartesian fibrations over $ \Fun(\{0\},\mS)$.
Thus the functor $\Lambda \to \mS \times \omega\LMod$ is a map of cocartesian fibrations over $\omega\LMod $ and cartesian fibrations over $\Fun(\{0\},\mS).$
\end{remark}

\begin{lemma}
There is an inclusion $ \kappa: \Fun([1],\mS) \times_{\Fun(\{1\},\mS)} \Cat_\infty \subset \Fun^{\mS \times \Cat_\infty}(\mS_* \times\Cat_\infty, \mS \times \mU)$ over $\Fun(\{0\},\mS) \times \Cat_\infty$ that induces on the fiber over $(\X, \Y) \in \mS \times \Cat_\infty$ the inclusion $\Fun(\X,\Y)^\simeq \subset \Fun(\X,\Y),$ where we use $(-)^\simeq: \Cat_\infty \to \mS$ for the left hand pullback.
The functor $\kappa$ is a map of cocartesian fibrations over $ \Cat_\infty$ and cartesian fibrations over $ \mS$.
	
\end{lemma}

\begin{proof}
We set $\Theta_1:=\Fun([1],\mS) \times_{\Fun(\{1\},\mS)} \Cat_\infty$,
$\Theta_2:=  \Fun^{\mS \times \Cat_\infty}(\mS_* \times\Cat_\infty, \mS \times \mU).$
Functors $\rS \to \Theta_1$ over $\mS \times \Cat_\infty $ are classified by maps $\X \to \Y$ of cocartesian fibrations over $\rS$, where $\X \to \rS$ is a left fibration.
Functors $\rS \to \Theta_2$ over $\mS \times \Cat_\infty$ are
classified by functors $\X \to \Y$ over $\rS$ from a left fibration to a cocartesian fibration.
So we obtain a functor $\kappa:\Theta_1 \to \Theta_2$ over $\Fun(\{0\},\mS) \times \Cat_\infty$, which induces on the fiber over $(\X, \Y) \in \mS \times \Cat_\infty$ the inclusion $\Fun(\X,\Y)^\simeq \subset \Fun(\X,\Y)$ and so is an inclusion if we
have shown that $\kappa$ is a map of cocartesian fibrations over $ \Cat_\infty$ and cartesian fibrations over $ \mS$.

Let $\alpha$ be a morphism of $\Theta_1$ classified by a map $\X \to \Y$ of cocartesian fibrations over $[1]$ starting at a left fibration.
Then $\alpha$ is cocartesian over $\Cat_\infty$ (cartesian over $\Fun(\{0\},\mS)$) if and only if its image in $\Fun(\{0\},\mS)$ (in $\Cat_\infty$) is an equivalence.
This is equivalent to say that the left fibration $\X \to [1]$ is a right fibration (the cocartesian fibration $\Y \to [1]$ is a cartesian fibration whose cocartesian and cartesian morphisms coincide).
Let $\beta$ be a morphism of $\Theta_2$ classified by a functor
$\sigma: \X \to \Y$ over $[1]$ from a left fibration to a cocartesian fibration.
Then $\beta$ is cocartesian over $\Cat_\infty$ (cartesian over $\Fun(\{0\},\mS)$) if and only if its image in $\Fun(\{0\},\mS)$ (in $\Cat_\infty$) is an equivalence and $\sigma$ is a map of cocartesian fibrations over $[1]$. 
This follows from Lemma \ref{flafla}, where we use that a cocartesian fibration over $[1]$ classifies an equivalence if and only if it is a cartesian fibration whose cocartesian and cartesian morphisms coincide.

\end{proof} 
 
By Remark \ref{reusoxy} we obtain the following corollary:

\begin{corollary}\label{inc}

There is a canonical inclusion $\Lambda \subset \Sigma$,
which is a map of cocartesian fibrations over $ \omega\LMod$ and cartesian fibrations over $ \mS$.

\end{corollary}

\begin{notation}
Let $\theta$ be the composition $$\Lambda \times_{\Sigma}  \LMod^{\mS \times \omega\LMod}(\Sigma) \subset  \LMod^{\mS \times \omega\LMod}(\Sigma) \xrightarrow{} \omega\LMod \times_{\Op_\infty} \omega\PreCat_\infty$$
of the canonical inclusion and map (\ref{for}).
\end{notation}
\begin{remark}
So $\theta$ is a map of cocartesian fibrations over $ \omega\LMod$ and cartesian fibrations over $ \mS$.
\end{remark}
We construct $\rho$ via $\theta$ using Remark \ref{cla}, Lemma \ref{rifi}
and the following construction of relative presheaf $\infty$-category:

\begin{notation}
Let $\mR \subset \Fun([1],\Cat_\infty)$ be the full subcategory of right fibrations.
\end{notation}

\begin{notation}\label{clas}
 
For every cocartesian fibration $\mX \to \rS$
classifying a functor $\alpha: \rS \to \Cat_\infty$ we define the $\infty$-category of presheaves relative to $\rS$ as the pullback
$\mP^\rS(\mX):=\rS \times_{\Cat_\infty} \mR$
taken along $\alpha$ and evaluation at the target.
	
\end{notation}

\begin{remark}\label{cla}
Let $\mY\to \rS$ be a functor. A functor $\mY \to \mP^\rS(\mX) \subset \rS \times_{\Fun(\{1\},\Cat_\infty)}\Fun([1],\Cat_\infty) $ over $\rS$
corresponds to a natural transformation of functors $\mY \to \Cat_\infty$
whose target factors as $\mY \to \rS \xrightarrow{\alpha}\Cat_\infty$
and whose components are right fibrations, which is precisely classified by a map $\mZ \to \mX\times_\rS \mY$ of cocartesian fibrations over $\mY$ inducing on the fiber over any object of $\mY$ a right fibration.

\end{remark}

Next we will construct a relative Yoneda-embedding $\mX \hookrightarrow \mP^\rS(\mX)$
for any cocartesian fibration $\mX \to \rS$.

\begin{remark}\label{cocartf}
For every cocartesian fibration $\mX \to \rS$ the canonical map
$\mX^{[1]} \to \mX^{\{0\}} \times_\rS \mX^{\{1\}}$ of cocartesian fibrations over
$\rS$ is a map of cocartesian fibrations over $\mX^{\{1\}}$
since for every $\s \in \rS$ the induced functor
$\Fun([1],\mX_\s) \to \Fun(\{0\},\mX_\s) \times \Fun(\{1\},\mX_\s)$
is a map of cocartesian fibrations over $ \Fun(\{1\},\mX_\s)$
that reflects cocartesian morphisms.	
\end{remark}

\begin{notation}\label{yone}
For any cocartesian fibration $\mX \to \rS$ the canonical functor
$\mX^{[1]} \to \mX^{\{0\}} \times_\rS \mX^{\{1\}}$ is a map of cocartesian fibrations over $\mX^{\{1\}}$ (Remark \ref{cocartf}) and so is classified by a functor $\y: \mX \to \mP^\rS(\mX)$ over $\rS$ (Remark \ref{cla}) that is fully faithful and a map of cocartesian fibrations over $\rS$ (\cite
[Lemma 2.7.]{Heine2022}.

\end{notation}

\begin{lemma}\label{rifi}
The map $\theta$ of cocartesian fibrations over $\omega\LMod$ induces on the fiber over every weakly left tensored $\infty$-category $\mM^\circledast \to \mV^\ot$ a right fibration.  

\end{lemma}
\begin{proof}
As $\theta$ is a map of cartesian fibrations over $\mS$, also $\theta_\mM$ is.
So by \cite[Lemma A.1.8.]{haugseng_melani_safronov_2020} it is enough to check that $\theta_\mM$ induces on the fiber over every small space $\X$ a right fibration. The functor $\theta_\mM$ induces on the fiber over $\X$
the following functor induced by the left action of $\omega\Quiv_\X^{\mV}$ on $\Fun(\X,\mP\L\Env(\mM)):$
\begin{equation}\label{tgvk}
\Fun(\X,\mM)^\simeq \times_{\Fun(\X,\mM)}\LMod(\Fun(\X,\mM)) \to \Alg(\omega\Quiv_\X^{\mV}).
\end{equation}
(\ref{tgvk}) is a right fibration as the functor $\LMod(\Fun(\X,\mM)) \to \Alg(\omega\Quiv_\X^{\mV})$
is a cartesian fibration whose cartesian morphisms are those whose image in $\Fun(\X,\mM) $ is an equivalence.

\end{proof}

\begin{notation}
By Remark \ref{cla} and Lemma \ref{rifi} the functor $\theta$ classifies a functor $\rho: \omega\LMod \to \mP^{\Op_\infty}(\omega\PreCat_\infty)$ over $\Op_\infty.$
\end{notation}

Next we make $\mP^{\Op_\infty}(\omega\PreCat_\infty)$ to an $\infty$-category over $\mS$ and $\rho$ to a functor over $\mS$ (Lemma \ref{faac}).

\begin{remark}
Any cartesian fibration $\phi: \mY \to \rS$ such that for every $\s \in \rS$ the fiber $\mY_\s$ has an initial object admits a unique section sending each object $\s \in \rS$ to the initial object of the fiber $\mY_\s:$

Let $\mY' \subset \mY$ be the full subcategory of initial objects of each fiber.
The restriction $\phi': \mY' \subset \mY \to \rS$ is a cartesian fibration, where a $\phi'$-cartesian lift of a morphism $\s' \to \s$ in $\rS$
factors as the unique morphism $\A \to \B$ followed by a $\phi$-cartesian morphism $\B \to \C$.
As $\phi'$ has contractible fibers, $\phi'$ is an equivalence.

\end{remark}

By the last remark the cartesian fibration $\{[0]\}\times_\mS \omega\PreCat_\infty 
\to \Op_{\infty}$ (and so also the cartesian fibration $\omega\PreCat_\infty 
\to \Op_{\infty}$) admits a unique section $\kappa$ sending a small $\infty$-operad $\mV^\ot \to \Ass$ to the initial object of $\Alg(\mP\Env(\mV)) \simeq \{[0]\}\times_\mS \omega\PreCat^\mV_\infty$ lying over the tensor unit of $\mP\Env(\mV)^\ot \to \Ass.$ 

This section gives rise to a functor $\kappa^*: \mP^{\Op_\infty}(\omega\PreCat_\infty) \to \mP^{\Op_\infty}(\Op_\infty) \simeq \mS \times \Op_\infty $ over $\Op_\infty.$

\begin{lemma}\label{faac}\emph{ }
	
\begin{enumerate}
	
\item The restriction $\omega\PreCat_\infty \subset \mP^{\Op_\infty}(\omega\PreCat_\infty) \xrightarrow{\kappa^*} \mS \times \Op_\infty $ is the canonical functor.

\vspace{1mm}
\item The composition $$\omega\LMod \xrightarrow{\rho} \mP^{\Op_\infty}(\omega\PreCat_\infty) \xrightarrow{\kappa^*} \mS \times \Op_\infty $$ is the canonical functor sending $\mM^\circledast \to \mV^\ot$ to $(\mM^\simeq,\mV^\ot).$
\end{enumerate}	

\end{lemma}
\begin{proof}
	
1.: An equivalence over $\Op_{\infty}$ between the composition $\omega\PreCat_\infty \subset \mP^{\Op_\infty}(\omega\PreCat_\infty) \xrightarrow{\kappa^*} \mS \times \Op_\infty $ and the canonical functor is classified by an equivalence
over $\omega\PreCat_\infty $ between the pullback $\mW$ of the functor $\omega\PreCat_\infty^{[1]}\to \omega\PreCat_\infty^{\{0\}} $
along the functor $\Op_\infty \xrightarrow{\kappa} \omega\PreCat_\infty$,
and the projection $\mS_\ast \times_{\mS} \omega\PreCat_\infty \to \omega\PreCat_\infty.$
The map $\omega\PreCat_\infty \to \mS \times \Op_{\infty} $
of cocartesian fibrations over $\Op_{\infty}$ gives rise to a map
$\omega\PreCat_\infty^{[1]}\to \Fun([1],\mS) \times \Op_{\infty} $ 
of cocartesian fibrations over $\Op_{\infty}$ that induces a map 
$\omega\PreCat_\infty^{[1]}\to \Fun([1],\mS) \times_{\Fun(\{1\},\mS) } \omega\PreCat_\infty^{\{1\}}$ 
of cocartesian fibrations over $\Op_{\infty}$.
This functor restricts to a map
$\gamma: \mW \to \mS_* \times_{\mS } \omega\PreCat_\infty $
of cocartesian fibrations over $\Op_{\infty}$ that induces on the fiber over any small
$\infty$-operad $\mV^\ot \to \Ass$ the canonical functor
$(\omega\PreCat_\infty^\mV)_{ \tu_{\mP\Env(\mV)} / }  \to \mS_* \times_{\mS } \omega\PreCat_\infty^\mV$
between left fibrations over $\omega\PreCat_\infty^\mV$.
This functor induces on the fiber over any $\mV$-precategory $\mC$ with space of objects $\X$ the canonical map
$\omega\PreCat_\infty^\mV(\tu_{\mP\Env(\mV)}, \mC) \to \mS([0],\X) \simeq \X, $
whose fiber over $\T\in \X$
is the contractible space $\Alg(\mP\Env(\mV))(\tu_{\mP\Env(\mV)}, \mC(\T,\T)).$
Hence $\gamma$ is an equivalence.

\vspace{1mm}

2.: An equivalence over $\Op_{\infty}$ between the composition $\omega\LMod \xrightarrow{\rho} \mP^{\Op_\infty}(\omega\PreCat_\infty) \xrightarrow{\kappa^*} \mS \times \Op_\infty $ and the canonical functor is classified by an equivalence
over $\omega\LMod $ between the pullback $\mQ$ of the functor $ \Lambda \times_{\Sigma}  \LMod^{\mS \times \omega\LMod}(\Sigma) \xrightarrow{\theta} \omega\LMod \times_{\Op_\infty} \omega\PreCat_\infty$
along the functor $ \omega\LMod \simeq \omega\LMod \times_{\Op_\infty} \Op_\infty \xrightarrow{\omega\LMod \times_{\Op_\infty}\kappa}\omega\LMod \times_{\Op_\infty} \omega\PreCat_\infty$, and the projection $\mS_\ast \times_{\mS} \omega\LMod \to \omega\LMod.$ The composition
$\mQ \to  \Lambda \times_{\Sigma}  \LMod^{\mS \times \omega\LMod}(\Sigma) \to \Lambda$ factors as a map of cocartesian fibrations 
$\mQ \to \mS_\ast \times_{\mS} \omega\LMod \subset \Lambda $ over $\omega\LMod$ 
that induces on the fiber over any weakly left tensored $\infty$-category $\mM^\circledast \to \mV^\ot$ the canonical equivalence  
$\LMod_{\tu_{\mP\Env(\mV)}}(\mM)^\simeq \simeq \mM^\simeq$.
\end{proof}

Next we prove the main theorem of this subsection:
 
\begin{theorem}\label{pqp}
	
The functor $\rho: \omega\LMod \xrightarrow{} \mP^{\Op_\infty}( \omega\PreCat_\infty)$ over $\mS \times \Op_\infty$ factors as 
$$ \omega\LMod \xrightarrow{\chi} \omega\PreCat_\infty \subset \mP^{\Op_\infty}( \omega\PreCat_\infty)$$ for a unique functor 
$\chi: \omega\LMod \to \omega\PreCat_\infty$ over $\mS \times \Op_{\infty}$.
\end{theorem} 

Theorem \ref{pqp} follows immediately from the next proposition.
For that we fix the following notation:

\begin{notation}
Let $\mM^\circledast \to \mV^\ot$ be a weakly left tensored $\infty$-category and $\X \in \mM.$ 
The functor $\sigma:[1] \to \Ass$ corresponding to the morphism $\{0\} \subset [1]$ in $\Delta$ gives rise to an $\infty$-category $\Fun_{\Ass}([1],\mM^\circledast)$.
Let $$\mV[\X]:= \{(\X,\X)\} \times_{(\mM \times \mM)} \Fun_{\Ass}([1],\mM^\circledast) \to \mV$$ be the pullback of the canonical functor $\Fun_{\Ass}([1],\mM^\circledast) \to \mM^\circledast_{[1]} \times \mM^\circledast_{[0]} \simeq \mV \times \mM \times \mM$
along the functor $\mV \times \{(\X,\X)\} \to \mV \times \mM \times \mM.$
\end{notation}
\begin{remark}\label{remil}
The functor $\mV[\X]\to \mV$ is a right fibration classifying the presheaf $\Mul_{\mM}(-,\X;\X): \mV^\op \to \mS. $ So we may identify objects of $\mV[\X]$ with pairs $(\A, \alpha)$ consisting of an object $\A \in \mV$ and a multi-morphism $\alpha \in \Mul_{\mM}(\A,\X;\X)$. 
Moreover a pair $(\A, \alpha)$ is a final object in $\mV[\X]$ if and only if
$\alpha$ exhibits $\A$ as the endomorphism object of $\X.$
\end{remark}
Composition with $\sigma:[1] \to \Ass$ defines a functor $\LMod(\mM) \subset \Fun_{\Ass}(\Ass,\mM^\circledast) \to \Fun_{\Ass}([1],\mM^\circledast)$ that gives rise to a functor $\{\X\}\times_\mM \LMod(\mM) \to \Alg(\mV) \times_\mV \mV[\X]$ over $\Alg(\mV).$

\begin{proposition}\label{endfi}\label{urr}
Let $\mM^\circledast \to \mV^\ot$ be an $\infty$-category weakly left tensored
over an $\infty$-operad and $\X \in \mM$ an object that admits an endomorphism object (corresponding to a final object of $\mV[\X]$
by Remark \ref{remil}). The $\infty$-category
$\{\X\}\times_\mM \LMod(\mM)$ has a final object lying over the final object of $\mV[\X]$.
\end{proposition}

\begin{proof}
If $\mM^\circledast \to \mV^\ot$ is a weakly left tensored
$\infty$-category, this is \cite[Theorem 4.7.1.34.]{lurie.higheralgebra}. 
In the general case the embedding $\mM^\circledast \subset \mP\L\Env(\mM)^\circledast$ preserves endomorphism objects (Lemma \ref{morpre})
and induces an embedding
$\{\X\}\times_\mM \LMod(\mM) \subset \{\X\}\times_{\mP\L\Env(\mM)} \LMod(\mP\L\Env(\mM))$. So the result follows.

\end{proof}
\vspace{1mm}

For the proof of the next proposition recall that a right fibration
$\mC \to \mD$ is called representable if it classifies a representable presheaf
$\mD^\op \to \mS$, which is equivalent to say that $\mC$ has a final object. 

\begin{remark}\label{fina}
For any cartesian fibration
$\phi: \mC \to \mD$, where $\mD$ and the fibers of $\phi$ have a final object preserved by the fiber transports, also $\mC$ has a final object that is preserved by $\phi.$
This holds as for any $\X,\Y \in \mC$ and $\alpha: \phi(\X)\to \phi(\Y)$ 
there is an equivalence $\{\phi\} \times_{\mD(\phi(\X),\phi(\Y)) } \mC(\X,\Y) \simeq \mC_{\phi(\X)}(\X, \alpha^*(\Y)).$
\end{remark}
\begin{proposition}\label{proooq}
Let $\mM^\circledast \to \mV^\ot$ be an $\infty$-category weakly left tensored over an $\infty$-operad. 

The presheaf $$\rho(\mM): (\omega\PreCat_\infty^\mV)^\op \to \mS, \  \mC \mapsto \Fun_\mV(\mC,\mM)^\simeq=\LMod_\mC(\Fun(\X,\mM))^\simeq $$
is representable.
 
\end{proposition}

\begin{proof}
	
The presheaf $\rho(\mM): (\omega\PreCat_\infty^\mV)^\op \to \mS$
is classified by the right fibration
$$\theta_\mM: \mS_{/\mM^\simeq} \times_{\Fun^{\mS}(\mS_*, \mS \times \mM)} \LMod^\mS(\Fun^{\mS}(\mS_*, \mS \times \mM)) \to \omega\PreCat_\infty^\mV.$$
 
We will prove that $\theta_\mM$ classifies a representable presheaf, which is equivalent to say that $\theta_\mM$ admits a final object.
By Lemma \ref{flafla} the functor $\LMod^\mS(\Fun^{\mS}(\mS_*, \mS \times \mM)) \to \Fun^{\mS}(\mS_*, \mS \times \mM)$
is a cartesian fibration so that the pullback $\gamma: \mS_{/\mM^\simeq} \times_{\Fun^{\mS}(\mS_*, \mS \times \mM)} \LMod^\mS(\Fun^{\mS}(\mS_*, \mS \times \mM)) \to \mS_{/\mM^\simeq}$ is a cartesian fibration.
Since $\mS_{/\mM^\simeq}$ has a final object, by Remark \ref{fina} it is enough to see that the fibers of $\gamma$ admit a final object that is preserved by the fiber transports of $\gamma.$

The fiber of $\gamma$ over a map of small spaces $\alpha : \X \to \mM^\simeq$ 
is $\{\alpha\}\times_{\Fun(\X,\mM)} \LMod_{}(\Fun(\X,\mM))$.
For any map of small spaces $\sigma: \Y \to \X$ over $\mM^\simeq$
the induced fiber transport of $\gamma$ is the canonical functor
\begin{equation*} 
\sigma^*: \{\alpha\}\times_{\Fun(\X,\mM)} \LMod_{}(\Fun(\X,\mM)) \to \{\alpha \circ \sigma \}\times_{\Fun(\Y,\mM)} \LMod_{}(\Fun(\Y,\mM)). \end{equation*}

By Proposition \ref{xre} the map $\alpha: \X \to \mM$ 
admits an endomorphism object $$\Mor_{\Fun(\X,\mM)}(\alpha,\alpha) \in \Fun(\X \times \X,\mP\Env(\mV)).$$

So by Remark \ref{remil} the $\infty$-category 
$\Fun(\X \times \X,\mP\Env(\mV))[\alpha]$ has a final object, which by
Proposition \ref{endfi} lifts to a final object $\zeta(\X)$ of $\{\alpha\}\times_{\Fun(\X,\mM)} \LMod_{}(\Fun(\X,\mM))$.

The unique morphism
$ \sigma^*(\zeta(\X)) \to \zeta(\Y)$ in the target of $\sigma^*$
lies over the unique morphism in $\Fun(\Y \times \Y,\mP\Env(\mV))[\alpha \circ \sigma]$.
Using Remark \ref{remil} the latter map lies over the canonical morphism
$$\Mor_{\Fun(\X,\mM)}(\alpha,\alpha) \circ (\sigma \times \sigma) \to \Mor_{\Fun(\Y,\mM)}(\alpha \circ \sigma,\alpha \circ \sigma)$$
in $ \Fun(\Y \times \Y,\mP\Env(\mV))$, which is an equivalence by Proposition \ref{xre}.

\end{proof}

\vspace{1mm}
 
By Proposition \ref{bica} the canonical functor $\omega\LMod \to \Op_\infty $ is a cartesian fibration and by Remark \ref{abc} the functor $\omega \PreCat_\infty \to\Op_\infty$ is a cartesian fibration.
We complete this section by proving that the functor $\chi: \omega\LMod \to \omega\PreCat_\infty$ is a map of cartesian fibrations over $\Op_\infty$:

\begin{lemma}\label{leeem}
The functor  $\rho: \omega\LMod \to \mP^{\Op_\infty}( \omega\PreCat_\infty)$ and so the functor $\chi: \omega\LMod \to \omega\PreCat_\infty $ preserve cartesian morphisms over $\Op_\infty$.

\end{lemma}

\begin{proof}
For any weakly left tensored $\infty$-category $\mM^\circledast \to \mV^\ot$ and map of $\infty$-operads $\varphi: \mW^\ot \to \mV^\ot$ we want to see that the canonical morphism $\alpha: \rho(\varphi^\ast(\mM)) \to \varphi^\ast(\rho(\mM))$ in $ \Fun((\omega\PreCat_\infty^\mW)^\op, \mS)$ lying over the identity of $\X:= \mM^\simeq$ is an equivalence. 
The induced commutative square
\begin{equation*} 
\begin{xy}
\xymatrix{
\LMod(\Fun(\X,\varphi^\ast(\mM)))  \ar[d]^{} \ar[r]^{ }
&\LMod(\Fun(\X, \mM)) \ar[d]^{}
\\ \Alg(\Fun(\X \times \X, \mW))
\ar[r]^{}  & \Alg(\Fun(\X \times \X, \mV))}
\end{xy} 
\end{equation*}
yields on the fiber over any $\mW$-precategory $\mC$ with space of objects $\X=\mM^\simeq$ a functor $$\LMod_\mC(\Fun(\X,\varphi^\ast(\mM))) \to \LMod_{\varphi_\ast(\mC)}(\Fun(\X, \mM)),$$ which induces on maximal subspaces the component map $\alpha_\mC.$
So it is enough to check that the last square is a pullback square. This follows from the fact that the following square is a pullback square:
\begin{equation*} 
\begin{xy}
\xymatrix{
\Fun(\X,\varphi^\ast(\mM))^\circledast  \ar[d]^{} \ar[r]^{ }
&\Fun(\X, \mM)^\circledast \ar[d]^{}
\\ \Fun(\X \times \X, \mW)^\ot
\ar[r]^{}  & \Fun(\X \times \X, \mV)^\ot.}
\end{xy} 
\end{equation*}

\end{proof}

\section{A universal property of enriched presheaves}
\label{univpr}

In the last section we defined the $\infty$-category of enriched presheaves.
In this section we prove that it satisfies a universal property:

\begin{theorem}\label{tgre}
Let $\mV^\ot \to \Ass$ be a monoidal $\infty$-category compatible with small colimits, 
$\mW^\ot \to \Ass$ a monoidal $\infty$-category, $\mC$ a $\mV$-precategory with small space of objects $\X$ and $\mM^\circledast \to \mV^\ot \times \mW^\ot$ a bitensored $\infty$-category whose underlying left tensored 
$\infty$-category is compatible with small colimits.
There is a canonical equivalence of $\infty$-categories right tensored over $\mW$:
\begin{equation}\label{gdcy}
\Psi: \LinFun^\L_{\mV}(\mP_\mV(\mC),\mM)^\circledast \simeq \Fun^\mV(\mC,\mM)^\circledast
\end{equation}

\end{theorem}

\begin{remark}
Note that for $\mW^\ot =\Ass$ the forgetful functor $\BMod_{\mV,\mW} \to \LMod_\mV$
is an equivalence. So in this case Theorem \ref{tgre} applies to 
$\infty$-categories with left $\mV$-action compatible with small colimits.
\end{remark}
We explain now how we construct the functor $\Psi$ appearing in Theorem \ref{tgre}.
As the first step we construct for any generalized $\infty$-operad,
$\mV^\ot \to \Ass$ and $\mV$-precategory $\mC$ a $\mV$-enriched Yoneda-embedding,
which is an object of $\Fun^\mV(\mC, \mP_\mV(\mC))$ lying over a map of spaces $\iota$, and write $\L(\mC) \subset \mP_\mV(\mC)$ for the full subcategory spanned by the essential image of $\iota.$
In a second step we give for any weakly bitensored $\infty$-category $\mM^\circledast \to \mV^\ot \times \mW^\ot$ a map $$\alpha: \LaxLinFun_{\mV}(\L(\mC),\mM)^\circledast \to
\Fun(\Fun^\mV(\mC,\L(\mC)), \Fun^\mV(\mC,\mM))^\circledast$$
of $\infty$-categories weakly right tensored over $\mW.$
We define a map $\Phi$ of $\infty$-categories weakly right tensored over $\mW$
as the composition of $\alpha$ with the map $$ \Fun(\Fun^\mV(\mC, \L(\mC)),\Fun^\mV(\mC,\mM))^\circledast \to  \Fun^\mV(\mC,\mM)^\circledast$$ evaluating at the $\mV$-enriched Yoneda-embedding $\mC \to \chi(\L(\mC)).$

If the assumptions of Theorem \ref{tgre} hold, we define $\Psi$ as the composition
$$ \LinFun^\L_{\mV}(\mP_\mV(\mC),\mM)^\circledast \to \LaxLinFun_{\mV}(\L(\mC), \mM)^\circledast\xrightarrow{\Phi} \Fun^\mV(\mC,\mM)^\circledast,$$ where the first map is restriction.

\vspace{1mm}
In the following we explain how we construct the $\mV$-enriched Yoneda-embedding:
By Remark \ref{exxxx} for all spaces $\X,\Y$ and weakly bitensored $\infty$-category $\mM^\circledast \to \mV^\ot \times \mW^\ot $ there is a weakly bitensored $\infty$-category $\Fun(\X \times \Y,\mM)^\circledast \to \Quiv_\X(\mV) \times \Quiv_\Y(\mW)$.
We construct (Proposition \ref{imporo}) for any $\mV$-precategory $\mC$ with space of objects $\X$ and $\mW$-precategory $\mB$ with space of objects $\Y$ an equivalence $$\Fun^\mV(\mC, \mP^\mM_\mV(\mB)) \simeq \BMod_{\mC,\mB}(\Fun(\X \times\Y,\mM).$$
In the special case where we view $\mV$ as weakly bitensored over itself, we obtain a
canonical equivalence
\begin{equation}\label{rtgn}
\Fun^\mV(\mC, \mP_\mV(\mC)) \simeq \BMod_{\mC,\mC}(\Quiv_\X(\mV))
\end{equation}
and make the following definition:

\begin{definition}\label{yoneet}
Let $\mV^\ot \to \Ass$ be a generalized $\infty$-operad and $\mC$ a $\mV$-enriched $\infty$-precategory with space of objects $\X$.
The $\mV$-enriched Yoneda-embedding of $\mC$ is the object of $$\Fun^\mV(\mC, \mP_\mV(\mC)) \simeq \BMod_{\mC,\mC}(\Quiv_\X(\mV))$$ corresponding to
$\mC$ seen as bimodule over itself.
\end{definition}

\begin{notation}\label{younder}
Let $\iota: \X \to \mP_\mV(\mC)= \RMod_\mC(\Fun(\X,\mV))$ be the image of
the $\mV$-enriched Yoneda-embedding under the forgetful functor
$$\Fun^\mV(\mC, \mP_\mV(\mC))= \LMod_\mC(\Fun(\X,\mP_\mV(\mC))) \to \Fun(\X,\mP_\mV(\mC)).$$
\end{notation}

\begin{remark}
Let $\mV^\ot \to \Ass$ be a generalized $\infty$-operad and $\mC$ a $\mV$-enriched $\infty$-precategory with space of objects $\X$. 
By Lemma \ref{impor} there is a canonical equivalence $\Fun(\X, \mP_\mV(\mC)) \simeq  \RMod_\mC(\Fun(\X \times \X,\mV))$
that induces equivalence (\ref{rtgn}) and so sends $\iota$ to $\mC$ seen as right module over itself.
\end{remark}

\begin{remark}\label{tau}
Let $\mV^\ot \to \Ass$ be a monoidal $\infty$-category compatible with small colimits
and $\mC$ a $\mV$-precategory with small space of objects $\X$.
Then $\Fun(\X,\mV)^\circledast \to \Quiv_\X(\mV)^\ot$ is
a left tensored $\infty$-category so that the forgetful functor $ \mP_{\mV}(\mC) = \RMod_\mC(\Fun(\X,\mV)) \to \Fun(\X,\mV)$ admits a left adjoint.

The tensor unit of $\Quiv_\X(\mV)$ corresponds to the composition
$\tau: \X \to \Fun(\X,\mS) \xrightarrow{((-)\ot \tu)_*} \Fun(\X,\mV)$ of the Yoneda-embedding and the functor induced by the unique left adjoint monoidal functor $(-)\ot \tu: \mS \to \mV$ (Proposition \ref{contino}).
Since $\mC \in \RMod_\mC(\Fun(\X \times \X,\mV)) \simeq \Fun(\X, \mP_\mV(\mC)) $
viewed as right module over itself is free on the tensor unit of $\Quiv_\X(\mV),$
the functor $\iota: \X \to \mP_{\mV}(\mC)$ corresponding to $\mC \in \RMod_\mC(\Fun(\X \times \X,\mV)) $
factors as $\tau$ followed by the $\mV$-linear left adjoint $\mF: \Fun(\X,\mV) \to \mP_{\mV}(\mC)$ of the $\mV$-linear forgetful functor $\nu:  \mP_{\mV}(\mC) \to \Fun(\X,\mV).$

By the Yoneda lemma for any $\Z \in \X$ there is a $\mV$-linear adjunction
$ (-) \otimes \tau(\Z): \mV \rightleftarrows \Fun(\X,\mV):\ev_\Z, $
where the right adjoint evaluates at $\Z.$
Forming the composed $\mV$-linear adjunction
$$ \iota(\Z)\simeq \mF \circ ((-) \otimes \tau(\Z)): \mV \rightleftarrows \Fun(\X,\mV) \rightleftarrows \mP_{\mV}(\mC): \ev_\Z \circ \nu$$
gives an equivalence $\mathrm{Mor}_{\mP_{\mV}(\mC)}(\iota(\Z),-) \simeq \ev_\Z \circ \nu$ of lax $\mV$-linear functors $\mP_{\mV}(\mC) \to \mV.$
In particular, $\mathrm{Mor}_{\mP_{\mV}(\mC)}(\iota(\Z),-)$ is $\mV$-linear and preserves small colimits and for any $\rH \in \mP_{\mV}(\mC), \V \in \mV$ and $\Z \in \X$ the following map is an equivalence giving an enriched version of Yoneda-lemma: $${\mP_{\mV}(\mC)}(\V \ot \iota(\Z),\rH) \to \mV(\V \ot \iota(\Z)(\Z),\rH(\Z)) \to \mV(\V, \rH(\Z)).$$

\end{remark}

\vspace{1mm}

\begin{notation}\label{younder}
Let $\L(\mC)^\circledast \subset \mP_\mV(\mC)^\circledast$ be the full subcategory with weak left $\mV$-action spanned by the essential image of $\iota: \X \to \mP_\mV(\mC)$.

\end{notation}
\begin{remark}\label{yott}
The embedding $\L(\mC)^\circledast \subset \mP_\mV(\mC)^\circledast $ induces a pullback square
\begin{equation*}
\begin{xy}
\xymatrix{
\Fun^\mV(\mC, \L(\mC)) \ar[d]^{} \ar[rrr]
&&&\Fun^\mV(\mC, \mP_\mV(\mC)) \ar[d]^{}
\\ \Fun(\X,\L(\mC))
\ar[rrr]^{} &&& \Fun(\X,\mP_\mV(\mC)).
}
\end{xy} 
\end{equation*} 
Hence the $\mV$-enriched Yoneda-embedding belongs to the full subcategory
$\Fun^\mV(\mC, \L(\mC)).$ 
\end{remark}

For later applications we extend the last notation to the case of weakly enriched
$\infty$-precategories:

\begin{notation}
Let $\mV^\ot \to \Ass$ be an $\infty$-operad and $\mC$ an $\infty$-precategory weakly enriched in $\mV$.
We write $\L(\mC)^\circledast \subset \mP_{\mP\Env(\mV)}(\mC)^\circledast $ for the full subcategory with weak left $\mP\Env(\mV)$-action spanned by the essential image of $\iota: \X \to \mP_{\mP\Env(\mV)}(\mC)$.
	
\end{notation}

We start with constructing the $\mV$-enriched Yoneda-embedding by proving the following lemma:

\begin{lemma}\label{impor}

Let $\mM^\circledast \to \mV^\ot \times \mW^\ot$ be an $\infty$-category with weak biaction and $\mC$ a $\mW$-precategory with space of objects $\Y.$
For any space $\X$ there is a canonical equivalence
$$\Fun(\X,\RMod_\mC(\Fun(\Y,\mM)))^\circledast \simeq \RMod_\mC(\Fun(\X \times \Y,\mM))^\circledast $$
of $\infty$-categories weakly left tensored over $\Quiv_\X(\mV)$ compatible with the forgetful functors. 
\end{lemma}

\begin{proof}
For any weakly left tensored $\infty$-category $\mD^\circledast \to \mV^\ot$ 
and associative algebra $\A $ in $\mV$ by Remark \ref{coala} there is a canonical equivalence
$$ \LaxLinFun_{\mV^\ot}(\mD^\circledast,\RMod_\A(\mM)^\circledast)\simeq \LaxLinFun_{\mV^\ot,\Ass}(\mD^\circledast \times \Ass, \Ass \times_{\mW} \mM^\circledast). $$
So for any weakly left tensored $\infty$-category $\mE^\circledast \to \Quiv_\X(\mV)$ we obtain canonical equivalences
$$\LaxLinFun_{\Quiv_\X(\mV)}(\mE^\circledast,\RMod_\mC(\Fun(\X \times \Y,\mM))^\circledast)\simeq$$$$ \LaxLinFun_{\Quiv_\X(\mV),\Ass}(\mE^\circledast \times \Ass, \Ass \times_{\Quiv_{\Y}(\mW)} \Fun(\X \times\Y,\mM)^\circledast)\simeq $$
$$ \LaxLinFun_{\Quiv_\X(\mV)_\X^\ot, \Ass_\Y}(\mE^\circledast \times_{\Ass} \Ass_\X \times \Ass_\Y, \Quiv_\X(\mV)_\X^\ot \times_{\mV^\ot} \mM^\circledast \times_{\mW^\ot} \Ass_\Y)$$
and
$$\LaxLinFun_{\Quiv_\X(\mV)}(\mE^\circledast,\Fun(\X,\RMod_\mC(\Fun(\Y,\mM)))^\circledast) \simeq $$$$ \LaxLinFun_{\Quiv_\X(\mV)^\ot_\X}(\mE^\circledast \times_{\Ass} \Ass_\X, \Quiv_\X(\mV)_\X^\ot \times_{\mV^\ot} \RMod_\mC(\Fun(\Y,\mM))^\circledast) \simeq $$ 
$$\LaxLinFun_{\Quiv_\X(\mV)_\X^\ot}(\mE^\circledast \times_{\Ass} \Ass_\X, \RMod_\mC(\Quiv_\X(\mV)_\X^\ot \times_{\mV^\ot} \Fun(\Y,\mM)^\circledast))\simeq $$ 
$$\LaxLinFun_{\Quiv_\X(\mV)_\X^\ot,\Ass}(\mE^\circledast \times_{\Ass} \Ass_\X \times \Ass, \Quiv_\X(\mV)_\X^\ot \times_{\mV^\ot} \Fun(\Y,\mM)^\circledast \times_{\Quiv_{\Y}(\mW)^\ot} \Ass) $$ 
$$\simeq \LaxLinFun_{\Quiv_\X(\mV)_\X^\ot, \Ass_\Y}(\mE^\circledast \times_{\Ass} \Ass_\X \times \Ass_\Y, \Quiv_\X(\mV)_\X^\ot \times_{\mV^\ot} \mM^\circledast \times_{\mW^\ot} \Ass_\Y). $$	
	
\end{proof}

From Lemma \ref{impor} we obtain the following proposition:

\begin{proposition}\label{imporo}

Let $\mM^\circledast \to \mV^\ot \times \mW^\ot$ be a weakly bitensored $\infty$-category, $\mB$ a $\mV$-precategory with space of objects $\X$	
and $\mC$ a $\mW$-precategory with space of objects $\Y.$
There is a canonical equivalence
$$\Fun^\mV(\mB,\mP^\mM_\mW(\mC)) \simeq \BMod_{\mB,\mC}(\Fun(\X \times \Y,\mM)) $$
compatible with the forgetful functors.

\end{proposition}

\begin{proof}
The desired equivalence is the composition $$\Fun^\mV(\mB,\mP^\mM_\mW(\mC))
=\LMod_\mB(\Fun(\X,\RMod_\mC(\Fun(\Y,\mM)))) \simeq $$$$ \LMod_\mB(\RMod_\mC(\Fun(\X \times \Y,\mM))) \simeq \BMod_{\mB,\mC}(\Fun(\X \times \Y,\mM)), $$
where the first equivalence is provided by Lemma \ref{impor} and the second one is due to Proposition \ref{compright}.

\end{proof}

Now we are ready to construct the functor $\Psi.$ 
For that we prove the following proposition:

\begin{proposition}\label{qqqiv}
Let $\X$ be a space, $\mM^\circledast \to \mV^\ot$ a weakly left tensored $\infty$-category and $\mN^\circledast \to \mV^\ot \times \mW^\ot$ a weakly bitensored $\infty$-category. There is a canonical map
$$\theta: \LaxLinFun_\mV(\mM,\mN)^\circledast \to
\LaxLinFun_{\Quiv_\X(\mV)}(\Fun(\X,\mM),\Fun(\X,\mN))^\circledast$$
of $\infty$-categories weakly right tensored over $\mW$.
\end{proposition}

\begin{proof}

Let $\mO^\circledast \to \mW^\ot$ be a weakly right tensored $\infty$-category.
The canonical map
$$ (\Fun(\X,\mM)^\circledast \times \mO^\circledast) \times_{(\Ass \times \Ass)} (\Ass_{\X} \times \Ass) \simeq (\Fun(\X,\mM)^\circledast \times_\Ass \Ass_\X) \times \mO^\circledast \to \mM^\circledast \times \mO^\circledast $$
of weakly bitensored $\infty$-categories lying over the maps 
$\Quiv_\X(\mV)^\ot \times_\Ass \Ass_\X \to \mV^\ot, \id : \mW^\ot \to \mW^\ot$
of generalized $\infty$-operads is adjoint to a map 
$$ \beta: \Fun(\X,\mM)^\circledast \times \mO^\circledast \to \Fun(\X,\mM \times \mO)^\circledast$$
of $\infty$-categories weakly bitensored over $\Quiv_\X(\mV),\mW$.

The map $\beta$ and the lax $\mV,\mW$-linear evaluation 
$ \mM^\circledast \times \LaxLinFun_\mV(\mM,\mN)^\circledast \to \mN^\circledast$
give rise to a map $$\Fun(\X,\mM)^\circledast \times \LaxLinFun_\mV(\mM,\mN)^\circledast \xrightarrow{\beta} \Fun(\X,\mM \times \LaxLinFun_\mV(\mM,\mN))^\circledast \to\Fun(\X,\mN)^\circledast $$
of $\infty$-categories weakly bitensored over $\Quiv_\X(\mV),\mW$
adjoint to the desired map $\theta$ of $\infty$-categories weakly right tensored over $\mW.$

\end{proof}

\begin{notation}
Let $\mN^\circledast \to \mV^\ot \times \mW^\ot$ be a weakly bitensored $\infty$-category, $\mM^\circledast \to \mV^\ot$ a weakly left tensored $\infty$-category and $\mC$ a $\mV$-precategory with space of objects $\X$.
Let $\alpha$ be the composition 
\begin{equation*}
\LaxLinFun_{\mV}(\mM,\mN)^\circledast \xrightarrow{\theta}\LaxLinFun_{\Quiv_\X(\mV)}(\Fun(\X,\mM),\Fun(\X,\mN))^\circledast \to$$$$
\Fun(\Fun^\mV(\mC,\mM), \Fun^\mV(\mC,\mN))^\circledast
\end{equation*}
of maps of $\infty$-categories weakly right tensored over $\mW$,
where the first map is given by Proposition \ref{qqqiv} and the second map
by Remark \ref{jbfigh} using that $\Fun^\mV(\mC,\mM)^\circledast = \LMod_\mC(\Fun(\X,\mM))^\circledast.$
	
\end{notation}

\begin{notation}\label{fuuu}
Let $\mM^\circledast \to \mV^\ot \times \mW^\ot$ be a weakly bitensored $\infty$-category and $\mC$ a $\mV$-precategory with space of objects $\X$.
Let $\Phi$ be the composition 
$$ \LaxLinFun_{\mV}(\L(\mC),\mM)^\circledast \xrightarrow{\alpha}
\Fun(\Fun^\mV(\mC,\L(\mC)), \Fun^\mV(\mC,\mM))^\circledast\to  \Fun^\mV(\mC,\mM)^\circledast$$
of maps of $\infty$-categories weakly right tensored over $\mW$, where the second map evaluates at the $\mV$-enriched Yoneda-embedding of $\mC$ (Definition \ref{yoneet}, Remark \ref{yott}).

\end{notation}

\begin{remark}
There is a commutative square of $\infty$-categories weakly right tensored over $\mW$
\begin{equation}\label{sqq}
\begin{xy}
\xymatrix{
\LaxLinFun_{\mV}(\L(\mC),\mM)^\circledast \ar[d]^{} \ar[rrr]^{\Phi}
&&&\Fun^\mV(\mC,\mM)^\circledast \ar[d]^{}
\\ \Fun(\L(\mC),\mM)^\circledast
\ar[rrr]^{\iota^\ast} &&& \Fun(\X,\mM)^\circledast,
}
\end{xy} 
\end{equation}  
where the vertical functors are the forgetful functors and $\iota: \X \to \L(\mC)\subset \mP_\mV(\mC)$ is defined in \ref{younder}.	
\end{remark}

\begin{remark}
Let $\mM^\circledast \to \mV^\ot \times \mW^\ot$ be a bitensored $\infty$-category compatible with small colimits and $\mN^\circledast \to \mV^\ot$ a weakly left tensored $\infty$-category.
Since the right $\mW$-action on $\LaxLinFun_{\mV}(\mN,\mM)$ is object-wise, it
restricts to $\LinFun^\L_{\mV}(\mN,\mM)$.
\end{remark}

\begin{notation}
 
Let $\mM^\circledast \to \mV^\ot \times \mW^\ot$ a bitensored $\infty$-category 
whose underlying $\infty$-category with left $\mV$-action is compatible with small colimits and $\mC$ a $\mV$-precategory with small space of objects $\X$.
We write $\Psi$ for the composition
$$ \LinFun^\L_{\mV}(\mP_\mV(\mC),\mM)^\circledast \to \LaxLinFun_{\mV}(\L(\mC), \mM)^\circledast\xrightarrow{\Phi} \Fun^\mV(\mC,\mM)^\circledast$$
of maps of $\infty$-categories weakly right tensored over $\mW$, where the first functor is restriction.

\end{notation}

\begin{remark}
There is a commutative square of $\infty$-categories right tensored over $\mW$
\begin{equation}\label{sqqhal}
\begin{xy}
\xymatrix{
\LinFun^\L_{\mV}(\mP_\mV(\mC),\mM)^\circledast \ar[d]^{} \ar[rrr]^{\Psi}
&&&\Fun^\mV(\mC,\mM)^\circledast \ar[d]^{}
\\ \LinFun^\L_{\mV}(\Fun(\X,\mV),\mM)^\circledast
\ar[rrr]^{\simeq} &&& \Fun(\X,\mM)^\circledast,
}
\end{xy} 
\end{equation}  
where the left vertical functor precomposes with the unique small colimits preserving $\mV$-linear functor $\Fun(\X,\mV) \to \mP_\mV(\mC)$ extending $\iota: \X \to \mP_\mV(\mC)$ and the right vertical functor forgets. Thus $\Psi$ is $\mW$-linear.
\end{remark}

Next we will show that $\Psi$ is an equivalence proving Theorem \ref{tgre}:

\begin{proof}[Theorem \ref{tgre}]

Let $\phi: \Fun(\X,\mV) \rightleftarrows \RMod_\mC(\Fun(\X,\mV)): \gamma $	
be the free-forgetful adjunction.
The right adjoint $\gamma$ is $\mV$-linear and makes the left adjoint $\phi$
canonically $\mV$-linear since the composition $\gamma \circ \phi \simeq (-) \otimes \mC$ is $\mV$-linear.
So this adjunction yields an adjunction
$$\gamma^\ast: \Fun(\X,\mM) \simeq \LinFun^\L_\mV(\Fun(\X,\mV),\mM) \rightleftarrows \LinFun^\L_\mV(\RMod_\mC(\Fun(\X,\mV)),\mM): \phi^\ast. $$

By the commutativity of square (\ref{sqqhal}) it will be enough to prove the following two conditions:
\begin{enumerate}
\item The left vertical functor $\phi^\ast$ in the square is monadic.
\item For any functor $\rH:\X \to \mM$ the canonical map
$$\theta: \mC \otimes \rH \to \mC \otimes \phi^\ast(\gamma^\ast(\rH)) \simeq \mC \otimes \nu(\Psi(\gamma^\ast(\rH))) \to \nu(\Psi(\gamma^\ast(\rH))) $$ in
$\Fun(\X,\mM) $ is an equivalence.
\end{enumerate}

We start with the first condition: the right adjoint $\phi^\ast$ is conservative since $\RMod_\mC(\Fun(\X,\mV))$ is generated by the free right $\mC$-modules under geometric realizations. 
Moreover $\phi^\ast$ preserves small colimits by Lemma \ref{fghhnml}. 

To check (2) it is enough to prove the case that $\mM= \Fun(\X,\mV)$
and $\rH:\X \to \mM = \Fun(\X,\mV)$ adjoint to the tensor unit $\E: \X \times \X \to \mV$ of $\Quiv_\X(\mV)$ since both vertical adjunctions in square (\ref{sqq}) are natural in $\mM$
with respect to $\mV$-linear functors preserving small colimits.
For these choices of $\mM$ and $\rH$ the left $\Quiv_\X(\mV)$-action on $\Fun(\X,\mM) \simeq \Fun(\X\times\X,\mV)$ is the left action of
the monoidal $\infty$-category $\Quiv_\X(\mV)$ on itself (Lemma \ref{comppp}).
The unit $\E \to \phi^\ast(\gamma^\ast(\E))$ of the adjunction is the morphism 
$$\E \simeq ((-) \otimes \E) \circ \E \to ((-) \otimes \mC) \circ \E $$
induced by the unit $\E \to \mC$ of the associative algebra, where $(-) \otimes \mC, \ (-) \otimes \E \simeq \id: \Fun(\X,\mV) \to  \Fun(\X,\mV)$ are formed using the right $\Quiv_\X(\mV)$-action on $\Fun(\X,\mV).$
By Lemma \ref{diagg} the right $\Quiv_\X(\mV)$-action on $\Fun(\X,\mV)$ is compatible with the diagonal left $\mV$-action, which yields a left $\Quiv_\X(\mV)$-action on $\Fun(\X,\Fun(\X,\mV)) \simeq \Fun(\X \times \X,\mV)$ compatible with the diagonal right $\Quiv_\X(\mV)$-action. By Lemma \ref{comppp} this biaction is the biaction of the monoidal $\infty$-category $\Quiv_\X(\mV)$ on itself.
Consequently for any $\mB \in \Fun(\X \times \X,\mV)$
the induced functor $$\Fun(\X, (-) \otimes \mB): \Fun(\X,\Fun(\X,\mV)) \to \Fun(\X,\Fun(\X,\mV))$$ is canonically equivalent to the functor
$(-) \otimes \mB: \Fun(\X \times\X,\mV) \to \Fun(\X \times\X,\mV)$
induced by the monoidal structure on $\Fun(\X \times\X,\mV)$.
Hence the unit $\E \to \phi^\ast(\gamma^\ast(\E))$ of the adjunction is the unit $\E \to \mC$ of the associative algebra.
By definition of $\Psi$ the left $\mC$-module structure on $\nu(\Psi(\gamma^\ast(\rH))):\X \to \mM=\Fun(\X,\mV)$ is the left $\mC$-module structure on $\mC$ coming from the associative algebra structure in $\Quiv_\X(\mV) $.
Therefore the map $\mC \otimes \nu(\Psi(\gamma^\ast(\rH))) \to \nu(\Psi(\gamma^\ast(\rH))) $ is the multiplication map $\mC \otimes \mC \to \mC.$ 

\end{proof}

By the next proposition the $\mV$-enriched Yoneda-embedding $\mC \to \chi(\mP_\mV(\mC))$ is fully faithful in the appropriate enriched sense:

\begin{proposition}\label{yooon}
Let $\mV^\ot \to \Ass$ be an $\infty$-operad and $\mC$ a $\mV$-precategory.
The $\mV$-enriched Yoneda-embedding $\mC \to \chi(\mP_\mV(\mC))$
induces an equivalence on underlying graphs. 
\end{proposition}

\begin{proof}

A $\mV$-enriched functor $\mC \to \chi(\mP_\mV(\mC))$ corresponds to a left
$\mC$-module with respect to the left action of $\Quiv_\X(\mV)$
on $\Fun(\X,\mP_\mV(\mC))$, where the left $\mC$-module exhibits $\mC$ as endomorphism algebra if and only if the corresponding $\mV$-enriched functor induces an equivalence on underlying graphs.
The associative algebra $\mC$ in $\Quiv_\X(\mV) $ gives rise to a right $\mC$-module $\mC'$ in $\Quiv_\X(\mV) $ and to a $(\mC,\mC)$-bimodule in $\Quiv_\X(\mV) $ that corresponds to a left $\mC$-module structure on $\mC'$ with respect to the left action of $\Quiv_\X(\mV)$ on $ \RMod_\mC(\Quiv_\X(\mV)).$
By Corollary \ref{impor} there is a $\Quiv_\X(\mV) $-linear equivalence 
$\RMod_\mC(\Quiv_\X(\mV)) \simeq \Fun(\X, \mP_\mV(\mC))
$ under which the $\mV$-enriched Yoneda-embedding
corresponds to the left $\mC$-module structure on $\mC'$,
which exhibits $\mC$ as endomorphism algebra by \cite[Theorem 4.8.5.5.]{lurie.higheralgebra}.
\end{proof}

We add the following Proposition \ref{nice} that is a corollary of Proposition \ref{imporo} and Lemma \ref{hacon}.

\begin{notation}\label{switchh}
For any generalized $\infty$-operads $\mV^\ot \to \Ass,\mW^\ot \to \Ass $ let  $$\theta_{\mV,\mW}: \mV^\ot \times_\Ass (\mW^\ot)^\rev \to \mV^\ot \times (\mW^\ot)^\rev \simeq \mV^\ot \times \mW^\ot $$
be the composition of projection and the functor induced by the 
equivalence $(\mW^\ot)^\rev \simeq \mW^\ot$.
Taking pullback along $\theta_{\mV,\mW}$ induces a functor
$(\theta_{\mV,\mW})^*: \omega\BMod_{\mV,\mW} \to \omega\LMod_{\mV \times \mW^\rev}.$ 
\end{notation}

\begin{proposition}\label{nice}

For any weakly bitensored $\infty$-category $\mM^\circledast \to \mV^\ot \times (\mW^\ot)^\rev,$ any $ \mV$-precategory $\mB$ with space of objects $\X$ and
$\mW$-precategory $\mC$ with space of objects $\Y$ there is a canonical equivalence
$$\Fun^\mV(\mB,\Fun^\mW(\mC,\mM)) \simeq \Fun^{\mV \times \mW}(\mB \times \mC, \theta_{\mV,\mW^\rev}^*(\mM))$$
compatible with the forgetful functors.
\end{proposition}

\begin{proof}
There is a canonical equivalence
$$\Fun^\mV(\mB,\Fun^\mW(\mC,\mM)) \simeq \BMod_{\mB,\mC^\op}(\Fun(\X \times \Y,\mM)) \simeq $$$$ \LMod_{(\mB, \mC)}(\theta_{\Quiv_\X(\mV),\Quiv_\Y(\mW)^\rev}^*(\Fun(\X, \times \Y,\mM))) \simeq \LMod_{\mB \times \mC}(\Fun(\X \times \Y,\theta_{\mV,\mW^\rev}^*(\mM))) =$$$$ \Fun^{\mV \times \mW}(\mB \times \mC, \theta_{\mV,\mW^\rev}^*(\mM))$$
compatible with the forgetful functors,
where the first equivalence is by Proposition \ref{imporo},
the second equivalence is by Proposition \ref{eqaybbn} and the third one is by Proposition \ref{hacon}.	
\end{proof}

\begin{notation}For any generalized $\infty$-operads $\mV^\ot \to \Ass,\mW^\ot \to \Ass $ and spaces $\X,\Y$ let $\rho$ be the canonical map of generalized $\infty$-operads $$\Quiv_\X(\mV)^\ot \times_\Ass (\Quiv_\Y(\mW)^\ot)^\rev \simeq \Quiv_\X(\mV)^\ot \times_\Ass \Quiv_\Y(\mW^\rev)^\ot \to \Quiv_{\X \times \Y}(\mV \times \mW^\rev)^\ot.$$

\end{notation}

\begin{lemma}\label{hacon}
For any weakly bitensored $\infty$-category $\mM^\circledast \to \mV^\ot \times \mW^\ot$ and spaces $\X,\Y$ there is a canonical equivalence
\begin{equation}\label{alf}
\theta_{\Quiv_\X(\mV),\Quiv_\Y(\mW)}^*(\Fun(\X \times \Y,\mM)^\circledast) \simeq \rho^*(\Fun(\X \times \Y,\theta_{\mV,\mW}^*(\mM))^\circledast)\end{equation}
of $\infty$-categories weakly left tensored over $\Quiv_\X(\mV) \times \Quiv_\Y(\mW)^\rev$ that induces on underlying $\infty$-categories the 
identity.
\end{lemma}
 
\begin{proof}
It will be enough to construct equivalence (\ref{alf}) for $\mP\B\Env(\mM)$ since (\ref{alf}) for $\mP\B\Env(\mM)$ restricts to $\mM$ 
by the description on underlying $\infty$-categories.
So we can reduce to the case that $\mM^\circledast \to \mV^\ot \times \mW^\ot$ is a presentably bitensored $\infty$-category, where we set $\widetilde{\mV}^\ot:= \Quiv_\X(\mV)^\ot, \widetilde{\mW}^\ot:= \Quiv_\Y(\mW)^\ot$.

Let $ \mN^\circledast \to \widetilde{\mV}^\ot \times \widetilde{\mW}^\ot$ be a weakly bitensored $\infty$-category
and $\beta: \mN^\circledast \to \Fun(\X \times \Y, \mM)^\circledast$
a lax $\widetilde{\mV},\widetilde{\mW}$-linear functor.
By the universal property (Remark \ref{paraph}) of 
$ \Fun(\X \times \Y, \mM)^\circledast \to \widetilde{\mV}^\ot \times \widetilde{\mW}^\ot$
the functor $\beta$ corresponds to a lax 
$\widetilde{\mV}_\X^\ot,\widetilde{\mW}^\ot_\Y$-linear functor
$\gamma: \mN^\circledast_{\X,\Y} \to \widetilde{\mV}^\ot_\X \times_{\mV^\ot} \mM^\circledast \times_{\mW^\ot} \widetilde{\mW}^\ot_\Y$.

The induced lax $ \widetilde{\mV}^\ot_\X \times_\Ass (\widetilde{\mW}^\ot_\Y)^\rev \simeq (\widetilde{\mV}^\ot \times_\Ass (\widetilde{\mW}^\ot)^\rev)_{\X \times\Y} $-linear functor
$$\theta^*_{\widetilde{\mV},\widetilde{\mW}}(\mN^\circledast)_{\X \times \Y} \simeq \theta^*_{\widetilde{\mV}_\X,\widetilde{\mW}_\Y}(\mN^\circledast_{\X,\Y}) \xrightarrow{\theta^*_{\widetilde{\mV}_\X,\widetilde{\mW}_\Y} (\gamma)} \theta^*_{\widetilde{\mV}_\X,\widetilde{\mW}_\Y} (\widetilde{\mV}^\ot_\X \times_{\mV^\ot} \mM^\circledast \times_{\mW^\ot} \widetilde{\mW}^\ot_\Y)
\simeq$$$$ {(\widetilde{\mV}^\ot \times_\Ass (\widetilde{\mW}^\ot)^\rev)}_{\X \times \Y} \times_{\mV^\ot \times_\Ass (\mW^\ot)^\rev} \theta_{\mV,\mW}^*(\mM^\circledast) \simeq $$$$
\rho_{\X \times \Y}^*({\Quiv_{\X \times \Y}(\mV \times \mW^\rev)}_{\X \times \Y} \times_{\mV^\ot \times_\Ass (\mW^\ot)^\rev} \theta_{\mV,\mW}^*(\mM^\circledast)) $$
is adjoint to a lax $ \Quiv_{\X \times \Y} (\mV \times \mW^\rev)_{\X \times \Y}^\ot $-linear functor
$$\rho_*(\theta^*_{\widetilde{\mV},\widetilde{\mW}}(\mN^\circledast))_{\X \times \Y} \simeq (\rho_{\X \times \Y})_*(\theta^*_{\widetilde{\mV},\widetilde{\mW}}(\mN^\circledast)_{\X \times \Y}) \to {\Quiv_{\X \times \Y}(\mV \times \mW^\rev)}_{\X \times \Y} \times_{\mV^\ot \times_\Ass (\mW^\ot)^\rev} \theta_{\mV,\mW}^*(\mM^\circledast) $$
corresponding to a lax 
$\Quiv_{\X \times \Y}(\mV \times \mW^\rev)^\ot$-linear functor
$\rho_*(\theta^*_{\widetilde{\mV},\widetilde{\mW}}(\mN^\circledast)) \to \Fun(\X \times \Y,\theta_{\mV,\mW}^*(\mM))^\circledast $
by the universal property of the right hand side.
The latter map is adjoint to a lax $\widetilde{\mV}^\ot \times_\Ass (\widetilde{\mW}^\ot)^\rev $-linear functor
$\beta': \theta^*_{\widetilde{\mV},\widetilde{\mW}}(\mN^\circledast) \to \rho^*(\Fun(\X \times \Y,\theta_{\mV,\mW}^*(\mM))^\circledast).$

We define (\ref{alf}) to be $\beta'$ for $\beta$ the identity.
Then (\ref{alf}) induces on underlying $\infty$-categories the identity and is 
$\widetilde{\mV} \times_\Ass \widetilde{\mW}^\rev $-linear by the description of biactions of Proposition \ref{contino} and so is an equivalence.

\end{proof}

\section{The equivalence}
\label{eqqiv}

In section \ref{extr} we constructed a functor 
$\chi: \omega\LMod \to \omega\PreCat_\infty.$ 
Now we come to our main goal: we prove that $\chi$ is fully faithful with essential image the weakly enriched $\infty$-categories defined next:

Let $\ast$ be the final $\mS$-precategory, whose space of objects is contractible, (corresponding to the final associative algebra in $\mS$), and $\mJ \to \ast$ a cartesian lift in $\PreCat_\infty^\mS$ of the map of spaces $ \{0,1\} \to [0]$.
So $\mJ$ is the $\infty$-precategory with contractible mapping spaces and space of objects a two-element set.

\begin{definition}\label{comp}
We call an $\mS$-precategory $\mD$ with small space of objects an $\mS$-category
if the map
$$\theta: \PreCat^\mS_\infty(\ast, \mD) \to \PreCat^\mS_\infty(\mJ, \mD)$$ is an equivalence.

\end{definition}

\begin{remark}
Note that for any $\mS$-precategory $\mD$ with small space of objects $\X$ the canonical map $ \PreCat^\mS_\infty(\ast, \mD) \to \mS([0],\X) \simeq \X$ is an equivalence since the fiber over any $\Z \in \X$ is the contractible space $ \Alg(\mS)(\ast, \mD(\Z,\Z)).$  	
On the other hand we think of $\PreCat^\mS_\infty(\mJ, \mD) $ as the space of equivalences of $\mD$. So $\mD$ is an $\mS$-category if the space of objects is equivalent to the space of equivalences.
\end{remark}

\begin{remark}\label{compl}
A very important property of $\mS$-categories is that an $\mS$-enriched functor between $\mS$-categories is an equivalence if it induces a surjective map on
path components of spaces of objects and an equivalence on underlying graphs \cite[Corollary 5.2.8.]{GEPNER2015575}.	
\end{remark}

By Lemma 4.8 any $\infty$-precategory enriched in an $\infty$-operad $\mV^\ot \to \Ass $ has an underlying $\mS$-precategory. In particular, any $\infty$-precategory $\mC$ weakly-enriched in $\mV$ has an underlying $\mS$-precategory that coincides with the former if $\mC$ is enriched since there is an embedding $\mV^\ot \subset \mP\Env(\mV)^\ot.$

\begin{definition}\label{comp}
Let $\mC$ be an $\infty$-precategory (weakly) enriched in an $\infty$-operad $\mV$. 
We call $\mC$ an $\infty$-category (weakly) enriched in $\mV$ if 
the underlying $\mS$-precategory of $\mC$ is an $\mS$-category.
\end{definition}

\begin{notation}
	
Let $\omega\Cat_\infty \subset \omega\PreCat_\infty$ be the full subcategory of weakly enriched $\infty$-categories.
	
\end{notation}

\begin{remark}\label{comple}
Let $\mV^\ot \to \Ass,\mW^\ot \to \Ass$ be $\infty$-operads and $\F: \mV^\ot \rightleftarrows \mW^\ot:\G$ an adjunction relative to $\Ass$.
For any $\mW$-category $\mC: \Ass_\X \to \mW^\ot$ the composition
$ \Ass_\X \xrightarrow{\mC} \mW^\ot \xrightarrow{ \G} \mV^\ot$ is a $\mV$-category.
This follows from the fact that $\G$ commutes with forming the underlying 
$\mS$-precategory.
	
\end{remark}

We will prove the following theorem:
\begin{theorem}\label{trfd}
The functor $$\chi: \omega\LMod \to \omega\PreCat_\infty $$
is fully faithful, admits a left adjoint, and the essential image precisely consists of the weakly enriched $\infty$-categories.
A $\mV$-enriched functor from an $\infty$-precategory weakly enriched in an $\infty$-operad $\mV$ to an $\infty$-category weakly enriched in $\mV$ is a local equivalence if and only if it induces an essentially surjective map on spaces of objects and an equivalence on underlying graphs.

\end{theorem}
\begin{corollary}\label{eqvvj}
The functor $\chi$ induces an equivalence 
$\omega\LMod \simeq \omega\Cat_\infty. $
\end{corollary}

Before we prove Theorem \ref{trfd}, we draw some conclusions.

\begin{definition}\label{tensor}
Let $\mV^\ot \to \Ass$ be an $\infty$-operad and $\mC$ an $\infty$-precategory weakly enriched in $\mV$ with space of objects $\X.$ 
The tensor of an object $\V \in \mV$ and $ \Y \in \X$
is an object $\V \otimes \Y \in \X$ equipped with a morphism
$\V \to \mC(\Y,\V \otimes \Y)$ in $\mP\Env(\mV)$ such that for any $\Z \in \X$ 
the canonical composition
\begin{equation}\label{eqfff}
\mC(\V \otimes \Y,\Z)  \to \Mor_{\mP\Env(\mV)}(\mC(\Y,\V \otimes \Y),\mC(\Y,\Z)) \to \Mor_{\mP\Env(\mV)}(\V,\mC(\Y,\Z))
\end{equation}
is an equivalence.
\vspace{1mm}
We call $\mC$ tensored over $\mV$ if every object $\V \in \mV$ and $ \Y \in \X$ admit a tensor.
\end{definition}

We apply Definition \ref{tensor} in particular to the case, where $\mC$ is a  $\mV$-enriched $\infty$-precategory seen as $\infty$-precategory weakly enriched in $\mV$ to talk about $\mV$-enriched $\infty$-precategories tensored over $\mV.$

\begin{proposition}\label{pseu}
Under the equivalence of Corollary \ref{eqvvj} the following objects correspond:
\begin{enumerate}
\item $\infty$-categories enriched in an $\infty$-operad $\mV$ in the sense of Lurie and $\mV$-enriched $\infty$-categories in the sense of Gepner-Haugseng.
\item $\infty$-categories pseudo-enriched in a monoidal $\infty$-category $\mV$ and $\infty$-categories enriched in $\mP(\mV)$.
\item $\infty$-categories locally left tensored over an $\infty$-operad $\mV$ and $\infty$-categories weakly enriched in $\mV$ that are tensored over $\mV.$ 
\item $\infty$-categories left tensored over a monoidal $\infty$-category $\mV$ and $\mP(\mV)$-enriched $\infty$-categories that are tensored over $\mV$.
\item $\infty$-categories with closed left action of a monoidal $\infty$-category $\mV$ and $\mV$-enriched $\infty$-categories that are tensored over $\mV.$

\end{enumerate}
\end{proposition}
\begin{proof}
Let $\mM^\circledast \to \mV^\ot$ be a weakly left tensored $\infty$-category and
$\chi(\mM)$ the underlying $\infty$-category enriched in $\mP\Env(\mV).$
The graph of $\chi(\mM)$ is the functor $\mM^\simeq \times \mM^\simeq \to \mP\Env(\mV)$ sending $\X,\Y$ to the presheaf whose value at the family $\V_1,...,\V_\n \in \mV$ for $\n \geq 0$ is the space $\Mul_{\mM}(\V_1,...,\V_\n,\X;\Y).$

\noindent So $\mM^\circledast \to \mV^\ot$ exhibits $\mM$ as enriched in $\mV$
if and only if the graph of $\chi(\mM)$ lands in $\mV.$
This proves (1).

For (2) we prove that the functor $\mM^\circledast \to \mV^\ot$ exhibits $\mM$ as pseudo-enriched in $\mV$ if and only if the graph of $\chi(\mM)$ lands in $\mP(\mV).$
By Lemma \ref{looocx} for every monoidal $\infty$-category $\mV^\ot \to \Ass$ the embedding $\iota: \mV \subset \Env(\mV)$ admits a left adjoint $\L$. Hence the embedding $\iota_!: \mP(\mV) \subset \mP\Env(\mV)$ coincides with $\L^\ast.$
Now observe that $\mM^\circledast \to \mV^\ot$ exhibits $\mM$ as pseudo-enriched in $\mV$ if and only if the graph of $\mM$ factors through the essential image of $\L^\ast: \mP(\mV) \subset \mP\Env(\mV)$.

(3): Let $\V \in \mV$, $ \Y,\Y',\Z \in \mM$ and $\alpha \in \Mul_\mM(\V,\Y;\Y').$
The morphism (\ref{eqfff}) for $\mC=\chi(\mM)$ evaluated at $\V_1 \ot ... \ot \V_\n$
for $\n \geq 0$ and $\V_1,...,\V_\n \in \mV$ is the canonical map
$\Mul_\mM(\V_1,...,\V_\n, \Y';\Z) \to \Mul_\mM(\V_1,...,\V_\n, \V,\Y;\Z).$
This shows (3).

(4) follows from (2) and (3) since a weakly left tensored $\infty$-category is a left tensored $\infty$-category if and only if it is a pseudo-enriched $\infty$-category and locally left tensored $\infty$-category 
(Proposition \ref{lur}).
(5) follows from (1) and (4) because a weakly left tensored $\infty$-category $\mM^\circledast \to \mV^\ot$ endows $\mM$ with a closed left action if and only if it is a left tensored $\infty$-category and exhibits $\mM$ as $\mV$-enriched.

\end{proof}

\begin{notation}
	
Let $\P\LMod \subset \omega\LMod$ be the full subcategory of pseudo-enriched $\infty$-categories.	
Let $\Enr^\Lur \subset \omega\LMod$ be the full subcategory of enriched $\infty$-categories.
	
	
\end{notation}	

\begin{notation}
	
Let $\P\PreCat_\infty$ be the pullback of $\widehat{\PreCat}_\infty \to  \widehat{\Op}_\infty \times \widehat{\mS} $ along
the functors $\mP: \Mon \to \widehat{\Mon} \subset \widehat{\Op}_\infty$ and $\mS \subset \widehat{\mS} $.
	
\end{notation}

	
\begin{corollary}\label{eqvvjj}
The functor $\chi: \omega\LMod \to \omega\PreCat_\infty $
restricts to a fully faithful functor $$\Enr^\Lur \to \PreCat_\infty $$
that admits a left adjoint.
The local objects are the enriched $\infty$-categories in the sense of Gepner-Haugseng.
A $\mV$-enriched functor from an $\infty$-precategory enriched in an $\infty$-operad $\mV$ to a $\mV$-enriched $\infty$-category is a local equivalence if and only if it induces an essentially surjective map on spaces of objects and an equivalence on underlying graphs.

\end{corollary}

\begin{corollary}
The functor $\chi: \omega\LMod \to \omega\PreCat_\infty $
restricts to a fully faithful functor $\P\LMod\to \P\PreCat_\infty $
that admits a left adjoint.
The local objects are the pairs $(\mV^\ot \to \Ass, \mC)$, where $\mV^\ot \to \Ass$ is a monoidal $\infty$-category and $\mC$ is a $\mP(\mV)$-enriched $\infty$-category in the sense of Gepner-Haugseng.	
	
\end{corollary}

Recall from Notation \ref{coli} the monoidal $\infty$-category $(\Cat_\infty^{\rc\rc})^\ot \to \Ass$ and the full monoidal subcategory $ (\Pr^\L)^\ot \subset (\Cat_\infty^{\rc\rc})^\ot$ of presentable $\infty$-categories.
For any presentably monoidal $\infty$-category $\mV^\ot \to \Ass$ there is a canonical inclusion $ \LMod_\mV(\Cat_\infty^{\rc\rc}) \subset  \LMod_\mV(\widehat{\Cat}_\infty) \simeq \widehat{\LMod}_\mV \subset \omega\widehat{\LMod}_\mV$ whose image is the subcategory of $\infty$-categories left tensored over $\mV$ compatible with small colimits,
which are $\mV$-enriched by presentability of $\mV$, and $\mV$-linear functors preserving small colimits. So we obtain the following corollary:
\begin{corollary}\label{cortos}
Let $\mV^\ot \to \Ass$ be a presentably monoidal $\infty$-category.
There is a canonical inclusion $ \LMod_\mV(\Cat_\infty^{\rc\rc}) \hookrightarrow \widehat{\Cat}_\infty^{\mV}$ and so a canonical inclusion $ \LMod_\mV(\Pr^\L) \hookrightarrow \widehat{\Cat}_\infty^{\mV}$.
\end{corollary}

\vspace{1mm}

\begin{notation}

For any (not necessarily small) $\infty$-operad $\mV^\ot \to \Ass$ let $\Cat_\infty^{\mV, \Lur} \subset \omega\widehat{\LMod}_\mV$ be the full subcategory of $\mV$-enriched $\infty$-categories $\mM^\circledast \to \mV^\ot$ such that $\mM$ is small.

\end{notation}		

By Corollary \ref{eqvvjj} the functor $\chi: \Cat_\infty^{\mS, \Lur} \to \Cat_\infty^\mS$ is an equivalence. So by the next Lemma we obtain the following corollary:

\begin{corollary}
There is a canonical equivalence
$\Cat_\infty \simeq \Cat_\infty^\mS$ over $\mS$,
where $\Cat_\infty$ lies over $\mS$ via the right adjoint of the embedding $\mS \subset \Cat_\infty.$
	
\end{corollary}

\begin{lemma}\label{Caa}
The forgetful functor $	\Cat_\infty^{\mS, \Lur} \to \Cat_\infty$ is an equivalence
	
\end{lemma}

\begin{proof}
	
The forgetful functor $ \LMod_\mS(\Cat_\infty^{\rc\rc}) \to \Cat_\infty^{\rc\rc}$ is an equivalence since $\mS$ is the tensor unit.
By Remark \ref{modules} there is a canonical equivalence $ \LMod_\mS(\Cat_\infty^{\rc\rc}) \simeq \LMod_\mS^{\rc\rc}$ over $\Cat_\infty^{\rc\rc}$. So the forgetful functor $ \LMod_\mS^{\rc\rc} \to \Cat_\infty^{\rc\rc}$ is an equivalence.
Hence for any small $\infty$-category $\mC$ the $\infty$-category $\mP(\mC) $ is canonically left tensored over $\mS$ compatible with small colimts and the full subcategory $\bar{\mC} \subset \mP(\mC)$ spanned by the representable presheaves is a $\mS$-enriched $\infty$-category.
Since the forgetful functor $\Cat_\infty^{\mS, \Lur} \to \Cat_\infty$
is conservative, it will be enough to see that it admits a fully faithful left adjoint. 
We prove that for any small $\mS$-enriched $\infty$-category
$\mD^\circledast \to \mS^\times$ the functor $ \LaxLinFun_\mS(\bar{\mC}, \mD) \to \Fun(\mC, \mD)$	is an equivalence. The latter functor is the pullback of the functor $ \LaxLinFun_\mS(\bar{\mC}, \mP_\mS(\chi(\mD))) \to \Fun(\mC, \mP_\mS(\chi(\mD)))$.
Consider the commutative square:
\begin{equation*}
\begin{xy}
\xymatrix{
\LinFun_\mS^\L(\mP_\mS(\chi(\mC)), \mP_\mS(\chi(\mD)))  \ar[d]^{} \ar[rr]
&& \LaxLinFun_\mS(\bar{\mC}, \mP_\mS(\chi(\mD))) \ar[d]^{}
\\ \Fun^\L(\mP(\mC), \mP_\mS(\chi(\mD)))
\ar[rr]^{} && \Fun(\mC, \mP_\mS(\chi(\mD))).
}
\end{xy} 
\end{equation*}
The left vertical functor is induced by the equivalence $ \LMod_\mS^{\rc\rc} \to \Cat_\infty^{\rc\rc}$ and the small colimits preserving functor
$ \beta: \mP(\mC) \to \mP_\mS(\chi(\mC))$ extending the functor $\mC \to \mP_\mS(\chi(\mC))$
underlying the $\mS$-enriched Yoneda-embedding. 
The functor $\beta$ is an equivalence because $\mP_\mS(\chi(\mC))$ is generated by
the representable $\mS$-enriched presheaves under small colimits and every representable $\mS$-enriched presheaf corepresents a small colimits preserving functor $\mP_\mS(\chi(\mC)) \to \mS$ (Remark \ref{tau}).
By Corollary \ref{korjl} the top horizontal functor is an equivalence.
By \cite[Theorem 5.1.5.6.]{lurie.HTT} the bottom horizontal functor is an equivalence.
	
\end{proof}

\vspace{1mm}
Now we prepare the proof of Theorem \ref{trfd}: 
Since the functor $\chi$ is a map of cartesian fibrations over $\Op_{\infty}$ (Lemma \ref{leeem}), to see that $\chi$ is fully faithful and admits a left adjoint, it is enough to show that for any $\infty$-operad $\mV^\ot \to \Ass $ the induced functor $\chi_\mV: \omega\LMod_\mV \to \omega\PreCat^\mV_\infty$
is fully faithful and admits a left adjoint.
This follows from Theorems \ref{cooo} and \ref{werf}:

\begin{lemma}\label{leop}Let $\mC$ be an $\infty$-precategory weakly enriched in an $\infty$-operad $\mV$ with small space of objects $\X$.
Let $\mM^\circledast\to \mP\Env(\mV)^\ot,\mN^\circledast\to \mP\Env(\mV)^\ot$ be left tensored $\infty$-categories compatible with small colimits, $\tau: \Y \to \mM^\simeq $ a map and $\theta: \mM^\circledast \to \mN^\circledast$ a left adjoint $\mP\Env(\mV)$-linear functor that induces equivalences on morphism objects between objects in the essential image of $\tau.$	
	
\begin{enumerate}
\item The functor 
$$\kappa: \Fun(\X,\Y) \times_{\Fun(\X,\mM)} \LinFun^\L_{\mP\Env(\mV)}(\mP_{\mP\Env(\mV)}(\mC),\mM) \to $$$$\Fun(\X,\Y) \times_{\Fun(\X,\mN)} \LinFun^\L_{\mP\Env(\mV)}(\mP_{\mP\Env(\mV)}(\mC),\mN) $$
is an equivalence.

\item The functor 
$$\kappa': \Fun(\X,\Y) \times_{\Fun(\X,\mM)} \LinFun^\L_{\mP\Env(\mV)}(\mP\L\Env(\L(\mC)), \mM) \to$$$$  \Fun(\X,\Y) \times_{\Fun(\X,\mN)} \LinFun^\L_{\mP\Env(\mV)}(\mP\L\Env(\L(\mC)),\mN) $$
is an equivalence
\end{enumerate}	
\end{lemma}

\begin{proof}
	
(1): By Theorem \ref{tgre} the functor $\kappa $ is equivalent to the functor 
$$\Fun(\X,\Y) \times_{\Fun(\X,\mM)} \Fun^{\mP\Env(\mV)}(\mC,\mM) \to \Fun(\X,\Y) \times_{\Fun(\X,\mN)} \Fun^{\mP\Env(\mV)}(\mC,\mN),$$
which is equivalent to the functor 
$\Fun^{\mP\Env(\mV)}(\mC,\tau^\ast(\mM)) \to \Fun^{\mP\Env(\mV)}(\mC,\tau^\ast(\theta^\ast(\mN))).$
Thus $\kappa$ is an equivalence if the $\mP\Env(\mV)$-enriched functor
$\tau^\ast(\mM) \to \tau^\ast(\theta^\ast(\mN))$ is an equivalence.

The latter $\mP\Env(\mV)$-enriched functor induces an equivalence on spaces of objects and so is an equivalence if it induces an equivalence on underlying graphs. This is equivalent to ask that the $\mP\Env(\mV)$-linear functor
$\theta: \mM^\circledast \to \mN^\circledast$ induces equivalences on morphism objects between objects in the essential image of $\tau.$	

\vspace{1mm}
(2): Let $\mM^\circledast_{\mid\tau} \subset \mM^\circledast, \mN^\circledast_{\mid\theta \circ \tau} \subset \mN^\circledast$ be the full subcategories with weak left $\mV$-action spanned by the essential images of $\tau, \theta \circ \tau,$ respectively.
The $\mP\Env(\mV)$-linear functor $\theta: \mM^\circledast \to \mN^\circledast$ 
restricts to a lax $\mV$-linear functor $\mM_{\mid\tau}^\circledast \to \mN^\circledast_{\mid\theta \circ \tau}$ that is essentially surjective.
The functor $\kappa' $ is equivalent to the functor
$$\Fun(\X,\Y) \times_{\Fun(\X,\mM_{\mid\tau})} \LaxLinFun_{\mV}(\L(\mC), \mM_{\mid\tau}) \to \Fun(\X,\Y) \times_{\Fun(\X,\mN_{\mid\theta \circ \tau})} \LaxLinFun_{\mV}(\L(\mC),\mN_{\mid\theta \circ \tau}). $$
Thus $\kappa'$ is an equivalence if $\theta: \mM^\circledast \to \mN^\circledast$ induces equivalences on morphism objects between objects in the essential image of $\tau.$
	
\end{proof}

\begin{theorem}\label{cooo}
Let $\mC$ be an $\infty$-precategory weakly enriched in an $\infty$-operad $\mV$ with small space of objects $\X$.
The canonical embedding $\L(\mC)^\circledast \subset \mP_{\mP\Env(\mV)}(\mC)^\circledast$ 
induces a $\mP\Env(\mV)$-linear equivalence $$\zeta: \mP\L\Env(\L(\mC))^\circledast \to \mP_{\mP\Env(\mV)}(\mC)^\circledast. $$

\end{theorem}

\begin{proof}
We prove that $\zeta$ admits a section, which itself has a section. 
To produce this section we apply Lemma \ref{leop} (1), where we take $\theta$ to be $\zeta: \mP\L\Env(\L(\mC))^\circledast \to \mP_{\mP\Env(\mV)}(\mC)^\circledast $ and $\tau$ to be the composition $ \X \xrightarrow{\iota} \L(\mC)^\simeq \subset \mP\L\Env(\L(\mC))^\simeq $. 
Since the canonical $\mP\Env(\mV)$-enriched functor
$$\tau^\ast(\mP\L\Env(\L(\mC))) \simeq \mC \to \tau^\ast(\zeta^\ast(\mP_{\mP\Env(\mV)}(\mC))) \simeq \mC $$ is the identity, there is a $\mP\Env(\mV)$-linear left adjoint functor 
$\alpha: \mP_{\mP\Env(\mV)}(\mC)^\circledast \to \mP\L\Env(\L(\mC))^\circledast  $ that is a section of $\zeta$ and a commutative triangle
\begin{equation*} 
\begin{xy}
\xymatrix{
&\X \ar[rd] \ar[ld]
\\ \mP_{\mP\Env(\mV)}(\mC)^\simeq\ar[rr]^{\alpha^\simeq} && \mP\L\Env(\L(\mC))^\simeq.
}
\end{xy} 
\end{equation*}

We complete the proof by showing that the section $\alpha$ of $\zeta$ has itself a section.

By the existence of the last triangle $\alpha: \mP_{\mP\Env(\mV)}(\mC)^\circledast \xrightarrow{\alpha} \mP\L\Env(\L(\mC))^\circledast$ restricts to a lax $\mV$-linear functor
$\alpha': \L(\mC)^\circledast \to \L(\mC)^\circledast.$
Composing with $\zeta$ we obtain a commutative triangle
\begin{equation*} 
\begin{xy}
\xymatrix{
\L(\mC)^\circledast \ar[rd] \ar[rr]^{\alpha'} && \L(\mC)^\circledast \ar[ld] 
\\ & \mP_{\mP\Env(\mV)}(\mC)^\circledast,
}
\end{xy} 
\end{equation*}
where the vertical functors are the canonical embeddings.
Thus $\alpha'$ is an embedding so that $\alpha$ induces equivalences on morphism objects between objects in the essential image of the map $\X \to \mP_{\mP\Env(\mV)}(\mC)^\simeq.$ 

Taking $\theta$ to be $\alpha: \mP_{\mP\Env(\mV)}(\mC)^\circledast \to \mP\L\Env(\L(\mC))^\circledast$ and $\tau$ to be the map $\X \to \mP_{\mP\Env(\mV)}(\mC)^\simeq $ by Lemma \ref{leop} (2) we obtain a $\mP\Env(\mV)$-linear left adjoint functor
$\beta: \mP\L\Env(\L(\mC))^\circledast \to \mP_{\mP\Env(\mV)}(\mC)^\circledast $
that is a section of $\alpha.$
Hence $\zeta$ is an equivalence.

\end{proof}

Theorem \ref{cooo} and Remark \ref{tau} give the following corollary:

\begin{corollary}\label{kory}
Let $\mM^\circledast \to \mV^\ot$ be a weakly bitensored $\infty$-category and
$\X \in \mM$. The lax $\mP\Env(\mV)$-linear functor $\Mor_{\mP\L\Env(\mM)}(\X,-): \mP\L\Env(\mM) \to \mP\Env(\mV)$ right adjoint to the $\mP\Env(\mV)$-linear functor $(-)\ot \X: \mP\Env(\mV) \to \mP\L\Env(\mM)$
preserves small colimits and is $\mP\Env(\mV)$-linear.	
	
\end{corollary}

Theorems \ref{tgre} and \ref{cooo} give the following theorem:
\begin{theorem}\label{werf}
Let $\mV^\ot \to \Ass$ be an $\infty$-operad, $\mC$ an $\infty$-precategory weakly enriched in $\mV$ with small space of objects $\X$ and $\mM^\circledast \to \mV^\ot \times \mW^\ot$ an $\infty$-category with weak biaction.

\begin{enumerate}
\item There is a lax $\mW$-linear equivalence 
$$
\LaxLinFun_\mV(\L(\mC),\mM)^\circledast \simeq \Fun^\mV(\mC,\mM)^\circledast
$$
that for $\mM=\L(\mC)$ sends the identity to the $\mV$-enriched Yoneda-embedding $\mC \to \chi(\L(\mC))$.

\vspace{1mm}

\item If $\mC$ is a $\mV$-precategory, the equivalence of (1) is equivalent to the lax $\mW$-linear functor $$\Phi_\mM: \LaxLinFun_\mV(\L(\mC),\mM)^\circledast \to \Fun^\mV(\mC,\mM)^\circledast.$$
\end{enumerate}	

\end{theorem}

\begin{proof}
(1): By Theorem \ref{tgre} the lax $\mW$-linear functor 
$ \Psi: \LinFun^\L_{\mP\Env(\mV)}(\mP_{\mP\Env(\mV)}(\mC),\mP\L\Env(\mM))^\circledast \to $

$ \Fun^{\mP\Env(\mV)}(\mC,\mP\L\Env(\mM))^\circledast$
is an equivalence. By Theorem \ref{cooo} the lax $\mW$-linear functor
$$ \LinFun^\L_{\mP\Env(\mV)}(\mP_{\mP\Env(\mV)}(\mC),\mP\L\Env(\mM))^\circledast \to \LaxLinFun_\mV(\L(\mC),\mP\L\Env(\mM))^\circledast, $$ 
which we denoty by $\alpha$, is an equivalence.
The resulting lax $\mW$-linear equivalence
$$\Psi \circ \alpha^{-1} : \LaxLinFun_\mV(\L(\mC),\mP\L\Env(\mM))^\circledast \simeq \Fun^{\mP\Env(\mV)}(\mC,\mP\L\Env(\mM))^\circledast$$ restricts to an equivalence $\Theta: \LaxLinFun_\mV(\L(\mC),\mM)^\circledast \to \Fun^{\mV}(\mC,\mM)^\circledast$
that for $\mM=\L(\mC)$ sends the identity to the $\mV$-enriched Yoneda-embedding $\mC \to \chi(\L(\mC))$.
	
\vspace{1mm}

(2): We have constructed a lax $\mW$-linear functor $\Phi_\mM: \LaxLinFun_{\mV}(\L(\mC),\mM)^\circledast \to \Fun^\mV(\mC,\mM)^\circledast$
(Notation \ref{fuuu}).
Let $\kappa: \mV^\ot \subset \mP\Env(\mV)^\ot$ the canonical embedding.
There is a commutative diagram
\begin{equation*}\label{sqq}
\begin{xy}
\xymatrix{
\LaxLinFun_{\mV}(\L(\mC),\mM)^\circledast \ar[d]^{} \ar[rrr]^{\Phi_\mM,\Theta}
&&&\Fun^\mV(\mC,\mM)^\circledast \ar[d]^{}
\\ 
\LinFun^\L_{{\mP\Env(\mV)}}(\mP_{\mP\Env(\mV)}(\mC),\mP\L\Env(\mM))^\circledast \ar[d]^{} \ar[rrr]^{\Psi} &&&\Fun^{\mP\Env(\mV)}(\mC,\mP\L\Env(\mM))^\circledast\ar[d]^{=}
\\ 
\LaxLinFun_{{\mP\Env(\mV)}}(\L(\mC),\mP\L\Env(\mM))^\circledast \ar[d]^{} \ar[rrr]^{\Phi_{\mP\L\Env(\mM)}} &&&\Fun^{\mP\Env(\mV)}(\mC,\mP\L\Env(\mM))^\circledast\ar[d]^{\simeq}
\\ 
\LaxLinFun_{\mV}(\L(\mC),\mP\L\Env(\mM))^\circledast \ar[rrr]^{\Phi_{\kappa^*(\mP\L\Env(\mM))}} &&&\Fun^{\mV}(\mC,\kappa^*(\mP\L\Env(\mM)))^\circledast
}
\end{xy} 
\end{equation*} 
of $\infty$-categories with weak right $\mW$-action.
The middle square commutes by definition of $\Psi$, the bottom square by construction of
$\Phi$, the top square for $\Theta$ commutes by construction of $\Theta$ and
the top square for $\Phi_\mM$ commutes since the outer square commutes for $\Phi_\mM.$
Hence $\Theta=\Phi_\mM.$
 
\end{proof}

Now we are ready to prove Theorem \ref{trfd}.

\begin{proof}[Theorem \ref{trfd}]
By Lemma \ref{leeem} the functor  $\chi : \omega\LMod \to \omega\PreCat_\infty$ is a map of cartesian fibrations over $\Op_{\infty}$, which induces on the fiber over any $\infty$-operad $\mV^\ot \to \Ass $ a right adjoint functor $\chi_\mV: \omega\LMod_\mV \to \omega\PreCat^\mV_\infty$ (Theorem $\ref{werf}$).
Thus $\chi$ admits a left adjoint relative to $\Op_\infty$ (see \ref{rell}).

We will prove that $\chi$ lands in the full subcategory of enriched $\infty$-categories.
This will imply that a weakly enriched $\infty$-precategory $\mC$ is a weakly enriched $\infty$-category if and only if the unit $\mC \to \chi(\L(\mC))$ is an equivalence.
The non-trivial direction follows from Remark \ref{compl} and the fact that the unit induces an essentially surjective map on spaces of objects and an equivalence on underlying graphs. Especially for any weakly left tensored $\infty$-category $\mM^\circledast \to \mV^\ot$ the unit $\chi(\mM) \to \chi(\L(\chi(\mM)))$ at $\chi(\mM)$
is an equivalence. As $\chi$ is conservative, by the triangle identities
the counit $\L(\chi(\mM)) \to \mM$ is an equivalence, too. So $\chi$ is fully faithful.

So we prove that $\chi$ lands in enriched $\infty$-categories.
Since $\chi$ preserves cartesian morphisms over $\Op_\infty$, any map of $\infty$-operads $\alpha: \emptyset^\ot \to \mV^\ot$ gives rise to a commutative square:
\begin{equation*} 
\begin{xy}
\xymatrix{
\omega\LMod_\mV \ar[d]^{\alpha^*} \ar[rr]^{\chi_\mV}
&&\omega\PreCat^\mV \ar[d]^{\alpha^*} 
\\  \omega\LMod_\emptyset \simeq \Cat_\infty \ar[rr]^{\chi_\emptyset}  && \omega\PreCat^\emptyset = \PreCat_\infty^\mS.
}
\end{xy} 
\end{equation*}

So we can reduce to $\mV^\ot=\emptyset^\ot$. In this case we need to see that for any $\infty$-category $\mM$ the image $\chi(\mM)$ is an $\mS$-enriched $\infty$-category.
By adjointness the map 
$ \PreCat_\infty^\mS(\ast, \chi(\mM)) \to \PreCat_\infty^\mS(\mJ, \chi(\mM))$ is equivalent to the map
$ \Cat_\infty(\L(\ast), \mM) \to \Cat_\infty(\L(\mJ), \mM).$
So we need to see that the functor $\L(\mJ) \to \L(\ast)$ is an equivalence.
As a cartesian lift of an essentially surjective map of spaces $\mJ \to \ast$ induces an essentially surjective map on spaces of objects and an equivalence on underlying graphs like the units at $\mJ$ and $*$ do.
So also $\chi(\L(\mJ)) \to \chi(\L(*))$ does. As $\chi$ preserves the space of objects and underlying graph, the functor $\L(\mJ) \to \L(*)$ is essentially surjective and fully faithful and so an equivalence.

The characterization of local equivalences follows from the fact that
an enriched functor whose target is local is inverted by $\L$ if and only if
its image under $\chi \circ \L$ induces an essentially surjective map on spaces of objects and an equivalence on underlying graphs (since $\chi$ preserves both).

\end{proof}

\begin{corollary}\label{korjl}
Let $\mV^\ot \to \Ass$ be a monoidal $\infty$-category compatible with small colimits, 
$\mW^\ot \to \Ass$ a monoidal $\infty$-category, $\mM^\circledast \to \mV^\ot$ a small
$\mV$-enriched $\infty$-category and $\mN^\circledast \to \mV^\ot \times \mW^\ot$ a bitensored $\infty$-category whose underlying left tensored
$\infty$-category is compatible with small colimits.
	
The canonical lax $\mV$-linear functor
$\mM^\circledast \simeq \L(\chi(\mM))^\circledast \subset \mP_\mV(\chi(\mM))^\circledast $
induces a $\mW$-linear equivalence
\begin{equation*}
\LinFun^\L_{\mV}(\mP_\mV(\chi(\mM)),\mN) \to \LaxLinFun_\mV(\mM,\mN).
\end{equation*}
	
\end{corollary}

\begin{proof}
	
By Theorem \ref{tgre} the $\mW$-linear functor
\begin{equation*}
\Psi: \LinFun^\L_{\mV}(\mP_\mV(\chi(\mM)),\mN) \to \LaxLinFun_\mV(\L(\chi(\mM)),\mN) \xrightarrow{\Phi} \Fun^\mV(\chi(\mM),\mN)
\end{equation*}
is an equivalence.
By Theorem \ref{werf} $\Phi$ is an equivalence.
By Theorem \ref{trfd} there is a canonical equivalence $\L(\chi(\mM))^\circledast \simeq \mM^\circledast$ of $\infty$-categories with weak left $\mV$-action.
	
\end{proof}

For later reference we also add the following proposition:

\begin{proposition}\label{EmB}

Let $\mN^\circledast \to \mV^\ot$ be a presentably left tensored $\infty$-category,
$\mM^\circledast \to \mV^\ot$ an enriched $\infty$-category such that $\mM$ is small and $\theta: \mP_\mV(\chi(\mM))^\circledast \to \mN^\circledast$ a $\mV$-linear small colimits preserving functor.

\begin{enumerate}
\item The functor $\theta$ is an embedding if the lax $\mV$-linear functor $\mM^\circledast \subset \mP_\mV(\chi(\mM))^\circledast \xrightarrow{\theta} \mN^\circledast$ is an embedding and for every $\X\in \mM$ the lax $\mV$-linear functor $\Mor_\mN(\theta(\X),-): \mN \to \mV$ preserves small colimits and is $\mV$-linear.

\item The functor $\theta$ is an equivalence if and only if for every $\X\in \mM$ the lax $\mV$-linear functor $\Mor_\mN(\theta(\X),-): \mN \to \mV$ preserves small colimits and is $\mV$-linear and the essential image of the functor $\mM \to \mP_\mV(\chi(\mM)) \xrightarrow{\theta} \mN$ generates $\mN$ under small colimits and the left $\mV$-action.
\end{enumerate}

\end{proposition}

\begin{proof}

(1) We first prove that for any $\A, \B \in \mP_\mV(\chi(\mM))$ the induced morphism
$$\rho: \Mor_{\mP_\mV(\chi(\mM))}(\A,\B) \to \Mor_\mN(\theta(\A), \theta(\B))$$ in $\mV$ is an equivalence. The $\infty$-category $\mP_\mV(\chi(\mM))$ is generated
by $\mM$ under small colimits and the left $\mV$-action.
Since $\mP_\mV(\chi(\mM)), \mN$ are cotensored over $\mV$ and admit small limits
and $\theta$ is $\mV$-linear and preserves small colimits, we can assume that $\A$ is representable.
By Remark \ref{tau} for every $\X \in \mM$ the lax $\mV$-linear functor $\Mor_{\mP_\mV(\chi(\mM))}(\X,-): \mP_\mV(\chi(\mM)) \to \mV$ right adjoint to the $\mV$-linear functor $(-)\ot \X: \mV \to \mP_\mV(\chi(\mM))$ preserves small colimits and is $\mV$-linear.
The same holds by assumption for the essential image of $\mM$ under $\theta$.
Since $\mP_\mV(\chi(\mM))$ is generated by $\mM$ under small colimits and the left $\mV$-action, and $\theta$ is $\mV$-linear and preserves small colimits, we can assume that $\A, \B \in \mM.$ In this case $\rho$ is an equivalence by assumption. Thus $\theta$ is an embedding.

(2) follows from (1), Remark \ref{tau} and the fact that the essential image of $\theta$ is closed in $\mN$ under small colimits and the left $\mV$-action.

\end{proof}

\begin{corollary}\label{kojj}

Let $\mN^\circledast \to \mV^\ot$ be a presentably left tensored $\infty$-category,
$\A$ an associative algebra in $\mV$ and $\theta: \RMod_\A(\mV)^\circledast \to \mN^\circledast$ a $\mV$-linear small colimits preserving functor.
The functor $\theta$ is an equivalence if and only if the lax $\mV$-linear functor $\Mor_\mN(\theta(\A),-): \mN \to \mV$ preserves small colimits and is $\mV$-linear and $\theta(\A) $ generates $\mN$ under small colimits and the left $\mV$-action.

\end{corollary}

\begin{remark}

In the latter corollary one can replace small colimits by geometric realizations everywhere because $ \RMod_\A(\mV)$ is generated by $\A$ under geometric realizations and the left $\mV$-action.
\end{remark}

\vspace{2mm}

Ayala-Francis \cite[Definition 0.12.]{MR3869643},\cite[Definition 0.2]{MR4074276} introduce a model for $\mS$-precategories called 
flagged $\infty$-categories. 
A flagged $\infty$-category consists of an $\infty$-category $\mC$ equipped with an essentially surjective map of spaces $\X \to \mC^\simeq $.
They prove \cite[Theorem 0.4]{MR4074276} that the $\infty$-category of small flagged $\infty$-categories
defined by the pullback $$ \Fun([1],\mS)^\mathrm{surj} \times_{\Fun(\{1\},\mS)} \Cat_\infty, $$ where $\Fun([1],\mS)^\mathrm{surj} \subset \Fun([1],\mS)$ is the full subcategory of essentially surjective maps,
is equivalent to the $\infty$-category of Segal spaces, which by \cite[Theorem 4.4.6.]{GEPNER2015575} is equivalent to $\PreCat_\infty^\mS$.

In analogy we make the following definition:
\begin{definition}
A flagged $\infty$-category with weak left action 
is an $\infty$-category with weak left action $\mM^\circledast \to \mV^\ot$
equipped with an essentially surjective map of spaces $\X \to \mM^\simeq$.
\end{definition}
The $\infty$-category of small flagged $\infty$-categories with weak left action is the pullback $$\omega\LMod^\fl := \Fun([1],\mS)^\mathrm{surj} \times_{\Fun(\{1\},\mS)} \omega\LMod. $$ There are forgetful functors $\omega\LMod^\fl \to \omega\LMod \to \Op_\infty, \omega\LMod^\fl \to \Fun([1],\mS)^\mathrm{surj} \to \Fun(\{0\},\mS).$
\begin{corollary}

There is a canonical equivalence
$ \omega\PreCat_\infty \simeq \omega\LMod^\fl$ over $\mS \times \Op_\infty.$

\end{corollary}

\begin{proof}

In view of the canonical equivalence
$\omega\LMod \simeq \omega\Cat_\infty$ over $\mS \times \Op_{\infty}$ of Theorem \ref{trfd}, it will be enough to
construct a canonical equivalence $$\gamma: \omega\PreCat_\infty \simeq \Fun([1],\mS)^\mathrm{surj} \times_{\Fun(\{1\},\mS)} \omega\Cat_\infty.$$
over $\Fun(\{0\},\mS) \times \Op_\infty.$
As $\phi: \omega\PreCat_\infty \to \mS$ is a cartesian fibration, by \ref{cocartt} evaluation at the target
$$\rho: \Fun([1], \omega\PreCat_\infty)^\cart \to \Fun([1],\mS) \times_{\Fun(\{1\},\mS)} \Fun(\{1\},\omega\PreCat_\infty) $$
is an equivalence, where $ \Fun([1], \omega\PreCat_\infty)^\cart \subset \Fun([1], \omega\PreCat_\infty)$ is the full subcategory of $\phi$-cartesian morphisms.
The equivalence $\rho$ restricts to an equivalence 
$$\Fun([1], \omega\PreCat_\infty)' \simeq \Fun([1],\mS)^\mathrm{surj} \times_{\Fun(\{1\},\mS)} \Fun(\{1\},\omega\Cat_\infty), $$
where the left hand side is the full subcategory of $\phi$-cartesian morphisms $\mC \to \mD$ that induce an essentially surjective map on spaces of objects and such that $\mD \in \omega\Cat_\infty,$
which by Theorem \ref{trfd} is precisely the full subcategory of local equivalences with local target.
Thus by Corollary \ref{cocccq} evaluation at the source restricts to an 
equivalence $\Fun([1],\omega\PreCat_\infty)' \simeq \omega\PreCat_\infty$ so that we obtain $\gamma$. The functor $\gamma$ is an equivalence over $\Op_\infty$ since the functors $\Fun([1], \omega\PreCat_\infty)^\cart \to \Fun(\{\bi\}, \omega\PreCat_\infty) \to \Op_\infty$ for $\bi=0,1$ are both equivalent to the functor
$\Fun([1], \omega\PreCat_\infty)^\cart \to \Fun([1], \Op_\infty)' \simeq 
\Op_\infty$, where $\Fun([1], \Op_\infty)' $ is the full subcategory of equivalences. 

\end{proof}

\begin{lemma}\label{cocartt}
For any cocartesian fibration $\phi: \mC \to \mD$ the induced functor
$$\rho_\phi: \Fun([1],\mC)^\cocart \to \Fun(\{0\},\mC) \times_{ \Fun(\{0\},\mD) } \Fun([1],\mD)$$ is an equivalence, where the left hand side is the full subcategory of $\phi$-cocartesian morphisms. 
\end{lemma}

\begin{proof}
For any $\infty$-category $\K$ the functor $\Fun(\K,\phi): \Fun(\K,\mC) \to \Fun(\K,\mD)$ is a cocartesian fibration
whose cocartesian morphisms are object-wise $\phi$-cocartesian. So $\Fun(\K,\rho_\phi)$ is equivalent to $\rho_{\Fun(\K,\phi)}$.
Thus it is enough to see that $\rho_\phi$ induces a bijection on equivalence classes.
Since $\phi$ is a cocartesian fibration, $\rho_\phi$ is essentially surjective.
By the defining property of $\phi$-cocartesian morphism for any
$\A, \B \in  \Fun([1],\mC)^\cocart$ and $\alpha: \rho_\phi(\A) \to \rho_\phi(\B)$ there is a morphism $\A \to \B$ lying over $\alpha$ that is an equivalence if
$\alpha$ is. So $\rho_\phi$ induces an injection on equivalence classes.
\end{proof}

\begin{corollary}\label{cocccq}

Let $\phi: \mM \to [1]$ be a bicartesian fibration classifying an adjunction $\mC \rightleftarrows \mD$ with fully faithful right adjoint.
The functor
$\rho_\phi: \Fun([1],\mM)^\cocart \to \Fun(\{0\},\mM) \times_{ \Fun(\{0\},[1]) } \Fun([1],[1])$ induces on the fiber over the identity of $[1]$
the equivalence 
$$\mD \times_{\Fun(\{1\},\mC)} \Fun([1],\mC)' \simeq \Fun_{[1]}([1],\mM)^\cocart \to \mC $$
evaluating at the source, where $\Fun([1],\mC)'\subset \Fun([1],\mC)$ is the full subcategory of local equivalences.	

\end{corollary}

\vspace{2mm}

\section{A monoidal equivalence}
\label{moneq}
By Corollary \ref{cortos} for any presentably monoidal $\infty$-category $\mV^\ot \to \Ass$ there is a canonical inclusion 
\begin{equation}\label{Eqt}
\LMod_\mV(\Pr^\L) \hookrightarrow \widehat{\Cat}_\infty^{\mV}.
\end{equation} 
In this section we prove that the inclusion (\ref{Eqt}) is lax $\bE_{\n}$-monoidal with respect to the relative tensor product
of $\mV$-modules in $\Pr^\L $, the $\infty$-category of presentable $\infty$-categories, and the tensor product of $\mV$-categories
constructed by Gepner-Haugseng \cite[Proposition 4.3.10.]{GEPNER2015575}
if $\mV$ is an $\bE_{\n+1}$-monoidal $\infty$-category for $1 \leq \n\leq \infty.$
To define $\bE_{\n}$-monoidal $\infty$-categories
we use the notion of symmetric $\infty$-operad \cite[Definition 2.1.1.10.]{lurie.higheralgebra}, the symmetric counterpart of $\infty$-operad in the sense of Definition \ref{opap}.
More generally, we consider $\mO$-monoidal $\infty$-categories for any symmetric
$\infty$-operad $\mO$ and prove that the inclusion \ref{Eqt}
is lax $\mO$-monoidal when $\mV$ is an $\mO$-algebra in $\Alg(\Pr^\L)$, the $\infty$-category of presentably monoidal $\infty$-categories (Theorem \ref{corrr}).
 
We start with the definition of symmetric $\infty$-operads:

\begin{notation}
Let $\Fin_\ast$ be the category of pointed finite sets. We denote pointed finite sets as $\langle \n \rangle:= \{ \ast, 1, ..., \n\}$ for $\n \geq 0$, where $\ast$ is the base point.
We call a map $\theta$ of pointed finite sets $\langle \n \rangle \to \langle \m \rangle$ 
\begin{itemize}
\item inert if for every $1 \leq \bi \leq \m$ the fiber of $\theta$ over $\bi$ consists precisely of one element.
\item standard inert if $\m=1$.
\end{itemize}
For every $\n \geq 0$ there are $\n$-many standard inert morphisms $\langle \n \rangle \to \langle 1 \rangle$, where the $\bi$-th standard inert morphism $ \langle \n \rangle \to \langle 1 \rangle$ for $1 \leq \bi \leq \n$ sends $\bi$ to 1.
\end{notation}
\begin{definition}
A symmetric $\infty$-operad is a cocartesian fibration $\phi: \mO^\ot \to \Fin_*$ relative to the collection of inert morphisms such that the following conditions are satisfied:
\begin{enumerate}
\item for any $\langle \n \rangle \in \Fin_*$ 
the standard inert morphisms $\langle \n \rangle \to \langle 1 \rangle $ for $1 \leq \bi \leq \n$ induce an equivalence $\mO^\ot_{\langle \n \rangle} \to \mO^{\times \n}$, where $\mO:= \mO^\ot_{\langle 1 \rangle} $, and $\mO^\ot_{\langle 0 \rangle}$ is contractible.
\vspace{1mm}
\item for any $\Y \in \mO^\ot$ lying over $\langle\n\rangle$ and $\Z \in \mO^\ot$ lying over $\langle\m\rangle$ the $\phi$-cocartesian lifts $\Y \to \Y_{\bi}$ of the standard inert morphisms induce an equivalence
$$ \mO^\ot(\Z,\Y)\to \Fin_*(\langle \m \rangle,\langle \n \rangle) \times_{\prod_{\bi=1}^\n \Fin_*(\langle \m \rangle,\langle 1 \rangle)} \prod_{\bi=1}^\n \mO^\ot(\Z,\Y_{\bi}).$$
\end{enumerate}

\end{definition}

For any symmetric $\infty$-operad $\mO^\ot \to \Fin_*$ we call
$\mO:= \mO^\ot_{\langle 1 \rangle}$ the underlying $\infty$-category.

We also consider maps of symmetric $\infty$-operads:
\begin{definition}
Let $\mO^\ot \to \Fin_*, \mO'^\ot \to \Fin_*$ be symmetric $\infty$-operads.

\begin{itemize}
\item A map of symmetric $\infty$-operads $\phi: \mO'^\ot \to \mO^\ot$ or $\mO'$-algebra in $\mO$
is a map of cocartesian fibrations $\mO'^\ot \to \mO^\ot$ relative to the collection of inert morphisms of $\Fin_*$.	

\item An $\mO$-monoidal $\infty$-category is a map of symmetric $\infty$-operads $\phi: \mO'^\ot \to \mO^\ot$ that is a cocartesian fibration. For $\mO^\ot = \Fin_*$ we call  $\mO$-monoidal $\infty$-categories symmetric monoidal $\infty$-categories. 
\end{itemize}
Let $ \mO'^\ot \to \mO^\ot, \mO''^\ot \to \mO^\ot$ be $\mO$-monoidal $\infty$-categories.

\begin{itemize}

\item A lax $\mO$-monoidal functor $ \mO'^\ot \to \mO''^\ot$ is a map of symmetric $\infty$-operads $ \mO'^\ot \to \mO''^\ot$ over $\mO^\ot.$

\item An $\mO$-monoidal functor $ \mO'^\ot \to \mO''^\ot$ is a map of cocartesian fibrations over $\mO^\ot.$
\end{itemize}

\end{definition}

\begin{notation}
Let $ \mO'^\ot \to \mO^\ot, \mO''^\ot \to \mO^\ot$ be maps of symmetric $\infty$-operads.
Let $$\Alg_{\mO'/\mO}(\mO'') \subset \Fun_{\mO^\ot}(\mO'^\ot, \mO''^\ot)$$
be the full subcategory of maps of symmetric $\infty$-operads over $\mO^\ot \to \Fin_*.$
For $\mO^\ot =\Fin_*$ we set $$\Alg_{\mO'}(\mO''):=\Alg_{\mO'/\mO}(\mO'').$$
\end{notation}

\begin{remark} By \cite[Definition 5.1.0.4., Theorem 5.1.2.2]{lurie.higheralgebra} for every $\n \geq 0$ there are symmetric $\infty$-operads $\bE_\n \to \Fin_*$
and canonical maps of symmetric $\infty$-operads
$\bE_{\n} \to \bE_{\n+1}$ for $\n \geq 0$ such that the canonical map
$\bE_\infty:=\colim_{\n\geq 0} (\bE_{\n}) \to \Fin_*$ is an equivalence.
For any $\n, \m \geq 0$ and symmetric monoidal $\infty$-category
$\mC^\ot \to \Fin_*$ there is a canonical equivalence $$\Alg_{\bE_{\n+\m}}(\mC) \simeq \Alg_{\bE_{\n}}(\Alg_{\bE_\m}(\mC)).$$

Note that $\bE_1$-algebras are by definition associative algebras and we set $\Alg(\mC):= \Alg_{\bE_1}(\mC).$

\end{remark}

\begin{definition}
We call a symmetric monoidal $\infty$-category $\mO^\ot \to \Fin_*$ cartesian 
if the tensor unit $\tu$ is a final object of $\mO$ and for any
$\X,\Y \in \mO$ the canonical maps $\X \ot \Y \to \X \ot \tu \simeq \X, \X \ot \Y \to \tu \ot \Y \simeq \Y$ exhibit $\X \ot \Y$ as the product of $\X$ and $\Y$ in $\mO.$
\end{definition}

\begin{definition} Let $\mD$ be an $\infty$-category with finite products and $\phi: \mO^\ot \to \Fin_*$ a symmetric $\infty$-operad. We call a functor $\alpha: \mO^\ot \to \mD$ an $\mO$-monoid in $\mD$ if for every $\X \in \mO^\ot$ lying over $\langle \n \rangle \in \Fin_*$ the $\phi$-cocartesian lifts $\X \to \X_{\bi} $ of the standard inert morphisms induce an equivalence $\alpha(\X) \to \prod_{\bi=1}^\n \alpha(\X_{\bi}).$ 
\end{definition}\begin{example}
Note that $\mO$-monoids in $\Cat_\infty$ are classified by $\mO$-monoidal $\infty$-categories.
\end{example}
\begin{notation}Let $\Mon_{\mO}(\mD) \subset \Fun(\mO^\ot,\mD) $ be the full subcategory of $\mO$-monoids and set $\Cmon(\mD):= \Mon_{\Fin_*}(\mD).$ \end{notation}
By \cite[Proposition 2.4.1.7.]{lurie.higheralgebra} for any $\infty$-category $\mD$ with finite products there is a cartesian symmetric monoidal $\infty$-category $\mD^\times \to \Fin_*$, where  $\mD^\times_{\langle 1 \rangle} \simeq \mD,$
and a $\mD^\times$-monoid in $ \mD$ such that for any symmetric $\infty$-operad $\mO^\ot \to \Fin_*$ the functor $ \Alg_\mO(\mD) \to \Mon_{\mO}(\mD)$ is an equivalence.

\vspace{2mm}
Next we recall the definition of tensor product of enriched $\infty$-precategories.
By \cite[Proposition 3.6.17.]{GEPNER2015575} the $\infty$-category $\PreCat_\infty $ carries a symmetric monoidal structure that provides the outer tensor product of enriched $\infty$-precategories.

The outer tensor product of two $\infty$-precategories $\mC: \Ass_\X \to \mV^\ot, \mD:\Ass_\Y \to \mW^\ot$ is the $\infty$-precategory
$\mC \boxtimes \mD: \Ass_{\X \times \Y} \simeq \Ass_\X \times_\Ass \Ass_\Y \to \mV^\ot \times_\Ass \mW^\ot$
enriched in $\mV \times \mW$ with space of objects $\X \times \Y$.
The tensor unit is the $\infty$-precategory $\id: \Ass \to \Ass$ enriched in the final
$\infty$-operad with contractible space of objects, which is the final object of $\PreCat_\infty.$ We make the following observation:
\begin{lemma}\label{obsa}
Let $\mC,\mD$ be $\infty$-precategories enriched in $\infty$-operads $\mV^\ot \to \Ass,\mW^\ot \to \Ass$, respectively, with space of objects $\X,\Y$, respectively.
The canonical maps $\mC \boxtimes \mD \to \mC \boxtimes \tu \simeq \mC, \ \mC \boxtimes \mD \to \tu \boxtimes \mD \simeq \mD$ in $\PreCat_\infty$ exhibit $\mC \boxtimes \mD$ as the product of $\mC, \mD$ in $\PreCat_\infty$.
\end{lemma}
\begin{proof}
For any $\infty$-precategory $\mB: \Ass_\Z \to \mU^\ot$ the map $\PreCat_\infty(\mB, \mC \boxtimes \mD) \to \PreCat_\infty(\mB, \mC) \times \PreCat_\infty(\mB, \mD)$ induces on the fiber over any map of spaces $\alpha= (\alpha_1,\alpha_2): \Z \to \X \times \Y$ and
map of $\infty$-operads $\varphi=(\varphi_1,\varphi_2): \mU^\ot \to \mV^\ot \times_\Ass \mW^\ot$ the equivalence $$\Alg_{\Ass_\Z}(\mV \times \mW)(\varphi_!(\mB), \alpha^*(\mC \boxtimes \mD)) \to \Alg_{\Ass_\Z}(\mV)((\varphi_1)_!(\mB), \alpha_1^*(\mC)) \times \Alg_{\Ass_\Z}(\mW)((\varphi_2)_!(\mB), \alpha_2^*(\mD)).$$
\end{proof}

By Lemma \ref{obsa} the $\infty$-category $\PreCat_\infty$ admits finite products,
which are preserved by the forgetful functor $\PreCat_\infty \to \mS \times \Op_\infty$,
and the cartesian structure $\PreCat_\infty^\times \to \Fin_*$ provides the outer tensor product of enriched $\infty$-precategories.

\begin{remark}\label{Reyolk}
The induced symmetric monoidal functor $\PreCat_\infty^\times \to \Op_\infty^\times$ on cartesian structures is a cocartesian fibration.
This follows by \cite[Lemma A.1.8.]{haugseng_melani_safronov_2020} since the functor $\kappa: \PreCat_\infty \to \Op_\infty$ is a cocartesian fibration (Remark \ref{abc}) and the $\kappa$-cocartesian morphisms are closed under finite products:
a morphism $\mC \to \mD$ in $\PreCat_\infty$ lying over a map of 
$\infty$-operads $\alpha: \mV^\ot \to \mW^\ot$ and a map of spaces $\tau: \X \to \Y$ is $\kappa$-cocartesian if and only if $\tau$ is an equivalence and for any 
$\A,\B \in \X$ the induced morphism $\alpha(\mC(\A,\B)) \to \mD(\tau(\A),\tau(\B))$
in $\mW$ is an equivalence. 

Similarly, the induced symmetric monoidal functor $\P\PreCat_\infty^\times \to \Mon^\times$ on cartesian structures is a cocartesian fibration since the functor $\rho: \P\PreCat_\infty \to \Mon$ is a cocartesian fibration (Remark \ref{abc}) and the $\rho$-cocartesian morphisms are closed under finite products:
a morphism $\mC \to \mD$ in $\P\PreCat_\infty$ lying over a map of 
monoidal $\infty$-categories $\alpha: \mV^\ot \to \mW^\ot$ and a map of spaces $\tau: \X \to \Y$ is $\rho$-cocartesian if and only if $\tau$ is an equivalence and for any $\A,\B \in \X$ the induced morphism $\alpha_!(\mC(\A,\B)) \to \mD(\tau(\A),\tau(\B))$ in $\mP(\mW)$ is an equivalence. 
\end{remark}
 
\begin{notation}\label{tenss}
	
For any symmetric $\infty$-operad $\mO^\ot \to \Fin_*$ and $\mO$-algebra $\mV$ in $\Op_\infty^\times $
we write $$ (\PreCat^\mV_\infty)^\ot:= \mO^\ot \times_{\Op_\infty^\times} \PreCat_\infty^\times \to \mO^\ot$$ for the pullback along $\mV. $
We write $ (\Cat_\infty^\mV)^\ot \subset (\PreCat^\mV_\infty)^\ot$ for the full suboperad spanned by the objects that belong to $ \Cat_\infty^{\mV(\X)} \subset \PreCat_\infty^{\mV(\X)}$ for some $\X \in \mO.$
\end{notation}

Since the functor $\PreCat_\infty^\times \to \Op_\infty^\times$ is a cocartesian fibration, $(\PreCat^\mV_\infty)^\ot \to \mO^\ot$ is a cocartesian fibration that provides the tensor product of $\mV$-precategories.
Theorem \ref{trfd} implies that the embedding $(\Cat^\mV_\infty)^\ot \subset (\PreCat^\mV_\infty)^\ot$ admits a left adjoint relative to $\mO^\ot$
using the description of local equivalences with local target.
By \cite[Lemma 2.2.4.11.]{lurie.higheralgebra} this implies that the restriction $ (\Cat_\infty^\mV)^\ot \to \mO^\ot$ is a cocartesian fibration providing the tensor product of $\mV$-categories.

\begin{notation}

Let $\P\PreCat_\infty^\ot$ be the pullback of $\widehat{\PreCat}_\infty^\times \to  \widehat{\Op}_\infty^\times \times_{\Fin_*} \widehat{\mS}^\times $ along
the functors $\mP: \Mon^\times \to \widehat{\Mon}^\times \subset \widehat{\Op}_\infty^\times$ and $\mS^\times \subset \widehat{\mS}^\times.$

\end{notation}

\begin{lemma}\label{obsay}
The functor $\P\PreCat_\infty^\ot \to \Fin_*$ exhibits $\P\PreCat_\infty$
as the cartesian symmetric monoidal structure.

\end{lemma}
\begin{proof}
The tensor unit of $\P\PreCat_\infty^\ot \to \Fin_*$
lies over the final $\infty$-operad and is the tensor unit of the fiber
$\Fin_* \times_{\Mon^\times} \P\PreCat_\infty^\ot \simeq (\PreCat_\infty^\mS)^\ot \simeq (\PreCat_\infty^\mS)^\times,$ which is the final $\infty$-precategory enriched in spaces.	

Let $\mV^\ot \to \Ass,\mW^\ot \to \Ass$ be small monoidal $\infty$-categories
and $\mC,\mD$ be $\infty$-precategories enriched in $\mP(\mV)^\ot \to \Ass, \mP(\mW)^\ot \to \Ass$, respectively, with space of objects $\X,\Y$, respectively.
Let $\mC*\mD$ be the image of $\mC, \mD$ under the
functor $\PreCat_\infty^{\mP(\mV)} \times \PreCat_\infty^{\mP(\mW)}
\to \PreCat_\infty^{\mP(\mV) \times \mP(\mW)} \to \PreCat_\infty^{\mP(\mV  \times \mW)}.$
The latter functor is induced by the monoidal functor
$\mP(\mV) \times \mP(\mW) \to \mP(\mV  \times \mW)$ that sends 
$\A, \B$ to $\L, \T \mapsto \A(\L) \times \B(\T)$, and is left adjoint to the functor $\mP(\mV  \times \mW) \to \mP(\mV) \times \mP(\mW)$ induced by the functors $\mV \times \mW \to \mV, \mV \times \mW \to \mW.$

We will prove that the canonical maps $\mC * \mD \to \mC * \tu \simeq \mC, \ \mC * \mD \to \tu * \mD \simeq \mD$ in $\PreCat_\infty$ exhibit $\mC * \mD$ as the product of $\mC, \mD$ in $\P\PreCat_\infty$.
For any monoidal $\infty$-category $\mU^\ot\to \Ass$ and $\infty$-precategory $\mB: \Ass_\Z \to \mP(\mU)^\ot$ the map $\P\PreCat_\infty(\mB, \mC * \mD) \to \P\PreCat_\infty(\mB, \mC) \times \P\PreCat_\infty(\mB, \mD)$ induces on the fiber over any map of spaces $\alpha= (\alpha_1,\alpha_2): \Z \to \X \times \Y$ and
map of monoidal $\infty$-categories $\varphi=(\varphi_1,\varphi_2): \mU^\ot \to \mV^\ot \times_\Ass \mW^\ot$ the equivalence $$\Alg_{\Ass_\Z}(\mP(\mV \times \mW))(\varphi_!(\mB), \alpha^*(\mC * \mD))  \simeq \Alg_{\Ass_\Z}(\mP(\mV) \times \mP(\mW))(((\varphi_1)_!(\mB), (\varphi_2)_!(\mB)), \alpha^*(\mC \boxtimes \mD)) \to$$$$ \Alg_{\Ass_\Z}(\mP(\mV))((\varphi_1)_!(\mB), \alpha_1^*(\mC)) \times \Alg_{\Ass_\Z}(\mP(\mW))((\varphi_2)_!(\mB), \alpha_2^*(\mD)).$$

\end{proof}

\vspace{2mm}
Next we explain how to construct the relative tensor product of left tensored $\infty$-categories.
By \cite[Proposition 2.4.3.16., 3.2.4.7.]{lurie.higheralgebra}
the forgetful functor $\Cmon(\Op_{\infty}^\mon) \to \Cmon(\Cat_\infty)$ is an equivalence.
So any symmetric monoidal $\infty$-category $\mV^\ot \to \Fin_*$ classifying a commutative monoid in $\Cat_\infty$ uniquely lifts to a commutative monoid
in $\Op_{\infty}^\mon$ classified by a cocartesian $\Fin_*$-family 
of monoidal $\infty$-categories $\bar{\mV}^\ot \to \Fin_* \times \Ass$,
whose fiber over $[1] \in \Ass$ is $\mV^\ot \to \Fin_*.$

\begin{notation}\label{commm}
For any symmetric monoidal $\infty$-category $\mV^\ot \to \Fin_*$ we write (see Notation \ref{modol}) $$\LMod(\mV)^\ot:= \LMod^{\Fin_*}(\bar{\mV}) \to \Fin_*, \ \Alg(\mV)^\ot:= \Alg^{\Fin_*}(\bar{\mV}) \to \Fin_*.$$
\end{notation}
The cocartesian fibrations $\LMod(\mV)^\ot \to \Fin_*, \Alg(\mV)^\ot \to \Fin_*$
are symmetric monoidal $\infty$-categories and by Example \ref{exemp} there are canonical symmetric monoidal functors $$\gamma:\LMod(\mV)^\ot \to \Alg(\mV)^\ot,\ \LMod(\mV)^\ot \to \mV^\ot.$$

\begin{remark}
If $\mV^\ot \to \Fin_*$ endows $\mV$ with the cartesian structure,
$\gamma$ is a symmetric monoidal functor between cartesian structures.
\end{remark}

\begin{lemma}\label{relprod}
Let $\mV^\ot \to \Fin_*$ be a symmetric monoidal $\infty$-category compatible with geometric realizations (which are $\Ass$-indexed colimits).
Then $\gamma: \LMod(\mV)^\ot \to \Alg(\mV)^\ot$ is a cocartesian fibration.
\end{lemma}

\begin{proof}
By \cite[Lemma 4.5.3.6.]{lurie.higheralgebra} the functor $\rho: \LMod(\mV) \to \Alg(\mV)$ is a cocartesian fibration. So by \cite[Lemma A.1.8.]{haugseng_melani_safronov_2020} it is enough to check that for any $\rho$-cocartesian morphisms $\alpha: \X \to \Y,\alpha': \X' \to \Y' $ the morphism $\alpha \ot \alpha'$ is $\rho$-cocartesian.
A morphism $\alpha: \X \to \Y$ in $\LMod(\mV)$ lying over a morphism
$\A \to \B $ in $\Alg(\mV)$ is $\rho$-cocartesian if and only if
the canonical compatible morphisms $ \X \ot \A^{\ot \n} \ot \B \to \Y$ for $[\n] \in \Ass$
induce an equivalence $\colim_{[\n] \in \Ass} \X \ot \A^{\ot \n} \ot \B \to \Y.$
This implies the claim using that $\mV^\ot \to \Fin_*$ is compatible with $\Ass$-indexed colimits and that the diagonal $\Ass \to \Ass \times \Ass$ is cofinal. 
\end{proof}

\begin{notation}\label{prsym}\emph{ } 
Let $(\Cat_\infty^{\rc\rc})^\ot \subset \widehat{\Cat}_\infty^\times $
be the symmetric suboperad whose colors are the $\infty$-categories having small colimits and and whose multi-morphisms $\mC_1,..., \mC_\n \to \mD$
correspond to a functor $\mC_1 \times ...\times \mC_\n \to \mD $ preserving small colimits component-wise.
Let $(\Pr^\L)^\ot \subset (\Cat_\infty^{\rc\rc})^\ot $ be the full symmetric suboperad of presentable $\infty$-categories.
\end{notation}
By \cite[Corollary 4.8.1.4.]{lurie.higheralgebra} the symmetric $\infty$-operad
$(\Cat_\infty^{\rc\rc})^\ot \to \Fin_*$ is a symmetric monoidal $\infty$-category compatible with small colimits
that by \cite[Proposition  4.8.1.15.]{lurie.higheralgebra} restricts to a symmetric monoidal $\infty$-category $(\Pr^\L)^\ot \to \Fin_*$ compatible with small colimits.
By Notation \ref{commm} there are symmetric monoidal $\infty$-categories
$\Alg(\Cat_\infty^{\rc\rc})^\ot \to \Fin_*, \LMod(\Cat_\infty^{\rc\rc})^\ot \to \Fin_*$ and a symmetric monoidal functor $\LMod(\Cat_\infty^{\rc\rc})^\ot \to \Alg(\Cat_\infty^{\rc\rc})^\ot$ that is a cocartesian fibration (Lemma \ref{relprod}), similarly for $\Pr^\L.$ 

\begin{notation}
For every symmetric $\infty$-operad $\mO^\ot \to \Fin_*$
and $\mO$-algebra $\mV$ in $\Alg(\Pr^\L)^\ot$ we write 
$$\LMod_\mV(\Cat_\infty^{\rc\rc})^\ot := \mO^\ot \times_{\Alg(\Cat_\infty^{\rc\rc})^\ot}  \LMod(\Cat_\infty^{\rc\rc})^\ot \to \mO^\ot, $$ 
$$\LMod_\mV(\Pr^\L)^\ot := \mO^\ot \times_{\Alg(\Pr^\L)^\ot}  \LMod(\Pr^\L)^\ot \to \mO^\ot.$$ 
\end{notation}

The embedding $(\Pr^\L)^\ot \subset (\Cat_\infty^{\rc\rc})^\ot $ induces for any symmetric $\infty$-operad $\mO^\ot \to \Fin_*$ and $\mO$-algebra $\mV$ in $\Alg(\Pr^\L)^\ot$ a lax $\mO$-monoidal embedding
$ \LMod_\mV(\Pr^\L)^\ot \subset \LMod_\mV(\Cat_\infty^{\rc\rc})^\ot $.
Note that this embedding is $\mO$-monoidal is a consequence of the proof of Lemma \ref{relprod} because the embedding $\Pr^\L \subset \Cat_\infty^{\rc\rc}$ preserves geometric realizations by \cite[Proposition 5.1.4.]{stefanich2020presentable}.

\begin{notation}For every symmetric $\infty$-operad $\mO^\ot \to \Fin_*$
and $\mO$-algebra $\mV$ in $\Mon^\times$ we write 
$$\LMod_\mV^\ot= \mO^\otimes \times_{\Mon^\times}\LMod^\times.$$	

\end{notation}

The symmetric monoidal functor $\LMod^\times \to \Mon^\times$ is a cocartesian fibration by Lemma \ref{relprod} and Remark \ref{modules} so that $\LMod_\mV^\ot \to \mO^\ot$ is an $\mO$-monoidal $\infty$-category.

\begin{theorem}\label{corrr}
Let $\mO^\ot \to \Fin_*$ be a symmetric $\infty$-operad.
\begin{enumerate}
\item Let $\mV$ be an $\mO$-monoid in $\Mon$.
There is a canonical lax $\mO$-monoidal inclusion 
$$ \LMod_\mV^\ot \hookrightarrow (\Cat_\infty^{\mP(\mV)})^\otimes.$$

\vspace{1mm}	
	
\item Let $\mV$ be an $\mO$-algebra in $\Alg(\Pr^\L)^\ot$.
There is a canonical lax $\mO$-monoidal inclusion 
$$\LMod_\mV(\Cat_\infty^{\rc\rc})^\ot \hookrightarrow (\widehat{\Cat}_\infty^\mV)^\ot$$ and so a lax $\mO$-monoidal inclusion 
$ \LMod_\mV(\Pr^\L)^\ot \hookrightarrow (\widehat{\Cat}_\infty^\mV)^\ot.$
\end{enumerate}	

\end{theorem}

\begin{proof}
(1): Corollary \ref{eqvvjj} gives a canonical embedding
$\P\LMod \hookrightarrow \P\PreCat_\infty$ over $\mS \times \Mon$.
Thus Lemma \ref{obsay} gives a canonical embedding of symmetric monoidal
$\infty$-categories $\P\LMod^\times \hookrightarrow \P\PreCat_\infty^\times \simeq \P\PreCat_\infty^\otimes $ over $\mS^\times \times_{\Fin_*} \Mon^\times$
whose pullback along $\mV: \mO^\ot \to \Mon^\times$ 
gives an embedding $$ \P\LMod_\mV^\ot:= \mO^\otimes \times_{\Mon^\times}\P\LMod^\times \hookrightarrow (\PreCat_\infty^{\mP(\mV)})^\otimes $$ of $\mO$-monoidal $\infty$-categories
that induces an equivalence $ \P\LMod_\mV^\ot\simeq (\Cat_\infty^{\mP(\mV)})^\otimes$ of $\mO$-monoidal $\infty$-categories.
The pullback along $\mV: \mO^\ot \to \Mon^\times$ of the lax $\Mon^\times$-monoidal inclusion $\LMod^\times \subset \P\LMod^\times $ is the desired lax
$\mO$-monoidal inclusion $$ \LMod_\mV^\ot= \mO^\otimes \times_{\Mon^\times}\LMod^\times \hookrightarrow \P\LMod_\mV^\ot \simeq (\Cat_\infty^{\mP(\mV)})^\otimes.$$

(2): Corollary \ref{eqvvjj} gives a canonical equivalence
$ (\widehat{\Cat}_\infty^\mV)^\otimes \simeq \mO^\otimes \times_{\widehat{\Op}_\infty^\times}\widehat{\Enr}_\Lur^\times$
of symmetric $\infty$-operads over $\mO^\ot.$
By definition we have $\LMod_\mV(\Cat_\infty^{\rc\rc})^\ot= \mO^\otimes \times_{\Alg(\Cat_\infty^{\rc\rc})^\ot}\LMod(\Cat_\infty^{\rc\rc})^\ot.$

The lax symmetric monoidal inclusion $(\Cat_\infty^{\rc\rc})^\ot \subset \widehat{\Cat}_\infty^\times $ induces a lax symmetric monoidal inclusion 
$$\LMod(\Cat_\infty^{\rc\rc})^\ot \subset \LMod(\widehat{\Cat}_\infty)^\times \simeq \widehat{\LMod}^\times \subset \omega\widehat{\LMod}^\times.$$ 
The latter restricts to a lax symmetric monoidal inclusion $\LMod(\Cat_\infty^{\rc\rc})^\ot \subset \widehat{\Enr}_\Lur^\times $ over $\Alg(\Cat_\infty^{\rc\rc})^\ot \subset\widehat{\Op}_\infty^\times,$
whose pullback along $\mV:\mO^\ot \to \Alg(\Cat_\infty^{\rc\rc})^\ot $ is the desired lax $\mO$-monoidal inclusion.
\end{proof}

\begin{remark}\label{futur}
	
One can show that the inclusion $ \LMod_\mV(\Cat_\infty^{\rc\rc})^\ot \hookrightarrow (\widehat{\Cat}_\infty^\mV)^\ot$
has an $\mO$-monoidal left adjoint sending a small
$\mV$-enriched $\infty$-category $\mC$ to $\mP_\mV(\mC)^\circledast \to \mV^\ot.$
This will be topic of future work.
	
\end{remark}

\begin{corollary}
Let $1 \leq \n \leq \infty$ and $\mV^\ot \to \bE_{\n+1}$ be an $\bE_{\n+1}$-monoidal $\infty$-category viewed as an $\bE_{\n}$-monoid in $\Mon.$
Then $\mV$ is an $\bE_\n$-monoidal $\mP(\mV)$-enriched $\infty$-category.

\end{corollary}

\begin{proof}
	
By Theorem \ref{corrr} there is a canonical lax $\bE_\n$-monoidal inclusion 
$ \LMod_\mV^\ot \hookrightarrow (\Cat_\infty^{\mP(\mV)})^\otimes$
that induces an inclusion
$\Alg_{\bE_{\n}}(\LMod_\mV) \hookrightarrow \Alg_{\bE_{\n}}(\Cat_\infty^{\mP(\mV)})$
that sends the initial object lying over the tensor unit of $\LMod_\mV$
and lying over $\mV \in \Alg_{\bE_{\n}}(\Cat_\infty)$ to
an $\bE_\n$-monoidal $\mP(\mV)$-enriched $\infty$-category.
		
\end{proof}

\begin{corollary}
Let $1 \leq \n \leq \infty$ and $\mV^\ot \to \bE_{\n+1}$ be a closed $\bE_{\n+1}$-monoidal $\infty$-category. Then $\mV$ is an $\bE_\n$-monoidal $\mV$-enriched $\infty$-category.

\end{corollary}

\section{An $(\infty,2)$-categorical refinement of the equivalence}\label{2Fu}

\vspace{1mm}

Let $\mV^\ot \to \Ass$ be a small $\infty$-operad.
In this section we will prove that $\chi: \omega\LMod_\mV \to \Cat_\infty^{\mP\Env(\mV)}$ is a $\Cat_\infty$-linear equivalence, an equivalence of $(\infty,2)$-categories,
with respect to natural left $\Cat_\infty$-actions on source and target of $\chi$,
which we explain in the following.

By Lemma \ref{2enr} the $\infty$-category $\omega\LMod_\mV $ carries a canonical left $\Cat_\infty$-action.

\begin{notation}
	
Let $(\Op_\infty^{\mon, \rc\rc})^\ot\subset (\widehat{\Op}_\infty^\mon)^\times$ be the $\infty$-operad whose colors are the monoidal $\infty$-categories having small colimits and and whose multi-morphisms $\mC_1,..., \mC_\n \to \mD$
correspond to a monoidal functor $\mC_1 \times ...\times \mC_\n \to \mD $ preserving small colimits component-wise.
\end{notation}

By \cite[Corollary 4.8.1.4.]{lurie.higheralgebra} the $\infty$-operad
$(\Op_\infty^{\mon, \rc\rc})^\ot \to \Ass$ is a monoidal $\infty$-category compatible with small colimits and the monoidal forgetful functor
$(\widehat{\Op}_\infty^\mon)^\times \to (\widehat{\Cat}_\infty)^\times$
on cartesian structures restricts to a monoidal functor
$(\Op_\infty^{\mon, \rc\rc})^\ot \to (\Cat_\infty^{\rc\rc})^\ot.$
Moreover $\mS^\times \to \Ass$ is the tensor unit of $\Op_\infty^{\mon, \rc\rc}$.

\begin{construction}\label{CoN}

Because $\mS^\times \to \Ass$ is the tensor unit of $\Op_\infty^{\mon, \rc\rc}$, the forgetful functor $$\LMod_\mS(\Op_\infty^{\mon, \rc\rc}) \to \Op_\infty^{\mon, \rc\rc}$$ is an equivalence.
So for any monoidal $\infty$-category $\mW^\ot \to \Ass$ compatible with small colimits there is a $\LM$-algebra $\alpha_\mW$ in $\Op_\infty^{\mon, \rc\rc}$ encoding a left action of $\mS^\times \to \Ass$ on $\mW^\ot \to \Ass$.
 
By Remark \ref{Reyolk} the monoidal functor $\widehat{\PreCat}_\infty^\times \to \widehat{\Op}_\infty^\times$ on cartesian structures is a cocartesian fibration.
The pullback of $\widehat{\PreCat}_\infty^\times \to \widehat{\Op}_\infty^\times$
along $\alpha_\mW: \LM \to (\Op_\infty^{\mon, \rc\rc})^\ot \subset (\widehat{\Op}_\infty^\mon)^\times \subset (\widehat{\Op}_\infty)^\times $
provides a left action of $\PreCat_\infty^\mS $ on $\PreCat_\infty^{\mW}$
that restricts to left action of $\Cat_\infty^\mS $ on $\Cat_\infty^{\mW}$.
  
By Corollary \ref{eqvvjj} the functor $\chi: \Cat_\infty^{\mS, \Lur} \to \Cat_\infty^\mS$ is an equivalence. By Lemma \ref{Caa} the forgetful functor
$ \Cat_\infty^{\mS, \Lur} \to \Cat_\infty$ is an equivalence.
The resulting equivalence
$\Cat_\infty \simeq \Cat_\infty^\mS$ induces a monoidal equivalence on cartesian structures. So we obtain a left action of $\Cat_\infty$ on $\Cat_\infty^{\mW}$.
\end{construction}

By Construction \ref{CoN} for any small $\infty$-operad $\mV^\ot \to \Ass$ there is a left $\Cat_\infty $-action on $\Cat_\infty^{\mP\Env(\mV)}$.

We prove the following theorem:

\begin{theorem}\label{PP}
	
Let $\mV^\ot \to \Ass$ be a small $\infty$-operad.
The functor $\chi: \omega\LMod_\mV \to \Cat_\infty^{\mP\Env(\mV)}$ is $\Cat_\infty$-linear.	
	
\end{theorem}

We prepare the proof of Theorem \ref{PP}:

\begin{lemma}\label{LyN}
Let $\mV^\ot \to \Ass$ be a small $\infty$-operad.	
The functor $\LMod_{\B\Env(\mV)} \to \omega\LMod_\mV$ restricting along the embedding $\mV \subset \B\Env(\mV)$ is $\Cat_\infty$-linear and admits a $\Cat_\infty$-linear left adjoint, where source and target carry the left $\Cat_\infty$-actions of Lemma \ref{2enr}.
	
\end{lemma}

\begin{proof}
	
The functor $\LMod_{\B\Env(\mV)} \to \omega\LMod_\mV$ restricting along the embedding $\mV \subset \B\Env(\mV)$ is $\Cat_\infty$-linear by definitions of the actions of Lemma \ref{2enr}. This functor admits a left adjoint 
$\B\L\Env(-): \omega\LMod_\mV \to \LMod_{\B\Env(\mV)}$ that is $\Cat_\infty$-linear since for any small $\infty$-category $\K$ and weakly left tensored $\infty$-category $\mM^\circledast \to \mV^\ot$ the canonical functor
$\K \times \L\Env(\mM)^\circledast \to \L\Env(\K \times \mM)^\circledast$
is an equivalence.	
	
\end{proof}

Corollary \ref{kojj} gives the following remark:

\begin{remark}\label{Reumi}
Let $\mN^\circledast \to \mV^\ot $ be a presentably left tensored $\infty$-category, $\A$ an associative algebra in $\mV$, $\X \in \mN$ and $\RMod_\A(\mV) \to \mN$ an equivalence sending
$\A$ to $\X$ satisfying the following conditions:
\begin{enumerate}
\item The functor $\mV \xrightarrow{(-)\ot \A} \RMod_\A(\mV) \simeq \mN $
is the functor $(-)\ot \X.$ 

\vspace{1mm}
\item The induced equivalence $\Ho(\RMod_\A(\mV)) \to \Ho(\mN)$ is $\Ho(\mV)$-linear.
\end{enumerate}

\end{remark}

\begin{proof}

By Theorem \ref{tgre} there is a unique small colimits preserving $\mV$-linear functor
$\RMod_\A(\mV)^\circledast \to \mN^\circledast$ preserving $\A.$
The $\infty$-category $\RMod_\A(\mV)$ is generated by $\A$ under small colimits and the left $\mV$-action. So by (2) also $\mN$ is generated by $\X$ under small colimits and the left $\mV$-action. By (1) the functor $ \RMod_\A(\mV) \simeq \mN \xrightarrow{\Mor_{\mN}(\X,-)} \mV$ is the forgetful functor. Thus the functor $\Mor_{\mN}(\X,-): \mN \to \mV$ preserves small colimits. By (2) there is a factorization 
$$ \Ho(\Mor_{\RMod_\A(\mV)}(\A,-)): \Ho(\RMod_\A(\mV)) \simeq \Ho(\mN) \xrightarrow{\Ho(\Mor_{\mN}(\X,-))}\Ho(\mV)$$ of lax $\Ho(\mV)$-linear functors.
Thus $\Ho(\Mor_{\mN}(\X,-)): \Ho(\mN) \to \Ho(\mV)$ is $\Ho(\mV)$-linear
so that $\Mor_{\mN}(\X,-): \mN \to \mV$ is $\mV$-linear.
So $\theta$ is an equivalence by Corollary \ref{kojj}.
	
\end{proof}

The forgetful functor $\LMod_{[0]}(\Op^\mon_\infty)\to \Op^\mon_\infty$ is an equivalence because $\Ass$ is the final monoidal $\infty$-category. For any monoidal $\infty$-category $\mV^\ot \to \Ass$ let $\beta_\mV$ be the $\LM$-algebra in $\Op^\mon_\infty$ corresponding to $\mV^\ot \to \Ass$.

\begin{lemma}\label{Clav}
Let $\mV^\ot \to \Ass$ be a small monoidal $\infty$-category.	
The $\LM$-monoidal $\infty$-category $$\beta_\mV^*(\LMod^\times) \to \LM$$
classifies the left $\Cat_\infty$-action on $\LMod_{\mV}$ of Lemma \ref{2enr}.  

\end{lemma}

\begin{proof}
The statement follows from Remark \ref{Reumi} because for any $\K \in \Cat_\infty$ and $\mM^\circledast \to \mV^\ot$ the left $\Cat_\infty$-action on $\LMod_{\mV}$ of Lemma \ref{2enr} 
gives $\K \times \mM^\circledast \to \mV^\ot$, where the left $\Cat_\infty \simeq \LMod_*$-action classified by $\beta_\mV^*(\LMod^\times) \to \LM$ gives $(\K \times \Ass) \times_\Ass \mM^\circledast \to \Ass \times_\Ass \mV^\ot$.	

\end{proof}

\begin{definition}Let $\mC$ be an $\infty$-category.
We call a morphism $\X \to \Y$ of $\mC$ a monomorphism
if for every $\Y \in \mC$ the induced map
$\mC(\Z,\X)\to \mC(\Z,\Y)$ is an embedding of spaces.	
	
\end{definition}

\begin{example}\label{EX}
A map of spaces is a monomorphism in $\mS$ if and only if it is an embedding
because for every space $\X$ the functor $\mS(\X,-): \mS \to \mS$ preserves embeddings.
	
\end{example}

\begin{remark}\label{Monol}
	
Let $\mC$ be an $\infty$-category. A morphism $\f: \X \to \Y$ of $\mC$ is a monomorphism if and only if the commutative square
\begin{equation*} 
\begin{xy}
\xymatrix{
\X \ar[d]^{\id} \ar[rr]^\id
&&\X \ar[d]^\f
\\  \X \ar[rr]^\f  && \Y
}
\end{xy} 
\end{equation*}
is a pullback square.

\end{remark}

\begin{proof}
	
Since monomorphisms and pullbacks are detected by mapping out from any object of $\mC$ this follows from the case $\mC=\mS.$
By Example \ref{EX} a map of spaces is a monomorphism if and only if it is an embedding. A map of spaces $\f:\X \to \Y$ is an embedding if and only if all its fibers are empty or contractible. This is equivalent to say that the latter square is a pullback square.
\end{proof}

\begin{remark}\label{LLo}
	
A morphism $\f: \X \to \Y$ of the fiber $\mC_\s$ for some $\s \in \rS$
is a monomorphism if its image in $\mC$ is a monomorphism.
The converse holds if $\mC \to \rS$ is a locally cocartesian fibration.
\end{remark}
\begin{proof}
	
Let $\f$ be a monomorphism in $\mC$ that lies over the identity of $\s$ and $\Z \in \mC_\s$.
The map $\mC_\s(\Z,\X) \to \mC_\s(\Z,\Y)$
is induced by the map $\mC(\Z,\X) \to \mC(\Z,\Y)$ over $\rS(\s,\s)$ on the fiber over the identity.

Conversely, let $\f$ be a monomorphism in $\mC_\s$ and let $\Z \in \mC$ lying over $\rt \in \rS$. The map $\mC(\Z,\X) \to \mC(\Z,\Y)$ over $\rS(\rt,\s)$ induces on the fiber over any $\g:\rt \to \s$ the map $\mC_\s(\g_*\Z,\X) \to \mC_\s(\g_*\Z,\Y)$
and so is an embedding if the first map is an embedding for any $\g: \rt \to \s.$
\end{proof}

\begin{lemma}

Let $\mC \to \rS, \mD \to \rS$ be functors. The functor $\Fun_\rS(\mC,\mD) \to \prod_{\s \in \rS}\Fun(\mC_\s, \mD_\s)$ is a cartesian fibration relative to the collection of morphisms $(\alpha^\s)_{\s \in \rS}$ such that for any $\s \in \rS$
and $\X \in \mC_\s$ the morphism $\alpha^\s_\X \in \mD_\s$ is a monomorphism
in $\mD.$	
	
\end{lemma}

\begin{proof}
	
Let $\mC, \mD$ be $\infty$-categories. We prove first that the functor $\theta: \Fun(\mC, \mD) \to \prod_{\X \in \mC} \mD $ evaluating at all $\X \in \mC$ is a cartesian fibration relative to the collection $\Theta$ of morphisms $(\alpha^\X)_{\X \in \mC}$ such that for any $\X \in \mC$ the morphism $\alpha^\X $ in $\mD$ is a monomorphism.

Assume first that $\mD=\mS.$ 
Let $\mL_\mC \subset \Cat_{\infty / \mC}$ be the full subcategory of left fibrations.
The functor $\theta$ factors as $\Fun(\mC, \mS) \simeq \mL_\mC \to \prod_{\X \in \mC} \mS $ taking the fiber over all $\X \in \mC.$
The functor $\mL_\mC \to \prod_{\X \in \mC} \mS $ taking the fiber at all $\X \in \mC$ is a cartesian fibration relative to $\Theta$: indeed, let $\mD \to \mC$ be a left fibration and $\alpha^\X: \mB^\X \to \mD_\X$ an embedding and let $\mB \subset \mD$ be the full subcategory spanned by $\mB^\X$
for all $\X \in \mC.$ The embedding $\mB \subset \mD$ induces on the fiber over $\X \in \mC$ the embedding $\alpha^\X: \mB^\X \to \mD_\X$. For any left fibration $\mE \to \mC$ the embedding
$$ \Fun_\mC(\mE, \mB) \to \prod_{\X \in \mC} \mS(\mE_\X, \mB_\X) \times_{\prod_{\X \in \mC} \mS(\mE_\X, \mD_\X)} \Fun_\mC(\mE, \mD) $$
is an equivalence.

Now let $\mD$ be arbitrary. For any $\Z \in \mD$ the functor $\mD(\Z,-): \mD \to \mS$ preserves limits
so that the embedding $\mD \subset \mP(\mD)$ sends monomorphisms to object-wise monomorphisms. 
Thus it is enough to see that the functor $\theta: \Fun(\mC, \mP(\mD)) \to \prod_{\X \in \mC} \mP(\mD) $ is a cartesian fibration relative to $\Theta.$

By the first part of the proof the functor $\theta: \mP(\mD) \to  \prod_{\Y \in \mD^\op} \mS $ is a cartesian fibration relative to $\Theta.$
So the functor $ \prod_{\X \in \mC}\mP(\mD) \to \prod_{\X \in \mC} \prod_{\Y \in \mD^\op} \mS $ is a cartesian fibration relative to the collection $\Phi$ of morphisms $(\alpha^{\X, \Y})_{\X \in \mC,\Y \in \mD}$ such that for any $\X \in \mC, \Y \in \mD$ the morphism $\alpha^{\X, \Y} $ is an embedding.
So the functor $\theta: \Fun(\mC, \mP(\mD)) \to \prod_{\X \in \mC} \mP(\mD) $ is a cartesian fibration relative to $\Theta$ if the composition
$$ \Fun(\mC, \mP(\mD))\to \prod_{\X \in \mC} \mP(\mD) \to \prod_{\X \in \mC} \prod_{\Y \in \mD^\op} \mS $$ is a cartesian fibration relative to $\Phi.$
The latter functor identifies with the functor $\theta: \Fun(\mC \times \mD^\op, \mS) \to \prod_{\X \in \mC} \prod_{\Y \in \mD^\op} \mS $. So the result follows from the case $\mD=\mS.$

Let now $\mC \to \rS, \mD \to \rS$ be functors and $\s \in \rS.$
So we know that the functor $\theta: \prod_{\s \in \rS} \Fun(\mC_\s, \mD_\s) \to \prod_{\s \in \rS}\prod_{\X \in \mC_\s} \mD_\s $ is a cartesian fibration relative to the collection of morphisms $(\alpha^{\s,\X})_{\s \in \rS, \X \in \mC_\s}$ such that for any $\s \in \rS$ and $\X \in \mC_\s$ the morphism $\alpha^{\s, \X} \in \mD_\s$ is a monomorphism in $\mD_\s$ (and so in particular in $\mD$).	

Consequently, the functor $\Fun_\rS(\mC,\mD) \to \prod_{\s \in \rS}\Fun(\mC_\s, \mD_\s)$ is a cartesian fibration relative to the collection of morphisms $(\alpha^\s)_{\s \in \rS}$ such that for any $\s \in \rS$
and $\X \in \mC_\s$ the morphism $\alpha^\s_\X \in \mD_\s$ is a monomorphism
in $\mD$ if the composition
$$\zeta: \Fun_\rS(\mC,\mD) \to \prod_{\s \in \rS}\Fun(\mC_\s, \mD_\s) \to \prod_{\s \in \rS}\prod_{\X \in \mC_\s} \mD_\s $$ is a cartesian fibration relative to the collection of morphisms $(\alpha^{\s,\X})_{\s \in \rS, \X \in \mC_\s}$ such that for any $\s \in \rS$ and $\X \in \mC_\s$ the morphism $\alpha^{\s,\X} \in \mD_\s$ is a monomorphism in $\mD$.
The functor $\zeta$ identifies with the functor
$$ * \times_{\Fun(\mC,\rS)} \Fun(\mC,\mD) \to * \times_{(\prod_{\X \in \mC} \rS)} \prod_{\X \in \mC} \mD.$$

\end{proof}

\begin{corollary}
	
Let $\mC \to \rS$ be a functor and $ \mD \to \rS$ a locally cocartesian fibration. The functor $\Fun_\rS(\mC,\mD) \to \prod_{\s \in \rS}\Fun(\mC_\s, \mD_\s)$ is a cartesian fibration relative to the collection of morphisms $(\alpha^\s)_{\s \in \rS}$ such that for any $\s \in \rS$
and $\X \in \mC_\s$ the morphism $\alpha^\s_\X \in \mD_\s$ is a monomorphism.
	
\end{corollary}

\begin{corollary}\label{proqjz}
Let $\mO \to \Ass$ be a cocartesian fibration relative to the collection of inert morphisms, $\mC \to \mO$ a generalized $\mO$-operad and $\mD \to \mO$ a generalized $\mO$-monoidal $\infty$-category.
Let $\mO^\el \subset \mO$ be the full subcategory of objects lying over $[0],[1]\in\Ass.$ 
The functor $$\Alg_{\mC/\mO}(\mD) \to \prod_{\X \in \mO^\el} \Fun(\mC_\X, \mD_\X)$$ is a cartesian fibration relative to the collection of morphisms $(\alpha^\X)_{\X \in \mO}$ such that for any $\X \in \mO$
and $\Z \in \mC_\X$ the morphism $\alpha^\X_\Z \in \mD_\X$ is a monomorphism.
	
\end{corollary}

\begin{notation}
Let $\mM^\circledast \to \mV^\ot, \mN^\circledast \to \mW^\ot$ be weakly left tensored $\infty$-categories.
Let $$\LaxLinFun(\mM,\mN) \subset \Fun_\Ass(\mV^\ot, \mW^\ot) \times_{\Fun_\Ass(\mM^\circledast, \mW^\ot) } \Fun_\Ass(\mM^\circledast, \mN^\circledast)$$
be the full subcategory of maps of weakly left tensored $\infty$-categories.
There are induced functors $$\LaxLinFun(\mM,\mN) \to \Alg_\mV(\mW), \LaxLinFun(\mM,\mN)  \to \Fun(\mM, \mN).$$
\end{notation}

Let $\mP^\ot \to \LM, \mQ^\ot \to \LM$ be the generalized $\LM$-operads classified by $\mM^\circledast \to \mV^\ot, \mN^\circledast \to \mW^\ot$. 
By Proposition \ref{proqjz} there is a canonical equivalence $\LaxLinFun(\mM,\mN) \simeq \Alg_{\mP/\LM}(\mQ)$ over $ \Alg_\mV(\mW) \times \Fun(\mM, \mN).$
Corollary \ref{proqjz} gives the following proposition:

\begin{proposition}\label{proqj}
	
Let $\mM^\circledast \to \mV^\ot$ a weakly left tensored $\infty$-category and $ \mN^\circledast \to \mW^\ot$ a left tensored $\infty$-category.
The functor $$\LaxLinFun(\mM,\mN) \to \Fun(\mV_{[0]}, \mW_{[0]}) \times \Fun(\mV, \mW) \times \Fun(\mM, \mN)$$ is a cartesian fibration relative to the collection of monomorphisms.	
	
\end{proposition}

\begin{proof}[Proof of Theorem \ref{PP}]

For any small weakly left tensored $\infty$-category $\mM^\circledast \to \mV^\ot$ let $$\xi(\mM)^\circledast \subset \mP\L\Env(\mM)^\circledast \to \mP\Env(\mV)^\ot$$ be the full weakly left tensored subcategory spanned by $\mM.$
So $\xi(\mM)^\circledast\to \mP\Env(\mV)^\ot$ is an enriched $\infty$-category
whose pullback to $\mV^\ot$ is $\mM^\circledast \to \mV^\ot$.
We obtain a functor $\xi: \omega\LMod_\mV \to \Cat_\infty^{\mP\Env(\mV), \mathrm{Lur}}$.
The functor $\chi$ restricts to an equivalence $\Cat_\infty^{\mV, \Lur} \to \Cat_\infty^{\mV}$.
Applied to a larger universe we obtain an equivalence $\chi': \Cat_\infty^{\mP\Env(\mV), \Lur} \to \Cat_\infty^{\mP\Env(\mV)}$.
The functor $\chi$ factors as 
$$ \omega\LMod_\mV \xrightarrow{\xi}\Cat_\infty^{\mP\Env(\mV), \Lur} \xrightarrow{\chi'} \Cat_\infty^{\mP\Env(\mV)}.$$

The pullback $\alpha_{\mP\Env(\mV)}^*((\widehat{\omega\LMod^{\mathrm{enr}}})^\times) \to \LM$ of the cocartesian fibration $(\widehat{\omega\LMod^{\mathrm{enr}}})^\times \to \widehat{\Op}_\infty^\times$ along $\alpha_{\mP\Env(\mV)}: \LM \to \widehat{\Op}_\infty^\times$ exhibits 
$\Cat_\infty^{\mP\Env(\mV), \mathrm{Lur}} $ as left tensored over $\Cat_\infty \simeq \Cat_\infty^{\mS, \Lur}.$

Let $\Enr \subset \PreCat_\infty$ be the full subcategory of enriched $\infty$-categories. 
By Corollary \ref{eqvvjj} there is a canonical equivalence
$\alpha_{\mP\Env(\mV)}^*((\omega\widehat{\LMod}^{\mathrm{enr}})^\times) \simeq \alpha_{\mP\Env(\mV)}^*((\widehat{\Enr})^\times) $
over $ \LM$ whose fibers over the colors of $\LM$ is $\chi':\Cat_\infty^{\mP\Env(\mV), \mathrm{Lur}} \to \Cat_\infty^{\mP\Env(\mV)} $ and $\chi':\Cat_\infty \simeq \Cat_\infty^{\mS, \Lur} \simeq \Cat^\mS_\infty $.
This proves that $\chi':\Cat_\infty^{\mP\Env(\mV), \mathrm{Lur}} \to \Cat_\infty^{\mP\Env(\mV)} $ is $\Cat_\infty$-linear.
It remains to see that $\xi: \omega\LMod_\mV \to \Cat_\infty^{\mP\Env(\mV), \mathrm{Lur}} $ is $\Cat_\infty$-linear.

By Lemmas \ref{LyN} and \ref{Clav} the functor $\B\L\Env(-): \omega\LMod_\mV \to \LMod_{\B\Env(\mV)}$ is $\Cat_\infty$-linear, where the source carries the action of Lemma \ref{2enr} and the target carries the action classified by  
$\beta_{\B\Env(\mV)}^*(\LMod^\times) \to \LM.$
By \cite[Proposition 4.8.1.3.]{lurie.higheralgebra} there is a monoidal functor $\mP: \Cat_\infty^\times \to (\Cat_\infty^{\rc\rc})^\ot$ that gives rise to a $\LM$-monoidal functor
$\beta_{\B\Env(\mV)}^*(\LMod^\times) \to \alpha_{\mP\Env(\mV)}^*((\LMod^{\rc\rc})^\otimes) \subset \alpha_{\mP\Env(\mV)}^*((\omega\widehat{\LMod}^{\mathrm{enr}})^\times)$.
Thus by Lemma \ref{Clav} the functor 
$\mP\B\Env(-): \omega\LMod_\mV \to \widehat{\Cat}^{\mP\Env(\mV), \mathrm{\Lur}}_\infty $ is $\Cat_\infty$-linear, where the left action on the source is by Example \ref{2enr}
and the left action on the target is the pullback of the left action classified by $\alpha_{\mP\Env(\mV)}^*((\omega\widehat{\LMod}^\mathrm{enr})^\times) \to \LM$
along the monoidal functor $\Cat_\infty \xrightarrow{\mP} \Cat_\infty^{\rc\rc} \subset \widehat{\Cat}^\mS_\infty \simeq \widehat{\Cat}_\infty.$
By Proposition \ref{proqj} this implies that $\xi: \omega\LMod_\mV \to \Cat_\infty^{\mP\Env(\mV), \Lur}$ is $\Cat_\infty$-linear.

\end{proof}

We consider corollaries of Theorem \ref{PP}.

\begin{lemma}\label{zzz}
Let $\mV^\ot \to \Ass$ be a monoidal $\infty$-category compatible with small colimits.
The embedding $\Cat_\infty^\mV \to \widehat{\Cat}_\infty^{\widehat{\mP}\Env(\mV)}$
induced by the embedding of $\infty$-operads $\mV^\ot \subset \widehat{\mP}\Env(\mV)^\ot$ is an embedding of $\infty$-categories weakly left tensored over $\Cat_\infty$, where $\Cat_\infty^\mV, \widehat{\Cat}_\infty^{\widehat{\mP}\Env(\mV)}$ carry the left $\Cat_\infty$-actions of Construction \ref{CoN}.
	
\end{lemma}

\begin{proof}

The monoidal adjunction $\L: \widehat{\mP}\Env(\mV) \rightleftarrows \widehat{\mP}(\mV) \rightleftarrows \widehat{\Ind}(\mV): \bj$
gives rise to an adjunction 
$\L_!: \widehat{\Cat}_\infty^{\widehat{\mP}\Env(\mV)} \rightleftarrows \widehat{\Cat}_\infty^{\widehat{\mP}(\mV)} \rightleftarrows \widehat{\Cat}_\infty^{\widehat{\Ind}(\mV)}: \bj_!.$
Since $\L$ preserves small colimits, $\L$ is $\mS$-linear. Thus $\L_!$ is
$\Cat_\infty$-linear. Hence $\bj_!$ is lax  $\Cat_\infty$-linear and therefore a lax $\Cat_\infty$-linear embedding as $\bj_!$ is an embedding.

The monoidal functor $\tau: \mV \to \widehat{\Ind}(\mV)$ preserves small colimits and so is $\mS$-linear. Hence $\tau_!$ is $\Cat_\infty$-linear and therefore a $\Cat_\infty$-linear embedding as $\tau_!$ is an embedding.
	
\end{proof}

Theorem \ref{PP} implies that for any small $\infty$-operad $\mV^\ot \to\Ass$ the left action of $\Cat_\infty^{\mP\Env(\mV)}$ is closed.
Lemma \ref{zzz} implies that the left $\Cat_\infty$-action on  $\Cat_\infty^{\mV}$ is closed.

Corollary \ref{eqvvjj} and Lemma \ref{zzz} give the following corollary:

\begin{corollary}\label{QQ}
Let $\mV^\ot \to \Ass$ be a monoidal $\infty$-category compatible with small colimits.
The $\Cat_\infty$-linear functor $\chi: \omega\widehat{\LMod}_\mV \to \widehat{\Cat}_\infty^{\widehat{\mP}\Env(\mV)}$ restricts to a $\Cat_\infty$-enriched equivalence
$$\chi: \Cat_\infty^{\mV, \Lur} \to \Cat_\infty^{\mV}.$$
	
\end{corollary}

Corollary \ref{QQ} implies that for any monoidal $\infty$-category $\mV^\ot \to\Ass$ compatible with small colimits the restricted $\Cat_\infty$-enrichment on $\Cat_\infty^{\mV, \Lur} $
makes $\Cat_\infty^{\mV, \Lur} $ left tensored over $\Cat_\infty$.

\begin{corollary}
	
Let $\mV^\ot \to \Ass$ be a small $\infty$-operad and $\mM^\circledast \to \mV^\ot, \mN^\circledast \to \mV^\ot$ small weakly left tensored $\infty$-categories. There is a canonical equivalence of $\infty$-categories
$$ \LaxLinFun_\mV(\mM, \mN) \simeq \Mor_{\Cat_\infty^{\mP\Env(\mV)}}(\chi(\mM), \chi(\mN)).$$

\end{corollary}

Theorems \ref{trfd} and \ref{werf} imply the following corollary:

\begin{corollary}\label{uuij}
	
Let $\mV^\ot \to \Ass$ be a small $\infty$-operad and $\mC, \mD$ be
$\mP\Env(\mV)$-enriched $\infty$-categories. There is a canonical equivalence of $\infty$-categories
$$ \Mor_{\Cat_\infty^{\mP\Env(\mV)}}(\mC, \mD) \simeq \Fun^\mV(\mC, \L(\mD)).$$

\end{corollary}

\begin{corollary}\label{uui}
	
Let $\mV^\ot \to \Ass$ be a monoidal $\infty$-category compatible with small colimits and $\mC, \mD$ be $\mV$-enriched $\infty$-categories. There is a canonical equivalence of $\infty$-categories
$$ \Mor_{\Cat_\infty^\mV}(\mC, \mD) \simeq \Fun^\mV(\mC, \L(\mD)).$$
	
\end{corollary}

\section{An embedding of stable into spectral $\infty$-categories}
\label{stabspec}

\vspace{2mm}

In this subsection we give an application of our theory by proving the following theorem:
\begin{theorem}\label{spectrall}

There is a canonical lax symmetric monoidal embedding
$$ \Cat_\infty^\mathrm{st} \hookrightarrow \Cat_\infty^\Sp$$
of the $\infty$-category of small stable $\infty$-categories into the
$\infty$-category of small spectral $\infty$-categories.

\end{theorem}
On the right hand side of the embedding of Theorem \ref{spectrall} appears the symmetric monoidal structure on $\Sp$-enriched $\infty$-categories constructed by Gepner-Haugseng \cite[Proposition 4.3.10.]{GEPNER2015575}.
On the left hand side is a symmetric monoidal structure on $ \Cat_\infty^\mathrm{st}$, which is localized from a symmetric monoidal structure on the $\infty$-category $\Cat_{\infty}^{\mathrm{rex}}$ of small $\infty$-categories having finite colimits \cite[Corollary 4.8.1.4.]{lurie.higheralgebra}.

\vspace{1mm}

In the following we explain how we prove Theorem \ref{spectrall},
where we fix a few definitions: 
recall that an $\infty$-category $\mC$ is called stable if it has a zero object, fibers and cofibers, and any commutative square in $\mC$ is a pullback square if and only if it is a pushout square.
We call a functor of stable $\infty$-categories exact if it preserves the zero object and fibers.
By \cite[Proposition 1.1.4.1.]{lurie.higheralgebra} a functor between stable $\infty$-categories is exact if and only if it preserves finite colimits or equivalently finite limits.

We abstract the notion of stable $\infty$-category to spectral $\infty$-categories:
\begin{definition}
We call a spectral $\infty$-category $\mM^\circledast \to \Sp^\ot$ stable if $\mM$ is stable and for every $\Y \in \mM$
the functors $\Mor_\mM(\Y,-): \mM \to \Sp$ and $\Mor_\mM(-,\Y):= \Mor_{\mM^\op}(\Y,-): \mM^\op \to \Sp$ are exact.
\end{definition}


Next we consider examples of stable spectral $\infty$-categories.
For the next Lemma \ref{Zero} we use that the $\infty$-category $\mS_*$ of pointed spaces carries a presentably (symmetric) monoidal structure given by the smash product \cite[Remark 4.8.2.14.]{lurie.higheralgebra}, which is the initial presentably (symmetric) monoidal $\infty$-category that admits a zero object.


\begin{lemma}\label{Zero}
Let $\mM^\circledast \to \mS_*^\ot$ be an $\infty$-category left tensored over $\mS_*$
such that $\mM$ admits an initial object and the left action preserves initial objects in both components. Then $\mM$ admits a zero object.
\end{lemma}

\begin{proof}
For any $\X \in \mM$ the unique map of pointed spaces $\tu_{\mS_*} \to 0$ gives rise to a morphism $\X \simeq \tu_{\mS_*} \ot \X \to 0 \ot \X \simeq \emptyset$
in $\mM.$ If $\mM$ has a final object $*$, there is a map $* \to \emptyset$
so that $\emptyset \simeq *$ and $\mM$ has a zero object.

We prove that $\mM$ has a final object. 
By Proposition \ref{presday} there is a left tensored $\infty$-category $\widehat{\mP}(\mM)^\circledast\to \widehat{\mP}(\mS_*)^\ot$ compatible with large colimits and an embedding $\mM^\circledast \to \widehat{\mP}(\mM)^\circledast$ of left tensored $\infty$-categories lying over a monoidal embedding $\mS_*^\ot \subset \widehat{\mP}(\mS_*)^\ot.$

For any $\infty$-category $\mB$ let $\widehat{\mP}(\mB)' \subset \widehat{\mP}(\mB)$ be the full subcategory of presheaves on $\mB$ preserving the final object.
The Yoneda-embedding $\y$ of $\mB$ lands in $\widehat{\mP}(\mB)'$ and preserves initial objects. If $\mB$ has an initial object, $\widehat{\mP}(\mB)' \subset \mP(\mB)$ is a localization with respect to the single map $\emptyset \to \y(\emptyset)$ and so $\widehat{\mP}(\mB)'$ admits a final object.
Moreover the embeddings $\widehat{\mP}(\mS_*)'^\ot \subset \widehat{\mP}(\mS_*)^\ot, \widehat{\mP}(\mM)'^\circledast \subset \widehat{\mP}(\mM)^\circledast$ are localizations relative to $\Ass$ since the tensor product and left action of the map $\emptyset \to \y(\emptyset)$ with itself gives this map. Hence $ \widehat{\mP}(\mM)'^\circledast \to \widehat{\mP}(\mS_*)'^\ot$ is a left tensored $\infty$-category compatible with large colimits so that the pullback $ \mS_*^\ot \times_{\widehat{\mP}(\mS_*)^\ot} \widehat{\mP}(\mM)'^\circledast \to \mS_*^\ot$ is a left tensored $\infty$-category compatible with the initial object.
Since $\widehat{\mP}(\mM)'$ has a final object and by what we have seen,
it admits a zero object.
This zero object belongs to $\mM$ since $\mM$ has an initial object that is preserved by the embedding $\mM \subset \widehat{\mP}(\mM)'.$

\end{proof}

\begin{corollary}\label{Specu}
Every $\infty$-category $\mM^\circledast \to \Sp^\ot$ left tensored over $\Sp$ compatible with small colimits is a stable spectral $\infty$-category.

\end{corollary}

\begin{proof}
For every $\Y \in \mM$ the functors $\Mor_\mM(\Y,-): \mM \to \Sp, \Mor_\mM(-,\Y): \mM^\op \to \Sp$ preserve limits using the left action compatible with small colimits. We prove that $\mM$ is stable.

There is a unique left adjoint (symmetric) monoidal functor $\mS_*^\ot \to \Sp^\ot$. The pullback $\mS_*^\ot \times_{\Sp^\ot} \mM^\circledast \to \mS_*^\ot$
exhibits $\mM$ as left tensored over $\mS_*$ compatible with small colimits.
So $\mM$ admits a zero object by Lemma \ref{Zero}.
For every $\X \in \mM$ there is a canonical equivalence $\Sigma(\X) \simeq \Sigma(\tu_\Sp \ot \X) \simeq \Sigma(\tu_\Sp) \ot \X$ so that the functor
$\Sigma: \mM \to \mM$ identifies with the functor $\Sigma(\tu_\Sp) \ot (-): \mM \to \mM$.
The latter functor is inverse to the functor $\Omega(\tu_\Sp) \ot (-): \mM \to \mM$, which proves stability.

\end{proof}

\begin{corollary}
For any small spectral $\infty$-category $\mM^\circledast \to \Sp^\ot $ the spectral $\infty$-category $\mP_\Sp(\mM)^\circledast \to \Sp^\ot$ of spectral presheaves
is presentably left tensored (Lemma \ref{prestor}) and so a stable spectral $\infty$-category.

\end{corollary}

\begin{example}
For every small spectral $\infty$-category $\mM^\circledast \to \Sp^\ot $ let $\mU(\mC) \subset \mP_\Sp(\mC)$ the smallest full stable subcategory containing the essential image of the spectral Yoneda-embedding $\mC \to  \mP_\Sp(\mC)$.
Then $\mU(\mC)^\circledast \to \Sp^\ot$ is a stable spectral $\infty$-category, which is small by construction.	We call $\mU(\mC)$ the stable closure or stable envelope of $\mC.$
	
\end{example}

\begin{remark}\label{invv}
Let $\mM^\circledast \to \Sp^\ot $ be a stable spectral $\infty$-category.
Since for every $\Y \in \mM$ the functors $\Mor_\mM(\Y,-): \mM \to \Sp$ and $\Mor_\mM(-,\Y): \mM^\op \to \Sp$ are exact,
for every $\Y,\Z \in \mD$ and $\n \in \bZ$ there is a canonical equivalence
of spectra $$ \Mor_\mM(\Sigma^\n(\Y),\Z) \simeq \Omega^\n(\Mor_\mM(\Y,\Z)) \simeq \Mor_\mM(\Y,\Omega^\n(\Z))$$
and so a canonical equivalence of spaces
$$ \mM(\Sigma^\n(\Y),\Z) \simeq \Omega^\infty(\Omega^\n(\Mor_\mM(\Y,\Z))) \simeq \mM(\Y,\Omega^\n(\Z)),$$
where $\Sigma: \mM \to \mM, \Y \mapsto 0 \coprod_\Y 0 $ is the suspension functor, $\Omega: \mM \to \mM, \Y \mapsto 0 \prod_\Y 0  $ is the loops functor and $\Omega^\infty: \Sp \to \mS$ is the infinite loops functor.
\end{remark}

\begin{example}\label{eqst}The forgetful functor $\LMod_\Sp(\Pr^\L) \to \Pr^\L$
induces an equivalence $\LMod_\Sp(\Pr^\L) \simeq \Pr^{\L}_\mathrm{st}$
\cite[Proposition 4.8.2.18.]{lurie.higheralgebra}.
So for every presentable stable $\infty$-category $\mB$ there is a canonical stable spectral $\infty$-category $\mB^\circledast \to \Sp^\ot $ whose underlying $\infty$-category is $\mB.$
\end{example}

\begin{notation}\label{ind}
Let $\mC$ be a small $\infty$-category having finite colimits and $\Ind(\mC) \subset \Fun(\mC^\op,\mS)$ the full subcategory of functors preserving finite limits. 
	
\end{notation}

\begin{example}\label{indig}
Let $\mC$ be a small $\infty$-category having finite colimits.
The $\infty$-category $\Ind(\mC)$ is an accessible localization of $\mP(\mC)$ with respect to the set of morphisms $$\{ \colim(\y \circ \rH) \to \y(\colim(\rH)) \mid \rH:\K \to \mC, \ \K \ \text{small} \},$$
where $\y: \mC \to \mP(\mC)$ is the Yoneda-embedding. Hence $\Ind(\mC)$ is presentable.

By \cite[Proposition 1.1.3.6.]{lurie.higheralgebra}
$\Ind(\mC)$ is stable if $\mC$ is stable. So there is a canonical stable spectral $\infty$-category $\Ind(\mC)^\circledast \to \Sp^\ot $
whose underlying $\infty$-category is $\Ind(\mC)$. 
\end{example}

\begin{notation}\label{liftsp}
For every small stable $\infty$-category $\mC$ let $\mC^\circledast \subset \Ind(\mC)^\circledast$ be the full spectral subcategory spanned by the essential image of the Yoneda-embedding $\mC \to \Ind(\mC)$.
\end{notation}

\begin{example}Let $\mC$ be a small stable $\infty$-category.
Since the Yoneda-embedding $\mC \to \Ind(\mC)$ is exact, $\mC^\circledast \to \Sp^\ot $ is a stable spectral $\infty$-category.
\end{example}
\begin{notation}
Let $(\Cat_\infty^{\Sp,\mathrm{st}})^\ot \subset (\Cat_\infty^{\Sp})^\ot$
be the full symmetric suboperad whose colors are the stable spectral $\infty$-categories.
\end{notation}

\begin{notation}\label{Rexx}

Let $(\Cat_\infty^\mathrm{rex})^\otimes \subset \Cat_\infty^\times$
be the symmetric suboperad whose colors are the $\infty$-categories that admit finite colimits and whose multi-morphisms $\mC_1  ... \mC_\n \to \mD$ correspond to functors $\mC_1 \times ... \times \mC_\n \to \mD$ that preserve finite colimits in each component.

Let $(\Cat_\infty^\mathrm{st})^\otimes \subset (\Cat_\infty^\mathrm{rex})^\otimes$ be the full symmetric suboperad whose colors are the stable $\infty$-categories.
\end{notation}

\begin{remark}
By \cite[Remark 4.8.1.6.]{lurie.higheralgebra} the symmetric $\infty$-operad $(\Cat_\infty^\mathrm{rex})^\otimes \to \Fin_*$ is a symmetric monoidal $\infty$-category, which is closed, where the internal hom of two small $\infty$-categories $\mC,\mD$ having finite colimits is the $\infty$-category $\Fun^{\mathrm{rex}}(\mC,\mD) \subset \Fun(\mC,\mD)$ of finite colimits preserving functors $\mC \to \mD$.
\end{remark}

\begin{lemma}\label{lmo}
The full subcategory $ \Cat_\infty^{\mathrm{st}} \subset \Cat_\infty^{\mathrm{rex}}$ is a localization compatible with the symmetric monoidal structure. 	
\end{lemma}

\begin{proof}[Proof of Lemma \ref{lmo}]
By \cite[Proposition 4.8.2.18.]{lurie.higheralgebra} the full subcategory $\Pr_\mathrm{st}^{\L} \subset \Pr^\L$
admits a left adjoint that sends a presentable $\infty$-category $\mB$ to the stable presentable $\infty$-category of spectra objects $\Sp(\mB):= \Sp \otimes \mB$.	
For any small $\infty$-category $\mC$ having finite colimits and stable $\infty$-category $\mD$ there is a canonical equivalence (using Lemma \ref{Indd}):
$$ \Fun^{\mathrm{rex}}(\mC,\Ind(\mD)) \simeq \Fun^\L(\Ind(\mC),\Ind(\mD)) \simeq  \Fun^\L(\Sp(\Ind(\mC)),\Ind(\mD)) \simeq \Fun^{\mathrm{rex}}(\Sp(\Ind(\mC))',\Ind(\mD)) $$
restricting to an equivalence 
$$ \Fun^{\mathrm{rex}}(\mC,\mD) \simeq \Fun^{\mathrm{rex}}(\Sp(\Ind(\mC))',\mD),$$
where $(-)'$ is the full subcategory of compact objects.
The resulting localization $ \Cat_\infty^{\mathrm{st}} \subset \Cat_\infty^{\mathrm{rex}}$ 
is symmetric monoidal since for any small $\infty$-categories $\mC,\mD$ having finite colimits the internal hom $\Fun^{\mathrm{rex}}(\mC,\mD)$ is stable if $\mD$ is stable.
\end{proof}

Lemma \ref{lmo} implies the following corollary:

\begin{corollary}
The symmetric $\infty$-operad $(\Cat_\infty^\mathrm{st})^\otimes \to \Fin_*$ is a closed symmetric monoidal $\infty$-category, where the internal hom of two stable $\infty$-categories $\mC,\mD$ is the full subcategory $\Fun^{\mathrm{ex}}(\mC,\mD) \subset \Fun(\mC,\mD)$ of exact functors $\mC \to \mD$.
\end{corollary}

The finite products preserving functor $\Enr^\Lur \to \Cat_\infty,  (\mM^\circledast \to \mV^\ot) \mapsto \mM$
induces a symmetric monoidal functor $(\Enr^\Lur)^\times \to \Cat_\infty^\times$
and so a lax symmetric monoidal functor
$$\rho: (\Cat_\infty^{\Sp})^\ot = \Fin_* \times_{\Op_\infty^\times} (\Enr^\Lur)^\times \to (\Enr^\Lur)^\times \to \Cat_\infty^\times.$$

In this section we will prove the following theorem: 
\begin{theorem}\label{Monn}\label{eqqqzw}
The lax symmetric monoidal functor $\rho: (\Cat_\infty^{\Sp})^\ot \to  \Cat_\infty^\times$
restrict to an equivalence $$(\Cat_\infty^{\Sp,\mathrm{st}})^\ot \simeq (\Cat_\infty^\mathrm{st})^\otimes. $$ 
\end{theorem}

Theorem \ref{eqqqzw} follows from Lemma \ref{ecute} and Proposition \ref{Monn}:
Lemma \ref{ecute} says that the lax symmetric monoidal functor $\rho: (\Cat_\infty^{\Sp})^\ot \to  \Cat_\infty^\times$
restricts to a conservative functor $\Cat_\infty^{\Sp,\mathrm{st}} \to \Cat_\infty^\mathrm{st}$.
Proposition \ref{Monn} says that the resulting restriction $(\Cat_\infty^{\Sp,\mathrm{st}})^\ot \to (\Cat_\infty^\mathrm{st})^\otimes$
admits a fully faithful left adjoint relative to $\Fin_*$
and so is an equivalence. 

\begin{lemma}\label{ecute}
Let $\F: \mD^\circledast \to \mE^\circledast $ be a lax $\Sp$-linear functor
between stable spectral $\infty$-categories.

\begin{enumerate}
\item The underlying functor $\mD \to \mE$ is exact.

\item The lax $\Sp$-linear functor $\F: \mD^\circledast \to \mE^\circledast $ is an equivalence if the underlying functor $\mD \to \mE$ is. 
\end{enumerate}
	
\end{lemma}

\begin{proof}
We start with (1): By \cite[Corollary 1.4.2.14.]{lurie.higheralgebra} a functor between stable $\infty$-categories is exact if and only if it preserves the zero object and loops. 

For the case of zero objects note that for any $\Y,\Z \in \mD$ the morphism space $\mD(\Y,\Z)$ is pointed as it underlies a morphism spectrum,
where the base point is the zero morphism $\Y \to 0 \to \Z$ in $\mD.$
Similarly, the induced map $\mD(0,0) \to \mE(\F(0),\F(0))$
on morphism spaces underlies a map of morphism spectra and so preserves the base point. Since the base point of the contractible space $\mD(0,0)$ is the identity, the identity of $\F(0)$ is the base point of the space
$\mE(\F(0),\F(0))$ and so is the zero morphism $\F(0) \to 0 \to \F(0)$ in $\mE.$
Thus the map $0 \to \F(0)$ is inverse to the map $\F(0)\to 0.$

We continue with showing that the underlying functor $\mD \to \mE$ preserves loops.
Let $\Y,\Z \in \mD$. By Remark \ref{invv} the induced map
$\Omega^\infty(\Sigma(\Mor_\mD(\Y,\Z))) \to \Omega^\infty(\Sigma(\Mor_\mE(\F(\Y),\F(\Z))))$ identifies with the map
$\mD(\Omega(\Y),\Z) \to \mE(\Omega(\F(\Y)),\F(\Z))$
that sends the identity of $\Z= \Omega(\Y)$ to a morphism $\alpha: \Omega(\F(\Y)) \to \F(\Omega(\Y)).$
It is easy to see that $\alpha$ is inverse to the canonical map $\F(\Omega(\Y)) \to \Omega(\F(\Y)).$

(2) Now we assume that the underlying functor $\mD \to \mE$ is an equivalence.
We like to see that for any objects $\Y, \Z \in \mD$ and $\n \geq 1 $ the
canonical map
$ \psi: \Omega^\infty(\Sigma^{\n}(\Mor_\mD(\Y,\Z))) \to  \Omega^\infty(\Sigma^{\n}(\Mor_\mE(\F(\Y),\F(\Z))))$
is an equivalence. By Remark \ref{invv} the map $\psi$ identifies with the canonical equivalence
$$\mD(\Y,\Sigma^{\n}(\Z)) \to \mE(\F(\Y),\F(\Sigma^{\n}(\Z))) \simeq \mE(\F(\Y),\Sigma^{\n}(\F(\Z))) . $$	

\end{proof}	

\begin{corollary}\label{flexo}
Let $\mC^\circledast \to \Sp$ be a small spectral $\infty$-category and
$\mD^\circledast \to \Sp$ a small stable spectral $\infty$-category.
The induced functor
$$\lambda: \LaxLinFun_\Sp(\mU(\mC),\mD) \to \LaxLinFun_\Sp(\mC,\mD)$$ is an equivalence.
	
\end{corollary}

\begin{proof} By Lemma \ref{ecute} the functor $\lambda$ is conservative.
By Corollary \ref{lan} the functor $\lambda$ admits a fully faithful left adjoint. Thus $\lambda$ is an equivalence.
\end{proof}

\begin{corollary}\label{symrea}
	
The full subcategory $ \Cat_\infty^{\Sp,\mathrm{st}} \subset \Cat_\infty^{\Sp}$	
is a localization.
	
\end{corollary}

\begin{remark}
Corollary \ref{symrea} implies that $(\Cat_\infty^{\Sp,\mathrm{st}})^\ot \to \Fin_*$ is a locally cocartesian fibration. 
\end{remark}

\vspace{1mm}
Let $\mC$ be a small stable $\infty$-category.
By Theorem \ref{tgre} the lax $\Sp$-linear embedding $ \mC^\circledast \to \Ind(\mC)^\circledast $ uniquely extends to a left adjoint $\Sp$-linear functor $\psi: \mP_\Sp(\mC)^\circledast \to \Ind(\mC)^\circledast $.

\begin{proposition}\label{identif}
Let $\mC$ be a small stable $\infty$-category.
The left adjoint $\Sp$-linear functor $\psi: \mP_\Sp(\mC)^\circledast \to \Ind(\mC)^\circledast $ is an equivalence.
\end{proposition}

\begin{proof}

By Lemma \ref{ecute} the spectral Yoneda-embedding 
$ \mC^\circledast \to \mP_\Sp(\mC)^\circledast$ induces an exact functor $\y: \mC \to \mP_\Sp(\mC)$ and so by Lemma \ref{Indd} uniquely extends to a left adjoint functor $\kappa: \Ind(\mC) \to \mP_\Sp(\mC)$ of presentable stable $\infty$-categories, where the right adjoint sends $\F \in \mP_\Sp(\mC)$ to the presheaf $ \map_{\mP_\Sp(\mC)}(-,\F) \circ \y: \mC^\op \to \mS.$

As finite limits commute with filtered colimits,
$\Ind(\mC)$ is closed in $\mP(\mC)$ under small filtered colimits.
Therefore by the Yoneda-lemma every object of $\mC$ is compact in $\Ind(\mC).$
Moreover $\Ind(\mC)$ is generated by $\mC$ under small filtered colimits  \cite[Proposition 5.3.5.3., Corollary 5.3.5.4.]{lurie.HTT}.
Because objects of $\mC$ are also compact in $\mP_\Sp(\mC)$ (Remark \ref{tau}), the functor $\kappa$ is fully faithful.
The right adjoint of $\kappa$ is conservative since for $\n \in \bZ $ there is a canonical equivalence $$ \Omega^\infty \circ \Sigma^{\n} \circ \Mor_{\mP_\Sp(\mC)}(-,\F) \circ \y \simeq \Omega^\infty \circ \Mor_{\mP_\Sp(\mC)}(-,\F) \circ \y \circ \Omega^{\n} \simeq \map_{\mP_\Sp(\mC)}(-,\F) \circ \y \circ \Omega^{\n} $$ of presheaves on $\mC.$
Thus $\kappa$ is an equivalence. By Remark \ref{eqst} the functor $\kappa$ uniquely refines to a $\Sp$-linear equivalence 
$\bar{\kappa}: \Ind(\mC)^\circledast \simeq \mP_\Sp(\mC)^\circledast$.
We will prove that $\bar{\kappa}$ is inverse to $\psi$.
For that it is enough to see that the composition $\psi \circ \bar{\kappa}: \Ind(\mC)^\circledast \to \mP_\Sp(\mC)^\circledast \to \Ind(\mC)^\circledast$
is the identity. To see this, by Remark \ref{eqst} and Lemma \ref{Indd} it is enough to check that the composition $\mC \subset \Ind(\mC) \to \mP_\Sp(\mC) \to \Ind(\mC)$ is the canonical embedding, which follows from the definition.

\end{proof}

\begin{notation}
Let $\mC_1, ..., \mC_\n,\mD$ be $\infty$-categories.
We write $\Fun'(\mC_1 \times ... \times \mC_\n, \mD)$ for the full subcategory of
functors $\mC_1 \times ... \times \mC_\n \to \mD$ that preserve finite colimits in each component and $\Fun''(\mC_1 \times ... \times \mC_\n, \mD)$ for the full subcategory of functors $\mC_1 \times ... \times \mC_\n \to \mD$ that preserve small colimits in each component.	
\end{notation}

\begin{lemma}\label{Indd}
Let $\n \geq 1$ and $\mC_1, ..., \mC_\n$ small $\infty$-categories with finite colimits and $ \mD$ an $\infty$-category with small colimits.
Then $\Fun''(\Ind(\mC_1) \times ... \times \Ind(\mC_\n), \mD) \to \Fun'(\mC_1 \times ... \times \mC_\n, \mD)$ is an equivalence.
\end{lemma}

\begin{proof}
We first show the case $\n=1$. Let $\mC$ be an $\infty$-category having finite colimits.
By \cite[Theorem 5.1.5.6.]{lurie.higheralgebra} the canonical functor $\Fun^\L(\mP(\mC),\mD) \to \Fun(\mC,\mD)$ is an equivalence.
By the description of the generating local equivalences of $\Ind(\mC)$ of Remark \ref{ind} this equivalence restricts to an equivalence 
$\Fun^\L(\Ind(\mC),\mD) \to \Fun^\mathrm{rex}(\mC,\mD),$ where the right hand side is the full subcategory of finite colimits preserving functors $\mC \to \mD.$	
The functor $\beta$ factors as $$ \Fun''(\Ind(\mC_1) \times ... \times \Ind(\mC_\n), \mD) \simeq \Fun''(\Ind(\mC_1) \times ... \times \Ind(\mC_{\n-1}), \Fun^\L(\Ind(\mC_\n), \mD)) $$
$$ \to \Fun'(\mC_1 \times ... \times \mC_{\n-1},\Fun^\mathrm{rex}(\mC_\n, \mD)) \simeq \Fun'(\mC_1 \times ... \times \mC_\n, \mD)$$
and so is an equivalence by induction and the case $\n=1. $ 

\end{proof}

Proposition \ref{Monn} follows from Lemma \ref{Indd} and the following Proposition, which we prove at the end of this section:

\begin{proposition}\label{enrcol}
Let $\mV^\ot \to \Ass$ be a presentably monoidal $\infty$-category, $\mN^\circledast \to \mV^\ot$ a left tensored $\infty$-category compatible with small colimits and $\mM^\circledast \to \mV^\ot$ a small
$\mV$-enriched $\infty$-category. The functor
\begin{equation*}
\rho: \LaxLinFun_{\mV}(\mP_\mV(\chi(\mM)),\mN) \to \LaxLinFun_\mV(\mM,\mN)
\end{equation*}
admits a fully faithful left adjoint, which takes values in the full subcategory
$\LinFun^\L_{\mV}(\mP_\mV(\chi(\mM)),\mN).$

\end{proposition}

\begin{proposition}\label{Monn}
Let $\mC_1, ..., \mC_\n$ be small stable $\infty$-categories for $\n \geq 1$
and $ \mD^\circledast \to \Sp^\ot$ a small stable spectral $\infty$-category.
The functor $$\psi_\mD: \LaxLinFun_{\Sp^{\times \n}}(\mC_1 \times ... \times \mC_\n, \ot^*(\mD)) \to \Fun'(\mC_1 \times ... \times \mC_\n, \mD)$$
is an equivalence. 
\end{proposition}

\begin{proof}
By Lemma \ref{ecute} the functor $\LaxLinFun_{\Sp^{\times \n}}(\mC_1 \times ... \times \mC_\n, \ot^*(\mD)) \to \Fun(\mC_1 \times ... \times \mC_\n, \mD)$
takes values in $\Fun'(\mC_1 \times ... \times \mC_\n, \mD)$.
By Lemma \ref{ecute} the spectral Yoneda-embedding $ \mD^\circledast \to \mP_\Sp(\mD)^\circledast$ induces an exact functor on underlying $\infty$-categories. So $\psi_\mD$ is the pullback of $\psi_{\mP_\Sp(\mD)}.$ So we can reduce to the case that $\mD^\circledast \to \Sp^\ot$ is a presentably left tensored $\infty$-category.
Consider the commutative square:
\begin{equation*} 
\begin{xy}
\xymatrix{
\LinFun''_{\Sp^{\times \n}}(\mP_\Sp(\mC_1) \times ... \times \mP_\Sp(\mC_\n), \ot^*(\mD)) \ar[rr]^{\gamma} \ar[d]^\alpha
&& \Fun''(\mP_\Sp(\mC_1) \times ... \times \mP_\Sp(\mC_\n), \mD) \ar[d]^\beta
\\ \LaxLinFun_{\Sp^{\times \n}}(\mC_1 \times ... \times \mC_\n, \ot^*(\mD)) \ar[rr]^{\psi_\mD} && \Fun'(\mC_1 \times ... \times \mC_\n, \mD).}
\end{xy} 
\end{equation*}

The functor $\beta$ is an equivalence by Lemma \ref{Indd} and Proposition \ref{identif}. So it is enough to see that $\alpha, \gamma$ are equivalences.

Let $(\Pr^\L)^\ot \subset \widehat{\Cat}_\infty^\times$ be the symmetric suboperad of Notation \ref{prsym} and $(\Pr^{\L}_\mathrm{st})^\ot \subset (\Pr^\L)^\ot$ the full symmetric suboperad of stable presentable $\infty$-categories.
By \cite[Proposition 4.8.2.18.]{lurie.higheralgebra} the lax symmetric monoidal forgetful functor $\LMod_\Sp(\Pr^\L)^\ot \to (\Pr^\L)^\ot$
induces an equivalence $\LMod_\Sp(\Pr^\L)^\ot \simeq (\Pr^{\L}_\mathrm{st})^\ot.$
The embedding of $\infty$-operads
$ \LMod_\Sp(\Pr^\L)^\ot \hookrightarrow
(\Pr^\L)^\ot \times_{\widehat{\Cat}_\infty^\times} \LMod_\Sp(\widehat{\Cat}_\infty)^\ot $
induces an equivalence
$$\varrho: \LinFun''_{\Sp^{\times \n}}(\mP_\Sp(\mC_1) \times ... \times \mP_\Sp(\mC_\n), \ot^*(\mD)) \simeq \LinFun^\L_{\Sp^{\otimes \n}}(\mP_\Sp(\mC_1) \otimes ... \otimes \mP_\Sp(\mC_\n), \ot^*(\mD)) $$$$ \simeq \LinFun^\L_{\Sp}(\mP_\Sp(\mC_1) \otimes_\Sp ... \otimes_\Sp \mP_\Sp(\mC_\n), \mD).$$
The equivalence $\LMod_\Sp(\Pr^\L)^\ot \simeq (\Pr^{\L}_\mathrm{st})^\ot$
of symmetric monoidal $\infty$-categories induces an equivalence
$$\zeta: \LinFun^\L_{\Sp}(\mP_\Sp(\mC_1) \otimes_\Sp ... \otimes_\Sp \mP_\Sp(\mC_\n), \mD) \to \Fun''(\Ind(\mC_1) \times ... \times \Ind(\mC_\n), \mD).$$
The functor $\gamma$ factors as $\varrho$ followed by $\zeta.$
So it remains to see that $\alpha$ is an equivalence. Using Lemma \ref{ecute} (1) and Lemma \ref{Indd} there is a commutative square
\begin{equation*} 
\begin{xy}
\xymatrix{
\LinFun''_{\Sp^{\times \n}}(\mP_\Sp(\mC_1) \times ... \times \mP_\Sp(\mC_\n), \ot^*(\mD)) \ar[rr]^{\alpha} \ar[d]
&&\LaxLinFun_{\Sp^{\times \n}}(\mC_1 \times ... \times \mC_\n, \ot^*(\mD)) \ar[d]
\\ \Fun''(\mP_\Sp(\mC_1) \times ... \times \mP_\Sp(\mC_\n), \mD) \ar[rr]^{\simeq} &&\Fun'(\mC_1 \times ... \times \mC_\n, \mD).}
\end{xy} 
\end{equation*}
The bottom functor is an equivalence by Remark \ref{ind}.
So the top functor $\alpha$ is conservative. Thus $\alpha$ is an equivalence if $\alpha$ admits a fully faithful left adjoint.
By Corollary \ref{lan} the functor
$$ \LaxLinFun_{\Sp^{\times \n}}(\mP_\Sp(\mC_1) \times ... \times \mP_\Sp(\mC_\n), \ot^*(\mD)) \to \LaxLinFun_{\Sp^{\times \n}}(\mC_1 \times ... \times \mC_\n, \ot^*(\mD))$$
admits a fully faithful left adjoint $\phi$.
So it is enough to see that $\phi$ takes values in the full subcategory
$\LinFun''_{\Sp^{\times \n}}(\mP_\Sp(\mC_1) \times ... \times \mP_\Sp(\mC_\n), \ot^*(\mD)).$
To see this by Proposition \ref{la} (5) we can reduce to the case $\n=1.$
The case $\n=1$ follows from Proposition \ref{enrcol}.

\end{proof}

To prove Proposition \ref{enrcol} we will use a theory of enriched Kan extensions based on the theory of \cite{Rune} that treats operadic left Kan extensions in a general framework.
We adapt the notion of extendable cartesian pattern \cite[Definition 8.8.]{Rune} to our setting.

For the next definition recall that for any cocartesian fibration $\mO \to \Ass$ relative to the collection of inert morphisms evaluation at the target $\Act(\mO) \subset \Fun([1],\mO) \to \mO$ is a cocartesian fibration.
This follows from \cite[Lemma 5.2.8.19.]{lurie.HTT} since the inert-active factorization system on $\Ass$ lifts to $\mO$. 

\begin{definition}\label{exttt}
We call a cocartesian fibration $\mO \to \Ass$ relative to the collection of inert morphisms extendable if for any $\X \in \mO$ lying over $[\n] \in \Ass$
the inert lifts $\X \to \X_\bi$ of the standard inert morphisms $[\n] \to [1]$ in $\Ass$
induce an equivalence of spaces \begin{equation}\label{extend}
\Act(\mO)_\X^\simeq \simeq \prod_{\bi=1}^\n \Act(\mO)_{\X_\bi}^\simeq.\end{equation} 

\end{definition}

\begin{remark}\label{exxx}
Any generalized $\infty$-operad $\mO \to \Ass$ whose fiber over $[0]$ is a space, is extendable: the canonical functor $\Act(\mO)_\X \to \Act(\mO)_{\X_1} \times_{ \Act(\mO)_{\X_{1,2}} }
... \times_{ \Act(\mO)_{\X_{\n-1,\n}} }\Act(\mO)_{\X_\n}$
is an equivalence and for any $\Z \in \mO_{[0]}$ the $\infty$-category $\Act(\mO)_\Z \simeq (\mO_{[0]})_{/\Z}$ is contractible.

\end{remark}
By Remark \ref{exxx} $\Ass$ and $ \BM$ are extendable.
Equivalence (\ref{extend}) for $\BM$ restricts to 
(\ref{extend}) for $\LM,\RM.$

\begin{proposition}\label{la}

Let $\mO \to \Ass$ be an extendable cocartesian fibration relative to the collection of inert morphisms, $\tau: \mA \to \mO, \phi:\mB \to \mO$ small $\mO$-operads, $\gamma: \mA \to \mB $ a map of $\mO$-operads and $\mD \to \mO$ an $\mO$-monoidal $\infty$-category compatible with small colimits. 

\begin{enumerate}
\item The forgetful functor $$ \Alg_{\mB/\mO}(\mD) \to \Alg_{\mA/\mO}(\mD) $$ admits a left adjoint $\gamma_!$.
For any map $\F: \mA \to \mD$ of $\mO$-operads and $\B \in \mB$ there is an equivalence $$\gamma_!(\F)(\B) \simeq \colim_{\Y \in \mA, \beta: \gamma(\Y) \to \B \in \mB^\act_{/\B}} \phi(\beta)_*(\F(\Y))$$
and for any $\A \in \mA$ the unit $\F \to \gamma_!(\F) \circ \gamma$ evaluated
at $\A$ factors as 
$$\F(\Y) \simeq \colim_{\Y \in \mA, \alpha: \Y \to \A \in \mA^\act_{/\A}} \tau(\alpha)_*(\F(\Y)) 
\to \colim_{\Y \in \mA, \beta: \gamma(\Y) \to \gamma(\A) \in \mB^\act_{/\gamma(\A)}} \phi(\beta)_*(\F(\Y)).$$

\item If $\gamma$ is an embedding of $\mO$-operads, the left adjoint is fully faithful.

\vspace{1mm}
\item If $\gamma$ is an $\mO$-monoidal functor, for any $\X \in \mO$
in the canonical commutative square
\begin{equation*}
\begin{xy}
\xymatrix{
\Alg_{\mB/\mO}(\mD) \ar[d] \ar[rr]^{}
&& \Alg_{\mA/\mO}(\mD) \ar[d]
\\  \Fun(\mB_\X,\mD_\X)
\ar[rr] &&  \Fun(\mA_\X,\mD_\X)
}
\end{xy} 
\end{equation*} 
the left adjoints commute with the forgetful functors.		

\vspace{1mm}
\item Let $\alpha: \mU \to \mO$ be a map of cocartesian fibrations relative to the collection of inert morphisms that is a right fibration relative to the collection of active morphisms. Then $\mU \to \Ass$ is extendable and in the commutative square 
\begin{equation*}
\begin{xy}
\xymatrix{
\Alg_{\mB/\mO}(\mD) \ar[d] \ar[rr]^{}
&& \Alg_{\mA/\mO}(\mD) \ar[d]
\\ \Alg_{\mU \times_\mO \mB/\mU}(\mU \times_\mO\mD)
\ar[rr] && \Alg_{\mU \times_\mO\mA/\mU}(\mU \times_\mO\mD). 
}
\end{xy} 
\end{equation*}  
the forgetful functors commute with the left adjoints.

\item For any small $\mO$-operads $\mC_\bi \to \mO$ for $1 \leq \bi \leq \n$ and $\n \geq 1$ and maps of $\mO$-operads $\mO \to \mC_\bi$ consider the following commutative square:
\begin{equation*}
\begin{xy}
\xymatrix{
\Alg_{\mC_1 \times_\mO ... \times_\mO \mC_\bi \times_\mO \mB \times_\mO \mC_{\bi+1} \times_\mO ... \times_\mO \mC_\n /\mO}(\mD) \ar[d] \ar[rr]^{}
&& \Alg_{\mC_1 \times_\mO ... \times_\mO \mC_\bi \times_\mO \mA \times_\mO \mC_{\bi+1} \times_\mO ... \times_\mO \mC_\n /\mO}(\mD) \ar[d]
\\ \Alg_{\mB/\mO}(\mD) \ar[rr]^{}
&& \Alg_{\mA/\mO}(\mD)
}
\end{xy} 
\end{equation*}  
The vertical functors in the square commute with the left adjoints
of the horizontal functors.

\end{enumerate}

\end{proposition}

\begin{proof}
(1) and (5) follows from \cite[Corollary 8.13.]{Rune} and \cite[Remark 8.15.]{Rune}.
(2) follows from the description of the unit in (1):
if $\gamma: \mA \to \mB$ is an embedding of $\mO$-operads, for any $\A \in \mA$ the induced functor
$\mA^\act_{/\A} \to \mB^\act_{/\gamma(\A)}$ is an equivalence so that the unit
$\id \to \gamma^* \circ \gamma_! $ is an equivalence.

(3): By Proposition \ref{proooo} (5) there is a lax $\mO$-monoidal functor $\Fun^\mO(\mB,\mD) \to \Fun^\mO(\mA,\mD)$
that induces on the fiber over $\X \in \mO$ the functor
$\Fun(\mB_\X,\mD_\X) \to \Fun(\mA_\X,\mD_\X)$ and on $\mO$-algebras the functor $\gamma^*: \Alg_{\mB/\mO}(\mD) \to \Alg_{\mA/\mO}(\mD) $.
By Proposition \ref{proooo} (4) the lax $\mO$-monoidal functor $\Fun^\mO(\mB,\mD) \to \Fun^\mO(\mA,\mD)$ admits a left adjoint $\kappa$ relative to $\mO$. As a consequence of adjointness $\kappa$ induces on the fiber over $\X \in \mO$ the functor
$\Fun(\mA_\X,\mD_\X) \to \Fun(\mB_\X,\mD_\X)$ taking left Kan extension along the functor
$\mA_\X \to \mB_\X$ and induces on $\mO$-algebras the left adjoint $\gamma_!$ of $\gamma^*.$ This implies the claim.

(4): If $\alpha:\mU \to \mO$ is a right fibration relative to the collection of active morphisms, for any $\X \in \mU$ the functor $\alpha$ induces an equivalence 
$\Act(\mU)_\X \simeq \Act(\mO)_{\alpha(\X)}$ so that $\mU \to \Ass$ is extendable.
If $\alpha:\mU \to \mO$ is a right fibration relative to the collection of active morphisms, the induced functor $\mA'_\act \to \mA_\act$ is a right fibration, where $ \mA':= \mU \times_\mO \mA$, and similar for $\mB \to \mO.$
Thus the induced map
$\gamma': \mA':= \mU \times_\mO \mA \to \mB':= \mU \times_\mO \mB$
of $\mU$-operads yields for any $\B' \in \mB'$ lying over $\B \in \mB$
an equivalence $\mA'_\act \times_{\mB'_\act}(\mB'_\act)_{/\B'} \to \mA_\act \times_{\mB_\act}(\mB_\act)_{/\B} $.
This implies the claim using Proposition \ref{la} (1).

\end{proof}

We apply Proposition \ref{la} (4) to the embedding $\Ass \subset \LM$ and the left and right embeddings $\Ass \subset \BM$ that are right fibrations relative to the collection of active morphisms:

\begin{corollary}\label{lan}

Let $\psi: \mM^\circledast \to \mN^\circledast $ be a map of small weakly bitensored $\infty$-categories inducing embeddings of $\infty$-operads $\gamma: \mV^\ot \to \mW^\ot,\gamma': \mV'^\ot \to \mW'^\ot $.
Let $\mD^\circledast \to \mU^\ot \times \mU'^\ot$ an $\infty$-category bitensored compatible with small colimits.
The functor $ \gamma^*: \Alg_{\mW/\Ass}(\mU) \to \Alg_{\mV/\Ass}(\mU) $ admits a fully faithful left adjoint $\gamma_!$ and similar for $\gamma'.$
Let $\alpha:\mV^\ot \to \mU^\ot, \alpha':\mV'^\ot \to \mU'^\ot$ maps of $\infty$-operads.
The functor $$\LaxLinFun_{\mW,\mW'}(\mN,(\gamma_!(\alpha), \gamma'_!(\alpha'))^*(\mD)) \to \LaxLinFun_{\mV,\mV'}(\mM,(\alpha, \alpha')^*(\mD))$$ admits a left adjoint that is fully faithful if $\psi$ is an embedding of weakly bitensored $\infty$-categories.

\end{corollary} 

\begin{corollary}\label{gggbn}
Let $\mO \to \Ass$ be an extendable cocartesian fibration relative to the collection of inert morphisms, $\mC \to \mO$ a small $\mO$-monoidal $\infty$-category and $\mD \to \mO$ an $\mO$-monoidal $\infty$-category compatible with small colimits.
The $\mO$-monoidal embedding $ \mC \to \mP\mC $ of Proposition \ref{presday} 
induces a functor $$\rho: \Alg_{\mP\mC/\mO}(\mD) \to \Alg_{\mC/\mO}(\mD)$$
that admits a fully faithful left adjoint $\phi$.
For any $\X \in \mO$ the functor $\phi$ covers the left adjoint of the functor
$ \Fun(\mP(\mC_\X), \mD_\X) \to \Fun(\mC_\X, \mD_\X) $ taking left Kan extension along the Yoneda-embedding. Thus $\phi$ sends (lax) $\mO$-monoidal functors to (lax) $\mO$-monoidal functors preserving fiberwise small colimits.

\end{corollary}

\begin{proof}
This follows from Proposition \ref{la} (3) and
the fact that the left Kan extension along the Yoneda-embedding preserves small colimits (\cite[Lemma 5.1.5.5.]{lurie.HTT}). A lax $\mO$-monoidal functor $\mP\mC \to \mD$ preserving small colimits is $\mO$-monoidal if and only if its restriction to $\mC$ is $\mO$-monoidal.
	
\end{proof}

\begin{lemma}\label{indos}
Let $\mV^\ot \to \mU^\ot$ be a small colimits preserving monoidal functor between monoidal $\infty$-categories compatible with small colimits, $\mN^\circledast \to \mV^\ot \times \mW^\ot$ a bitensored $\infty$-category compatible with small colimits and $\mC$ a $\mW$-precategory with small space of objects $\X$.
\begin{enumerate}
\item There is a canonical $\mU,\Quiv_\X(\mW)$-linear equivalence $$\mU \otimes_\mV \Fun(\X,\mN) \simeq \Fun(\X, \mU \otimes_\mV \mN).$$

\item There is a canonical $\mU$-linear equivalence $$\mU \otimes_\mV \mP_\mV(\mC;\mN) \simeq \mP_{\mU}(\mC;\mU \otimes_\mV \mN).$$

\item There is a canonical $\mU$-linear equivalence $\mU \otimes_\mV \mP_\mV(\mC) \simeq \mP_{\mU}(\varphi_*(\mC))$.
\end{enumerate}	
\end{lemma}

\begin{proof}
(1): The canonical $\mV,\mW$-linear map $ \mN \to \mU \otimes_\mV \mN $
yields a $\mV, \Quiv_\X(\mW)$-linear map
$\Fun(\X,\mN) \to \Fun(\X, \mU \otimes_\mV \mN) $
that induces a $\mU, \Quiv_\X(\mW)$-linear map
$\theta: \mU \otimes_\mV \Fun(\X,\mN) \to \Fun(\X, \mU \otimes_\mV \mN). $
Forgetting the right actions 
by Remark \ref{diagg} the left actions on source and target become the diagonal actions so that $\theta$ identifies with the $\mU$-linear equivalence
$\mU \otimes_\mV (\mN \ot \Fun(\X,\mS)) \to (\mU \otimes_\mV \mN) \ot \Fun(\X,\mS). $

\vspace{1mm}
(2): Let $\A$ be an associative algebra in $\mW$.
By \cite[Theorem 4.8.4.6.]{lurie.higheralgebra} there is a $\mV$-linear equivalence $$ \mN \ot_\mW \RMod_\A(\mW) \simeq \RMod_\A(\mN)$$
and so a canonical $\mU$-linear equivalence 
\begin{equation}\label{zuppp}
\RMod_\A(\mU \otimes_\mV \mN) \simeq  (\mU \otimes_\mV \mN) \ot_\mW \RMod_\A(\mW) \simeq \mU \ot_\mV \RMod_\A(\mN).
\end{equation}
Thus the $\mU,\Quiv_\X(\mW)$-linear equivalence of (1) induces an $\mU$-linear equivalence 
$$\mU \otimes_\mV \RMod_\mC(\Fun(\X,\mN)) \simeq \RMod_\mC(\mU \otimes_\mV \Fun(\X,\mN)) \to \RMod_\mC(\Fun(\X, \mU \otimes_\mV \mN)).$$
(3): For $\mN:=\mV=\mW$ seen as bitensored over itself, the $\infty$-category $\mU \otimes_\mV \mN$ bitensored over $\mU,\mV$ arises by pulling back the biaction on $\mU$ over itself along the monoidal functor $\varphi$.
\end{proof}

\begin{remark}\label{conses}
Let $\phi: \mV^\ot \to \mW^\ot$ be a lax monoidal functor and $\kappa: \mM^\circledast \to \mN^\circledast$ a lax $\mW$-linear functor of $\mW$-enriched $\infty$-categories.
Assume that $\mW$ is generated under small colimits by the image of $\phi$. Then $\kappa$ is an equivalence if the induced lax $\mV$-linear functor $\phi^*(\kappa): \phi^\ast(\mM)^\circledast \to \phi^\ast(\mM)^\circledast$ is an equivalence.

Indeed, $\phi^*(\kappa)$ and $\kappa$ induce the same functor on underlying $\infty$-categories.
Thus $\kappa$ is an equivalence if for any $\X,\Y \in \mM$ and $\W \in \mW$
the induced map $\alpha: \mW(\W, \Mor_\mM(\X,\Y)) \to \mW(\W, \Mor_\mN(\kappa(\X),\kappa(\Y))$ is an equivalence.
Because $\mW$ is generated by the essential image of $\phi$ under small colimits,
we can assume that $\W$ is in the essential image of $\phi$.
In this case $\alpha$ is an equivalence as $\phi^*(\kappa)$ is an equivalence.

\end{remark}

\begin{proof}[Proof of Proposition \ref{enrcol}]

By \cite[Proposition 7.15.]{Rune} there is a regular cardinal $\kappa$ such that $\mV$ is $\kappa$-accessible and the monoidal structure on $\mV$ restricts to the full subcategory $\mV^\kappa$ of $\kappa$-compact objects.

Let $(\mV^\kappa)^\ot \subset \mV^\ot$ be the full monoidal subcategory
spanned by $\mV^\kappa$ and $\Ind_\kappa(\mV^\kappa)^\ot \subset \mP(\mV^\kappa)^\ot$ the full suboperad of presheaves on $\mV^\kappa$ preserving $\kappa$-small limits.
By Remark \ref{nos} the full suboperad $\Ind_\kappa(\mV^\kappa)^\ot \subset \mP(\mV^\kappa)^\ot$ is a localization relative to $\Ass$ and the monoidal embedding $\iota: (\mV^\kappa)^\ot \subset \mV^\ot$ uniquely extends to a left adjoint monoidal functor $\Ind_\kappa(\mV^\kappa)^\otimes \to \mV^\ot$, which is an equivalence since $\mV$ is presentable.

By Lemma \ref{looocx} there are adjunctions relative to $\Ass$ whose right adjoints are embeddings:
\begin{equation}\label{adjoum}
\theta: \mP\Env(\mV^\kappa)^\ot \rightleftarrows \mP(\mV^\kappa)^\ot \rightleftarrows \Ind_\kappa(\mV^\kappa)^\ot \simeq \mV^\ot:\gamma.
\end{equation}

The composed adjunction yields for any small $\mV$-enriched $\infty$-category $\mO^\circledast \to \mV^\ot$ a localization
\begin{equation}\label{bjoum}
\mP_{\mP\Env(\mV^\kappa)}(\gamma_*(\mO))^\circledast \rightleftarrows \mP_{\mV}(\mO)^\circledast
\end{equation} relative to $\mV^\ot$ lying over adjunction
(\ref{adjoum}).
Since $\mV \simeq \Ind_\kappa(\mV^\kappa)$ is generated by $\mV^\kappa$ under small filtered colimits, by Remark \ref{conses} the canonical lax $\mV$-linear functor $\iota_*(\iota^*(\mM)^\circledast \to \mM^\circledast $ is an equivalence
since its pullback along $\iota$ is an equivalence.

Let $\bj \simeq \gamma \circ \iota$ be the lax monoidal Yoneda-embedding $(\mV^\kappa)^\ot \subset \mP\Env(\mV^\kappa)^\ot$.
Adjunction $(\ref{bjoum})$ for $\mO^\circledast= \iota_*(\iota^*(\mM))^\circledast \simeq \mM^\circledast $ gives an adjunction
\begin{equation}\label{adjouto}
\phi: \mP\L\Env(\iota^*(\mM))^\circledast\simeq \mP_{\mP\Env(\mV^\kappa)}(\bj_*(\iota^*(\mM)))^\circledast \rightleftarrows \mP_{\mV}(\iota_*(\iota^*(\mM))^\circledast \simeq \mP_{\mV}(\mM)^\circledast
\end{equation}
of left tensored $\infty$-categories lying over adjunction (\ref{adjoum}), where the $\mP\Env(\mV^\kappa)$-linear equivalence on the left hand side is by 
Theorem \ref{cooo}. 
By Lemma \ref{indos} the left adjoint $\phi$ induces a $\mV$-linear equivalence
$ \mV\otimes_{\mP\Env(\mV^\kappa)}\mP\L\Env(\iota^*(\mM))  \simeq \mP_\mV(\mM).$
Therefore adjunction (\ref{adjouto}) gives rise to an adjunction
$$\LaxLinFun_{\mP\Env(\mV^\kappa)}(\mP\L\Env(\iota^*(\mM)),\theta^*(\mN)) \rightleftarrows \LaxLinFun_{\mV}(\mP_\mV(\mM),\mN):\phi^* $$
whose right adjoint is fully faithful, which restricts to an equivalence
$$ \LinFun^\L_{\mP\Env(\mV^\kappa)}(\mP\L\Env(\iota^*(\mM)),\theta^*(\mN)) \simeq \LinFun^\L_{\mV}(\mP_\mV(\mM),\mN) .$$

By Corollary \ref{gggbn} and Proposition \ref{unb} (3) the functor $$\beta: \LaxLinFun_{\mP\Env(\mV^\kappa)}(\mP\L\Env(\iota^*(\mM)),\theta^*(\mN)) \to 
\LaxLinFun_{\mV^\kappa}(\iota^*(\mM),\iota^*(\mN)) $$$$ \simeq \LaxLinFun_{\mV}(\iota_*(\iota^*(\mM)),\mN) \simeq \LaxLinFun_\mV(\mM,\mN)$$
admits a fully faithful left adjoint whose essential image is $\LinFun^\L_{\mP\Env(\mV^\kappa)}(\mP\L\Env(\iota^*(\mM)),\theta^*(\mN)).$

Because $\rho$ factors as $$\LaxLinFun_{\mV}(\mP_\mV(\mM),\mN) \xrightarrow{\phi^*} \LaxLinFun_{\mP\Env(\mV^\kappa)}(\mP\L\Env(\iota^*(\mM)),\theta^*(\mN)) 
\xrightarrow{\beta} $$$$\LaxLinFun_{\mV^\kappa}(\iota^*(\mM),\iota^*(\mN)) \simeq\LaxLinFun_{\mV}(\iota_*(\iota^*(\mM)),\mN) \simeq \LaxLinFun_\mV(\mM,\mN),$$
$\rho$ has a left adjoint, which factors as
$$ \LaxLinFun_\mV(\mM,\mN) \simeq\LaxLinFun_{\mV^\kappa}(\iota^*(\mM),\iota^*(\mN)) \simeq \LinFun^\L_{\mP\Env(\mV^\kappa)}(\mP\L\Env(\iota^*(\mM)),\theta^*(\mN)) $$$$ \simeq \LinFun^\L_{\mV}(\mP_\mV(\mM),\mN) \subset \LaxLinFun_{\mV}(\mP_\mV(\mM),\mN).$$
\end{proof}

\section{A Yoneda-embedding for Lurie-enriched $\infty$-categories}\label{EnrYoneda}

\vspace{2mm}
\label{enrYot}

In this section we construct a model for the opposite enriched $\infty$-category associated to a $\mV$-enriched $\infty$-category  $\mM^\circledast \to \mV^\ot $ (Notation \ref{notori}, Corollary \ref{uuu})
and construct a $\mV$-enriched Yoneda-embedding $\mM^\circledast \to \LaxLinFun_{\mV^\rev}(\mM^\op, \mV)^\circledast$
in terms of Lurie-enriched $\infty$-categories (Notation \ref{nnoo}, Theorem \ref{ooo}).

\begin{lemma}\label{lele}
Let $\mM^\circledast \to \mV^\ot $ be a $\mV$-enriched $\infty$-category.
The functor $ \mM^\op \to \Fun(\mM,\mV), \Z \mapsto \Mor_\mM(\Z,-)$
canonically lifts to an embedding  $\xi: \mM^\op \to \LaxLinFun_{\mV}(\mM,\mV)$.
\end{lemma}

\begin{proof}
First assume that $\mM^\circledast \to \mV^\ot $ is a presentably left tensored $\infty$-category.	
Let $\omega\LMod^\L_{\mV}, \omega\LMod^\R_{\mV} \subset \omega\LMod_{\mV} $
be the wide subcategories whose morphisms are lax $\mV$-linear functors having a left (right) adjoint relative to $\mV^\ot$.
There is a canonical equivalence $$(\omega\LMod^\L_{\mV})^\op \simeq \omega\LMod^\R_{\mV}$$ sending relative left adjoints to relative right adjoints:
for any $\infty$-category $\rS$ functors $\rS \to \omega\LMod^\L_{\mV}$ and $\rS^\op \to \omega\LMod^\R_{\mV}$ are both classified by 
cocartesian and cartesian $\rS$-families of $\infty$-categories weakly left tensored over $\mV.$

Let $\LaxLinFun^\R_{\mV}(\mM, \mV) \subset \LaxLinFun_{\mV}(\mM, \mV)$ be
the full subcategory of lax $ \mV$-linear functors having a left adjoint relative to $ \mV^\ot$. 
For any small $\infty$-category $\mB$ by Lemma \ref{dfgj} (1) the functor $\mB^\op \subset \Fun(\mB,\mS) \xrightarrow{ (-)\ot\tu} \Fun(\mB,\mV)$ exhibits the diagonal left $\mV$-action on $\Fun(\mB,\mV)$ as the free left $\mV$-action on $\mB^\op$ compatible with small colimits. Thus there is a canonical equivalence
$\Cat_\infty(\mB, \mM^\op) \simeq $
$$\Cat_\infty(\mB^\op, \mM) \simeq \omega\LMod^\L_{\mV}(\Fun(\mB, \mV),\mM) \simeq \omega\LMod^\R_{\mV}(\mM, \Fun(\mB, \mV)) \simeq \Cat_\infty(\mB, \LaxLinFun^\R_{\mV}(\mM, \mV)),$$
which is natural in $\mB \in \Cat_\infty$ and by Lemma \ref{dfgj} (2)
sends a functor $\F: \mB \to \mM^\op$ to the functor
$$\mB \to \LaxLinFun^\R_{\mV}(\mM, \mV), \Z \mapsto \Mor_\mM(\F(\Z),-).$$

Let us now assume that $\mM^\circledast \to \mV^\ot $ is a small weakly left tensored $\infty$-category.
We embed $\mM^\circledast \to \mV^\ot $ into the presentably left tensored $\infty$-category $\mP\L\Env(\mM)^\circledast \to \mP\Env(\mV)^\ot $ and 
obtain an equivalence
$$\Cat_\infty(\mB, \mP\L\Env(\mM)^\op) \simeq \Cat_\infty(\mB, \LaxLinFun^\R_{\mP\Env(\mV)}(\mP\L\Env(\mM), \mP\Env(\mV)))$$
that restricts to an equivalence
$$\varphi: \Cat_\infty(\mB, \mM^\op)\simeq \Cat_\infty(\mB, \LaxLinFun^\R_{\mP\Env(\mV)}(\mP\L\Env(\mM), \mP\Env(\mV))'),$$
where $$\LaxLinFun^\R_{\mP\Env(\mV)}(\mP\L\Env(\mM), \mP\Env(\mV))'\subset\LaxLinFun^\R_{\mP\Env(\mV)}(\mP\L\Env(\mM), \mP\Env(\mV))$$
is the full subcategory of lax $\mP\Env(\mV)$-linear functors that admit a left adjoint 
relative to $\mP\Env(\mV)^\ot$ sending the tensor unit of $ \mP\Env(\mV)$ to $\mM.$ 
For $\mB=\mM^\op$ the identity of $\mB$ corresponds 
to a functor $$\theta: \mM^\op \to \LaxLinFun^\R_{\mP\Env(\mV)}(\mP\L\Env(\mM),\mP\Env(\mV))'$$
that by Yoneda induces the equivalence $\varphi$, i.e.
$\varphi= \Cat_\infty(\mB,\theta)$ for any small $\infty$-category
$\mB$. This implies that $\theta$ is an equivalence.
Note that $\theta$ is an equivalence if $\Cat_\infty(\mB,\theta)$
is an equivalence for $\mB=[0],[1]$ and so in particular for all small $\infty$-categories $\mB$ although the target of $\theta$ is a priori
not small. 

By definition $\theta$ sends $\Z$ to $\Mor_{\mP\L\Env(\mM)}(\Z,-): \mP\L\Env(\mM)^\circledast \to \mP\Env(\mV)^\ot,$
which by Theorem \ref{cooo} identifies with the left adjoint $\mP\Env(\mV)$-linear
functor (Remark \ref{tau}):
$$\mP\L\Env(\mM)^\circledast \simeq \mP_{\mP\Env(\mV)}(\chi(\mM))^\circledast \xrightarrow{\mathrm{Mor}_{\mP_{\mP\Env(\mV)}(\chi(\mM)) }(\iota(\Z),-) } \mP\Env(\mV)^\ot.$$ So we find that $\LaxLinFun^\R_{\mP\Env(\mV)}(\mP\L\Env(\mM),\mP\Env(\mV))' \subset \LinFun^\L_{\mP\Env(\mV)}(\mP\L\Env(\mM),\mP\Env(\mV)).$
The resulting embedding $$\theta': \mM^\op \to \LinFun^\L_{\mP\Env(\mV)}(\mP\L\Env(\mM),\mP\Env(\mV)) \simeq  \LaxLinFun_{\mV}(\mM,\mP\Env(\mV))$$
sends $\Z $ to $ \Mor_{\mP\L\Env(\mM)}(\Z,-)_{\mid \mM}.$
If $\mM^\circledast \to \mV^\ot$ exhibits $\mM$ as $\mV$-enriched,
there is an equivalence $$\Mor_{\mP\L\Env(\mM)}(\Z,-)_{\mid \mM} \simeq \Mor_{\mM}(\Z,-) : \mM \to \mV \subset \mP\Env(\mV)$$
(Lemma \ref{morpre}). So $\theta'$ restricts to an embedding 
$\mM^\op \to \LaxLinFun_{\mV}(\mM,\mV)$ that sends $\Z $ to $ \Mor_{\mM}(\Z,-).$
\end{proof}

\begin{notation}\label{notori}
Let $\mM^\circledast \to \mV^\ot $ be a $\mV$-enriched $\infty$-category.
Let $$(\mM^\op)^\circledast \subset \LaxLinFun_\mV(\mM,\mV)^\circledast $$ be the full subcategory with weak right $\mV$-action spanned by the essential image of $\xi.$
\end{notation}
By definition the embedding $\xi$ of Lemma \ref{lele} induces an equivalence $\mM^\op \simeq (\mM^\op)^\circledast_{[0]}$.  

\begin{lemma}\label{lemco}
Let $\mV^\ot \to \Ass$ be a monoidal $\infty$-category compatible with small colimits, $\mC$ a $\mV$-precategory with small space of objects $\X$ and $\psi: \X \to \mP_{\mV}(\mC)=\RMod_\mC( \Fun(\X,\mV)) $ a lift of the functor
$\X \to \Fun(\X,\mV), \ \Z \mapsto \mC(-,\Z)$ along the forgetful functor $\mP_{\mV}(\mC) \to \Fun(\X,\mV).$

For any $\Z \in \X$ the right $\mC$-action on $\mC(-,\Z)$ given by $\psi(\Z)$
induces for any $\A, \Y \in \X$ a map
$\mu_\A: \mC(\A,\Z) \ot \mC(\Y,\A) \to \mC(\Y,\Z)$.
The functor $\psi$ is equivalent to the functor 
$\iota:  \X \to \mP_{\mV}(\mC)$ underlying the enriched Yoneda-embedding
if and only if the canonical map $$\{\id_\Z \}  \ot \mC(\Y,\Z) \to \mC(\Z,\Z) \ot \mC(\Y,\Z)$$ is a section of $\mu_\Z. $ 
\end{lemma}

\begin{proof}
Let $\kappa: \iota \to \psi$ be adjoint to the canonical map from the functor $\X \to \Fun(\X,\mV), \ \Z \mapsto \X(-,\Z) \ot \tu_\mV $ to the functor $ \X \xrightarrow{\psi}  \mP_{\mV}(\mC) \xrightarrow{\text{forget}} \Fun(\X,\mV), \ \Z \mapsto \mC(-,\Z).$
For any $\Z \in \X$ the map $\kappa_\Z$ factors as $$\text{Free}(\X(-,\Z)\ot \tu_{\mV})  \xrightarrow{} \text{Free}(\mC(-,\Z)) \to \mC(-,\Z)$$ in $\mP_\mV(\mC)$.
Evaluating $\kappa_\Z$ at $\Y \in \X$ we obtain the morphism
$$\mC(\Y,\Z) \to \colim_{\A \in \X}(\mC(\A,\Z) \ot \mC(\Y,\A)) \xrightarrow{} \mC(\Y,\Z),$$ where the first map is the inclusion for $\A=\Z$ and the identity of $\Z$ and the second map is induced by $\mu_\A.$ Thus by assumption $\kappa_\Z$ is an equivalence.
\end{proof}

\begin{corollary}\label{co}
Let $\mV^\ot \to \Ass$ be an $\infty$-operad and $\mM^\circledast \to \mV^\ot$
a $\mV$-enriched $\infty$-category.
The functor $$\mM^\simeq \subset \mM \xrightarrow{\xi} \LaxLinFun_{\mV}(\mM,\mV), \ \Z \mapsto \Mor_\mM(\Z,-) $$ is equivalent to the functor 
$\iota: \mM^\simeq \to \mP_{\mV^\rev}(\chi(\mM)^\op) \simeq \LaxLinFun_{\mV}(\mM,\mV)$
underlying the enriched Yoneda-embedding.

\end{corollary}

\begin{proof}
Via the enveloping closed monoidal $\infty$-category and Lemma \ref{morpre}
we can reduce to the case that $\mV^\ot \to \Ass$ is a monoidal $\infty$-category compatible with small colimits.	
In this case we apply Lemma \ref{lemco}.
The statement follows from the fact that for any $\Y, \Z \in \mM$ the composition 
$$\Mor_\mM(\Z,\Y) \ot \{\id_\Z \} \to \Mor_\mM(\Z,\Y) \ot \Mor_\mM(\Z,\Z) \xrightarrow{\mu} \Mor_\mM(\Z,\Y)$$
is the identity, where $\mu$ is adjoint to the map $\Mor_\mM(\Z,\Y) \ot \Mor_\mM(\Z,\Z) \ot \Z \to \Mor_\mM(\Z,\Y) \ot \Z \to \Y.$

\end{proof}

\begin{corollary}\label{uuu}
Let $\mM^\circledast \to \mV^\ot $ be a $\mV$-enriched $\infty$-category.
The equivalence
$$\mP_{\mV^\rev}(\chi(\mM)^\op) \simeq \LaxLinFun_\mV(\mM,\mV) $$
of $\infty$-categories weakly right tensored over $\mV$
of Theorem \ref{werf} restricts to an equivalence
$$\L(\chi(\mM)^\op)^\circledast\simeq (\mM^\op)^\circledast.$$
So there is a canonical equivalence
$\chi(\mM)^\op \simeq \chi(\mM^\op)$ of $\mV^\rev$-enriched $\infty$-categories.
\end{corollary}

Corollary \ref{uuu} and Theorem \ref{werf} give the following corollary:

\begin{corollary}\label{uuuzz}
Let $\mM^\circledast \to \mV^\ot $ be a $\mV$-enriched $\infty$-category.
There is a canonical equivalence
$$\mP_{\mV}(\chi(\mM)) \simeq \LaxLinFun_{\mV^\rev}(\mM^\op,\mV)  $$ 
of $\infty$-categories weakly left tensored over $\mV$ compatible with the forgetful functors to $ \Fun(\mM^\simeq,\mV)$.

\end{corollary}

In the following we construct a $\mV$-enriched Yoneda-embedding $\mM^\circledast \to \LaxLinFun_{\mV^\rev}(\mM^\op, \mV)^\circledast$.

\begin{notation}\label{nnoo}

Under the equivalence of Lemma \ref{lehmmm} the restricted evaluation map $$\mM^\circledast \times (\mM^\op)^\circledast \subset \mM^\circledast \times \LaxLinFun_\mV(\mM,\mV)^\circledast \to \mV^\circledast $$
of $\infty$-categories weakly bitensored over $\mV,\mV$ corresponds to a map 
$$\rho: \mM^\circledast \to \LaxLinFun_{\mV^\rev}(\mM^\op,\mV)^\circledast$$ 
of $\infty$-categories weakly left tensored over $\mV.$
\end{notation}

\begin{remark}
For any $\Z \in \mM$ the functor $\rho(\Z): \mM^\op \to \mV$ factors as $$\mM^\op \xrightarrow{\xi} \LaxLinFun_\mV(\mM,\mV) \xrightarrow{\text{forget}} \Fun(\mM,\mV) \to \mV,$$ where the last functor evaluates at $\Z$, and so is the functor $\Mor_\mM(-,\Z)$. 
\end{remark}

\begin{lemma}\label{cortlou}
Let $\mV^\ot \to \Ass$ be an $\infty$-operad and $\mM^\circledast \to \mV^\ot$ a $\mV$-enriched $\infty$-category. The functor $$\mM^\simeq \subset \mM \xrightarrow{\rho}\LaxLinFun_{\mV^\rev}(\mM^\op,\mV)$$ is equivalent to the functor
$\iota: \mM^\simeq \to \mP_{\mV}(\chi(\mM)) \simeq \LaxLinFun_{\mV^\rev}(\mM^\op,\mV) $ underlying the $\mV$-enriched Yoneda-embedding.
\end{lemma}

\begin{proof}
Via the enveloping closed monoidal $\infty$-category and Lemma \ref{morpre}
we can reduce to the case where $\mV^\ot \to \Ass$ is a monoidal $\infty$-category compatible with small colimits.	
In this case we apply Lemma \ref{lemco}.
The statement follows from the fact that for any $\Y, \Z \in \mM$ the composition 
$$ \{\id_\Z \} \ot \Mor_\mM(\Y,\Z) \to \Mor_\mM(\Z,\Z) \ot  \Mor_\mM(\Y,\Z) \xrightarrow{\mu} \Mor_\mM(\Y,\Z)$$
is the identity, where $\mu$ is adjoint to the map $\Mor_\mM(\Z,\Z) \ot \Mor_\mM(\Y,\Z) \ot \Y \to \Mor_\mM(\Z,\Z) \ot \Z \to \Z.$

\end{proof}

Remark \ref{tau}, Corollary \ref{co} and Lemma \ref{cortlou} imply the following proposition used to prove \ref{ooo}:
\begin{proposition}\label{coog}
Let $\mM^\circledast \to \mV^\ot $ be a $\mV$-enriched $\infty$-category and $\Z \in \mM$.

\begin{enumerate}
\item For any $\rH \in \LaxLinFun_{\mV}(\mM,\mV)$ the canonical morphism
$$\Mor_{\LaxLinFun_{\mV}(\mM,\mV)}(\Mor_\mM(\Z,-),\rH) \to \Mor_{\mV}(\Mor_\mM(\Z,\Z), \rH(\Z)) \to \rH(\Z)$$ in $\mV$ is an equivalence.

\item For any $\rH \in \LaxLinFun_{\mV^\rev}(\mM^\op,\mV)$ the canonical morphism
$$\Mor_{\LaxLinFun_{\mV^\rev}(\mM^\op,\mV)}(\rho(\Z),\rH) \to \Mor_{\mV}(\Mor_\mM(\Z,\Z), \rH(\Z)) \to \rH(\Z)$$ in $\mV$ is an equivalence.
\end{enumerate}	
\end{proposition}

\begin{theorem}\label{ooo}
Let $\mM^\circledast \to \mV^\ot $ be a $\mV$-enriched $\infty$-category.
The canonical map $$\rho: \mM^\circledast \to \LaxLinFun_{\mV^\rev}(\mM^\op,\mV)^\circledast $$ induces an equivalence $\mM^\circledast \to ((\mM^\op)^\op)^\circledast$ of $\infty$-categories weakly left tensored over $\mV.$

\end{theorem}

\begin{proof}\label{coo}

We first prove that $\rho$ induces an essentially surjective functor $\mM^\circledast \to ((\mM^\op)^\op)^\circledast$ by checking that for any $\Z \in \mM$ the canonical map
$$\beta:  \Mor_{\mM^\op}(\Mor_\mM(\Z,-),-) \to \Mor_{\mV}(\Mor_\mM(\Z,\Z),-) \circ \rho(\Z) \to \rho(\Z)$$
in $\LaxLinFun_{\mV^\rev}(\mM^\op,\mV) $ is an equivalence.
For $\rH \in \mM^\op \subset \LaxLinFun_{\mV}(\mM,\mV)$ the map $\beta_\rH$ factors as
$$\Mor_{\mM^\op}(\Mor_\mM(\Z,-),\rH) \simeq \Mor_{\LaxLinFun_{\mV}(\mM,\mV)}(\Mor_\mM(\Z,-),\rH) \to \Mor_{\mV}(\Mor_\mM(\Z,\Z), \rH(\Z)) \to \rH(\Z)$$ and thus is an equivalence by Proposition \ref{coog} (1).
The lax $\mV$-linear functor $\rho$ gives rise to a map
$$\psi: \Mor_\mM(\Z,-) \to \Mor_{\LaxLinFun_{\mV^\rev}(\mM^\op,\mV) }(\rho(\Z),-)\circ \rho $$
of lax $\mV$-linear functors $\mM \to \mV.$
We will complete the proof by checking that $\psi$ is an equivalence.

By construction of $\rho$ there is a canonical equivalence
$\ev_\Z \circ \rho \simeq \Mor_\mM(\Z,-)$ of lax $\mV$-linear functors $\mM \to \mV,$ where $\ev_\Z: \LaxLinFun_{\mV^\rev}(\mM^\op,\mV) \xrightarrow{\text{forget}} \Fun(\X,\mV) \to \mV$ is evaluation at $\Z.$

The lax $\mV$-linear functor $\ev_\Z$ gives rise to a map $$\phi:\Mor_{\LaxLinFun_{\mV^\rev}(\mM^\op,\mV) }(\rho(\Z),-)\circ \rho  \to \Mor_{\mV}(\Mor_{\mM}(\Z,\Z),-) \circ \ev_\Z \circ \rho \to \ev_\Z \circ \rho \simeq \Mor_\mM(\Z,-)$$ of lax $\mV$-linear functors $\mM \to \mV$
that is an equivalence by Proposition \ref{coog} (2).
By Proposition \ref{coog} (1) the map $\phi \circ \psi: \Mor_\mM(\Z,-) \to \Mor_\mM(\Z,-)$ is the identity as $ \phi_\Z \circ \psi_\Z$ preserves the identity of $\Z$. 
\end{proof}

\section{Generalized Day-convolution}
\label{genDay}

\vspace{2mm}

In this section we construct a Day-convolution for generalized $\mO$-operads,
where $\mO \to \Ass$ is a cocartesian fibration relative to the collection of inert morphisms.
By the defining universal property our Day-convolution specializes to the Day-convolution for generalized non-symmetric $\infty$-operads constructed by Haugseng
\cite[Proposition 3.5.6.]{haugseng2021inftyoperads} and non-symmetric $\infty$-operads constructed by Lurie \cite[Theorem 2.2.6.22.]{lurie.higheralgebra}.
We show that for any generalized $\mO$-monoidal $\infty$-category 
$\mC \to \mO$ the functor
$$(-) \times_\mO \mC : \Op_\infty^{\mO,\gen} \to \Op_\infty^{\mO, \gen}$$ admits a right adjoint, which by definition assigns the Day-convolution to a generalized $\mO$-operad.

To prove the existence of the right adjoint we take the following steps,
where we write $\Cat^{\mO^\inert}_{\infty/\mO} \subset \Cat_{\infty / \mO} $ for the subcategory of cocartesian fibrations relative to the collection $\mO^\inert $ of inert morphisms:
 
\begin{enumerate}
\item We first show (Corollary of Proposition \ref{ewq}) that for any cocartesian fibration $\mC \to \mO$ relative to the collection of inert morphisms, which is a flat functor (Definition \ref{flat}), the functor
$$(-) \times_\mO \mC : \Cat^{\mO^\inert}_{\infty/\mO} \to \Cat^{\mO^\inert}_{\infty/\mO}$$ admits a right adjoint \begin{equation}\label{jul} \Fun^\mO(\mC,-): \Cat^{\mO^\inert}_{\infty/\mO} \to \Cat^{\mO^\inert}_{\infty/\mO}.\end{equation}

\item In a second step (Theorem \ref{proop}) we prove that the functor 
(\ref{jul}) sends (generalized) $\mO$-operads to (generalized) $\mO$-operads.
\item Finally, (Proposition \ref{proooo}) we show that if $\mC \to \mO$ is a cocartesian fibration with small fibers, the functor (\ref{jul}) preserves $\mO$-monoidal $\infty$-categories compatible with small colimits.
\end{enumerate}
\begin{notation}
Let $\rS$ be an $\infty$-category and $\mE \subset \Fun([1],\rS)$ a full subcategory. Let $\Cat^\mE_{\infty/\rS} \subset \Cat_{\infty/\rS}$ be the subcategory of cocartesian fibrations relative to $\mE$ and functors over $\rS$ preserving cocartesian lifts of morphisms of $\mE.$

\end{notation}

\begin{notation}
	
Let $\mC \to \rS, \mD \to \rS$ be cocartesian fibrations relative to $\mE$.
Let $\Fun_\rS^{\cocart, \mE}(\mC,\mD) \subset \Fun_\rS(\mC,\mD)$ be the full subcategory of functors over $\rS$ preserving cocartesian lifts of morphisms of $\mE.$
\end{notation}

To state our results we use the following terminology:
\begin{definition}
A relative $\infty$-category is a pair $(\rS,\mE)$,
where $\rS$ is an $\infty$-category and $\mE \subset \Fun([1],\rS)$ a full subcategory that contains all equivalences of $\rS$ 
and the composite of any two composable arrows that belong to $\mE.$
A map of relative $\infty$-categories $(\rS,\mE) \to (\T,\delta)$
is a functor $\rS \to \T$ sending $\mE$ to $\delta.$

\end{definition}

\begin{proposition}\label{ewq}

Let $(\rS,\mE),(\T,\delta)$ be relative $\infty$-categories, $\psi:\mC \to \T$ a cocartesian fibration relative to $\delta$ and $\mC \to \rS$ a flat functor (Definition \ref{flat}) that sends $\psi$-cocartesian lifts of morphisms of $\delta$ to $\mE.$
This guarantees that the functor $(-)\times_\rS \mC: \Cat_{\infty/\rS} \to \Cat_{\infty/\mC}\to  \Cat_{\infty/\T}$ restricts to a functor $\Cat^\mE_{\infty/\rS} \to \Cat^\delta_{\infty/\T}.$ 
If evaluation at the source $\rho: \mE \to \rS$ is flat, the functor $(-)\times_\rS \mC: \Cat^\mE_{\infty/\rS} \to \Cat^\delta_{\infty/\T}$ admits a right adjoint, which we denote by $\Fun^{\rS, \mE, \delta}_{\T}(\mC,-).$

\end{proposition}

\begin{proof}
For any cocartesian fibration $\sigma: \mD \to \T $ relative to $\delta$ we will construct a cocartesian fibration $\Fun_\T^{\rS, \mE,\delta}(\mC,\mD) \to \rS$ relative to $\mE$ such that for any cocartesian fibration
$\phi: \mB \to \rS $ relative to $\mE$ there is a canonical equivalence
\begin{equation}\label{chhe}
\Fun_\T^{\cocart, \delta}(\mB \times_\rS \mC, \mD) \simeq \Fun_\rS^{\cocart, \mE}(\mB, \Fun_\T^{\rS, \mE,\delta}(\mC,\mD)).
\end{equation} 

We start with constructing $\Fun_\T^{\rS, \mE,\delta}(\mC,\mD) \to \rS$: 
since the functors $\rho: \mE \to \rS$ and $\mC \to \rS$ are flat, 
the composition $\mC':= \mE \times_{\Fun(\{1\},\rS)} \mC \to \mE \xrightarrow{\rho} \rS$ is flat. So there is a canonical functor 
$\mC'= \mE \times_{\Fun(\{1\}, \rS)} \mC \to \mE \times \mC \xrightarrow{\rho \times \psi} \rS \times \T,$
whose projection to the first factor is flat.
Hence we may form $\Fun^{\rS \times \ast}_{\rS \times \T}(\mC', \rS \times \mD)$.

The functor $\mE_\X \to \rS$ is a cocartesian fibration relative to $\mE$, where all lifts of maps that belong to $\mE$, are cocartesian over $\rS$.
Thus the functor
$\mE_{\X} \times_\rS \mC \to \mC \to \T$ is a cocartesian fibration relative to $\delta,$ where a lift in $\mE_{\X} \times_\rS \mC $ of a morphism of $\delta$ is cocartesian over $\T$ if and only if its image in $\mC$ is cocartesian over $\T.$
We define $\Fun^{\rS, \mE}(\mC,\mD)$ as the full subcategory of $\Fun^{\rS \times \ast}_{\rS \times \T}(\mC', \rS \times \mD)$ spanned by the objects that belong to
$\Fun^{\cocart, \delta}_{\T}(\mE_{\X} \times_\rS \mC, \mD) \subset \Fun_{\T}(\mE_{\X} \times_\rS \mC, \mD)\simeq \Fun^{\rS \times \ast}_{\rS \times \T}(\mC', \rS \times \mD)_\X$.

Evaluation at the source $\Fun([1],\rS) \to \rS$ is a cartesian fibration,
whose cartesian morphisms are inverted by evaluation at the target.
So the restriction $\mE \subset \Fun([1],\rS) \to \rS$ is a cartesian fibration
relative to $\mE$, whose cartesian morphisms lying over $\mE$ are sent to equivalences under evaluation at the target.
Hence the pullback $\mC' \to \mE \to \rS$ is a cartesian fibration relative to $\mE$, whose cartesian lifts of morphisms of $\mE$ lie over equivalences in $\mC$ and so also in $\T.$ By Lemma \ref{flafla} the functor $ \Fun^{\rS \times \ast}_{\rS \times \T}(\mC', \rS \times \mD)$ is a cocartesian fibration relative to $\mE$
that restricts to a cocartesian fibration $\Fun_\T^{\rS, \mE,\delta}(\mC,\mD) \to \rS$ relative to $\mE$ with the same cocartesian morphisms lying over morphism of $\mE$.

\vspace{1mm}

To check (\ref{chhe}) we first reduce to the case where $\mB=\rS$, i.e. that there is a canonical equivalence
\begin{equation}\label{eqhvb}
\Fun_\T^{\cocart, \delta}(\mC, \mD) \simeq \Fun_\rS^{\cocart, \mE}(\rS, \Fun_\T^{\rS,\mE,\delta}(\mC,\mD)):
\end{equation}
Let $\mE_\mB \subset \Fun([1],\mB) $ be the full subcategory of $\phi$-cocartesian morphisms lying over morphisms of $\mE.$
There is a canonical equivalence $ \mE_\mB \simeq \mB \times_\rS \mE$ over $\mB$,
where we view $\mE, \mE_\mB$ over $\rS,\mB$, respectively, via evaluation at the source.
There is a canonical equivalence $(\mB \times_\rS \mC)' \simeq \mB \times_\rS \mC' $ over $\mB$, where we view $\mC', (\mB \times_\rS \mC)'$ over $\rS$, $\mB$, respectively, via evaluation at the source.
This equivalence yields an equivalence
$ \mB \times_\rS \Fun^{\rS \times \ast}_{\rS \times \T}(\mC', \rS \times \mD) \simeq  \Fun^{\mB \times \ast}_{\mB \times \T}((\mB \times_\rS \mC)', \mB \times \mD)$ over $\mB$ that restricts to an equivalence
\begin{equation}\label{eqwa}
\mB \times_\rS \Fun^{\rS, \mE,\delta}(\mC,\mD) \simeq \Fun_\T^{\mB, \mE_\mB,\delta}(\mB \times_\rS \mC,\mD).
\end{equation}

There is a canonical equivalence $\Fun_\rS^{\cocart, \mE}(\mB, \Fun_\T^{\rS, \mE,\delta}(\mC,\mD)) \simeq \Fun_\mB^{\cocart, \mE_\mB}(\mB, \mB \times_\rS \Fun_\T^{\rS, \mE,\delta}(\mC,\mD)).$
Thus via equivalence (\ref{eqwa}) the general case follows from equivalence (\ref{eqhvb}), which we will prove in the following:
the diagonal embedding $\rS \subset \Fun([1], \rS)$ and so the embedding $\rS \subset \mE$ admit a left adjoint
relative to $\rS$, where we now view $\mE$ over $\rS$ via evaluation at the target.
This localization relative to $\rS$ gives rise to a localization
$\mC \subset \mC'$ relative to $\mC$ (and so also $\T$) with left adjoint the projection $\mC' \to \mC$. This localization gives rise to a localization
$\Fun_\T(\mC, \mD) \subset \Fun_\T(\mC', \mD).$
The following three conditions on a functor
$\alpha: \mC' \to \mD$ over $\T$ are equivalent, where $\theta: \mC' \to \mE \to \rS$ is evaluation at the source:
\begin{itemize}
\item $\alpha$ inverts local equivalences, i.e. those morphisms of $\mC'$, whose image in $\mC$ is an equivalence.
\item $\alpha$ inverts morphisms of $\mC'$ that are $\theta$-cartesian 
lying over morphisms of $\mE$ via $\theta$, which are exactly the maps of
$\mC'$, whose image in $\mC$ is invertible and whose image under $\theta$ belongs to $\mE$.
\item $\alpha$ inverts local equivalences with local target (whose image under $\theta$ has to belong to $\mE$).
\end{itemize}

For any functor $\mC' \to \mD$ over $\T$ corresponding to
a functor $\mC' \to \rS \times \mD$ over $\rS \times \T$ corresponding
to a section $\beta$ of $\kappa: \Fun^{\rS \times \ast}_{\rS \times \T}(\mC', \rS \times \mD) \to \rS$ the functor $\mC' \to \mD$ over $\T$ satisfies the conditions above if and only if the section $\beta$ sends morphisms of $\mE$
to $\kappa$-cocartesian morphisms.
In other words the full subcategories $\Fun_\T(\mC, \mD) \subset \Fun_\T(\mC', \mD)$ and 
$$\Fun^{\cocart,\mE}_\rS(\rS, \Fun^{\rS \times \ast}_{\rS \times \T}(\mC', \rS \times \mD)) \subset \Fun_\rS(\rS, \Fun^{\rS \times \ast}_{\rS \times \T}(\mC', \rS \times \mD))\simeq \Fun_{\rS \times \T}(\mC', \rS \times \mD) \simeq \Fun_{\T}(\mC', \mD)$$
coincide. So we obtain an equivalence
$\Fun_\T(\mC, \mD)\simeq \Fun^{\cocart,\mE}_\rS(\rS, \Fun^{\rS \times \ast}_{\rS \times \T}(\mC', \rS \times \mD))$
that restricts to the desired equivalence 
(\ref{eqhvb}): for any functor $\lambda: \mC \to \mD $ over $\T$ corresponding to a section $\beta$ of $\kappa: \Fun^{\rS \times \ast}_{\rS \times \T}(\mC', \rS \times \mD) \to \rS$
the section $\beta$ lands in $ \Fun_\T^{\rS, \mE}(\mC,\mD)$ if and only if
for all $\X \in \rS$ the composition $\mE_{\X} \times_\rS \mC \to \mC \xrightarrow{\lambda} \mD$ sends morphisms whose image in $\mC$ is a cocartesian lift of a map of $\delta$, to cocartesian lifts of maps of $\delta$.
This is equivalent to ask that $\lambda: \mC \to \mD $ preserves cocartesian lifts of maps of $\delta$.
\end{proof}

\begin{corollary}\label{ewq}
Let $\tau: (\T,\delta) \to (\rS,\mE) $ be a map of relative $\infty$-categories, $\psi:\mC \to \T$ a cocartesian fibration relative to $\delta$ such that the composition $\mC \to \T \to \rS$ is flat.
If evaluation at the source $\rho: \mE \to \rS$ is flat, the functor $(-)\times_\rS \mC: \Cat^\mE_{\infty/\rS} \to \Cat^\delta_{\infty/\T}$ admits a right adjoint, which we denote by $\Fun^{\rS, \mE, \delta}_{\T}(\mC,-).$
	
\end{corollary}

For $\tau$ the identity we obtain the following important corollary:

\begin{corollary}\label{eeayd}

Let $(\rS,\mE)$ be a relative $\infty$-category and $\mC \to \rS$ a cocartesian fibration relative to $\mE$.
If the functors $\mC \to \rS$ and evaluation at the source $\rho: \mE \to \rS$ are flat, the functor $(-)\times_\rS \mC : \Cat^\mE_{\infty/\rS} \to \Cat^\mE_{\infty/\rS}$ admits a right adjoint, which we denote by $\Fun^{\rS, \mE}(\mC,-).$

\end{corollary}

For any functor $\mB \to \rS$ and morphism $\alpha: \X \to \Y$ in $\rS$ let $\mB_\alpha \to [1]$
be the pullback of $\mB \to \rS$ along the functor $[1]\to \rS$ taking $\alpha$.

\begin{remark}\label{forms}
Let $(\rS,\mE),(\T,\delta)$ be relative $\infty$-categories, $\psi:\mC \to \T, \mD \to \T$ cocartesian fibrations relative to $\delta$ and $\tau: \mC \to \rS$ a flat functor that sends $\psi$-cocartesian lifts of morphisms of $\delta$ to $\mE.$
Assume that evaluation at the source $\rho: \mE \to \rS$ is flat.

\begin{enumerate}
\item By construction of $\Fun^{\rS, \mE, \delta}_{\T}(\mC,\mD) \to \rS$
(given in the proof of Proposition \ref{ewq}) for every morphism $\alpha: \X \to \Y$ in $\rS$ there are canonical equivalences
$$\Fun^{\rS, \mE, \delta}_{\T}(\mC,\mD)_\X \simeq \Fun^{\cocart, \delta}_{\T}(\mE_{\X} \times_\rS \mC, \mD),$$
$$\Fun_{[1]}([1],\Fun^{\rS, \mE, \delta}_{\T}(\mC,\mD)_\alpha) \simeq \Fun'_{\T}(\mE_{\alpha} \times_\rS \mC, \mD),$$
where $ \Fun'_{\T}(\mE_{\alpha} \times_\rS \mC, \mD) \subset \Fun_{\T}(\mE_{\alpha} \times_\rS \mC, \mD)$ is the full subcategory of functors over $\T$ whose restrictions to $\mE_\X,\mE_\Y \subset \mE_\alpha$ are maps of cocartesian fibrations relative to $\delta.$
Note that $\mE_{\X} \times_\rS \mC \to \T$ is a cocartesian fibration relative to $\delta$ since $\mE_{\X} \to \rS$ is a cocartesian fibration relative to $\mE.$

\item For any map $\beta: \Y \to \Z$ that belongs to $\mE$ the induced functor $\Fun^{\rS, \mE, \delta}_{\T}(\mC,\mD)_\Y \to \Fun^{\rS, \mE, \delta}_{\T}(\mC,\mD)_\Z$
identifies with the functor $\Fun^\cocart_{\T}(\mE_{\Y} \times_\rS \mC, \mD) \to \Fun^\cocart_{\T}(\mE_{\Z} \times_\rS \mC, \mD)$
precomposing with the functor $ \mE_{\Z} \times_\rS \mC \to \mE_{\Y} \times_\rS \mC$ over $\T$ induced by the functor $ \mE_{\Z} \to \mE_{\Y}$ over $\rS$. 

\item For any map $\alpha \to \beta$ in $\Fun([1],\rS)$
whose evaluation at the source and target belongs to $\mE$, the induced functor $\Fun_{[1]}([1],\Fun^{\rS, \mE, \delta}_{\T}(\mC,\mD)_\alpha) \to \Fun_{[1]}([1], \Fun^{\rS, \mE, \delta}_{\T}(\mC,\mD)_\beta)$
identifies with the functor $\Fun^\cocart_{\T}(\mE_{\alpha} \times_\rS \mC, \mD) \to \Fun^\cocart_{\T}(\mE_{\beta} \times_\rS \mC, \mD)$
precomposing with the functor $ \mE_{\beta} \times_\rS \mC \to \mE_{\alpha} \times_\rS \mC$ over $\T$ induced by the functor $ \mE_{\beta} \to \mE_{\alpha}$
over $\rS$.

\end{enumerate}

\end{remark}

Let $(\rS,\mE)$ be a relative $\infty$-category and $\alpha:\rS' \to \rS$ a 
cocartesian fibration relative to $\mE.$
Let $\mE' \subset \Fun([1],\rS')$ be the full subcategory of $\alpha$-cocartesian lifts of morphisms of $\mE$.
The pair $(\rS',\mE')$ is a relative $\infty$-category that we call the
relative $\infty$-category induced by $\alpha.$

\begin{lemma}\label{pulll}
Let $(\rS,\mE),(\T,\delta)$ be relative $\infty$-categories and $\alpha:\rS' \to \rS, \beta: \T' \to \T$ cocartesian fibrations relative to $\mE,\delta$, respectively, inducing relative $\infty$-categories $(\rS',\mE'),(\T',\delta').$

Let $\psi:\mC \to \T$ be a cocartesian fibration relative to $\delta$
and $\psi': \mC':= \T' \times_\T \mC \to \T'$. Let $\mC \to \rS$ be a flat functor sending $\psi$-cocartesian lifts of morphisms of $\delta$ to $\mE.$
Let $\kappa: \mC' \simeq \rS' \times_\rS \mC$ be an equivalence over $\T$.
The assumptions imply that $\mC' \simeq \rS' \times_\rS \mC \to \rS'$ sends $\psi'$-cocartesian lifts of morphisms of $\delta'$ to $\mE'$.

\vspace{1mm}
If $\rho: \mE \to \rS$ and so $ \rho':\mE' \simeq \rS' \times_\rS \mE \to \rS'$ are flat, the functors $(-)\times_\rS \mC: \Cat^\mE_{\infty/\rS} \to \Cat^\delta_{\infty/\T},$
$(-)\times_{\rS'} \mC': \Cat^{\mE'}_{\infty/\rS'} \to \Cat^{\delta'}_{\infty/\T'}$ have right adjoints $\Fun_\T^{\rS, \mE,\delta}(\mC,-), \Fun_{\T'}^{\rS', \mE',\delta'}(\mC',-)$ by Proposition \ref{ewq}.

\vspace{1mm}

For any cocartesian fibration $\mD \to \T$ relative to $\delta$, there is a canonical equivalence over $\rS'$:
$$\Fun_{\T'}^{\rS', \mE',\delta'}(\T' \times_\T \mC,\T' \times_\T \mD) \simeq \rS' \times_\rS \Fun_\T^{\rS, \mE,\delta}(\mC,\mD).$$

\end{lemma}

\begin{proof}
For any cocartesian fibration $\mB \to \rS'$ relative to $\mE'$
there is a canonical equivalence
$$\Fun^{\mE'}_{\rS'}(\mB,\Fun_{\T'}^{\rS', \mE',\delta'}(\T' \times_\T \mC,\T' \times_\T \mD)) \simeq \Fun^{\delta'}_{\T'}(\mB \times_{\rS'} \mC',\T' \times_\T \mD) \simeq \Fun^{\delta}_{\T}(\mB \times_{\rS'} \mC',\mD) $$
$$\simeq \Fun^{\mE}_{\rS}(\mB,\Fun_\T^{\rS, \mE,\delta}(\mC,\mD))\simeq  \Fun^{\mE'}_{\rS'}(\mB,\rS' \times_\rS \Fun_\T^{\rS, \mE,\delta}(\mC,\mD)).$$
	
\end{proof}

\begin{lemma}\label{bbbn}
	
Let $\L,\R $ be $\infty$-categories and $(\rS,\mE),(\T,\delta)$ relative $\infty$-categories such that evaluation at the source $\mE \to \rS$ is flat.

\begin{enumerate}
\item Let $\psi=(\psi_1,\psi_2):\mC \to \L \times \T$ be a cocartesian fibration relative to $\L \times \delta$, $\phi:\mD \to \T$ a cocartesian fibration relative to $\delta$ and $\kappa=(\kappa_1,\kappa_2): \mC \to \L \times \rS$ a flat functor sending $\psi$-cocartesian lifts of morphisms of $\L \times \delta$ to $\L \times \mE$ such that $(\kappa_1,\psi_2):\mC \to \L \times \T$ is a cocartesian fibration relative to $\L\times \delta$.
If $\kappa, (\kappa_1,\psi_2)$ are maps of cocartesian fibrations over $\L$, then
$\Fun_{\L \times \T}^{\L \times \rS, \L \times \mE,\L \times \delta}(\mC,\L \times \mD)\to \L \times \rS$ is a map of cartesian fibrations over $\L.$

\vspace{1mm}
\item Let $\psi:\mC \to \T $ be a cocartesian fibration relative to $\delta$ and $\kappa: \mC \to \rS $ a flat functor sending $\psi$-cocartesian lifts of morphisms of $\delta$ to $\mE.$
For every cocartesian fibration $\phi:\mD \to \T \times \R$ relative to $\delta \times \Fun([1],\R)$ the functor $\Fun_{\T \times \R}^{\rS \times \R, \mE \times \R,\delta \times \R}(\mC\times \R,\mD)\to \rS \times \R$ is a map of cocartesian fibrations over $\R.$

\vspace{1mm}

\item Let $\psi=(\psi_1,\psi_2):\mC \to \L \times \T$ be a cocartesian fibration relative to $\L \times \delta$, $\kappa=(\kappa_1,\kappa_2): \mC \to \L \times \rS$ a flat functor sending $\psi$-cocartesian lifts of morphisms of $\L \times \delta$ to $\L \times \mE$ and $\phi:\mD \to \T \times \R$ a cocartesian fibration relative to $\delta \times \R$ such that $(\kappa_1,\psi_2):\mC \to \L \times \T$ is a cocartesian fibration relative to $\L\times \delta$.
Let $\zeta$ be the canonical functor $$\Fun_{\L \times \T \times \R}^{\L \times \rS \times \R, \L \times \mE \times \R,\L \times \delta \times \R}(\mC\times \R,\L \times \mD)\to \L \times \rS \times \R.$$

If the functor $\phi:\mD \to \T \times \R$ is a map of cocartesian fibrations over $\R$, then $\zeta$ is a map of cocartesian fibrations over $\R.$
If $\kappa,(\kappa_1,\psi_2)$ are maps of cocartesian fibrations over $\L$, 
then $\zeta$ is a map of cartesian fibrations over $\L$.

\end{enumerate}
	
\end{lemma}

\begin{proof}
(1): We apply the notation of the proof of Proposition \ref{ewq}.
Let $$\lambda: \mC':=\mE \times_{\Fun(\{1\},\rS)} \mC \simeq (\L \times\mE \times \T) \times_{(\L \times\Fun(\{1\},\rS)) \times \T} \mC \to \L \times \mE \times \T \to \L \times \Fun(\{0\},\rS) \times \T,$$ 
where we use $\kappa_1,\kappa_2,\psi_2$ for the pullback.
Note that $\lambda$ is a map of cocartesian fibrations over $\L$ as it is the pullback of cocartesian fibrations over $\L$ and maps of such,
where we use that $\kappa_2, \psi_2$ invert $\kappa_1$-cocartesian morphisms.
By construction 
$\Fun_{\L \times \T}^{\L \times \rS, \L \times \mE,\L \times \delta}(\mC,\L \times \mD)$ is the full subcategory of $$\Fun_{\L \times \rS \times \L \times \T}^{\L \times \rS}(\mC',\L \times \rS \times \L \times \mD) \simeq \Fun_{\L \times \rS \times \T}^{\L \times \rS}(\mC',\L \times \rS \times \mD)$$ whose fiber over any $\X \in \L, \Y \in \rS$
consists of the functors $\mC'_{\X,\Y} \simeq \mE_\Y \times_{\Fun(\{1\},\rS)} \mC_\X \to \mD$ over $\T$
sending morphisms whose image in $\mC_\X$ is a cocartesian lift of a morphism of $\delta$, to $\phi$-cocartesian morphisms.	

By Lemma \ref{fibb} the functor 
$\Fun^{\L \times \rS}_{\L \times \rS \times \T}(\mC',\L \times \rS \times \mD) \to \L \times \rS $ is a map of cartesian fibrations over $\L.$
Because $(\kappa_1,\psi_2)$ is a cocartesian fibration relative to $\Fun([1],\L)\times \delta$,
for every morphism $\X \to \X'$ in $\L$ the induced functor
$\mC_\X \to \mC_{\X'}$ over $\rS$ and over $\T$ is a map of cocartesian fibrations relative to $\delta$. So the restriction $\Fun_{\L \times \T}^{\L \times \rS, \L \times \mE,\L \times \delta}(\mC,\L \times \mD)\to \L$
is a cartesian fibration with the same cartesian morphisms.

\vspace{1mm}
(2): By construction $\Fun_{\T \times \R}^{\rS \times \R, \mE \times \R,\delta \times \R}(\mC\times \R,\mD)$ is the full subcategory of $\Fun_{\rS \times \R \times \T \times \R}^{\rS \times \R}(\mC' \times \R,\rS \times \R \times \mD)$ whose fiber over any $\X \in \rS, \Y \in \R$
consists of the functors $\mE_\X \times_{\Fun(\{1\},\rS)} \mC \to \mD_\Y$ over $\T$
sending morphisms whose image in $\mC$ is a cocartesian lift of a morphism of $\delta$, to $\phi$-cocartesian morphisms.	

The canonical functor $\alpha: \Fun_{\rS \times \R \times \T \times \R}^{\rS \times \R}(\mC' \times \R,\rS \times \R \times \mD) \to \rS \times \R$ is the pullback of the functor $\phi_*: \Fun^{\rS}(\mC',\rS \times \mD)\to \Fun^{\rS}(\mC',\rS \times \T \times \R) \simeq \Fun^{\rS}(\mC',\rS \times \T) \times_\rS  \Fun^{\rS}(\mC', \rS \times \R)$ induced by $\phi$
along the functor $\xi:\rS \times \R \to \Fun^{\rS}(\mC',\rS \times \T \times \R) $ over $\rS $ given by $\mC'\times \R \to \T \times \R.$

Because $\phi: \mD \to \T \times \R$ is a cocartesian fibration relative to the collection of morphisms whose first component is an equivalence, the induced functor $\phi_*$ is a cocartesian fibration relative to the collection of morphisms whose first component is an equivalence. Moreover a morphism whose first component is an equivalence, is $\phi_*$-cocartesian if and only if it is object-wise $\phi$-cocartesian.
Since $\xi$ preserves morphisms whose first component is an equivalence,
the pullback $\alpha$ of $\phi_*$ along $\xi$ is a cocartesian fibration
relative to the collection of maps whose first component is an equivalence.
Because $\phi:\mD \to \T \times \R$ is a cocartesian fibration relative to $\delta \times \Fun([1],\R)$, for any $\Y \to \Y'$ in $\R$ the induced functor $\mD_\Y \to \mD_{\Y'}$ over $\T$ is a map of cocartesian fibrations relative to $\delta$. So the restriction $\Fun_{\T \times \R}^{\rS \times \R, \mE \times \R,\delta \times \R}(\mC\times \R,\mD)\to \rS \times \R$
is a map of cocartesian fibrations over $\R.$ (3) follows from (1),(2).
\end{proof}

Recall the definition of a factorization system \cite[Definition 5.2.8.8., Proposition 5.2.8.17.] {lurie.HTT}:

\begin{definition}
Let $\mC$ be an $\infty$-category. A factorization system on $\mC$ is a pair $(\L,\R)$ of full subcategories of $\Fun([1],\mC)$ such that 
\begin{itemize}
\item $(\mC,\L),(\mC,\R)$ are relative $\infty$-categories,
\item Restriction along the embedding $[1]\simeq \{0<2\} \subset [2]$ induces an equivalence $\Fun([2],\mC)' \to \Fun([1],\mC)$,
where $\Fun([2],\mC)' \subset \Fun([2],\mC)$ is the full subcategory of functors $[2] \to \mC$
whose restriction to $0 \to 1$ is in $\L$ and whose restriction to $1 \to 2$ is in $\R.$
\end{itemize}
\end{definition}

The following definition is due to \cite[Definition 2.1.]{Rune}.

\begin{definition}

An algebraic pattern is a quadruple $(\mO,\mO^\el,\mO^\inert,\mO^\act)$,
where $\mO$ is an $\infty$-category, $\mO^\el $ is a full subcategory
of $\mO$ whose objects we call elementary, and $\mO^\inert,\mO^\act $ are full subcategories of $\Fun([1],\mO)$ whose objects we call inert, active morphisms of
$\mO$, respectively, such that $(\mO^\inert,\mO^\act)$ is a factorization system on
$\mO.$

\end{definition}

\begin{example}\label{nooor}

Let $(\mO,\mO^\el,\mO^\inert,\mO^\act)$ be an algebraic pattern and $\psi: \mO' \to \mO $ a cocartesian fibration relative to $\mO^\inert.$
We call a morphism of $\mO'$ inert if it is a $\psi$-cocartesian lift of an inert morphism of $\mO$. We call a morphism of $\mO'$ active if its image in $\mO$ is active. We call an object of $\mO'$ elementary if its image in $\mO$ is elementary.
Let $\mO'^\inert, \mO'^\act \subset \Fun([1],\mO')$ be the full subcategories of inert, active morphisms, respectively,
and let $\mO'^\el \subset \mO'$ be the full subcategory of elementary objects.

Then $(\mO',\mO'^\el,\mO'^\inert,\mO'^\act)$ is an algebraic pattern.

\end{example}

\begin{example} 
Let $\Ass^\inert,\Ass^\act \subset \Fun([1],\Ass)$ be the full subcategories of inert, active morphisms, respectively.
Then $(\Ass, \{[1]\}, \Ass^\inert,\Ass^\act),\hspace{1mm} (\Ass, \{[0],[1]\}, \Ass^\inert,\Ass^\act)$ are algebraic pattern.
So by the last example for any cocartesian fibration $\mO \to \Ass$ relative to $\Ass^\inert$ we obtain algebraic pattern
$(\mO,\{[1]\} \times_\Ass \mO, \mO^\inert,\mO^\act), \ (\mO,\{[0], [1]\} \times_\Ass \mO, \mO^\inert,\mO^\act).$ 
\end{example}

\begin{example}\label{extor}
Let $\mV^\ot \to \Ass$ be a generalized $\infty$-operad.

There is an algebraic pattern $\mathfrak{P}_\L$ on $\mV^\ot$,
where the inert morphisms are the cocartesian lifts of inert maps $[\n] \to [\m]$ of $\Delta$ preserving the minimum,
the active morphisms are the morphisms of $\mV^\ot$
lying over a map $[\n] \to [\m]$ of $\Delta$ preserving the maximum,
and the elementary objects are the objects of $\mV^\ot$ lying over $[0]\in \Ass.$

Similarly, there is an algebraic pattern $\mathfrak{P}_\R$ on $\mV^\ot$,
where the inert morphisms are the cocartesian lifts of inert maps $[\n] \to [\m]$ of $\Delta$ preserving the maximum,
the active morphisms are the morphisms of $\mV^\ot$
lying over a map $[\n] \to [\m]$ of $\Delta$ preserving the minimum,
and the elementary objects are the objects of $\mV^\ot$ lying over $[0]\in \Ass.$

\end{example}

\begin{notation}

Let $(\mO,\mO^\el,\mO^\inert,\mO^\act)$ be an algebraic pattern.
Let $\mO^\inert_\el \subset \mO^\inert$ be the full subcategory of inert morphisms whose target belongs to $\mO^\el$.
For $\X \in \mO$ let $\mO^\inert_{\X /} \subset \mO_{\X/}$ be the full subcategory of inert morphisms $\X \to \Y$ and $\mO^\el_{\X /} \subset \mO^\inert_{\X/}$ be the full subcategory of inert morphisms $\X \to \Y$ such that $\Y \in \mO^\el.$
So $\mO^\el_{\X/}$ is the fiber of evaluation at the source $\mO^\inert_\el \to \mO$.
\end{notation}

Evaluation at the source $\Fun([1],\mO) \to \mO$ restricts to cartesian fibrations $\mO^\inert \to \mO, \mO^\inert_\el \to \mO$ relative to $\mO^\inert$.
For every $\X \in \mO$ the forgetful functor $\mO^\inert_{\X/} \to \mO$
sends any morphism to an inert morphism.
Since $\mO^\inert_{\X/}$ has an initial object, for any cocartesian fibration $\mC \to \mO$ relative to $\mO^\inert$,
we obtain a canonical map $\mO^\inert_{\X/} \times \mC_\X \to \mO^\inert_{\X/} \times_\mO \mC$ of cocartesian fibrations over $\mO^\inert_{\X/}$.
The pullback of the latter functor to $\mO^\el_{\X/}$ corresponds to a functor $\mC_\X \to \Fun^\cocart_{\mO^\el_{\X/}}(\mO^\el_{\X/},\mO^\el_{\X/} \times_\mO \mC)\simeq \lim_{\Z \in \mO^\el_{\X/}} \mC_\Z$ used in the following definition:

\begin{definition}
Let $\mathfrak{P}=(\mO,\mO^\el,\mO^\inert,\mO^\act)$ be an algebraic pattern.
A $\mathfrak{P}$-fibrous object is a cocartesian fibration $\phi: \mC \to \mO$
relative to $\mO^\inert$ such that the following conditions hold:

\begin{enumerate}
\item For every $\X \in\mO$ the induced functor
$$ \mC_\X \to  \lim_{\Z \in \mO^\el_{\X/}} \mC_\Z$$
is an equivalence.

\vspace{1mm}
\item For any $\Y \in \mC$ the forgetful functor $\mC^\el_{\Y/} \to \mC$ is a $\phi$-limit diagram, i.e. for every $\Z \in \mC$ the following commutative square is a pullback square:
\begin{equation*}
\begin{xy}
\xymatrix{
\mC(\Z, \Y) \ar[d]
\ar[r]^{}
&  \lim_{\Y' \in \mC^\el_{\Y/}} \mC(\Z,\Y') \ar[d]^{} 
\\
\mO(\phi(\Z), \phi(\Y)) \ar[r]^{}  & \lim_{\Y' \in \mC^\el_{\Y/}} \mO(\phi(\Z),\phi(\Y')) }
\end{xy}
\end{equation*}

\end{enumerate}	

\end{definition}

\begin{remark}
Condition (2) implies that the functor in (1) is fully faithful
by taking the fiber over the identity of $\phi(\Z) $ in the square of (2).
So if (2) holds, (1) holds if and only if the functor in (1) is essentially surjective.
\end{remark}

\begin{example}Let $\mO \to \Ass$ be a cocartesian fibration relative to $\Ass^\inert$.

\begin{itemize}
\item 
A cocartesian fibration $\mC \to \mO$ relative to $\mO^\inert$
is $(\mO,\{[1]\} \times_\Ass \mO, \mO^\inert,\mO^\act)$-fibrous if and only if it is an $\mO$-operad.

\item 
A cocartesian fibration $\mC \to \mO$ relative to $\mO^\inert$
is $(\mO,\{[0], [1]\} \times_\Ass \mO, \mO^\inert,\mO^\act)$-fibrous if and only if it is a generalized $\mO$-operad.
\end{itemize}		
\end{example} 

For the next example we use the algebraic pattern of Example \ref{extor}.
It follows from Remark \ref{porto}:

\begin{example}\label{exuk}
	
Let $\mV^\ot \to \Ass, \mW^\ot \to \Ass$ be generalized $\infty$-operads.

\begin{itemize}
\item A functor $\mM^\circledast \to \mV^\ot$ is $\mathfrak{P}_\L$ fibrous if and only if it is a weakly left tensored $\infty$-category.
 
\item A functor $\mM^\circledast \to \mV^\ot$ is $\mathfrak{P}_\R$ fibrous if and only if it is a weakly right tensored $\infty$-category.
 
\item A functor $\mM^\circledast \to \mV^\ot \times \mW^\ot $ is $\mathfrak{P}_\L \times \mathfrak{P}_\R$ fibrous if and only if it is a weakly bitensored $\infty$-category.
	
\end{itemize}
	
\end{example}

\begin{lemma}\label{exx}
Let $(\mO, \mO^\el,\mO^\inert,\mO^\act)$ be an algebraic pattern.
Evaluation at the source $\mO^\inert \to \mO$ is a cartesian fibration
(and so in particular flat).
\end{lemma}

\begin{proof}
By definition there is an inert active factorization system on $\mO$.
So by \cite[Lemma 5.2.8.19.]{lurie.HTT} $\mO^\inert \subset \Fun([1],\mO)$ is a colocalization relative to $\mO$ (via evaluation at the source). Since evaluation at the source $\Fun([1],\mO) \to \mO$ is a cartesian fibration, also its colocalization $\mO^\inert \subset \Fun([1],\mO) \to \mO$ relative to $\mO$
is a cartesian fibration by (the dual of) Lemma \cite[2.2.4.11.]{lurie.higheralgebra}.
\end{proof}

\vspace{1mm}

Let $\mathfrak{P}=(\mO, \mO^\el,\mO^\inert,\mO^\act), \mathfrak{P'}=(\mO', \mO'^\el,\mO'^\inert, \mO'^\act)$ be algebraic pattern, $\mC \to \mO',\mD \to \mO'$ cocartesian fibrations relative to $\mO'^\inert$ and $\mC \to \mO$
a flat functor that sends cocartesian lifts of morphisms of $\mO'^\inert$
to $\mO^\inert.$
By Lemma \ref{exx} evaluation at the source $\rho: \mO^\inert \to \mO$ is flat.
Thus by Proposition \ref{ewq} the functor $(-)\times_\mO \mC: \Cat^{\mO^{\inert}}_{\infty/\mO} \to \Cat^{\mO'^\inert}_{\infty/\mO'}$ admits a right adjoint, which we denote by $\Fun^{\mathfrak{P}}_{\mathfrak{P'}}(\mC,-).$

\begin{theorem}\label{opppl}

Let $\mathfrak{P}=(\mO, \mO^\el,\mO^\inert,\mO^\act), \mathfrak{P'}=(\mO', \mO'^\el,\mO'^\inert, \mO'^\act)$ be algebraic pattern, $\mC \to \mO',\mD \to \mO'$ cocartesian fibrations relative to $\mO'^\inert$ and $\mC \to \mO$
a flat functor that sends cocartesian lifts of morphisms of $\mO'^\inert$
to $\mO^\inert.$

Assume that for every $\X \in \mO$ the $\infty$-category
$\mO^\el_{\X/}$ is weakly contractible and the canonical functor $\colim_{\Z \in\mO^\el_{\X/}}\mO^\el_{\Z/} \to\mO^\el_{\X/}$ is an equivalence.
If $\mD \to \mO'$ is $\mathfrak{P'}$-fibrous, then $\Fun^{\mathfrak{P}}_{\mathfrak{P'}}(\mC,\mD) \to \mO$ is $\mathfrak{P}$-fibrous.	

\end{theorem}

We apply Theorem \ref{opppl} to the algebraic pattern underlying a cocartesian fibration $\mO \to \Ass$ relative to the collecction of inert morphisms.
For that we need the following lemma:

\begin{lemma}\label{limo}
Let $\mO \to \Ass$ be a cocartesian fibration relative to the collection of inert morphisms.
For every $\X \in \mO$ the $\infty$-category
$\mO^\el_{\X/}$ is weakly contractible and the functor $\colim_{\Z \in\mO^\el_{\X/}}\mO^\el_{\Z/} \to\mO^\el_{\X/}$ is an equivalence.

\end{lemma}

\begin{proof}
For any $\Z\in \mO_{[0]}$ the $\infty$-category
$\mO^\el_{\Z/} $ is contractible, for any $\Z\in \mO_{[1]}$ we have
$\mO^\el_{\Z/} \simeq \Lambda_0^2$ and for any $\Z\in \mO_{[\n]}$ for $\n \geq 2$ we have $\mO^\el_{\Z/} \simeq \Lambda_0^2 \coprod_{[0]} \Lambda_0^2\coprod_{[0]}... \coprod_{[0]} \Lambda_0^2$ using $\n$ copies. 

The functor $\Lambda_2^2 \coprod_{[0]} \Lambda_2^2\coprod_{[0]}... \coprod_{[0]} \Lambda_2^2 \simeq (\mO^\el_{\Z/})^\op \to \Cat_\infty, \Z \mapsto \mO^\el_{\Z/}$
encodes the diagram
$$\{0\} \subset \Lambda_0^2 \supset \{2=0\} \subset \Lambda_0^2 \supset \{2=0\} ... \{2=0\} \subset \Lambda_0^2 \supset \{2\}$$
whose colimit is $\Lambda_0^2 \coprod_{[0]} \Lambda_0^2\coprod_{[0]}... \coprod_{[0]} \Lambda_0^2 \simeq\mO^\el_{\X/}$ using $\n$ copies of $\Lambda_0^2.$

\end{proof}

Let $\mO \to \Ass, \mO' \to \Ass, \ \mC \to \mO', \ \mD \to \mO'$ be cocartesian fibrations relative to the collection of inert morphisms and $\mC \to \mO$ a flat functor that preserves inert morphisms.
By Proposition \ref{ewq} the functor $(-)\times_\mO \mC: \Cat^{\mO^{\inert}}_{\infty/\mO} \to \Cat^{\mO'^\inert}_{\infty/\mO'}$ admits a right adjoint, which we denote by $\Fun^{\mO}_{\mO'}(\mC,-).$

If $\mO'=\mO$ and the functors $\mC \to \mO'$ and $\mC \to \mO$ agree,
we write $\Fun^{\mO}(\mC,-)$ for $\Fun^{\mO}_{\mO'}(\mC,-).$

\vspace{2mm}

Theorem \ref{opppl} and Lemma \ref{limo} imply the following corollary:

\begin{theorem}\label{proop}\label{opppl}
Let $\mO \to \Ass, \mO' \to \Ass, \ \mC \to \mO', \ \mD \to \mO'$ be cocartesian fibrations relative to the collection of inert morphisms and $\mC \to \mO$ a functor over $\Ass$, where we see $\mC$ over $\Ass$ via $\mC \to \mO' \to \Ass,$
that is flat and preserves inert morphisms.
	
\begin{enumerate}
\item If $\mD \to \mO'$ is a generalized $\mO'$-operad, $\Fun_{\mO'}^{\mO}(\mC,\mD)\to \mO$ is a generalized $\mO$-operad.
		
\item If $\mD \to \mO'$ is an $\mO'$-operad, $\Fun_{\mO'}^{\mO}(\mC,\mD) \to \mO$ is an $\mO$-operad.
		
\end{enumerate}
	
\end{theorem}

\begin{proof}
(2) follows from Theorem \ref{opppl}.
(3) follows from (2) when we have shown that the functor
$\Fun_{\mO'}^{\mO}(\mC,\mD)_{[0]}\to \mO_{[0]}$ is an equivalence.
By Lemma \ref{pulll} there is a canonical equivalence
$$\Fun_{\mO'}^{\mO}(\mC,\mD)_{[0]} \simeq \mO_{[0]} \times_\mO \Fun_{\mO'}^{\mO}(\mC,\mD) \simeq \Fun_{\mO'_{[0]}}^{\mO_{[0]}}(\mC_{[0]},\mD_{[0]})$$
over $\mO_{[0]}.$ Since $\mD_{[0]} \to \mO'_{[0]}$ is an equivalence,
the result follows.

\end{proof}

\begin{lemma}\label{limot}
Let $\mV^\ot \to \Ass$ be a generalized monoidal $\infty$-category.
For every $\X \in \mV$ the $\infty$-category
$\mV^\el_{\X/}$ is weakly contractible and the functor $\colim_{\Z \in\mV^\el_{\X/}}\mV^\el_{\Z/} \to\mV^\el_{\X/}$ is an equivalence.

\end{lemma}

\begin{proof}
For any $\Z\in \mO$ the $\infty$-category $\mV^\el_{\Z/} $ is contractible.
\end{proof}

Theorem \ref{opppl} and Lemma \ref{limot} imply the following example:

\begin{example}

Let $\mV^\ot \to \Ass, \mW^\ot \to \Ass$ be generalized $\infty$-operads, $\mM^\circledast \to \mV^\ot \times \mW^\ot $ a weakly bitensored $\infty$-category and $\X, \Y$ be spaces.
Example \ref{exuk} and Theorem \ref{opppl} imply that the functor 
$$ \Fun(\X \times \Y,\mM)^\circledast:=$$$$\Fun^{\Quiv_\X(\mV)^\ot \times \Quiv_\Y(\mW)^\ot}_{\Quiv_\X(\mV)^\ot_\X \times \Quiv_\Y(\mW)^\ot_\Y}(\Quiv_\X(\mV)^\ot_\X \times \Quiv_\Y(\mW)^\ot_\Y, \Quiv_\X(\mV)^\ot_\X \times \Quiv_\Y(\mW)^\ot_\Y \times_{\mV^\ot \times \mW^\ot} \mM^\circledast) \to $$$$ \Quiv_\X(\mV)^\ot \times \Quiv_\Y(\mW)^\ot $$
is a weakly bitensored $\infty$-category, where we use the projection
$\Quiv_\X(\mV)^\ot_\X \times \Quiv_\Y(\mW)^\ot_\Y \to \Quiv_\X(\mV)^\ot \times \Quiv_\Y(\mW)^\ot.$

The functor $ \Fun(\X \times \Y,\mM)^\circledast\to \Quiv_\X(\mV)^\ot \times \Quiv_\Y(\mW)^\ot $ satisfies the universal property of Remark \ref{paraph}
and so identifies with the functor of Notation \ref{exxxx} with the same name.

\end{example}

Next we prepare the proof of Theorem \ref{opppl}.
To prove Theorem \ref{proop} we use the following lemma:

\begin{lemma}\label{ley}

Let $\mathfrak{P'}=(\mO', \mO'^\el,\mO'^\inert, \mO'^\act)$ be algebraic pattern, $\mC \to \mO'$ a cocartesian fibrations relative to $\mO'^\inert$, $\gamma: \mD \to \mO'$ a $\mathfrak{P'}$-fibered object and $\mC \to \mO$
a flat functor that sends cocartesian lifts of morphisms of $\mO'^\inert$
to $\mO^\inert$ and sends objects lying over elementary objects to elementary objects.
Let $\alpha: \X \to \Y$ be a morphism of $\mO.$
The functors $$\theta: \Fun_{\mO'}(\mO^\inert_{\X/}\times_\mO \mC, \mD) \to \Fun_{\mO'}(\mO^\el_{\X/} \times_\mO \mC, \mD), $$$$\zeta: \Fun_{\mO'}(\mO^\inert_\alpha \times_\mO \mC,\mD)\to \Fun_{\mO'}(\mO^\el_\alpha \times_\mO \mC,\mD)$$
restricting along the respective embeddings $\mO^\el_{\X/}\times_\mO \mC \subset \mO^\inert_{\X/}\times_\mO \mC$, $\mO^\el_\alpha \times_\mO \mC \subset \mO^\inert_\alpha \times_\mO \mC$ induce equivalences
$$\theta': \Fun^{\inert}_{\mO'}(\mO^\inert_{\X/}\times_\mO \mC, \mD) \to \Fun^{\inert}_{\mO'}(\mO^\el_{\X/} \times_\mO \mC, \mD), $$$$\zeta': \Fun'_{\mO'}(\mO^\inert_\alpha \times_\mO \mC,\mD) \to \Fun'_{\mO'}(\mO^\el_\alpha \times_\mO \mC,\mD).$$	
	
\end{lemma}

In the following we prepare the proof of Lemma \ref{ley}.

\begin{definition}
	
We say that a morphism $\f: \X \to \Y$ of an $\infty$-category
$\mC$ is left orthogonal to a morphism $\g: \A \to \B$ in $\mC$ if
the canonical map
$ \mC(\Y,\A) \to \mC(\X,\A) \times_{\mC(\X,\B)} \mC(\Y,\B) $
is an equivalence.

\end{definition}

\begin{lemma}\label{ortho}
	
Let $\phi: \mC \to \rS$ be a functor. Any $\phi$-cocartesian morphism $\X \to \Y$ is left orthogonal to any morphism $\A \to \B$ of $\mC$ that is inverted by $\phi$.
	
\end{lemma}

\begin{proof}
	
Consider the following commutative diagrams
\begin{equation*}
\begin{xy}
\xymatrix{
\mC(\Y,\A) \ar[d] \ar[r]^{}
&  \mC(\X,\A) \ar[d]
\\ \mC(\Y,\B)
\ar[r] \ar[d] &\mC(\X,\B) \ar[d]
\\\rS(\phi(\Y),\phi(\B))
\ar[r] & \rS(\phi(\X),\phi(\B)),
}
\end{xy} 
\begin{xy}
\xymatrix{
\mC(\Y,\A) \ar[d] \ar[r]^{}
&  \mC(\X,\A) \ar[d]
\\ \rS(\phi(\Y),\phi(\A))
\ar[r] \ar[d]^\simeq & \rS(\phi(\X),\phi(\A)) \ar[d]^\simeq
\\\rS(\phi(\Y),\phi(\B))
\ar[r] & \rS(\phi(\X),\phi(\B)).
}
\end{xy} 
\end{equation*}	
The outer square of left hand diagram is equivalent to the outer square of the right hand diagram.
Since the bottom square of the left hand diagram is a pullback square, and
the top and bottom square of the right hand diagram is a pullback square,
by the pasting laws the top square of the left hand diagram is a pullback square.
	
\end{proof}

\begin{remark}\label{colo}
	
Let $\mC$ be an $\infty$-category and $\f \to \h$ a morphism in $ \Fun([1],\mC)$
corresponding to a commutative square
\begin{equation*}
\begin{xy}
\xymatrix{
\X \ar[d]^{\f} \ar[r]^{\simeq}
& \Y \ar[d]^{\h}
\\  \T
\ar[r]^\g & \Z.
}
\end{xy} 
\end{equation*} 
	
For every morphism $\rt: \A \to \B$ in $\mC$ left orthogonal to $\g$
the induced map $$ \Fun([1],\mC)(\rt, \f) \to \Fun([1],\mC)(\rt, \h)$$
over $\mC(\A, \X) \simeq \mC(\A, \Y)$ is an equivalence
since it induces on the fiber over any map $\alpha : \A \to \X$ in $\mC$
the map induced by the map
$ \mC(\B,\T) \to \mC(\A,\T) \times_{\mC(\A,\Z)} \mC(\B,\Z) $ on the fiber over $\f \circ \alpha: \A \to \X \to \T.$
	
\vspace{2mm}
This has the following consequence: let $\mE, \L, \R \subset \Fun([1], \mC)$ be
full subcategories such that every morphism of $\mE$ factors as a morphism of $\L$ followed by a morphism of $\R$ and every $\f \in \L$ is left orthogonal to every $\g \in \R$.	
If $\L \subset \mE,$ the embedding $\L \subset \mE$ admits a right adjoint, where a morphism is a colocal equivalence if and only if its image under evaluation at the target belongs to $\R$ and its image under evaluation at the source is an equivalence.
	
\vspace{1mm}
	
Hence by Lemma \ref{ortho} for any cocartesian fibration $\phi: \mC \to \rS$ relative to some full subcategory $\mE \subset \Fun([1],\rS)$ the full subcategory of $ \mE \times_{\Fun([1],\rS)} \Fun([1],\mC)$ spanned by the $\phi$-cocartesian morphisms is a colocalization relative to $\Fun(\{0\},\mC) \times \mE.$ 	
	
\end{remark}

\begin{remark}\label{cofinal}
	
Let $\theta : \mA \rightleftarrows \mB:\R$ be an adjunction and $\rho: \mB \to \mC$ be a functor. The map
$$\lambda_\mC: \mC \times_{\Fun(\mB,\mC)} \Fun(\mB,\mC)_{/\rho} \to \mC \times_{\Fun(\mA,\mC)} \Fun(\mA,\mC)_{/\rho \circ \theta}$$
of right fibrations over $\mC$ is an equivalence, where the pullbacks are formed via the diagonal functors $\delta_\mB: \mC \to \Fun(\mB,\mC), \delta_\mA: \mC \to \Fun(\mA,\mC).$
	
Indeed, the adjunction $\theta : \mA \rightleftarrows \mB: \R$ gives rise to an adjunction
$\R^*: \Fun(\mA,\mC) \rightleftarrows \Fun(\mB,\mC): \theta^*$.
For any $\X \in \mC$ the canonical equivalence 
$\Fun(\mB,\mC)(\delta_\mA(\X) \circ \R,\rho) \simeq \Fun(\mA,\mC)(\delta_\mA(\X),\rho \circ \theta)$
factors as $$\Fun(\mB,\mC)(\delta_\mA(\X) \circ \R,\rho) \xrightarrow{\beta} \Fun(\mA,\mC)(\delta_\mA(\X) \circ \R\circ \theta,\rho\circ \theta) \xrightarrow{\gamma} \Fun(\mA,\mC)(\delta_\mA(\X),\rho \circ \theta),$$
where $\beta$ precomposes with $\theta$ and $\gamma$ precomposes with the equivalence $\delta_\mA(\X)\to \delta_\mA(\X) \circ \R \circ \theta.$
Hence $\gamma$ and so $\beta$ are equivalences.
The map induced by $\lambda_\mC$ on the fiber over $\X \in \mC$ identifies with $\beta$, where we use the equivalence $\delta_\mA(\X) \circ \R \simeq \delta_\mB(\X).$ This proves that $\lambda $ is an equivalence.
	
\vspace{2mm}

Let $\phi:\mC \to \mD$ be a functor and $\phi'$ the functor
$\mC \times_{\Fun(\mB,\mC)} \Fun(\mB,\mC)_{/\rho} \to \mD \times_{\Fun(\mB,\mD)} \Fun(\mB,\mD)_{/\phi \circ \rho}.$
By definition a $\phi$-final object is an object $\X \in \mC$ such that for any 
$\Y \in \mC$ the induced map $\mC(\Y,\X) \to \mD(\phi(\Y),\phi(\X))$ is an equivalence. A $\phi$-limit of $\rho$ is a $\phi'$-final object.
	
Since $\lambda_\mC, \lambda_\mD$ are equivalences, $\rho$ admits a $\phi$-limit if and only if $\rho \circ \theta$ admits a $\phi$-limit.
In this case the $\phi$-limit of $\rho$ and of $\rho \circ \theta$ correspond under restriction along $\theta.$
	
If $\mB$ has an initial object $\emptyset$, the embedding $\mA \subset \mB$ of the contractible full subcategory $\mA$ spanned by the initial objects admits a right adjoint. So $\rho(\emptyset)$ is a $\phi$-limit of $\rho$.
	
\end{remark}

\begin{proof}[Proof of Lemma \ref{ley}]
	
It is enough to prove that the following conditions hold:
\begin{enumerate}
\item The functor $\theta'$ is conservative.
		
\item The functor $\zeta'$ is conservative.
		
\item The map $\theta$ has a fully faithful right adjoint
taking $\Fun^{\inert}_{\mO'}(\mO^\el_{\X/} \times_\mO \mC, \mD)$ to $\Fun^{\inert}_{\mO'}(\mO^\inert_{\X/}\times_\mO \mC, \mD)$.
		
\item The functor $\zeta$ has a fully faithful right adjoint 
taking $\Fun'_{\mO'}(\mO^\el_\alpha \times_\mO \mC,\mD)$ to $\Fun'_{\mO'}(\mO^\inert_\alpha \times_\mO \mC,\mD)$.
\end{enumerate}
	
We start with proving (1). Let $\tau: \F \to \G$ be a map in $\Fun^\inert_{\mO'}(\mO^\inert_{\X/}\times_\mO \mC, \mD)$ such that for any 
$\K \in \mO^\el_{\X/}\times_\mO \mC$ the component $\tau_\K: \F(\K)\to \G(\K)$ is an equivalence.
We wish to see that for any $\T \in \mO^\inert_{\X/}\times_\mO \mC$ the component $\tau_\T: \F(\T)\to \G(\T)$ is an equivalence.
Let $\W \in \mO'$ be the image of $\T$ in $\mO'.$
	
The component $\tau_\T$ is a morphism in $\mD_\W \simeq \lim_{\W'\in \mO'^\el_{\W/}} \mD_{\W'}$ and so is an equivalence if and only if for any inert morphism $\W \to \W'$ in $\mO'$, where $\W' \in \mO'^\el$, the image of $\tau_\T$ under the induced functor $\mD_\W \to \mD_{\W'}$ is an equivalence.
Let $\T \to \T'$ be an inert lift in $\mO^\inert_{\X/}\times_\mO \mC$ of an inert map $\W \to \W'$ in $\mO'$, where $\W' \in \mO'^\el,$ 
and consider the commutative square:
\begin{equation*}
\begin{xy}
\xymatrix{
\F(\T) \ar[d] \ar[rr]^{\tau_\T}
&&\G(\T)  \ar[d]
\\ \F(\T')
\ar[rr]^{\tau_{\T'}} && \G(\T')
}
\end{xy} 
\end{equation*} 
By assumption the vertical morphisms of the square are inert.
Thus $\tau_{\T'}$ is the image of $\tau_\T$ under the induced functor $\mD_\W \to \mD_{\W'}$.
Since $\T'$ lies over $\W' \in \mO'^\el$, we find that $\T' \in \mO^\el_{\X/}\times_\mO \mC$ so that $\tau_{\T'}$ is an equivalence by assumption.
This proves (1).
(2) follows from (1) because the commutative square
\begin{equation}\label{qco}
\begin{xy}
\xymatrix{
\Fun_{\mO'}(\mO^\inert_\alpha \times_\mO \mC,\mD) \ar[d] \ar[rr]^{\zeta}
&& \Fun_{\mO'}(\mO^\el_\alpha \times_\mO \mC,\mD)  \ar[d]
\\  \Fun_{\mO'}(\mO^\inert_{\X/}\times_\mO \mC, \mD) \times \Fun_{\mO'}(\mO^\inert_{\Y/}\times_\mO \mC, \mD) 
\ar[rr]^{\id \times \theta} && \Fun_{\mO'}(\mO^\inert_{\X/} \times_\mO \mC, \mD) \times \Fun_{\mO'}(\mO^\el_{\Y/} \times_\mO \mC, \mD)
}
\end{xy} 
\end{equation} 
restricts to the following commutative square, where both vertical functors are conservative:
\begin{equation*}
\begin{xy}
\xymatrix{
\Fun'_{\mO'}(\mO^\inert_\alpha \times_\mO \mC,\mD) \ar[d] \ar[rr]^{\zeta'}
&& \Fun'_{\mO'}(\mO^\el_\alpha \times_\mO \mC,\mD)  \ar[d]
\\  \Fun'_{\mO'}(\mO^\inert_{\X/}\times_\mO \mC, \mD) \times \Fun'_{\mO'}(\mO^\inert_{\Y/}\times_\mO \mC, \mD) 
\ar[rr]^{\id \times \theta'} && \Fun'_{\mO'}(\mO^\inert_{\X/} \times_\mO \mC, \mD) \times  \Fun'_{\mO'}(\mO^\el_{\Y/} \times_\mO \mC, \mD).
}
\end{xy} 
\end{equation*} 
	
We continue with (3) and (4).
By \cite[Lemma 4.3.2.13., Proposition 4.3.2.17.]{lurie.HTT} the functor
$\theta$ admits a fully faithful right adjoint $\mF$ if for every
$ \T \in \mO^\inert_{\X/} \times_\mO \mC$ and map $\phi: \mO^\el_{\X/} \times_\mO \mC \to \mD$ of cocartesian fibrations relative to $\mO'^\inert$ the functor
$(\mO^\el_{\X/} \times_\mO \mC) \times_{(\mO^\inert_{\X/} \times_\mO \mC)} (\mO^\inert_{\X/} \times_\mO \mC)_{\T/} \to \mO^\el_{\X/} \times_\mO \mC \xrightarrow{\phi} \mD$ has a $\gamma$-limit. Moreover $\mF(\phi)(\T) $ is this $\gamma$-limit.
	
For the same reason the functor
$\zeta$ admits a fully faithful right adjoint $\mQ$ if for every
$ \T \in \mO^\inert_{\alpha} \times_\mO \mC$ and map $\phi: \mO^\el_{\alpha} \times_\mO \mC \to \mD$ of cocartesian fibrations relative to $\mO'^\inert$
the functor $$(\mO^\el_{\alpha} \times_\mO \mC) \times_{(\mO^\inert_{\alpha} \times_\mO \mC)} (\mO^\inert_{\alpha} \times_\mO \mC)_{\T/} \to \mO^\el_{\alpha} \times_\mO \mC \xrightarrow{\phi} \mD$$ admits a $\gamma$-limit and $\mQ(\phi)(\T) $ is this $\gamma$-limit.
	
The embedding $\mO^\el_{\alpha} \times_\mO \mC \subset  \mO^\inert_{\alpha} \times_\mO \mC$ over $[1]$ induces an equivalence on the fiber over $0 \in [1].$
So if $\T$ belongs to $\mO^\inert_{\X/} \times_\mO \mC \subset  \mO^\inert_{\alpha} \times_\mO \mC$, then $\T$ belongs to $\mO^\inert_{\X/} \times_\mO \mC \subset \mO^\el_{\alpha} \times_\mO \mC$
so that $$(\mO^\el_{\alpha} \times_\mO \mC) \times_{(\mO^\inert_{\alpha} \times_\mO \mC)} (\mO^\inert_{\alpha} \times_\mO \mC)_{\T/} \simeq (\mO^\el_{\alpha} \times_\mO \mC)_{\T/}.$$
Since the latter $\infty$-category has an initial object,
by Remark \ref{cofinal} the $\gamma$-limit $\mQ(\phi)(\T) $ exists and is computed as $\phi(\T).$
	
If $\T$ belongs to $\mO^\inert_{\Y/} \times_\mO \mC \subset \mO^\inert_{\alpha} \times_\mO \mC$, every object of $\mO^\inert_{\alpha} \times_\mO \mC$
under $\T$ also belongs to $\mO^\inert_{\Y/} \times_\mO \mC.$
Hence the induced embedding
$(\mO^\el_{\Y/} \times_\mO \mC) \times_{(\mO^\inert_{\Y/} \times_\mO \mC)} (\mO^\inert_{\Y/} \times_\mO \mC)_{\T/}  \subset (\mO^\el_{\alpha} \times_\mO \mC) \times_{(\mO^\inert_{\alpha} \times_\mO \mC)} (\mO^\inert_{\alpha} \times_\mO \mC)_{\T/}$ is an equivalence. 
Consequently, 
the commutative square (\ref{qco}) induces a commutative square
\begin{equation}\label{qcoo}
\begin{xy}
\xymatrix{
\Fun_{\mO'}(\mO^\el_\alpha \times_\mO \mC,\mD) \ar[d] \ar[rr]^{\mQ}
&& \Fun_{\mO'}(\mO^\inert_\alpha \times_\mO \mC,\mD) \ar[d]
\\  \Fun_{\mO'}(\mO^\inert_{\X/} \times_\mO \mC, \mD) \times \Fun_{\mO'}(\mO^\el_{\Y/} \times_\mO \mC, \mD)
\ar[rr]^{\id \times \mF} && \Fun_{\mO'}(\mO^\inert_{\X/}\times_\mO \mC, \mD) \times \Fun_{\mO'}(\mO^\inert_{\Y/}\times_\mO \mC, \mD)
}
\end{xy} 
\end{equation} 
such that the unit and counit of the adjunction $\mQ \dashv \zeta$ cover the 
unit and counit of the adjunction $\id \times \mF \dashv \id \times \theta.$ 
So (4) follows from (3), where we use that the vertical functors are conservative.
	
It remains to prove (3).
Let $\W \in \mO', \Z \in \mC, \Y \in \mO$ be the images of $\T.$
There is a canonical equivalence $$ (\mO^\el_{\X/} \times_\mO \mC) \times_{(\mO^\inert_{\X/} \times_\mO \mC)} (\mO^\inert_{\X/} \times_\mO \mC)_{\T/} \simeq (\mO^\el_{\X/} \times_\mO \mC) \times_{(\mO^\inert_{\X/} \times_\mO \mC)} (\mO^\inert_{\Y/} \times_{\mO_{\Y/}}\mC_{\Z/}) \simeq \mO^\el_{\Y/} \times_{\mO_{\Y/}}\mC_{\Z/}.$$
	
By Remark \ref{colo} the embedding $\mC^\inert \subset \mO^\inert \times_{\Fun([1],\mO)} \Fun([1],\mC)$ admits a right adjoint relative to $\Fun(\{0\},\mC) \times \mO^\inert.$ 
Pulling back to $\mO^\el$ the embedding $\mC^\el \subset \mO^\el \times_{\Fun([1],\mO)} \Fun([1],\mC)$ admits a right adjoint relative to $\Fun(\{0\},\mC) \times \mO^\el.$
Taking the fiber over $\Z \in \mC$ the embedding $ \mC^\el_{\Z/} \subset \mO^\el_{\Y/} \times_{\mO_{\Y/}}\mC_{\Z/} $ admits a right adjoint relative to $\mO^\el_{\Y/}.$
So by Remark \ref{cofinal} the functor
$\theta$ admits a fully faithful right adjoint if for every
$ \T \in \mO^\inert_{\X/} \times_\mO \mC$ and $\phi: \mO^\el_{\X/} \times_\mO \mC \to \mD$ the functor
$\mC^\el_{\Z/} \to \mO^\el_{\Y/} \times_\mO \mC\to \mO^\el_{\X/} \times_\mO \mC \xrightarrow{\phi} \mD$ admits a $\gamma$-limit and $\mF(\phi)(\T) $ is this $\gamma$-limit.
	
The functor $\mC_{\Z/} \to \mO'_{\W/}$ restricts to an equivalence
$\mC^\el_{\Z/} \to \mO'^\el_{\W/}$ since $\mC \to \mO'$ is a cocartesian fibration relative to $\mO'^\inert.$ The functor $\mC^\el_{\Z/} \to \mO^\el_{\Y/} \times_\mO \mC\to \mO^\el_{\X/} \times_\mO \mC \xrightarrow{\phi} \mD \to \mO'$
factors as $\mC^\el_{\Z/} \simeq \mO'^\el_{\W/} \to \mO'$ and so induces a map
$\Psi: \mC^\el_{\Z/} \to \mO'^\el_{\W/} \times_{\mO'} \mD$ of cocartesian fibrations over $\mO'^\el_{\W/}.$
The composition $\Psi': \mO'^\el_{\W/} \simeq \mC^\el_{\Z/} \xrightarrow{\Psi} \mO'^\el_{\W/} \times_{\mO'} \mD$ is an object of $\Fun^\cocart_{\mO'^\el_{\W/}}(\mO'^\el_{\W/}, \mO'^\el_{\W/} \times_{\mO'} \mD) \simeq \lim_{(\W \to \W') \in \mO'^\el_{\W/}} \mD_{\mW'}.$
Since $\mD \to \mO'$ is $\mathfrak{P}$-fibrous, the canonical functor
$\mD_\mW \to \lim_{(\W \to \W') \in \mO'^\el_{\W/}} \mD_{\mW'} $ is an equivalence so that $\Psi'$ corresponds to an object $\L \in \mD_\mW$. 
The object $\L $ is sent by the functor
$\mD_\mW \to \lim_{(\W \to \W') \in \mO'^\el_{\W/}} \mD_{\mW'} $
to the canonical map $ \mO'^\el_{\W/} \simeq \mD^\el_{\L/} \to \mO'^\el_{\W/} \times_{\mO'} \mD$ of cocartesian fibrations over $\mO'^\el_{\W/}$,
which therefore identifies with $\Psi'.$
Since $\mD \to \mO'$ is $\mathfrak{P}$-fibrous, the object
$\L \in \mD_\mW$ is the $\gamma$-limit of the functor $ \mO'^\el_{\W/} \simeq \mD^\el_{\L/} \to \mD$ and so the $\gamma$-limit of the functor 
$(\mO'^\el_{\W/} \simeq) \ \mC^\el_{\Z/} \to \mO^\el_{\Y/} \times_\mO \mC \to \mO^\el_{\X/} \times_\mO \mC \xrightarrow{\phi} \mD$.

For any inert morphism $\T \to \T'$ in $\mO^\inert_{\X/} \times_\mO \mC $, where $\T' \in \mO^\el_{\X/} \times_\mO \mC$, let $\W \to \W'$ be the image in $\mO'$
and $\Z \to \Z'$ the cocartesian lift in $\mC$ of the latter morphism.
The object $\Z \to \Z'$ of $\mC^\el_{\Z/}$ gives an inert morphism
$\L \to \phi(\T').$ 
The latter morphism is the image under $\mF(\phi)$ of the morphism
$\T \to \T'$ in $\mO^\inert_{\X/} \times_\mO \mC$.
This implies that $\mF(\phi)$ is a map of cocartesian fibrations relative to $\mO^\inert$. This proves (4).
		
\end{proof}
 
\begin{proof} For any morphism $\alpha: \X \to \Y$ in $\mO$ consider the commutative square
\begin{equation*} 
\begin{xy}
\xymatrix{
\Fun_{[1]}([1], \Fun^{\mathfrak{P}}_{\mathfrak{P'}}(\mC,\mD)_\alpha) \ar[d]^{} \ar[r]^{ }
& \lim_{(\Z,\sigma)\in \mO^\el_{\Y/}}\Fun_{[1]}([1], \Fun^{\mathfrak{P}}_{\mathfrak{P'}}(\mC,\mD)_{\sigma \circ \alpha})\ar[d]^{} 
\\ \Fun^{\mathfrak{P}}_{\mathfrak{P'}}(\mC,\mD)_\Y 
\ar[r]^{}  & \lim_{(\Z,\sigma)\in \mO^\el_{\Y/}}\Fun^{\mathfrak{P}}_{\mathfrak{P'}}(\mC,\mD)_{\Z}
}
\end{xy} 
\end{equation*}
over $ \Fun^{\mathfrak{P}}_{\mathfrak{P'}}(\mC,\mD)_\X$, which induces on the fiber over $\X' \in  \Fun^{\mathfrak{P}}_{\mathfrak{P'}}(\mC,\mD)_\X, \Y' \in \Fun^{\mathfrak{P}}_{\mathfrak{P'}}(\mC,\mD)_\Y$
the map $$\{\alpha\} \times_{\mO(\X,\Y)} \Fun^{\mathfrak{P}}_{\mathfrak{P'}}(\mC,\mD)(\X',\Y') \to \lim_{(\Z,\sigma)\in \mO^\el_{\Y/}} \{\sigma \circ \alpha\}\times_{\mO(\X,\Z)}\Fun^{\mathfrak{P}}_{\mathfrak{P'}}(\mC,\mD)(\X',\sigma_*(\Y')).$$
	
By definition of a $\mathfrak{P}$-fibered object it is enough to see that the horizontal functors in the latter square are equivalences.
By Remark \ref{forms} there are canonical equivalences
$$\Fun^{\mathfrak{P}}_{\mathfrak{P'}}(\mC,\mD)_\Y \simeq \Fun^\inert_{\mO'}(\mO^\inert_{\Y/}\times_\mO \mC, \mD),\hspace{5mm}
\Fun_{[1]}([1], \Fun^{\mathfrak{P}}_{\mathfrak{P'}}(\mC,\mD)_\alpha)  \simeq \Fun'_{\mO'}(\mO^\inert_\alpha \times_\mO \mC, \mD).$$ So we want to see that the following functors are equivalences:
\begin{equation}\label{ghvx}
\Fun^{\inert}_{\mO'}(\mO^\inert_{\Y/}\times_\mO \mC, \mD) \to 
\lim_{(\Z,\sigma) \in\mO^\el_{\Y/}}\Fun^{\inert}_{\mO'}(\mO^\inert_{\Z/} \times_\mO \mC,\mD),  \end{equation}
\begin{equation}\label{ghvxa}
\Fun'_{\mO'}(\mO^\inert_\alpha \times_\mO \mC,\mD) \to \lim_{(\Z,\sigma) \in \mO^\el_{\Y/}} \Fun'_{\mO'}(\mO^\inert_{\sigma \circ \alpha} \times_\mO \mC, \mD). \end{equation}
	
Let $\mO^\el_\alpha \subset \mO^\inert_\alpha $ be the full subcategory spanned by $\mO^\inert_{\X/}, \mO^\el_{\Y/}.$
By Lemma \ref{ley} the functors $$ \Fun^{\inert}_{\mO'}(\mO^\inert_{\Y/}\times_\mO \mC, \mD) \to \Fun^{\inert}_{\mO'}(\mO^\el_{\Y/} \times_\mO \mC, \mD), \ \Fun'_{\mO'}(\mO^\inert_\alpha \times_\mO \mC,\mD)\to \Fun'_{\mO'}(\mO^\el_\alpha \times_\mO \mC,\mD)$$
restricting along the respective embeddings $\mO^\el_{\Y/}\times_\mO \mC \subset \mO^\inert_{\Y/}\times_\mO \mC$, $\mO^\el_\alpha \times_\mO \mC \subset \mO^\inert_\alpha \times_\mO \mC$ are equivalences.

Thus the functors (\ref{ghvx}) and (\ref{ghvxa}) are equivalences if and only if the functors
$$\Fun^\inert_{\mO'}(\mO^\el_{\Y/}\times_\mO \mC, \mD) \to 
\lim_{(\Z,\sigma) \in\mO^\el_{\Y/}}\Fun^\inert_{\mO'}(\mO^\el_{\Z/} \times_\mO \mC,\mD), \ \Fun'_{\mO'}(\mO^\el_\alpha \times_\mO \mC,\mD) \to \lim_{(\Z,\sigma) \in \mO^\el_{\Y/}} \Fun'_{\mO'}(\mO^\el_{\sigma \circ \alpha} \times_\mO \mC, \mD)$$ are equivalences.
	  
The latter functors are the pullback of the respective functors
\begin{equation}\label{ejjj}
\Fun_{\mO'}(\mO^\el_{\Y/}\times_\mO \mC, \mD) \to 
\lim_{(\Z,\sigma) \in\mO^\el_{\Y/}}\Fun_{\mO'}(\mO^\el_{\Z/} \times_\mO \mC,\mD),
\end{equation}
\begin{equation}\label{ejjjo}
\Fun_{\mO'}(\mO^\el_\alpha \times_\mO \mC,\mD) \to \lim_{(\Z,\sigma) \in \mO^\el_{\Y/}} \Fun_{\mO'}(\mO^\el_{\sigma \circ \alpha} \times_\mO \mC, \mD)
\end{equation}
because every morphism of $\mO^\el_{\Y/}$ is in the image
of a functor $\mO^\el_{\Z/} \to\mO^\el_{\Y/}$ induced by an inert morphism
$\Y \to \Z, $ where $\Z\in \mO^\el$, and every morphism of $\mO^\el_{\alpha}$ is in the image of a functor $\mO^\el_{\sigma \circ \alpha} \to\mO^\el_{\alpha}$ induced by an inert morphism $\sigma: \Y \to \Z, $ where $\Z\in \mO^\el$.
So it will suffice to see that the functors (\ref{ejjj}) and (\ref{ejjjo}) are equivalences.
Functor $(\ref{ejjj})$ identifies with the functor
$$\Fun_{\mO'}(\mO^\el_{\Y/}\times_\mO \mC, \mD) \to \Fun_{\mO'}(\colim_{(\Z,\sigma) \in\mO^\el_{\Y/}}\mO^\el_{\Z/} \times_\mO \mC,\mD) \simeq \lim_{(\Z,\sigma) \in\mO^\el_{\Y/}}\Fun_{\mO'}(\mO^\el_{\Z/} \times_\mO \mC,\mD) $$
induced by the functor
$\gamma: \colim_{(\Z,\sigma) \in\mO^\el_{\Y/}}(\mO^\el_{\Z/} \times_\mO \mC) \to (\colim_{(\Z,\sigma) \in\mO^\el_{\Y/}}\mO^\el_{\Z/}) \times_\mO \mC \to \mO^\el_{\Y/}\times_\mO \mC$ over $\mO'$.
The first functor in the definition of
$\gamma$ is an equivalence as $\mC \to \mO$ is flat, the second functor in $\gamma$ is induced by the functor $\kappa:\mQ:=\colim_{(\Z,\sigma) \in \mO^\el_{\Y/}} \mO^\el_{\Z/} \to \mO^\el_{\Y/}$, which is an equivalence by Lemma \ref{limo}.
	
Functor $(\ref{ejjjo})$ identifies with the functor
$$\Fun_{\mO'}(\mO^\el_\alpha \times_\mO \mC,\mD) \to \Fun_{\mO'}(\colim_{(\Z,\sigma) \in \mO^\el_{\Y/}}\mO^\el_{\sigma \circ \alpha} \times_\mO \mC, \mD) \simeq \lim_{(\Z,\sigma) \in \mO^\el_{\Y/}} \Fun_{\mO'}(\mO^\el_{\sigma \circ \alpha} \times_\mO \mC, \mD) $$
induced by the functor
$\tau: \colim_{(\Z,\sigma) \in \mO^\el_{\Y/}}(\mO^\el_{\sigma \circ \alpha} \times_\mO \mC) \to (\colim_{(\Z,\sigma) \in \mO^\el_{\Y/}}\mO^\el_{\sigma \circ \alpha}) \times_\mO \mC \to \mO^\el_\alpha \times_\mO \mC$ over $\mO'$.
Since $\mC \to \mO$ is flat, the first functor in the definition of
$\tau$ is an equivalence.
We will complete the proof by showing that the canonical functor
$\lambda: \colim_{(\Z,\sigma) \in \mO^\el_{\Y/}} \mO^\el_{\sigma \circ \alpha} \to \mO^\el_\alpha $ is an equivalence.

For any cartesian fibration $\mM \to [1]$ the canonical functor
$\mM_0 \coprod_{(\mM_1 \times \{1\})} (\mM_1 \times [1]) \to \mM $ is an equivalence.
We apply this to the cartesian fibration $\mO^\el_\beta \to [1]$ (Lemma \ref{exx}) for any morphism
$\beta:\X \to \Z$ in $ \mO$ to obtain an equivalence
$ \mO^\el_{\beta} \simeq\mO^\el_{\X/} \coprod_{(\mO^\el_{\Z/} \times \{1\})} (\mO^\el_{\Z/} \times [1]).$

The functor $ \colim_{(\Z,\sigma) \in \mO^\el_{\Y/}}\mO^\el_{\X/} \to \mO^\el_{\X /} $
is an equivalence as $ \mO^\el_{\Y/}$ is weakly contractible (Lemma \ref{limo}).
So $\lambda$ identifies with the functor induced by the equivalence $\kappa:\mQ:=\colim_{(\Z,\sigma) \in \mO^\el_{\Y/}} \mO^\el_{\Z/} \to \mO^\el_{\Y/}:$
$$ \colim_{(\Z,\sigma) \in \mO^\el_{\Y/}} (\mO^\el_{\X/} \coprod_{(\mO^\el_{\Z/} \times \{1\})} (\mO^\el_{\Z/} \times [1]))\simeq \mO^\el_{\X/} \coprod_{\mQ \times \{1\}} (\mQ \times [1])  \to\mO^\el_{\X/} \coprod_{(\mO^\el_{\Y/} \times \{1\})} (\mO^\el_{\Y/} \times [1]).
$$	 
	 
\end{proof}

For the next proposition we use the following remark
analogous to \cite[Remark 6.2.4.5.]{lurie.higheralgebra}:

\begin{remark}\label{repr}
Let $\mO \to \Ass$ be a cocartesian fibration relative to the collection of inert morphisms.
	
An $\mO$-operad $\phi: \mC \to \mO$ is corepresentable if and only if
for any active morphism $\alpha: \Y \to \X$ in $\mO$ with $\X$ lying over $[1]\in \Ass$ the induced pullback $\phi': [1] \times_\mO \mC \to [1]$ is a cocartesian fibration.
\end{remark}

\begin{proposition}\label{proooo}
Let $\mO \to \Ass$ be a cocartesian fibration relative to the collection of inert morphisms, $\mC \to \mO$ a cocartesian fibration whose fibers are small, and $\mD \to \mO$ a corepresentable $\mO$-operad whose fibers admit small colimits.
	
\begin{enumerate}
\item The $\mO$-operad $\Fun^\mO(\mC, \mD) \to \mO$ is corepresentable.
		
\vspace{1mm}
		
\item If $\mD \to \mO$ is compatible with small colimits, $\Fun^\mO(\mC, \mD) \to \mO$ is compatible with small colimits.
		
\vspace{1mm}
		
\item For any $\mO$-monoidal functor $\gamma: \mD \to \mD'$ of corepresentable $\mO$-operads inducing on every fiber a small colimits preserving functor between $\infty$-categories with small colimits the functor $\gamma_\ast: \Fun^\mO(\mC, \mD) \to \Fun^\mO(\mC, \mD')$ is $\mO$-monoidal.
\vspace{1mm}
		
\item If $\mD \to \mO$ is compatible with small colimits, for any $\mO$-monoidal functor $\theta: \mC \to \mC'$ of $\mO$-monoidal $\infty$-categories whose fibers are small, the induced map $ \Fun^\mO(\mC', \mD) \to \Fun^\mO(\mC, \mD)$ of $\mO$-operads admits a left adjoint relative to $\mO.$
		
\end{enumerate}
\end{proposition}
\begin{proof}
	
(1): By Theorem \ref{proop} $\Fun^\mO(\mC, \mD) \to \mO$ is an $\mO$-operad. So by Remark \ref{repr} it is enough to show that $\Fun^\mO(\mC, \mD) \to \mO$ is a locally cocartesian fibration relative to the collection of active morphisms $\alpha: \X \to \Y$ whose target lies over $[1]\in \Ass$.
By Lemma \ref{propo} for any $\F \in \Fun^\mO(\mC, \mD)_\X $ lying over $\F' \in \Fun(\mC_\X, \mD_\X)$
and $\G \in \Fun^\mO(\mC, \mD)_\Y \simeq \Fun(\mC_\Y, \mD_\Y)$ there is a canonical equivalence $$ \{\alpha\} \times_{\mO(\X,\Y)} \Fun^\mO(\mC, \mD)(\F, \G) \simeq \Fun(\mC_\X, \mD_\Y)(\alpha^\mD_\ast \circ \F',\G \circ \alpha^\mC_\ast) \simeq \Fun(\mC_\Y, \mD_\Y)( \lan_{\alpha^\mC_\ast}(\alpha^\mD_\ast \circ \F'),\G),$$
where $\lan_{\alpha^\mC_\ast}: \Fun(\mC_{\X}, \mD_\Y) \to \Fun(\mC_\Y, \mD_\Y) $ is the left adjoint to pre-composition along $\alpha^\mC_\ast.$
	
\vspace{1mm}
	
Thus a morphism $\F \to \G$ in $\Fun^\mO(\mC, \mD)$ is cocartesian over $\alpha:\X \to \Y $ if and only if the corresponding map $\alpha^\mD_\ast \circ \F' \to \G \circ \alpha^\mC_\ast$ in $\Fun(\mC_\X, \mD_\Y)$ is adjoint to an equivalence
$\lan_{\alpha^\mC_\ast}(\alpha^\mD_\ast \circ \F') \simeq \G$ in $ \Fun(\mC_\Y, \mD_\Y).$
	
So $\alpha : \X \to \Y $ yields the composition 
$ \alpha_\ast:\Fun^\mO(\mC, \mD)_\X  \simeq \prod_{\bi=1}^\m \Fun^\mO(\mC, \mD)_{\X_\bi} \simeq \prod_{\bi=1}^\m \Fun(\mC_{\X_\bi}, \mD_{\X_\bi}) \to \Fun(\prod_{\bi=1}^\m \mC_{\X_\bi}, \prod_{\bi=1}^\m \mD_{\X_\bi}) \to \Fun(\mC_{\X}, \prod_{\bi=1}^\m \mD_{\X_\bi}) \xrightarrow{\Fun(\mC_\X, \alpha^\mD_\ast)} \Fun(\mC_{\X}, \mD_\Y) \xrightarrow{\lan_{\alpha^\mC_\ast}} \Fun(\mC_\Y, \mD_\Y).$
Hence $\alpha_\ast$ preserves small colimits component-wise if $\alpha^\mD_\ast: \prod_{\bi=1}^\m \mD_{\X_\bi} \to \mD_\Y$ does so.
This shows (2).
	
(3): The induced map $\gamma_\ast: \Fun^\mO(\mC, \mD) \to \Fun^\mO(\mC, \mD')$ of $\mO$-operads sends the morphism $\F \to \G$ in $\Fun^\mO(\mC, \mD)$ cocartesian over $\alpha:\X \to \Y $ to a morphism $\gamma_\ast(\F) \to \gamma_\ast(\G)$ in $\Fun^\mO(\mC, \mD')$ lying over $\alpha$ corresponding to the morphism $\alpha^{\mD'}_\ast \circ \gamma_\ast(\F') \to \gamma_\ast(\G) \circ \alpha^\mC_\ast$ in $\Fun(\mC_\X, \mD'_\Y)$ adjoint to the morphism
$$\lan_{\alpha^\mC_\ast}(\alpha^{\mD'}_\ast \circ \gamma_\ast(\F')) \simeq \lan_{\alpha^\mC_\ast}(\gamma_\ast(\alpha^{\mD}_\ast \circ \F')) \simeq
\gamma_\ast(\lan_{\alpha^\mC_\ast}(\alpha^{\mD}_\ast \circ \F')) \to \gamma_\ast(\G) $$ in $ \Fun(\mC_\Y, \mD'_\Y),$
where the first map is an equivalence since $\gamma$ is an $\mO$-monoidal functor
and the second map is an equivalence since $\gamma$ preserves fiberwise small colimits and so left Kan extensions.
	
\vspace{1mm}
	
(4): Let $\X \in \mO$.
First assume that $\X $ lies over $[1] \in \Ass$. Since $\mC_\X, \mC'_\X$ are small and $\mD_\X$ admits small colimits, the canonical functor $\Fun(\mC'_\X, \mD_\X) \to \Fun(\mC_\X, \mD_\X)$ admits a left adjoint taking left Kan extension along $\theta_\X: \mC_\X \to \mC'_\X$.
For general $\X \in \mO$ we choose for any $1 \leq \bi \leq \n$ an inert lift $\X \to \X_\bi$ in $\mO$ of the inert map $[\n] \to [1]$ of $\Ass$ and find that the  induced functor $ \Fun(\mC', \mD)_\X \simeq  \prod_{\bi=1}^\n\Fun(\mC'_{\X_\bi}, \mD_{\X_\bi}) \to \Fun(\mC, \mD)_\X \simeq  \prod_{\bi=1}^\n\Fun(\mC_{\X_\bi}, \mD_{\X_\bi}) $ admits a left adjoint $\tau_\X.$
	
By (1) the functors $ \Fun^\mO(\mC', \mD) \to \mO, \Fun^\mO(\mC, \mD) \to \mO$ are locally cocartesian fibrations. So to show (4) by \cite[Proposition 7.3.2.11.]{lurie.higheralgebra} it is enough to check that
for any morphism $\alpha: \X \to \Y$ in $\mO$ the canonical natural transformation $\lambda_\alpha:\tau_\Y \circ \alpha_* \to \alpha_* \circ \tau_\X $ is an equivalence.
	
By the operad axioms the transformation $\lambda_\alpha$
is an equivalence if for all inert morphisms $\beta:\Y \to \Z$ in $\mO$,
where $\Z \in \mO_{[1]}$, the transformation $\beta_* \circ \lambda_\alpha $ is an equivalence, where $\beta_*:  \Fun^\mO(\mC', \mD)_\Y \to \Fun^\mO(\mC', \mD)_\Z.$
The transformation $\beta_* \circ \lambda_\alpha $ identifies with $\lambda_{\beta\circ\alpha}$
since $\beta_*$ is a projection and so preserves left Kan extensions and $(\beta \circ \alpha)_* \simeq \beta_* \circ \alpha_*$. So it is enough to show that $\lambda_\alpha$ is an equivalence
when $\Y \in \mO_{[1]}$.
Moreover $\alpha:\X \to \Y$ factors as an inert map $\beta: \X \to \Z$ followed by an active map $\gamma: \Z \to \Y$.
The transformation $\lambda_\alpha$ identifies with $\lambda_\gamma \circ \beta_*$
since $\alpha_* \simeq \gamma_* \circ \beta_*$ and $\beta_*$ is a projection and so preserves left Kan extensions.
So it is enough to show that $\lambda_\alpha$ is an equivalence for $\alpha$ active and $\Y\in \mO_{[1]}$.
	
For $1 \leq \bi \leq \n$ let $\F_\bi:\mC_{\X_{\bi}} \to \mD_{\X_{\bi}} $
be a functor. There is a canonical equivalence
$$\alpha_*(\lan_{\theta_{\X_1}}(\F_1),...,\lan_{\theta_{\X_\n}}(\F_\n)) = \lan_{\alpha_*^{\mC'}}(\alpha^\mD_*\circ (\lan_{\theta_{\X_1}}(\F_1)\times...\times \lan_{\theta_{\X_\n}}(\F_\n)))
\simeq $$$$\lan_{\alpha_*^{\mC'}}(\lan_{\theta_\X}(\alpha^\mD_* \circ (\F_1 \times ...\times \F_\n))) 
\simeq \lan_{\theta_\Y}(\lan_{\alpha_*^{\mC}}(\alpha^\mD_*\circ (\F_1 \times ...\times \F_\n)))= \lan_{\theta_\Y}(\alpha_*(\F_1,...,\F_\n)),$$
where the non-trivial equivalence in the second line holds since
$\theta$ is an $\mO$-monoidal functor, and the non-trivial equivalence in the first line is induced by the equivalence
$\alpha^\mD_*\circ (\lan_{\theta_{\X_1}}(\F_1)\times...\times \lan_{\theta_{\X_\n}}(\F_\n))
\simeq \lan_{\theta_\X}(\alpha^\mD_* \circ (\F_1 \times ...\times \F_\n)) $
coming from the fact that $\alpha^\mD_*$ preserves small colimits component-wise.
\end{proof}

To prove Proposition \ref{proooo} we used the next lemma, an adaption of \cite[Proposition 2.2.6.6.]{lurie.higheralgebra}: 

\begin{lemma}\label{propo}
Let $\mO \to \Ass$ be a cocartesian fibration relative to the collection of inert morphisms, $\mC \to \mO$ a flat functor that is a cocartesian fibration relative to the collection of inert morphisms of $\mO$ and $\mD \to \mO$ an $\mO$-operad.

For any active morphism $\alpha: \X \to \Y$ in $\mO$ with $\Y$ lying over $[1]\in \Ass$ and any $\F,\G \in \Fun^\mO(\mC, \mD)$ lying over $\X,\Y \in \mO$ with images $\F' \in \Fun(\mC_\X, \mD_\X), \G' \in \Fun(\mC_\Y, \mD_\Y)$ there is a canonical equivalence $$ \{\alpha\} \times_{\mO(\X,\Y)} \Fun^\mO(\mC, \mD)(\F, \G) \simeq \{\F',\G'\} \times_{(\Fun(\mC_\X, \mD_\X) \times \Fun(\mC_\Y, \mD_\Y))} \Fun_{[1]}(\mC_\alpha, \mD_\alpha). $$
\end{lemma}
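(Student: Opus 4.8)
The plan is to adapt the proof of \cite{lurie.higheralgebra} Proposition 2.2.6.6 to the present non-symmetric, relative setting, computing the mapping space over the active morphism $\alpha$ by first reducing it to a space of sections over $[1]$ and then identifying the correspondence determined by $\alpha$.

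First I would unwind the construction of $\Fun^\mO(\mC,-)$ from Proposition \ref{ewq} and Corollary \ref{eeay}, together with the description of its pullback along $\alpha$ used in the proof of Theorem \ref{proop}. Writing $\mE \subset \Fun([1],\mO)$ for the inert morphisms and $\mE_\alpha := [1]\times_\mO \mE$ for the pullback of evaluation at the source along $\alpha\colon [1]\to\mO$, a morphism $\F \to \G$ in $\Fun^\mO(\mC,\mD)$ lying over $\alpha$ is the same datum as a functor $G\colon \mE_\alpha\times_\mO\mC \to \mE_\alpha\times_\mO\mD$ over $\mE_\alpha$ whose restrictions to the fibers $\mE_\X,\mE_\Y$ over $0,1\in[1]$ are the maps of cocartesian fibrations classifying $\F$ and $\G$; equivalently an object of $\Fun'_{\mE_\alpha}(\mE_\alpha\times_\mO\mC,\mE_\alpha\times_\mO\mD)$ lying over $(\F,\G)$. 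This presents the left hand side as the space of such $G$ with prescribed restrictions to the two fibres.

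The crux is then to reduce the indexing category $\mE_\alpha$ to $[1]$. Since $\Y$ lies over $[1]\in\Ass$, the fibre $\mE_\Y$ is the horn $\Lambda^2_0$ with cone point $\id_\Y$, and the cocartesian fibration $\mE'_\Y\times_\mO\mD\to\mE'_\Y$ classifies a limit diagram because $\mD$ is a $\mO$-operad. As the restrictions of $G$ to $\mE_\X$ and $\mE_\Y$ are already fixed by $\F$ and $\G$, only the $[1]$-directional component of $G$ is free. I would make this precise by a right Kan extension argument exactly parallel to part 2 of the proof of Theorem \ref{proop}: restriction along the inclusion $[1]\hookrightarrow\mE_\alpha$ picking out the lift $\id_\X\to\id_\Y$ of $\alpha$ induces an equivalence from the above space onto $\Fun_{[1]}([1]\times_\mO\mC,[1]\times_\mO\mD)$ taken fibrewise over $(\F',\G')$, where the Segal condition on $\mD$ guarantees that the extension recovers the value of $G$ over the remaining objects of $\mE_\alpha$ and flatness of $\mC\to\mO$ guarantees that all the occurring pullbacks compute the correct mapping spaces. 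This yields the first equivalence, and it is the step I expect to be the main obstacle, since it is where the hypotheses ``$\alpha$ active'', ``$\Y$ over $[1]$'' and the operad condition on $\mD$ all enter.

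Finally I would identify the middle term with the last one. Whenever $\alpha$ admits cocartesian lifts in $\mC$ and in $\mD$ --- as is the case in all applications, e.g. in Proposition \ref{proooo} where $\mC$ is a (generalized) $\mO$-monoidal $\infty$-category and $\mD$ is locally cocartesian --- the pullbacks $[1]\times_\mO\mC\to[1]$ and $[1]\times_\mO\mD\to[1]$ are cocartesian fibrations classified by the pushforward functors $\alpha^\mC_\ast\colon\mC_\X\to\mC_\Y$ and $\alpha^\mD_\ast\colon\mD_\X\to\mD_\Y$. By the standard cograph description of functors between cocartesian fibrations over $[1]$, the space of functors $[1]\times_\mO\mC\to[1]\times_\mO\mD$ over $[1]$ restricting to $\F'$ over $0$ and $\G'$ over $1$ is the mapping space $\Fun(\mC_\X,\mD_\Y)(\alpha^\mD_\ast\circ\F',\G'\circ\alpha^\mC_\ast)$ of natural transformations $\alpha^\mD_\ast\circ\F'\Rightarrow\G'\circ\alpha^\mC_\ast$. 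All identifications are manifestly natural in $\F,\G$ and $\alpha$, which gives the canonicity claimed.
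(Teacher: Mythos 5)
Your first step (unwinding a morphism $\F\to\G$ over $\alpha$ as a functor $\mE_\alpha\times_\mO\mC\to\mE_\alpha\times_\mO\mD$ over $\mE_\alpha=[1]\times_\mO\mE$ with prescribed restrictions over the two endpoints) and your last step (the cograph description of $\Fun_{[1]}([1]\times_\mO\mC,[1]\times_\mO\mD)$ with fixed fibers, including your correct remark that the second displayed equivalence presupposes that the pushforwards $\alpha^\mC_\ast,\alpha^\mD_\ast$ exist) agree with the paper. The gap is in the middle step, exactly where you located the difficulty. Restriction along $[1]\hookrightarrow\mE_\alpha$, i.e. along $[1]\times_\mO\mC\subset\mE_\alpha\times_\mO\mC$, forgets the values of a functor at all objects $(e\colon\X\to\Z,c)$ with $e\in\mE_\X$, $e\neq\id_\X$. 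Since $\alpha$ is active, $\X$ lies over an arbitrary $[\n]$, and for $\n\geq 2$ the values of $\F$ at the objects $(e\colon\X\to\X_\bi,c)$ with $\X_\bi$ lying over $[1]$ are genuine data not determined by $\F'\in\Fun(\mC_\X,\mD_\X)$; moreover a right Kan extension from $[1]\times_\mO\mC$ cannot reproduce them, because the relevant comma categories are empty: a morphism in $\mE_\alpha$ from such an $e$ to $\id_\Y$ would be a factorization of the active morphism $\alpha$ through the inert morphism $e$, and the active map $[\n]\to[1]$ in $\Ass$ factors through no inert map $[\n]\to[1]$ when $\n\geq2$ (nor is there any morphism from $e$ to $\id_\X$ inside $\mE_\X$). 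So the extension, if it exists at all, produces final objects of fibers of $\mD$ instead of the prescribed values of $\F$. Working ``fibrewise over $(\F,\G)$'' does not rescue this: a Kan extension adjunction is a statement about whole functor categories and does not restrict to fibers, so fixing the endpoint restrictions by hand is not something the adjunction can see.

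The missing idea, which is how the paper argues, is to restrict along the larger full subcategory $(\mE_\X\times_\mO\mC)\coprod_{\mC_\X}([1]\times_\mO\mC)\subset\mE_\alpha\times_\mO\mC$, i.e. to remember the whole fiber over $0$ and not merely $\F'$. The complement of its image then consists only of objects $(e'\colon\Y\to\Z',c')$ with $e'\neq\id_\Y$, and since $\Y$ lies over $[1]$ every such $\Z'$ lies over $[0]$, where the fibers of the $\mO$-operad $\mD$ are contractible. Hence restriction along this subcategory is conservative (using $\Fun^\mO(\mC,\mD)_\Y\simeq\Fun(\mC_\Y,\mD_\Y)$, again a consequence of $\Y$ lying over $[1]$ and of the operad condition) and admits a fully faithful adjoint given by fibrewise right Kan extension, which automatically lands in the subcategory of functors preserving cocartesian lifts; this yields an equivalence of whole functor categories, and only afterwards does one pass to the fibers over $(\F,\G')$ to obtain the asserted equivalence of mapping spaces. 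Note that it is this contractibility of the fibers of $\mD$ over objects lying over $[0]$ --- not the Segal condition as such --- that drives the extension argument; this is also precisely where the hypothesis that $\mD$ is an $\mO$-operad rather than a generalized $\mO$-operad enters.
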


\begin{remark}\label{spezi}
If $\mC \to \mO, \mD \to \mO$ are locally cocartesian fibrations, there is a canonical equivalence
$$ \{\F',\G'\} \times_{(\Fun(\mC_\X, \mD_\X) \times \Fun(\mC_\Y, \mD_\Y))} \Fun_{[1]}(\mC_\alpha, \mD_\alpha) \simeq \Fun(\mC_\X, \mD_\Y)( \alpha^\mD_\ast \circ \F',\G' \circ \alpha^\mC_\ast).$$ 
Hence under the assumptions of Lemma \ref{propo} there is a canonical equivalence
$$ \{\alpha\} \times_{\mO(\X,\Y)} \Fun^\mO(\mC, \mD)(\F, \G) \simeq \Fun(\mC_\X, \mD_\Y)( \alpha^\mD_\ast \circ \F',\G' \circ \alpha^\mC_\ast).$$ 

\end{remark}

\begin{proof}

Since forming mapping spaces preserves pullbacks, there is a canonical equivalence 
\begin{equation}\label{hhh}
\{\alpha\} \times_{\mO(\X,\Y)} \Fun^{\mO}(\mC,\mD)(\F, \G) \simeq \{(\F,\G)\}\times_{(\Fun^{\mO}(\mC, \mD)_\X \times  \Fun^{\mO}(\mC, \mD)_\Y)} \Fun_{\mO}([1],  \Fun^{\mO}(\mC,\mD)). 
\end{equation} 
We set $\mC^\X:= \mO^\inert_{\X/} \times_\mO \mC, \mC^\alpha:= \mO^\inert_{\alpha} \times_\mO \mC.$
By Remark \ref{forms} the right hand side of (\ref{hhh}) is equivalent to 
$$ \{(\F,\G)\} \times_{(\Fun^{\inert}_{\mO}(\mC^\X, \mD) \times \Fun^{\inert}_{\mO}(\mC^\Y, \mD))} \Fun'_{\mO}(\mC^\alpha, \mD).$$
The canonical functor $\lambda: \Fun^{\inert}_{\mO}(\mC^\Y, \mD) \simeq \Fun^{\mO}(\mC,\mD)_\Y \to \Fun(\mC_\Y,\mD_\Y)$ is an equivalence
as $\mD \to \mO$ is an $\mO$-operad and $\Y\in \mO_{[1]}.$
So it is enough to see that the following square is a pullback square:
\begin{equation*} 
\begin{xy}
\xymatrix{
\Fun'_{\mO}(\mC^\alpha, \mD) \ar[d]^{} \ar[r]^{ }
& \Fun_{[1]}(\mC_\alpha, \mD_\alpha) \ar[d]^{} 
\\ \Fun^{\inert}_{\mO}(\mC^\X, \mD)
\ar[r]^{}  & \Fun(\mC_\X, \mD_\X)
}
\end{xy} 
\end{equation*}
We want to see that the functor
$\gamma: \Fun_{\mO}(\mC^\alpha, \mD) \to \mQ:= \Fun_{\mO}(\mC^\X, \mD) \times_{\Fun(\mC_\X, \mD_\X) }\Fun_{[1]}(\mC_\alpha, \mD_\alpha)$
restricts to an equivalence
$ \Fun'_{\mO}(\mC^\alpha, \mD) \to \Fun^{\inert}_{\mO}(\mC^\X, \mD) \times_{\Fun(\mC_\X, \mD_\X) }\Fun_{[1]}(\mC_\alpha, \mD_\alpha).$
We first rewrite $\mQ.$
The diagonal embedding gives an embedding $\mC_\alpha \subset \mC^\alpha$  restricting to an embedding 
$\mC_\X \subset \mC^\X.$
There is an equivalence 
$\mQ \simeq\Fun_{\mO}(\mC^\X, \mD) \times_{\Fun_{\mO}(\mC_\X, \mD)} \Fun_{\mO}(\mC_\alpha, \mD) \simeq \Fun_{\mO}(\mC^\X \coprod_{\mC_\X} \mC_\alpha, \mD),$
under which $\gamma$ identifies with the functor
$\psi^\ast: \Fun_{\mO}(\mC^\alpha, \mD) \to \Fun_{\mO}(\mC^\X \coprod_{\mC_\X} \mC_\alpha, \mD) $
induced by the functor $\psi: \mC^\X \coprod_{\mC_\X} \mC_\alpha \to \mC^\alpha$.
So we need to see that $\psi^\ast$ restricts to an equivalence
${\psi^\ast}': \Fun'_{\mO}(\mC^\alpha, \mD) \to \Fun''_{\mO}(\mC^\X \coprod_{\mC_\X} \mC_\alpha, \mD),$
where the right hand side is the full subcategory of functors over $\mO$ whose restriction to $\mC^\X$ 
sends morphisms whose image in $\mC$ is cocartesian over $\mO$, to morphisms whose image in $\mD$ is cocartesian over $\mO$.
The functor ${\psi^\ast}'$ is conservative as $ \lambda$ is an equivalence.
So it is enough to show that ${\psi^\ast}'$ has a fully faithful right adjoint.

As $\psi$ is fully faithful, by \cite[Lemma 4.3.2.13.]{lurie.higheralgebra} this holds if for any $\rH \in \Fun_{\mO}(\mC^\X \coprod_{\mC_\X} \mC_\alpha, \mD)$ and $\T \in \mC^\alpha$ not in the essential image of $\psi$ the $\tau$-limit of the functor $$\zeta: \mC^\X \coprod_{\mC_\X} \mC_\alpha \times_{\mC^\alpha} \mC^\alpha_{\T/} \to \mC^\X \coprod_{\mC_\X} \mC_\alpha \xrightarrow{\rH}\mD$$ exists, where $\tau$ is the functor $\mD \to \mO.$
In this case the right adjoint of ${\psi^\ast}$ sends $\rH$ to its right Kan extension $\bar{\rH} $ along $\psi$ relative to $\mO$, which sends $\T$ to the $\tau$-limit of $\zeta.$ 

We prove that the $\tau$-limit exists: 
the object $\T$ corresponds to an object $\Z\in\mC$ lying over some $\W\in \mO$ equipped with an inert map $\rho: \Y \to \W$ in $\mO$ that is not an equivalence. So $\rho$ lies over an inert map $[1] \to [\n]$ in $\Ass$ that is not the identity. So $\n=0$. Since $\mD \to \mO$ is an $\mO$-operad, $\mD_{\W} $ is contractible.
Let $\T' \in \mC^\X \coprod_{\mC_\X} \mC_\alpha$
and $\sigma: \T \to \psi(\T')$ a morphism in $\mD$.
Then $\T'$ corresponds to an object $\Z'\in \mC$ lying over some $\W'\in \mO$ equipped with an equivalence $\rho': \Y \to \W'$ in $\mO$. As $\rho$ is inert, $\sigma$ is sent by evaluation at the target to an inert map in $\mO$ lying over the unique map $[0] \to [1]$ in $\Ass$ that is not inert. So
$ \mC^\X \coprod_{\mC_\X} \mC_\alpha \times_{\mC^\alpha} \mC^\alpha_{\T/}$ is empty
and the $\tau$-limit of $\zeta$ is the $\tau$-final object of $\mD$ that corresponds to the unique object of $\mD_\W$.

We complete the proof by showing that the right adjoint of $\psi^*$ restricts to a right adjoint of $\psi'^*.$ For that it is enough to see that $\bar{\rH}$
restricts to a functor $\mC^\Y \to \mD$ over $\mO$ that sends maps whose image in $\mC$ is cocartesian over $\mO$, to maps cocartesian over $\mO$.
Let $\kappa: \T \to \T'$ be a map in $\mC^\Y$
lying over a map $\alpha: \Z \to \Z'$ in $\mC$ cocartesian over $\mO$ and a map $\beta: \W \to \W' $ in $\mO^\inert_{\Y/}$ that has to be inert.
We want to see that the image $\D \to \D'$ of $\bar{\rH}(\kappa)$ in $\mD$ is cocartesian over $\mO.$ If the map $\Y \to \W'$ is an equivalence, by the proof above the map $\Y \to \W$ is an equivalence so that $\beta$ and so $\alpha$ are equivalences. In this case there is nothing to show.
So we can assume that the map $\Y \to \W'$ is not an equivalence.
In this case as we have seen above, $\W'\in \mO_{[0]}$ and $\D' \in\mD_{\W'}.$
As $\beta$ is inert, $\D \to \D'$ factors as a $\varphi$-cocartesian lift of
$\beta$ followed by a map of the contractible fiber $\mD_{\W'}$ and so is $\varphi$-cocartesian.
\end{proof}

\vspace{2mm}

We also use Day-convolution in families of (generalized) $\infty$-operads:
let $\rS$ be an $\infty$-category and $\mO \to \Ass$ a cocartesian fibration relative to the collection of inert morphisms.
The composition $\mO \times \rS \to \mO \to \Ass$ is a cocartesian fibration relative to the collection of inert morphisms,
where a morphism of $\mO \times \rS$ is inert if its image in $\mO$ is inert and its image in $\rS$ is an equivalence.
Replacing $\mO \to \Ass$ by $\mO \times \rS \to \mO \to \Ass$ Lemma \ref{pulll},
Lemma \ref{bbbn} and Theorem \ref{proop} imply the following corollary: 

\begin{corollary}\label{remaq}

Let $\rS$ be an $\infty$-category, $\mO \to \Ass$ a cocartesian fibration relative to the collection of inert morphisms, $\mC \to \mO \times \rS$ a flat functor and cocartesian fibration relative to the collection of inert morphisms and $\mD \to \mO \times \rS$ a $\rS$-family of generalized $\mO$-operads.

\begin{enumerate}
\item 
The induced functor $(-)\times_{(\mO \times \rS)} \mC: \Cat^\mathrm{inert}_{\infty/\mO\times \rS} \to \Cat^\mathrm{inert}_{\infty/\mO\times \rS}$ admits a right adjoint $\Fun^{\mO\times \rS}(\mC,-).$

\item The functor $\Fun^{\mO \times \rS}(\mC, \mD) \to \mO \times \rS$ is a $\rS$-family of generalized $\mO$-operads whose fibers are $\mO$-operads if
the fibers of $\mD \to \mO \times \rS$ are $\mO$-operads.

\item For any functor $\rS' \to \rS$ the canonical map
$\rS' \times_\rS \Fun^{\mO\times \rS}(\mC,\mD) \to \Fun^{\mO\times \rS'}(\rS' \times_\rS\mC,\rS' \times_\rS\mD) $
of $\rS'$-families of (generalized) $\mO$-operads is an equivalence.

\item Let $\T$ be an $\infty$-category. If $\mC \to \mO\times \rS $ is a map of cocartesian fibrations over $\rS$
and $\mD \to \mO\times \T $ is a map of cocartesian fibrations over $\T$, $\Fun^{\mO\times \rS \times \T}(\mC \times \T,\rS \times \mD) \to \mO\times \rS \times \T$ is map of cartesian fibrations over $\rS$ and
cocartesian fibrations over $\T.$

\end{enumerate}
\end{corollary}

\appendix
\section{Enveloping cocartesian fibrations}

In this appendix we develop some basic constructions of enveloping cocartesian fibrations.

\vspace{2mm}

\begin{notation}
For any functor $\alpha: \mC \to \rS$ we write $$\Env(\mC):= \Fun([1],\rS) \times_{\Fun(\{0\},\rS)} \mC \to \Fun(\{1\},\rS)$$ for the pullback
of $\alpha$ along evaluation at the source $\Fun([1],\rS) \to \rS$.

\end{notation}

The diagonal embedding $\rS \to \Fun([1],\rS)$ yields an embedding over
$\Fun(\{1\},\rS)$:
$$\mC \to \Env(\mC)= \Fun([1],\rS) \times_{\Fun(\{0\},\rS)} \mC $$

\begin{proposition}\label{freefib}Let $ \mC \to \rS$ be a functor and $\mD \to \rS$ a cocartesian fibration.

\begin{enumerate}
\item The functor $$\Env(\mC)\to \Fun([1],\rS)\to \Fun(\{1\},\rS)$$ is a cocartesian fibration
whose cocartesian morphisms are those that get inverted in $\mC$.

\item The induced functor
$$\Fun_\rS(\Env(\mC), \mD) \to \Fun_\rS(\mC,\mD)$$
admits a fully faithful left adjoint that lands in the full subcategory
$\Fun_\rS^\cocart(\Env(\mC), \mD)$ of maps of cocartesian fibrations
$\Env(\mC)\to \mD$ over $\rS.$

\vspace{1mm}
\item The restriction $\Fun_\rS^\cocart(\Env(\mC), \mD) \to \Fun_\rS(\mC,\mD)$ is an equivalence.
\end{enumerate}
\end{proposition}

\begin{proof}
(1): For every $\infty$-category $\rS$ evaluation at the target
$\Fun([1],\rS) \to \rS$ is a cocartesian fibration
whose cocartesian morphisms are those maps inverted by evaluation at the source. This implies (1).

Next we remark that (3) follows from (2).	
For every object $\Z \in \Env(\mC)$ there is an object $\Y \in \mC$ and a morphism
$\Y \to \Z$ in $\Env(\mC)$ cocartesian over $\rS$.
Therefore the restriction $\Fun_\rS^\cocart(\Env(\mC), \mD) \to \Fun_\rS(\mC,\mD)$ is conservative and so an equivalence if (2) is shown.	

We complete the proof by showing (2):
By \cite[Lemma 2.7.]{Heine2022} there is an embedding $\mD \subset \mE:= \mP^\rS(\mD) $ of cocartesian fibrations over
$\rS$ into a cocartesian fibration whose fibers admit small colimits and whose fiber transports preserve small colimits.
By \cite[Corollary 4.3.2.14.]{lurie.HTT} the functor $\Fun_\rS(\Env(\mC), \mE) \to \Fun_\rS(\mC,\mE)$ admits a fully faithful left adjoint $\phi$.
We will prove that $\phi$ takes values in 
$\Fun_\rS^\cocart(\Env(\mC), \mE)$.
This implies that $\phi$ restricts to a functor
$\Fun_\rS(\mC, \mD) \to \Fun^\cocart_\rS(\Env(\mC),\mD)$
since the embedding $\mD \subset \mE $ preserves cocartesian morphisms and for any $\Z \in \Env(\mC)$ there is a $\Y \in \mC$ and a morphism
$\Y \to \Z$ in $\Env(\mC)$ cocartesian over $\rS$.
So we will prove that $\phi$ lands in $\Fun_\rS^\cocart(\Env(\mC), \mE).$
For any functor $\F: \mC \to \mE$ over $\rS$ and $\Z \in \Env(\mC)$ lying over $\s \in \rS$ the functor
$\kappa: \mC \times_{\Env(\mC)} (\Env(\mC))_{/\Z} \to \mC \xrightarrow{\F} \mE$ induces a functor $\mC \times_{\Env(\mC)} (\Env(\mC))_{/\Z}  \to \rS_{/\s} \times_\rS \mE $ corresponding to a natural transformation $\f$
from the composition $\mC \times_{\Env(\mC)} (\Env(\mC))_{/\Z} \xrightarrow{\kappa} \mE \to \rS$
to the constant functor with value $\s.$
Taking a cocartesian lift we obtain a natural transformation
$\kappa \to \f_*(\kappa),$ where $\f_*(\kappa)$ is a functor
$\mC \times_{\Env(\mC)} (\Env(\mC))_{/\Z} \to \mE_\s$ whose colimit is $\phi(\F)(\Z).$
The object $\Z \in \Env(\mC)$
corresponds to an object $\Y \in \mC$ lying over $\rt \in \rS$ equipped with a morphism
$\g: \rt \to \s $ in $\rS$. The projection $ \mC \times_{\Env(\mC)} (\Env(\mC))_{/\Z} \to \mC_{/\Y} $ factors as the chain of equivalences
$$ (\mC \times_{\Env(\mC)} \Env(\mC))_{/\Z} \simeq \rS \times_{\Fun([1],\rS)} \Fun([1],\rS)_{/\g} \times_{\rS_{\rt}} \mC_{/\Y} \simeq \mC_{/\Y} $$
since $\rS \times_{\Fun([1],\rS)} \Fun([1],\rS)_{/\g} \simeq \rS_{\rt}$.
Hence $\phi(\F)(\Z)$ is the colimit of the functor $\f_*(\kappa')$, where $\kappa': \mC_{/\Y} \to \mC \xrightarrow{\F} \mE $ and $\f$ is the canonical natural transformation from the composition $\mC_{/\Y} \xrightarrow{\kappa'} \mE \to \rS$ to the constant functor with value $\s$ using $\g: \rt \to \s.$
Thus there is a cocartesian lift $\F(\Y) \to \phi(\F)(\Z)$
of $\g:\rt \to \s.$ 
For any morphism $\Z \to \Z'$ in $\Env(\mC)$, where we use similar notation for
$\Z'$, there is a commutative square 
$$\begin{xy}
\xymatrix{
\F(\Y) \ar[d]
\ar[r]^{}
& \phi(\F)(\Z) \ar[d]^{} 
\\
\F(\Y') \ar[r]^{}  & \phi(\F)(\Z')}
\end{xy} $$
in $\rS$, where the horizontal morphisms are cocartesian over $\rS.$
The morphism $\Z \to \Z'$ is cocartesian over $\rS$ if and only if
the morphism $\Y \to \Y'$ is an equivalence, in which case the left vertical
morphism in the square is an equivalence so that the right vertical morphism is cocartesian over $\rS$.

\end{proof}

\begin{definition}\label{faccc}
Let $\rS$ be an $\infty$-category and $\L,\R \subset \Fun([1],\rS)$ full subcategories containing all equivalences and closed under composition.
We call $(\L,\R)$ a factorization system on $\rS$ if 
\begin{enumerate}
\item the embedding
$\R \subset \Fun([1],\rS)$ admits a left adjoint,
\item a morphism in
$\Fun([1],\rS)$ whose target belongs to $\R$, is a local equivalence if and only if its image under evaluation at the source belongs to $\L$ and under evaluation at the target is an equivalence.	
\end{enumerate}

\end{definition}

\begin{notation}
For every functor $\mC \to \rS$ and factorization system $(\L,\R)$ on $\rS$ we set $$\Env_\L(\mC):= \R \times_{\Fun(\{0\},\rS)} \mC \simeq \R \times_{\Fun([1],\rS)}\Env(\mC) \to \R \to \Fun(\{1\},\rS).$$ 
\end{notation}

\begin{lemma}\label{envvo}
Let $\rS$ be an $\infty$-category, $(\L,\R)$ a factorization system on $\rS$
and $\gamma: \mC \to \rS$ a cocartesian fibration relative to $\L$.
The embedding $\Env_\L(\mC)\subset \Env(\mC)$ admits a left adjoint,
where a morphism of $\Env(\mC)$ is a local equivalence if and only if its image in $\mC$ is $\gamma$-cocartesian, lies over a morphism of $\L$ and is inverted by evaluation at the target.

\end{lemma}

\begin{proof}
This follows from the fact that $\R \subset \Fun([1],\rS)$ is a localization with the described local equivalences (Definition \ref{faccc}) and $\gamma: \mC \to \rS$ is a cocartesian fibration relative to $\L$ so that local equivalences can be lifted.
\end{proof}

\begin{remark}\label{lababop}
Lemma \ref{envvo} implies that the functor $\Env_\L(\mC) \to \rS$ is a cocartesian fibration whose cocartesian morphisms are those whose image in $\mC$ is $\gamma$-cocartesian and lies over a morphism of $\L$. In particular, the embedding $\mC \subset \Env_\L(\mC)$ is a map of cocartesian fibrations relative to $\L.$		
\end{remark}

\begin{lemma}\label{loccc}
Let $(\L,\R)$ be a factorization system on an $\infty$-category $\rS$ and $\phi: \mC \to \rS$ a cocartesian fibration. 
The embedding $\mC \subset \Env_\L(\mC)$ admits a left adjoint relative to $\rS.$

\end{lemma}

\begin{proof}

Let $(\X, \f: \phi(\X) \to \Y) \in \Env(\mC)$ and let
$\g: \X \to \X'$ a $\phi$-cocartesian lift of $\f.$ 
The canonical embedding $\iota: \mC \subset \Env(\mC)$ sends 
$\Z \in \mC $ to $(\Z,\id_{\phi(\Z)}) \in \Env(\mC). $
The morphism $(\g, \id_\Y) : (\X,\f) \to \iota(\X') $ in $\Env(\mC)$ induces for any $\Z \in \mC$ a map
$ \mC(\X', \Z) \to \Env(\mC)((\X, \f), \iota(\Z) )$
that factors as 
$$ \mC(\X', \Z) \simeq \rS(\Y, \phi(\Z)) \times_{\rS(\phi(\X), \phi(\Z))} \mC(\X, \Z) \simeq  \Env(\mC)((\X, \f), \iota(\Z) ).$$

\end{proof}  

\begin{notation}
Let $(\L,\R), (\L',\R')$ be factorization systems on an $\infty$-category $\rS$
such that $\L \subset \L'$ and $\mC \to \rS$ a cocartesian fibration relative to $\L$.
Let $$\Env_\L^{\L'}(\mC) \subset \Env_\L(\mC)$$ be the full subcategory spanned by the objects $\Y \in \Env_\L(\mC)$ such that there is a cocartesian lift $\X \to \Y$ of a morphism of $\L'$ and $\X \in \mC.$

\end{notation}

\begin{remark}
For $\L'=\Fun([1],\rS)$ we have $	\Env_\L^{\L'}(\mC) = \Env_\L(\mC).$
\end{remark}

\begin{remark}\label{consa}
Since $\Env_\L(\mC) \to \rS$ is a cocartesian fibration, $\Env_\L^{\L'}(\mC) \to \rS$ is
a cocartesian fibration relative to $\L'$ using that the composite of two cocartesian morphisms is cocartesian.
\end{remark}

The embedding $\mC \subset \Env_\L(\mC)$ induces an embedding 
$\mC \subset \Env_\L^{\L'}(\mC)$ that is a map of cocartesian fibrations relative to $\L$ and for any cocartesian fibration $\mD \to \rS$ relative to $\L'$
the canonical functor
$\Fun^{\L'}_\rS(\Env_\L^{\L'}(\mC), \mD) \to \Fun_\rS^{\L}(\mC, \mD)$
is conservative, where $\L,\L'$ indicate maps of cocartesian fibrations relative to $\L,\L',$ respectively.

\begin{proposition}\label{Envelo}
Let $(\L,\R), (\L',\R')$ be factorization systems on an $\infty$-category $\rS$
such that $\L \subset \L'$ and $\mC \to \rS$ a cocartesian fibration relative to $\L$.	
\begin{enumerate}
\item For every cocartesian fibration $\mD \to \rS$ the functor $\Fun_\rS(\Env_\L^{\L'}(\mC), \mD) \to \Fun_\rS(\mC, \mD)$
admits a fully faithful left adjoint that restricts to a functor
$\Fun^\L_\rS(\mC, \mD) \to \Fun^{\L'}_\rS(\Env_\L^{\L'}(\mC), \mD).$

\item For any cocartesian fibration $\mD \to \rS$ relative to $\L'$ the functor
$\Fun^{\L'}_\rS(\Env_\L^{\L'}(\mC), \mD) \to \Fun^\L_\rS(\mC, \mD)$ is an equivalence.

\end{enumerate}
\end{proposition}
\begin{proof}
We first show that (1) implies (2).
The functor
$\Fun^{\L'}_\rS(\Env_\L^{\L'}(\mC), \mD) \to \Fun_\rS^{\L}(\mC, \mD)$
is the pullback of the functor
$\lambda: \Fun^{\L'}_\rS(\Env_\L^{\L'}(\mC), \Env_{\L'}(\mD)) \to \Fun_\rS^{\L}(\mC, \Env_{\L'}(\mD))$ since the embedding $\mD \subset \Env_{\L'}(\mD)$ is a map of cocartesian fibrations relative to $\L'$ by Remark \ref{lababop}.
So to show (2) it is enough to prove that $\lambda$ is an equivalence.
Since $ \Env_{\L'}(\mD) \to \rS$ is a cocartesian fibration, by (1) the functor $\lambda$ admits a fully faithful left adjoint. By Remark \ref{consa} $\lambda$ is conservative and so an equivalence.

(1): By Lemma \ref{freefib} the functor $ \Fun_\rS(\Env(\mC), \mD) \to \Fun_\rS(\mC, \mD)$ admits a fully faithful left adjoint $\kappa$ taking values in $\Fun_\rS^\cocart(\Env(\mC), \mD)$.
We prove that the functor $\Fun_\rS(\Env_\L^{\L'}(\mC), \mD) \to \Fun_\rS(\mC, \mD)$ is right adjoint to the composition
$\theta_\mD: \Fun_\rS(\mC, \mD) \xrightarrow{\kappa} \Fun_\rS(\Env(\mC), \mD) \to \Fun_\rS(\Env_\L^{\L'}(\mC), \mD)$ and that $\theta_\mD$ is fully faithful
and restricts to a functor $\Fun^\L_\rS(\mC, \mD) \to \Fun^{\L'}_\rS(\Env_\L^{\L'}(\mC), \mD).$
The cocartesian fibration $\mD \to \rS$ embeds into the bicartesian fibration $\mP^\rS(\mD) \to \rS$ whose fibers admit small colimits and limits.
The functor $\theta_\mD$ is the restriction of the functor $\theta_{\mP^\rS(\mD)}$ since $\kappa$ takes values in $\Fun_\rS^\cocart(\Env(\mC), \mD)$ and the embedding $\mD \subset\mP^\rS(\mD)$
preserves cocartesian morphisms. 
Thus it is enough to show the corresponding statement for $\mP^\rS(\mD) \to \rS$
so that we can assume that $\mD \to \rS$ is a bicartesian fibration whose fibers admit small colimits and limits.
In this case the functor $\Fun_\rS(\Env_\L^{\L'}(\mC), \mD) \to \Fun_\rS(\mC, \mD)$ admits a fully faithful left adjoint and the functor $\Fun_\rS(\Env_\L(\mC), \mD) \to \Fun_\rS(\Env_\L^{\L'}(\mC), \mD) $ admits a fully faithful right adjoint $\omega.$

By Lemma \ref{envvo} the embedding $\iota: \Env_\L(\mC) \subset \Env(\mC)$
admits a left adjoint $\phi: \Env(\mC) \to \Env_\L(\mC)$ relative to $\rS$.
The resulting adjunction $\Env(\mC) \leftrightarrows \Env_\L(\mC)$ relative to $\rS$ 
induces an adjunction
$\iota^*: \Fun_\rS(\Env(\mC), \mD)\leftrightarrows  \Fun_\rS(\Env_\L(\mC), \mD) : \phi^*, $ where $\phi^*$ is fully faithful with essential image the functors $\Env(\mC) \to \mD$ over $\rS$ inverting local equivalences.
The functor $\Fun_\rS(\Env_\L^{\L'}(\mC), \mD) \to \Fun_\rS(\mC,\mD) $ restricting to $\mC$
factors as $$\Fun_\rS(\Env_\L^{\L'}(\mC), \mD) \xrightarrow{\omega} \Fun_\rS(\Env_\L(\mC), \mD) \xrightarrow{\phi^*} \Fun_\rS(\Env(\mC), \mD) \to \Fun_\rS(\mC,\mD) $$
and so is right adjoint to the functor
$$\theta_\mD: \Fun_\rS(\mC,\mD) \xrightarrow{\kappa} \Fun_\rS(\Env(\mC), \mD) \xrightarrow{\iota^*} \Fun_\rS(\Env_\L(\mC), \mD) \to \Fun_\rS(\Env_\L^{\L'}(\mC), \mD).$$

We complete the proof by showing that $\theta_\mD$ restricts. This follows from the following two statements:
\begin{itemize}
\item A functor $\Env(\mC)\to \mD$ over $\rS$ inverting local equivalences is a map of cocartesian fibrations over $\rS$ if and only if its restriction $ \Env(\mC) \subset \Env(\mC) \to \mD$ is.

\item A map $\psi: \Env(\mC) \to \mD$ of cocartesian fibrations over $\rS$ inverts local equivalences if its restriction 
$\tau: \mC \subset \Env(\mC) \to \mD$ is a map of cocartesian fibrations relative to $\L.$ 
\end{itemize}
The first statement follows from the characterization of local equivalences in Lemma \ref{envvo}. So we check the second point.
For any morphism $\h$ in $\Env(\mC)$ corresponding to a morphism $\f:\X \to \Y$ in $\mC$ 
and a commutative square 
$$\begin{xy}
\xymatrix{
\gamma(\X) \ar[d]
\ar[r]^{\gamma(\f)}
& \gamma(\Y) \ar[d]^{} 
\\
\X' \ar[r]^{\g}  & \Y'}
\end{xy} $$
in $\rS$ there is a unique commutative square
$$\begin{xy}
\xymatrix{
\tau(\X) \ar[d]
\ar[r]^{\tau(\f)}
& \tau(\Y) \ar[d]^{} 
\\
\psi(\X) \ar[r]^{\psi(\f)}  & \psi(\Y)}
\end{xy}$$
in $\mD$, where both vertical morphisms are cocartesian, lifting the former commutative square.

If $\h$ is a local equivalence, $\f: \X \to \Y$ is $\gamma$-cocartesian
and lies over a morphism of $\L$ and $\g$ is an equivalence.
So if $\tau: \mC \subset \Env(\mC) \to \mD$ is a map of cocartesian fibrations relative to $\L$, the image $\tau(\f)$ is cocartesian.
Therefore $\psi(\f)$ is also cocartesian as both vertical maps in the diagram are. Since $\psi(\f)$ lies over the equivalence $\g$, it is itself an 
equivalence.

\end{proof}

\begin{example}\label{envort}
Let $\rS, \T$ be $\infty$-categories, $(\L,\R)$ a factorization system on $\rS$ and $ \mC \to \rS \times \T$ a map of cocartesian fibrations relative to $\L$.
Let $$\L' := \L \times \T, \ \R':= \R \times \Fun([1],\T), \ \L'' := \Fun([1],\rS) \times \T, \ \R'':= \Fun([1],\rS) \times \Fun([1],\T).$$
Then $(\L',\R'), (\L'',\R'')$ are factorization systems. The diagonal embedding 
$\T \subset \Fun([1],\T)$ induces an embedding $\Env_\L(\mC) \subset \Env_{\L'}(\mC)$
that induces an equivalence $\Env_\L(\mC) \simeq \Env_{\L'}^{\L''}(\mC).$
So by Proposition \ref{Envelo}
for every map $\mD \to \rS \times \T$ of cocartesian fibrations over $\rS$ the functor $$\Fun^{\Fun([1],\rS) \times \T}_{\rS \times \T}(\Env_\L(\mC), \mD) \to \Fun^{\L\times \T}_{\rS \times \T}(\mC, \mD)$$ is an equivalence, where the left hand side are the maps of cocartesian fibrations over $\rS$ 
and the right hand side are the maps of cocartesian fibrations relative to $\L$. 

\end{example}

\bibliographystyle{plain}
\bibliography{ma}

\begin{center}
EPFL, Lausanne, Switzerland \\ 
E-mail address: hadrian.heine@epfl.ch
\end{center}

\end{document}